\algrenewcommand\algorithmicrequire{\textbf{Precondition:}}
\algrenewcommand\algorithmicensure{\textbf{Postcondition:}}
\newtheorem{definition}{Definition}
\newtheorem{lemma}[definition]{Lemma}
\newtheorem{proposition}[definition]{Proposition}
\newtheorem{corollary}[definition]{Corollary}
\newtheorem{conjecture}[definition]{Conjecture}
\newtheorem{theorem}[definition]{Theorem}
\newtheorem{remark}[definition]{Remark}
\newcommand{\N}{\mathbb{N}}
\newcommand{\Z}{\mathbb{Z}}
\newcommand{\R}{\mathbb{R}}
\newcommand{\A}{\mathcal{A}}
\newcommand{\B}{\mathcal{B}}
\newcommand{\F}{\mathcal{F}}
\renewcommand{\L}{\mathcal{L}}
\newcommand{\Scal}{\mathcal{S}}
\newcommand{\T}{\mathcal{T}}
\newcommand{\U}{\mathcal{U}}
\newcommand{\X}{\mathcal{X}}
\newcommand{\Xcal}{\mathcal{X}}
\newcommand{\ba}{\mathbf{a}}
\newcommand{\be}{\mathbf{e}}
\newcommand{\bh}{\mathbf{h}}
\newcommand{\bk}{\mathbf{k}}
\newcommand{\bn}{\mathbf{n}}
\newcommand{\bm}{\mathbf{m}}
\newcommand{\bp}{\mathbf{p}}
\newcommand{\bq}{\mathbf{q}}
\newcommand{\bx}{\mathbf{x}}
\newcommand{\shape}{\textsc{shape}}
\newcommand{\GL}{\textrm{GL}}
\newcommand{\SFT}{\textsc{SFT}}
\newcommand{\zero}{\mathbf{0}}
\newcommand{\sctop}{\textsc{top}}
\newcommand{\scbottom}{\textsc{bottom}}
\newcommand{\scleft}{\textsc{left}}
\newcommand{\scright}{\textsc{right}}
\newcommand\tile[4]{
    \raisebox{-3mm}{
\begin{tikzpicture}[scale=0.9]
\draw (0, 0) -- (1, 0);
\draw (0, 0) -- (0, 1);
\draw (1, 1) -- (1, 0);
\draw (1, 1) -- (0, 1);
\node[rotate=0,font=\tiny] at (0.8, 0.5) {#1};
\node[rotate=0,font=\tiny] at (0.5, 0.8) {#2};
\node[rotate=0,font=\tiny] at (0.2, 0.5) {#3};
\node[rotate=0,font=\tiny] at (0.5, 0.2) {#4};
\end{tikzpicture}}}
\newcommand\tilerot[4]{
    \raisebox{-3mm}{
\begin{tikzpicture}[scale=0.9]
\draw (0, 0) -- (1, 0);
\draw (0, 0) -- (0, 1);
\draw (1, 1) -- (1, 0);
\draw (1, 1) -- (0, 1);
\node[rotate=90,font=\tiny] at (0.8, 0.5) {#1};
\node[rotate=0, font=\tiny] at (0.5, 0.8) {#2};
\node[rotate=90,font=\tiny] at (0.2, 0.5) {#3};
\node[rotate=0, font=\tiny] at (0.5, 0.2) {#4};
\end{tikzpicture}}}
\newcommand\dominoV[3]{
\begin{tikzpicture}[scale=.2]
\draw[fill=#1] (0,0) rectangle (1,1);
\draw[fill=#2] (0,1) rectangle (1,2);
\foreach \x/\y in {#3}
{\draw[thick] (\x,\y) -- (\x+1,\y+1);
 \draw[thick] (\x,\y+1) -- (\x+1,\y);}
\end{tikzpicture}}
\newcommand\dominoH[3]{
\begin{tikzpicture}[scale=.2]
\draw[fill=#1] (0,0) rectangle (1,1);
\draw[fill=#2] (1,0) rectangle (2,1);
\foreach \x/\y in {#3}
{\draw[thick] (\x,\y) -- (\x+1,\y+1);
 \draw[thick] (\x,\y+1) -- (\x+1,\y);}
\end{tikzpicture}}
\keywords{Wang tiles \and tilings \and aperiodic 
\and substitutions \and markers}
\subjclass[2010]{Primary 52C23; Secondary 37B50}
\begin{document}

\title{Substitutive structure of Jeandel-Rao aperiodic tilings}
\author[S.~Labb\'e]{S\'ebastien Labb\'e}
\address[S.~Labb\'e]{Univ. Bordeaux, CNRS,  Bordeaux INP, LaBRI, UMR 5800, F-33400, Talence, France}
\email{sebastien.labbe@labri.fr}

\date{\today}
\begin{abstract}
Jeandel and Rao proved that 11 is the size of the smallest set of Wang tiles, i.e., unit squares with colored edges, that admit valid tilings (contiguous edges of adjacent tiles have the same color) of the plane, none of them being invariant under a nontrivial translation. We study herein the Wang shift $\Omega_0$ made of all valid tilings using the set $\mathcal{T}_0$ of 11 aperiodic Wang tiles discovered by Jeandel and Rao.  We show that there exists a minimal subshift $X_0$ of $\Omega_0$ such that every tiling in $X_0$ can be decomposed uniquely into 19 distinct patches of sizes ranging from 45 to 112 that are equivalent to a set of 19 self-similar and aperiodic Wang tiles.  We suggest that this provides an almost complete description of the substitutive structure of Jeandel-Rao tilings, as we believe that $\Omega_0\setminus X_0$ is a null set for any shift-invariant probability measure on $\Omega_0$.  The proof is based on 12 elementary steps, 10 of which involve the same procedure allowing one to desubstitute Wang tilings from the existence of a subset of marker tiles. The 2 other steps involve the addition of decorations to deal with fault lines and changing the base of the $\mathbb{Z}^2$-action through a shear conjugacy.  Algorithms are provided to find markers, recognizable substitutions, and shear conjugacy from a set of Wang tiles.
\end{abstract}

\maketitle


\setcounter{tocdepth}{1}
\tableofcontents

\section{Introduction}

Aperiodic tilings are much studied for their beauty but also for their links
with various aspects of science including dynamical systems \cite{MR1452190}, 
topology \cite{MR2446623}, number theory \cite{MR2742574},
theoretical computer science \cite{2017_jeandel_undecidability}
and crystallography.
Chapters 10 and 11 of \cite{MR857454} and the more recent
book on aperiodic order \cite{MR3136260} give an excellent overview of what is
known on aperiodic tilings.
The first examples of aperiodic tilings were
tilings of $\Z^2$ by 
\emph{Wang tiles}, that is, unit square tiles with colored edges
\cite{MR2939561,MR0286317,MR0297572,MR1417578,MR1417576,MR2507046}.
A tiling by Wang tiles is \emph{valid} if every contiguous edges have the same color.
The set of all valid tilings $\Z^2\to\T$ using translated copies of a finite set $\T$ of Wang tiles
is called the \emph{Wang shift} of $\T$ and denoted $\Omega_\T$ (rotated or reflected copies of tiles are not allowed).
It is a $2$-dimensional \emph{subshift} as it
is invariant under translations called \emph{shifts} and closed under taking limits.
A nonempty Wang shift $\Omega_\T$ is said to be \emph{aperiodic} if none of its
tilings have a nontrivial period.

\newcommand\JeandelRaoO{
\begin{tikzpicture}
[scale=0.900000000000000]
\tikzstyle{every node}=[font=\footnotesize]
\fill[cyan] (1, 0) -- (0.5, 0.5) -- (1, 1);
\fill[lightgray] (0, 1) -- (0.5, 0.5) -- (1, 1);
\fill[cyan] (0, 0) -- (0.5, 0.5) -- (0, 1);
\fill[red] (0, 0) -- (0.5, 0.5) -- (1, 0);
\draw (1, 0) -- ++ (0,1);
\draw (0, 1) -- ++ (1,0);
\draw (0, 0) -- ++ (0,1);
\draw (0, 0) -- ++ (1,0);
\node[rotate=0,black] at (0.800000000000000, 0.5) {2};
\node[rotate=0,black] at (0.5, 0.800000000000000) {4};
\node[rotate=0,black] at (0.200000000000000, 0.5) {2};
\node[rotate=0,black] at (0.5, 0.200000000000000) {1};
\end{tikzpicture}
} 
\newcommand\JeandelRaoI{
\begin{tikzpicture}
[scale=0.900000000000000]
\tikzstyle{every node}=[font=\footnotesize]
\fill[cyan] (1, 0) -- (0.5, 0.5) -- (1, 1);
\fill[cyan] (0, 1) -- (0.5, 0.5) -- (1, 1);
\fill[cyan] (0, 0) -- (0.5, 0.5) -- (0, 1);
\fill[white] (0, 0) -- (0.5, 0.5) -- (1, 0);
\draw (1, 0) -- ++ (0,1);
\draw (0, 1) -- ++ (1,0);
\draw (0, 0) -- ++ (0,1);
\draw (0, 0) -- ++ (1,0);
\node[rotate=0,black] at (0.800000000000000, 0.5) {2};
\node[rotate=0,black] at (0.5, 0.800000000000000) {2};
\node[rotate=0,black] at (0.200000000000000, 0.5) {2};
\node[rotate=0,black] at (0.5, 0.200000000000000) {0};
\end{tikzpicture}
} 
\newcommand\JeandelRaoII{
\begin{tikzpicture}
[scale=0.900000000000000]
\tikzstyle{every node}=[font=\footnotesize]
\fill[red] (1, 0) -- (0.5, 0.5) -- (1, 1);
\fill[red] (0, 1) -- (0.5, 0.5) -- (1, 1);
\fill[green] (0, 0) -- (0.5, 0.5) -- (0, 1);
\fill[red] (0, 0) -- (0.5, 0.5) -- (1, 0);
\draw (1, 0) -- ++ (0,1);
\draw (0, 1) -- ++ (1,0);
\draw (0, 0) -- ++ (0,1);
\draw (0, 0) -- ++ (1,0);
\node[rotate=0,black] at (0.800000000000000, 0.5) {1};
\node[rotate=0,black] at (0.5, 0.800000000000000) {1};
\node[rotate=0,black] at (0.200000000000000, 0.5) {3};
\node[rotate=0,black] at (0.5, 0.200000000000000) {1};
\end{tikzpicture}
} 
\newcommand\JeandelRaoIII{
\begin{tikzpicture}
[scale=0.900000000000000]
\tikzstyle{every node}=[font=\footnotesize]
\fill[red] (1, 0) -- (0.5, 0.5) -- (1, 1);
\fill[cyan] (0, 1) -- (0.5, 0.5) -- (1, 1);
\fill[green] (0, 0) -- (0.5, 0.5) -- (0, 1);
\fill[cyan] (0, 0) -- (0.5, 0.5) -- (1, 0);
\draw (1, 0) -- ++ (0,1);
\draw (0, 1) -- ++ (1,0);
\draw (0, 0) -- ++ (0,1);
\draw (0, 0) -- ++ (1,0);
\node[rotate=0,black] at (0.800000000000000, 0.5) {1};
\node[rotate=0,black] at (0.5, 0.800000000000000) {2};
\node[rotate=0,black] at (0.200000000000000, 0.5) {3};
\node[rotate=0,black] at (0.5, 0.200000000000000) {2};
\end{tikzpicture}
} 
\newcommand\JeandelRaoIV{
\begin{tikzpicture}
[scale=0.900000000000000]
\tikzstyle{every node}=[font=\footnotesize]
\fill[green] (1, 0) -- (0.5, 0.5) -- (1, 1);
\fill[red] (0, 1) -- (0.5, 0.5) -- (1, 1);
\fill[green] (0, 0) -- (0.5, 0.5) -- (0, 1);
\fill[green] (0, 0) -- (0.5, 0.5) -- (1, 0);
\draw (1, 0) -- ++ (0,1);
\draw (0, 1) -- ++ (1,0);
\draw (0, 0) -- ++ (0,1);
\draw (0, 0) -- ++ (1,0);
\node[rotate=0,black] at (0.800000000000000, 0.5) {3};
\node[rotate=0,black] at (0.5, 0.800000000000000) {1};
\node[rotate=0,black] at (0.200000000000000, 0.5) {3};
\node[rotate=0,black] at (0.5, 0.200000000000000) {3};
\end{tikzpicture}
} 
\newcommand\JeandelRaoV{
\begin{tikzpicture}
[scale=0.900000000000000]
\tikzstyle{every node}=[font=\footnotesize]
\fill[white] (1, 0) -- (0.5, 0.5) -- (1, 1);
\fill[red] (0, 1) -- (0.5, 0.5) -- (1, 1);
\fill[green] (0, 0) -- (0.5, 0.5) -- (0, 1);
\fill[red] (0, 0) -- (0.5, 0.5) -- (1, 0);
\draw (1, 0) -- ++ (0,1);
\draw (0, 1) -- ++ (1,0);
\draw (0, 0) -- ++ (0,1);
\draw (0, 0) -- ++ (1,0);
\node[rotate=0,black] at (0.800000000000000, 0.5) {0};
\node[rotate=0,black] at (0.5, 0.800000000000000) {1};
\node[rotate=0,black] at (0.200000000000000, 0.5) {3};
\node[rotate=0,black] at (0.5, 0.200000000000000) {1};
\end{tikzpicture}
} 
\newcommand\JeandelRaoVI{
\begin{tikzpicture}
[scale=0.900000000000000]
\tikzstyle{every node}=[font=\footnotesize]
\fill[white] (1, 0) -- (0.5, 0.5) -- (1, 1);
\fill[white] (0, 1) -- (0.5, 0.5) -- (1, 1);
\fill[white] (0, 0) -- (0.5, 0.5) -- (0, 1);
\fill[red] (0, 0) -- (0.5, 0.5) -- (1, 0);
\draw (1, 0) -- ++ (0,1);
\draw (0, 1) -- ++ (1,0);
\draw (0, 0) -- ++ (0,1);
\draw (0, 0) -- ++ (1,0);
\node[rotate=0,black] at (0.800000000000000, 0.5) {0};
\node[rotate=0,black] at (0.5, 0.800000000000000) {0};
\node[rotate=0,black] at (0.200000000000000, 0.5) {0};
\node[rotate=0,black] at (0.5, 0.200000000000000) {1};
\end{tikzpicture}
} 
\newcommand\JeandelRaoVII{
\begin{tikzpicture}
[scale=0.900000000000000]
\tikzstyle{every node}=[font=\footnotesize]
\fill[green] (1, 0) -- (0.5, 0.5) -- (1, 1);
\fill[red] (0, 1) -- (0.5, 0.5) -- (1, 1);
\fill[white] (0, 0) -- (0.5, 0.5) -- (0, 1);
\fill[cyan] (0, 0) -- (0.5, 0.5) -- (1, 0);
\draw (1, 0) -- ++ (0,1);
\draw (0, 1) -- ++ (1,0);
\draw (0, 0) -- ++ (0,1);
\draw (0, 0) -- ++ (1,0);
\node[rotate=0,black] at (0.800000000000000, 0.5) {3};
\node[rotate=0,black] at (0.5, 0.800000000000000) {1};
\node[rotate=0,black] at (0.200000000000000, 0.5) {0};
\node[rotate=0,black] at (0.5, 0.200000000000000) {2};
\end{tikzpicture}
} 
\newcommand\JeandelRaoVIII{
\begin{tikzpicture}
[scale=0.900000000000000]
\tikzstyle{every node}=[font=\footnotesize]
\fill[white] (1, 0) -- (0.5, 0.5) -- (1, 1);
\fill[cyan] (0, 1) -- (0.5, 0.5) -- (1, 1);
\fill[red] (0, 0) -- (0.5, 0.5) -- (0, 1);
\fill[cyan] (0, 0) -- (0.5, 0.5) -- (1, 0);
\draw (1, 0) -- ++ (0,1);
\draw (0, 1) -- ++ (1,0);
\draw (0, 0) -- ++ (0,1);
\draw (0, 0) -- ++ (1,0);
\node[rotate=0,black] at (0.800000000000000, 0.5) {0};
\node[rotate=0,black] at (0.5, 0.800000000000000) {2};
\node[rotate=0,black] at (0.200000000000000, 0.5) {1};
\node[rotate=0,black] at (0.5, 0.200000000000000) {2};
\end{tikzpicture}
} 
\newcommand\JeandelRaoIX{
\begin{tikzpicture}
[scale=0.900000000000000]
\tikzstyle{every node}=[font=\footnotesize]
\fill[red] (1, 0) -- (0.5, 0.5) -- (1, 1);
\fill[cyan] (0, 1) -- (0.5, 0.5) -- (1, 1);
\fill[red] (0, 0) -- (0.5, 0.5) -- (0, 1);
\fill[lightgray] (0, 0) -- (0.5, 0.5) -- (1, 0);
\draw (1, 0) -- ++ (0,1);
\draw (0, 1) -- ++ (1,0);
\draw (0, 0) -- ++ (0,1);
\draw (0, 0) -- ++ (1,0);
\node[rotate=0,black] at (0.800000000000000, 0.5) {1};
\node[rotate=0,black] at (0.5, 0.800000000000000) {2};
\node[rotate=0,black] at (0.200000000000000, 0.5) {1};
\node[rotate=0,black] at (0.5, 0.200000000000000) {4};
\end{tikzpicture}
} 
\newcommand\JeandelRaoX{
\begin{tikzpicture}
[scale=0.900000000000000]
\tikzstyle{every node}=[font=\footnotesize]
\fill[green] (1, 0) -- (0.5, 0.5) -- (1, 1);
\fill[green] (0, 1) -- (0.5, 0.5) -- (1, 1);
\fill[red] (0, 0) -- (0.5, 0.5) -- (0, 1);
\fill[cyan] (0, 0) -- (0.5, 0.5) -- (1, 0);
\draw (1, 0) -- ++ (0,1);
\draw (0, 1) -- ++ (1,0);
\draw (0, 0) -- ++ (0,1);
\draw (0, 0) -- ++ (1,0);
\node[rotate=0,black] at (0.800000000000000, 0.5) {3};
\node[rotate=0,black] at (0.5, 0.800000000000000) {3};
\node[rotate=0,black] at (0.200000000000000, 0.5) {1};
\node[rotate=0,black] at (0.5, 0.200000000000000) {2};
\end{tikzpicture}
} 
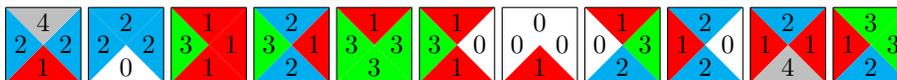
\begin{figure}[h]
\begin{center}
    \begin{tikzpicture}
[scale=1]
\tikzstyle{every node}=[font=\footnotesize]
\fill[cyan] (1.0, 0.0) -- (0.5, 0.5) -- (1.0, 1.0);
\fill[lightgray] (0.0, 1.0) -- (0.5, 0.5) -- (1.0, 1.0);
\fill[cyan] (0.0, 0.0) -- (0.5, 0.5) -- (0.0, 1.0);
\fill[red] (0.0, 0.0) -- (0.5, 0.5) -- (1.0, 0.0);
\draw (1.0, 0.0) -- ++ (0,1);
\draw (0.0, 1.0) -- ++ (1,0);
\draw (0.0, 0.0) -- ++ (0,1);
\draw (0.0, 0.0) -- ++ (1,0);
\node[rotate=0,black] at (0.8, 0.5) {2};
\node[rotate=0,black] at (0.5, 0.8) {4};
\node[rotate=0,black] at (0.2, 0.5) {2};
\node[rotate=0,black] at (0.5, 0.2) {1};
\fill[cyan] (2.1, 0.0) -- (1.6, 0.5) -- (2.1, 1.0);
\fill[cyan] (1.1, 1.0) -- (1.6, 0.5) -- (2.1, 1.0);
\fill[cyan] (1.1, 0.0) -- (1.6, 0.5) -- (1.1, 1.0);
\fill[white] (1.1, 0.0) -- (1.6, 0.5) -- (2.1, 0.0);
\draw (2.1, 0.0) -- ++ (0,1);
\draw (1.1, 1.0) -- ++ (1,0);
\draw (1.1, 0.0) -- ++ (0,1);
\draw (1.1, 0.0) -- ++ (1,0);
\node[rotate=0,black] at (1.9000000000000001, 0.5) {2};
\node[rotate=0,black] at (1.6, 0.8) {2};
\node[rotate=0,black] at (1.3, 0.5) {2};
\node[rotate=0,black] at (1.6, 0.2) {0};
\fill[red] (3.2, 0.0) -- (2.7, 0.5) -- (3.2, 1.0);
\fill[red] (2.2, 1.0) -- (2.7, 0.5) -- (3.2, 1.0);
\fill[green] (2.2, 0.0) -- (2.7, 0.5) -- (2.2, 1.0);
\fill[red] (2.2, 0.0) -- (2.7, 0.5) -- (3.2, 0.0);
\draw (3.2, 0.0) -- ++ (0,1);
\draw (2.2, 1.0) -- ++ (1,0);
\draw (2.2, 0.0) -- ++ (0,1);
\draw (2.2, 0.0) -- ++ (1,0);
\node[rotate=0,black] at (3.0, 0.5) {1};
\node[rotate=0,black] at (2.7, 0.8) {1};
\node[rotate=0,black] at (2.4000000000000004, 0.5) {3};
\node[rotate=0,black] at (2.7, 0.2) {1};
\fill[red] (4.300000000000001, 0.0) -- (3.8000000000000003, 0.5) -- (4.300000000000001, 1.0);
\fill[cyan] (3.3000000000000003, 1.0) -- (3.8000000000000003, 0.5) -- (4.300000000000001, 1.0);
\fill[green] (3.3000000000000003, 0.0) -- (3.8000000000000003, 0.5) -- (3.3000000000000003, 1.0);
\fill[cyan] (3.3000000000000003, 0.0) -- (3.8000000000000003, 0.5) -- (4.300000000000001, 0.0);
\draw (4.300000000000001, 0.0) -- ++ (0,1);
\draw (3.3000000000000003, 1.0) -- ++ (1,0);
\draw (3.3000000000000003, 0.0) -- ++ (0,1);
\draw (3.3000000000000003, 0.0) -- ++ (1,0);
\node[rotate=0,black] at (4.1000000000000005, 0.5) {1};
\node[rotate=0,black] at (3.8000000000000003, 0.8) {2};
\node[rotate=0,black] at (3.5000000000000004, 0.5) {3};
\node[rotate=0,black] at (3.8000000000000003, 0.2) {2};
\fill[green] (5.4, 0.0) -- (4.9, 0.5) -- (5.4, 1.0);
\fill[red] (4.4, 1.0) -- (4.9, 0.5) -- (5.4, 1.0);
\fill[green] (4.4, 0.0) -- (4.9, 0.5) -- (4.4, 1.0);
\fill[green] (4.4, 0.0) -- (4.9, 0.5) -- (5.4, 0.0);
\draw (5.4, 0.0) -- ++ (0,1);
\draw (4.4, 1.0) -- ++ (1,0);
\draw (4.4, 0.0) -- ++ (0,1);
\draw (4.4, 0.0) -- ++ (1,0);
\node[rotate=0,black] at (5.2, 0.5) {3};
\node[rotate=0,black] at (4.9, 0.8) {1};
\node[rotate=0,black] at (4.6000000000000005, 0.5) {3};
\node[rotate=0,black] at (4.9, 0.2) {3};
\fill[white] (6.5, 0.0) -- (6.0, 0.5) -- (6.5, 1.0);
\fill[red] (5.5, 1.0) -- (6.0, 0.5) -- (6.5, 1.0);
\fill[green] (5.5, 0.0) -- (6.0, 0.5) -- (5.5, 1.0);
\fill[red] (5.5, 0.0) -- (6.0, 0.5) -- (6.5, 0.0);
\draw (6.5, 0.0) -- ++ (0,1);
\draw (5.5, 1.0) -- ++ (1,0);
\draw (5.5, 0.0) -- ++ (0,1);
\draw (5.5, 0.0) -- ++ (1,0);
\node[rotate=0,black] at (6.3, 0.5) {0};
\node[rotate=0,black] at (6.0, 0.8) {1};
\node[rotate=0,black] at (5.7, 0.5) {3};
\node[rotate=0,black] at (6.0, 0.2) {1};
\fill[white] (7.6000000000000005, 0.0) -- (7.1000000000000005, 0.5) -- (7.6000000000000005, 1.0);
\fill[white] (6.6000000000000005, 1.0) -- (7.1000000000000005, 0.5) -- (7.6000000000000005, 1.0);
\fill[white] (6.6000000000000005, 0.0) -- (7.1000000000000005, 0.5) -- (6.6000000000000005, 1.0);
\fill[red] (6.6000000000000005, 0.0) -- (7.1000000000000005, 0.5) -- (7.6000000000000005, 0.0);
\draw (7.6000000000000005, 0.0) -- ++ (0,1);
\draw (6.6000000000000005, 1.0) -- ++ (1,0);
\draw (6.6000000000000005, 0.0) -- ++ (0,1);
\draw (6.6000000000000005, 0.0) -- ++ (1,0);
\node[rotate=0,black] at (7.4, 0.5) {0};
\node[rotate=0,black] at (7.1000000000000005, 0.8) {0};
\node[rotate=0,black] at (6.800000000000001, 0.5) {0};
\node[rotate=0,black] at (7.1000000000000005, 0.2) {1};
\fill[green] (8.700000000000001, 0.0) -- (8.200000000000001, 0.5) -- (8.700000000000001, 1.0);
\fill[red] (7.700000000000001, 1.0) -- (8.200000000000001, 0.5) -- (8.700000000000001, 1.0);
\fill[white] (7.700000000000001, 0.0) -- (8.200000000000001, 0.5) -- (7.700000000000001, 1.0);
\fill[cyan] (7.700000000000001, 0.0) -- (8.200000000000001, 0.5) -- (8.700000000000001, 0.0);
\draw (8.700000000000001, 0.0) -- ++ (0,1);
\draw (7.700000000000001, 1.0) -- ++ (1,0);
\draw (7.700000000000001, 0.0) -- ++ (0,1);
\draw (7.700000000000001, 0.0) -- ++ (1,0);
\node[rotate=0,black] at (8.500000000000002, 0.5) {3};
\node[rotate=0,black] at (8.200000000000001, 0.8) {1};
\node[rotate=0,black] at (7.900000000000001, 0.5) {0};
\node[rotate=0,black] at (8.200000000000001, 0.2) {2};
\fill[white] (9.8, 0.0) -- (9.3, 0.5) -- (9.8, 1.0);
\fill[cyan] (8.8, 1.0) -- (9.3, 0.5) -- (9.8, 1.0);
\fill[red] (8.8, 0.0) -- (9.3, 0.5) -- (8.8, 1.0);
\fill[cyan] (8.8, 0.0) -- (9.3, 0.5) -- (9.8, 0.0);
\draw (9.8, 0.0) -- ++ (0,1);
\draw (8.8, 1.0) -- ++ (1,0);
\draw (8.8, 0.0) -- ++ (0,1);
\draw (8.8, 0.0) -- ++ (1,0);
\node[rotate=0,black] at (9.600000000000001, 0.5) {0};
\node[rotate=0,black] at (9.3, 0.8) {2};
\node[rotate=0,black] at (9.0, 0.5) {1};
\node[rotate=0,black] at (9.3, 0.2) {2};
\fill[red] (10.9, 0.0) -- (10.4, 0.5) -- (10.9, 1.0);
\fill[cyan] (9.9, 1.0) -- (10.4, 0.5) -- (10.9, 1.0);
\fill[red] (9.9, 0.0) -- (10.4, 0.5) -- (9.9, 1.0);
\fill[lightgray] (9.9, 0.0) -- (10.4, 0.5) -- (10.9, 0.0);
\draw (10.9, 0.0) -- ++ (0,1);
\draw (9.9, 1.0) -- ++ (1,0);
\draw (9.9, 0.0) -- ++ (0,1);
\draw (9.9, 0.0) -- ++ (1,0);
\node[rotate=0,black] at (10.700000000000001, 0.5) {1};
\node[rotate=0,black] at (10.4, 0.8) {2};
\node[rotate=0,black] at (10.1, 0.5) {1};
\node[rotate=0,black] at (10.4, 0.2) {4};
\fill[green] (12.0, 0.0) -- (11.5, 0.5) -- (12.0, 1.0);
\fill[green] (11.0, 1.0) -- (11.5, 0.5) -- (12.0, 1.0);
\fill[red] (11.0, 0.0) -- (11.5, 0.5) -- (11.0, 1.0);
\fill[cyan] (11.0, 0.0) -- (11.5, 0.5) -- (12.0, 0.0);
\draw (12.0, 0.0) -- ++ (0,1);
\draw (11.0, 1.0) -- ++ (1,0);
\draw (11.0, 0.0) -- ++ (0,1);
\draw (11.0, 0.0) -- ++ (1,0);
\node[rotate=0,black] at (11.8, 0.5) {3};
\node[rotate=0,black] at (11.5, 0.8) {3};
\node[rotate=0,black] at (11.2, 0.5) {1};
\node[rotate=0,black] at (11.5, 0.2) {2};
\end{tikzpicture}
\end{center}
    \caption{The Jeandel-Rao's set $\T_0$ of 11 Wang tiles.}
    \label{fig:jeandel_rao_tile_set}
\end{figure}

Jeandel and Rao~\cite{jeandel_aperiodic_2015}
proved that the set $\T_0$ of Wang tiles shown in Figure~\ref{fig:jeandel_rao_tile_set}
is the smallest possible set of Wang tiles that is aperiodic.
Indeed, based on computer explorations, they proved
that every set of Wang tiles of cardinality $\leq 10$ either admits a
periodic tiling of the plane or does not tile the plane at all. Thus there is
no aperiodic set of Wang tiles of cardinality less than or equal to 10. In the same
work, they found this interesting candidate of cardinality 11 and they proved
that the associated Wang shift $\Omega_0=\Omega_{\T_0}$ is aperiodic.
Their proof is based on the representation of a set of Wang tiles as a
transducer and on the
description of a sequence of
transducers describing larger and larger infinite horizontal strips of Wang
tiles by iteratively taking products of transducers.

Their example is also minimal for the number of colors.
Indeed it is known that three colors are not enough to allow an aperiodic
set of tiles \cite{chen_decidability_2012} and
Jeandel and Rao mentioned in their
preprint that 
while they used 5 colors for the edges $\{0,1,2,3,4\}$,
they claim colors 0 and 4 can be merged without losing the aperiodic property.
In this work, we keep colors 0 and 4 as distinct and we associate the number 4
to the color gray in figures.

In this contribution, we show the existence of a minimal
aperiodic subshift $X_0$ of the Jeandel-Rao tilings $\Omega_0$.
The substitutive structure of $X_0$ can be described in terms
of a minimal, aperiodic and self-similar Wang shift $\Omega_\U$
based on a set of 19 Wang tiles $\U$ introduced in
\cite{MR3978536}.
Our main result can be summarized as follows.
\begin{theorem}\label{thm:main-result-in-simple-terms}
    There exists an aperiodic and minimal subshift $X_0$  of the Jeandel-Rao
    tilings $\Omega_0$ such that any tiling in $X_0$ can be decomposed uniquely
    into 19 distinct patches (two of size 45, four of size 70, six of size 72
    and seven of size 112) that are equivalent to the set of 19 Wang
    tiles $\U$.
\end{theorem}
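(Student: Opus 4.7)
The plan is to construct $X_0$ as the image of $\Omega_\U$ under a composition of twelve maps that progressively re-express Jeandel--Rao tilings in terms of larger and larger supertiles. Concretely, I would produce a chain of Wang shifts $\Omega_0 \supseteq \Omega^{(1)} \supseteq \cdots \supseteq \Omega^{(12)}$, where each inclusion is realized by a recognizable substitution $\omega_i\colon \Omega^{(i)}\to\Omega^{(i-1)}$, possibly preceded by a change of coordinates of the $\Z^2$-action. The final shift $\Omega^{(12)}$ is to be identified with $\Omega_\U$ from \cite{MR3978536}, which is minimal, aperiodic and self-similar. Setting $X_0 := \omega_1\circ\cdots\circ\omega_{12}(\Omega_\U)\subseteq\Omega_0$ and checking that every $\omega_i$ preserves minimality, aperiodicity and uniqueness of desubstitution then delivers the statement, and the 19 patches of sizes $45,70,72,112$ are precisely the images of the 19 Wang tiles of $\U$ under the total composition.

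The workhorse for ten of the twelve steps is a \emph{marker lemma}. Given $\Omega^{(i)}$, one identifies a subset $M\subset \T^{(i)}$ of \emph{marker tiles} such that in every tiling of $\Omega^{(i)}$ the positions of markers form a combinatorially rigid, dispersed pattern: markers cannot occur adjacently along a specified direction, while the non-marker tiles cannot appear in arbitrarily long runs. Each marker is then canonically grouped with a forced neighbor to form a domino or longer block, and replacing each such block by a single abstract tile whose edges record the exposed boundary colors yields the next Wang tile set $\T^{(i+1)}$. The resulting coding is a shift-conjugacy between $\Omega^{(i+1)}$ and the set of markered tilings in $\Omega^{(i)}$, and the induced block map is exactly the substitution $\omega_i$. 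The algorithmic core, developed in the paper, is a procedure that finds such marker subsets and certifies recognizability directly from the combinatorics of $\T^{(i)}$.

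The two remaining steps require geometric preprocessing. At one stage a \emph{fault line} appears: the tilings admit a direction along which adjacent rows can be shifted independently, so no marker set suffices. I would absorb this degree of freedom by adding decorations to the affected edges, producing an equivalent Wang shift on which the marker procedure applies again. At another stage no axis-aligned marker system is compatible with the local patterns, and I would perform a \emph{shear conjugacy} given by a matrix in $\GL(2,\Z)$, rebasing the $\Z^2$-action so that markers become available in the new coordinates; both operations are conjugacies and hence preserve the dynamics. The principal obstacle, and what in fact forces $X_0$ to be a proper subshift of $\Omega_0$, is verifying recognizability at each step: some Jeandel--Rao tilings containing fault configurations admit ambiguous desubstitutions and must be discarded along the way. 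Once recognizability is established along the entire chain, minimality and aperiodicity transfer from $\Omega_\U$ to $X_0$ through the twelve conjugacies, the patch count $2+4+6+7=19$ with the announced sizes is read off from tracking block sizes through $\omega_1\circ\cdots\circ\omega_{12}$, and uniqueness of the decomposition is precisely the recognizability just proved.
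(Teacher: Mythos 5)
Your proposal follows essentially the same route as the paper: a chain of twelve steps (ten marker-based desubstitutions, one decoration step to kill fault lines, one shear conjugacy), ending at the self-similar shift $\Omega_\U$, with minimality and aperiodicity pulled back through recognizable morphisms and the 19 patches read off as the images of the tiles of $\U$. One conceptual point is misattributed, though: what forces $X_0\subsetneq\Omega_0$ is not a failure of recognizability "at each step" — every substitution $\omega_i$ in the chain is recognizable and onto up to a shift on the \emph{full} Wang shift $\Omega_i$, so no tilings are discarded there. The properness enters at exactly one place, the decoration map $\jmath:\Omega_5\to\Omega_4$, which is injective but not surjective: tilings of $\Omega_4$ obtained by sliding a half-plane along a horizontal fault line admit no consistent decoration, hence lie outside $X_4=\jmath(\Omega_5)$, and this deficit then propagates down to $X_0$ through the (still surjective-up-to-shift) maps $\omega_3,\dots,\omega_0$.
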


We show that $\Omega_0\setminus X_0\neq\varnothing$ in
Proposition~\ref{prop:nonempty} due to the presence of some horizontal fault
lines in~$\Omega_0$, but we believe that $X_0$ gives an almost complete
description of the Jeandel-Rao tilings. 
More precisely and as opposed to the minimal subshift of 
the Kari-Culik tilings \cite{MR3668002},
we think the following holds.

\begin{conjecture}\label{conj:complement-measure-zero-Omega0}
    $\Omega_0\setminus X_0$ is of measure zero
    for any shift-invariant probability measure on $\Omega_0$.
\end{conjecture}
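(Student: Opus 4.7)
The strategy is to reduce the conjecture to showing that the event ``the tiling has a horizontal fault line through the row $y=0$'' is a $\mu$-null event for every $\Z^2$-ergodic invariant probability $\mu$ on $\Omega_0$, and then to extract this from a combination of the pointwise ergodic theorem and a structural analysis of fault-compatible color words.

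First I would pin down $\Omega_0 \setminus X_0$ structurally. Following the discussion preceding Proposition~\ref{prop:nonempty}, the obstruction to belonging to $X_0$ comes from \emph{horizontal fault lines}: horizontal lines $\ell_y = \R \times \{y\}$ across which the tiling decomposes into two half-plane Jeandel--Rao tilings remaining compatible under arbitrary horizontal shear of one half. Let $F \subseteq \Omega_0$ be the Borel set of tilings with a fault line along $\ell_0$; it is invariant under horizontal shifts, and $\Omega_0 \setminus X_0 \subseteq \bigcup_{y \in \Z} \sigma_{(0,y)}^{-1} F$. By shift-invariance of $\mu$ and countable subadditivity, it suffices to prove $\mu(F) = 0$, and by the $\Z^2$-ergodic decomposition one may further assume $\mu$ ergodic.

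Next I would apply the $\Z^2$ pointwise ergodic theorem. If $p := \mu(F) > 0$, the horizontal invariance of $F$ collapses the square Birkhoff average to a vertical one, yielding
\[
\lim_{N\to\infty}\frac{1}{2N+1}\#\bigl\{|y|\le N : x\in\sigma_{(0,y)}^{-1}F\bigr\} = p
\]
for $\mu$-a.e.\ $x$. Hence $\mu$-typical Jeandel--Rao tilings would carry a positive vertical density of horizontal fault rows, each one crossing a color word admitting two independent Jeandel--Rao half-plane completions. Such a tiling is entirely determined by (i) the finite strips between successive fault rows, (ii) the fault-compatible color words bordering those strips, and (iii) the integer horizontal offsets between consecutive strips. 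The language of fault-compatible color words is sofic and can be enumerated algorithmically from the transducer presentation of $\Omega_0$ used by Jeandel and Rao. The final step would then derive a contradiction by showing that no $\Z^2$-ergodic invariant measure on $\Omega_0$ can concentrate on this ``fault-line subshift''. A natural route is to exploit the horizontal-shear degrees of freedom: each fault row allows a one-parameter family of horizontal translations of the upper half, producing a group of measurable symmetries of $\mu$ beyond the $\Z^2$-action, which should be incompatible with $\Z^2$-ergodicity. An alternative route is a quantitative patch-complexity bound: if the exponential growth rate of fault-compatible strip languages is strictly below that of $\Omega_0$, a Shannon--McMillan--Breiman comparison combined with the positive density of fault rows would force $p=0$.

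The main obstacle is this final step. Both routes require an explicit enumeration of the fault-compatible color words from the eleven Wang tiles (a finite, computer-assisted task) and a delicate dynamical or entropy-theoretic argument that remains to be formalized, since the horizontal-shear freedom could plausibly \emph{increase} rather than decrease patch complexity, so the contradiction must come from structural rigidity of $\Z^2$-invariant measures rather than from raw entropy counting. A secondary but essential verification, to be carried out before invoking Step~1, is that horizontal fault lines are genuinely the only obstruction to desubstitution along the twelve-step construction underlying $X_0$, so that the inclusion $\Omega_0\setminus X_0\subseteq\bigcup_{y}\sigma_{(0,y)}^{-1}F$ is actually an equality.
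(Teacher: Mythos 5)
First, note that the statement you are addressing is stated in the paper as Conjecture~\ref{conj:complement-measure-zero-Omega0} and is \emph{not} proved there: the paper only establishes the measure-zero property for the explicitly constructed fault-line tilings (Proposition~\ref{prop:has-measure-0}) and leaves the structural identification of $\Omega_4\setminus X_4$ with those fault-line tilings as Conjecture~\ref{conj:fault line}. Your proposal correctly isolates the same two ingredients, but it misjudges their relative difficulty. The step you call a ``secondary but essential verification'' --- that horizontal fault lines are the only obstruction, i.e.\ that $\Omega_0\setminus X_0\subseteq\bigcup_{y}\sigma_{(0,y)}^{-1}F$ --- is in fact the heart of the matter and is precisely what the author could not prove: it amounts to showing that the transducer associated to $\T_8$ is synchronized on every biinfinite word other than the constant words of $0$'s and $1$'s, and the paper reports only partial results in that direction. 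Without this inclusion your reduction to $\mu(F)=0$ never gets started, so this gap is fatal to the proposal as a proof.

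Conversely, the step you treat as the main obstacle --- showing $\mu(F)=0$ --- admits a much simpler argument than either of your two speculative routes, and it is the one used in Proposition~\ref{prop:has-measure-0}: once one knows that each tiling in the fault-line family has \emph{at most one} horizontal fault line (Lemma~\ref{lem:rows-of-156-81516} supplies this for the relevant tilings), the vertical translates $\sigma^{k\be_2}\left(Y_i^-\odot^2 Y_i^+\right)$ are pairwise disjoint and all have the same measure, which forces that measure to be $0$ by countable additivity in a probability space. No ergodic decomposition, pointwise ergodic theorem, entropy comparison, or shear-symmetry rigidity is needed. Your ergodic-theorem route also has an internal gap: from $\mu(F)>0$ you deduce that a typical tiling carries a positive vertical density of fault rows, but you never exhibit the contradiction --- and the contradiction you need is exactly the ``at most one fault line per tiling'' statement, which you would then have to prove anyway. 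In short, the proposal identifies the correct objects but leaves open both the genuinely hard combinatorial step (fault lines are the only obstruction to desubstitution) and the comparatively easy measure-theoretic step (which it replaces by two unformalized and unnecessarily heavy strategies); the conjecture remains unproved.
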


\begin{figure}[h]
\begin{center}
\includegraphics[width=.75\linewidth]{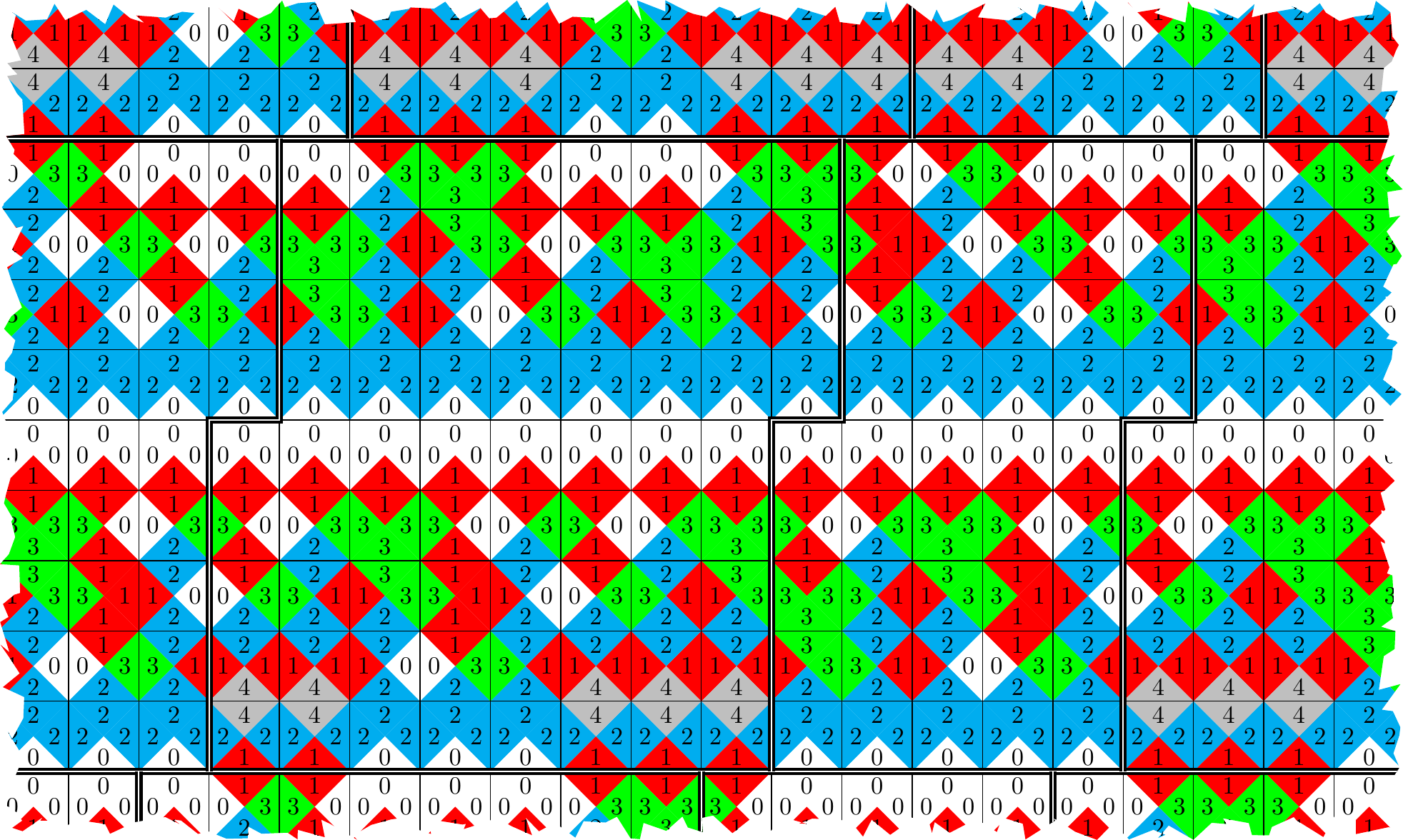}
\end{center}
\caption{A finite part of a Jeandel-Rao aperiodic tiling in $\Omega_0$. Any
    tiling in the minimal subshift $X_0$ of $\Omega_0$ that we describe can be
    decomposed uniquely into 19 supertiles (two of size 45, 
    four of size 70,
    six of size 72
    and seven of size 112). The thick black lines show the
    contour of the supertiles. The figure illustrates two complete supertiles
    of size 72 and 45, respectively. While Jeandel-Rao set of tiles $\T_0$ is not self-similar,
    the 19 supertiles are self-similar.  
    }
\label{fig:jeandel-rao-some-tiling}
\end{figure}

The decomposition of a tiling in $X_0\subset\Omega_0$ into 19 distinct patches
is illustrated in Figure~\ref{fig:jeandel-rao-some-tiling}.
All of the 19 patches and their link with the Wang tiles in $\U$ are shown in
Figure~\ref{fig:omega-12-to-0} in the appendix.
The 19 patches are complicated objects, but we provide a systematic construction
of them by means of a product of substitutions where each substitution 
(or $2$-dimensional morphism)
represents an elementary step in the procedure. The substitutions
are computed automatically using the same algorithms at each step.
There are a dozens of such elementary steps between Jeandel-Rao tilings
$X_0\subset\Omega_0$ and $\Omega_\U$ that are illustrated in
Figure~\ref{fig:subs-structure}.

\begin{figure}[h]
\begin{center}
\begin{tikzpicture}[auto,scale=1.2,every node/.style={scale=0.9}]
    \node (0) at (0,2) {$\Omega_0$};
    \node (1) at (1,2) {$\Omega_1$};
    \node (2) at (2,2) {$\Omega_2$};
    \node (3) at (3,2) {$\Omega_3$};
    \node (4) at (4,2) {$\Omega_4$};
    \node (X0) at (0,1) {$X_0$};
    \node (X1) at (1,1) {$X_1$};
    \node (X2) at (2,1) {$X_2$};
    \node (X3) at (3,1) {$X_3$};
    \node (X4) at (4,1) {$X_4$};
    \node (5) at (5,1) {$\Omega_5$};
    \node (6) at (6,1) {$\Omega_6$};
    \node (7) at (7,1) {$\Omega_7$};
    \node (8) at (8,1) {$\Omega_8$};
    \node (9) at (9,1) {$\Omega_9$};
    \node (10) at (10,1) {$\Omega_{10}$};
    \node (11) at (11,1) {$\Omega_{11}$};
    \node (12) at (12,1) {$\Omega_{12}$};
    \node (13) at (13,1) {$\Omega_\U$};

    \draw (X0) edge[draw=none] node[auto=false,sloped] {$\subset$} (0);
    \draw (X1) edge[draw=none] node[auto=false,sloped] {$\supset$} (1);
    \draw (X2) edge[draw=none] node[auto=false,sloped] {$\supset$} (2);
    \draw (X3) edge[draw=none] node[auto=false,sloped] {$\supset$} (3);
    \draw (X4) edge[draw=none] node[auto=false,sloped] {$\supset$} (4);
    \draw[to-,very thick] (X0) to node {$\omega_0$} (X1);
    \draw[to-,very thick] (X1) to node {$\omega_1$} (X2);
    \draw[to-,very thick] (X2) to node {$\omega_2$} (X3);
    \draw[to-,very thick] (X3) to node {$\omega_3$} (X4);
    \draw[to-,very thick] (0) to node {$\omega_0$} (1);
    \draw[to-,very thick] (1) to node {$\omega_1$} (2);
    \draw[to-,very thick] (2) to node {$\omega_2$} (3);
    \draw[to-,very thick] (3) to node {$\omega_3$} (4);
    \draw[to-,very thick] (X4) to node {$\jmath$} (5);
    \draw[to-,very thick] (5) to node {$\eta$} (6);
    \draw[to-,very thick] (6) to node {$\omega_6$} (7);
    \draw[to-,very thick] (7) to node {$\omega_7$} (8);
    \draw[to-,very thick] (8) to node {$\omega_8$} (9);
    \draw[to-,very thick] (9) to node {$\omega_9$} (10);
    \draw[to-,very thick] (10) to node {$\omega_{10}$} (11);
    \draw[to-,very thick] (11) to node {$\omega_{11}$} (12);
    \draw[to-,very thick] (12) to node {$\rho$} (13);
    \draw[to-,very thick] (13) to [loop above] node {$\omega_\U$} (13);
\end{tikzpicture}
\end{center}
    \caption{Substitutive structure of Jeandel-Rao aperiodic Wang shift
    $\Omega_0$ and its minimal subshift $X_0$ leading to the self-similar
    aperiodic and minimal Wang shift $\Omega_\U$ introduced in
    \cite{MR3978536}.}
    \label{fig:subs-structure}
\end{figure}

The substitutive structure of Jeandel-Rao tilings
is given by 
a sequence of sets of Wang tiles $\{\T_{i}\}_{1\leq i\leq 12}$
together with their associated Wang shifts $\{\Omega_{i}\}_{1\leq i\leq 12}$.
The cardinality of the involved sets of tiles is in the table below.
\[
\begin{array}{l|cccccccccccccc}
\text{Set of tiles} & \T_0 &\T_1 &\T_2 &\T_3 &\T_4 &\T_5 &\T_6 &\T_7 
    &\T_8 &\T_9 &\T_{10} &\T_{11} &\T_{12} & \U\\
\hline
\text{Cardinality} & 11 & 13 & 20 & 24 & 28 & 29 & 29 & 20 & 20 & 22 & 18 & 21
    & 19 & 19
\end{array}
\]
The morphism $\omega_0\,\omega_1\,\omega_2\,\omega_3 :\Omega_4\to\Omega_0$ is
shown in Figure~\ref{fig:omega-4-to-0}.
The map $\eta\,\omega_6\,\omega_7\,\omega_8\,
\omega_9\,\omega_{10}\,\omega_{11}\,\rho:\Omega_\U\to\Omega_5$ is shown in
Figure~\ref{fig:omega-12-to-5}.
The map $\omega_0\,\omega_1\,\omega_2\,\omega_3\,\jmath\,
\eta\,\omega_6\,\omega_7\,\omega_8\,
\omega_9\,\omega_{10}\,\omega_{11}\,\rho:\Omega_\U\to\Omega_0$
is the map which is shown in Figure~\ref{fig:omega-12-to-0}.
The map $\jmath:\Omega_5\to\Omega_4$ is not onto but its image 
$X_4=\jmath(\Omega_5)$ has an important role in the comprehension of
Jeandel-Rao aperiodic tilings as it leads to the minimal subshift $X_0$.
The fact that each tiling in $X_0$ can be decomposed uniquely
into this procedure follows from the fact that each morphism $\omega_i$
with $0\leq i\leq 3$
or $6\leq i\leq 11$
is recognizable and onto up to a shift,
and both $\jmath:\Omega_5\to X_4$ and
$\eta:\Omega_6\to\Omega_5$ are one-to-one and onto.
The term \emph{recognizable morphism} essentially means that the morphism is
one-to-one up to a shift and its precise definition can be found in 
Section~\ref{sec:recognizability}.

The construction of the morphisms $\omega_i$ is inspired from a well-known method
to study self-similar aperiodic tilings.
One way to prove that a Wang shift is aperiodic is to use the \emph{unique
composition property} \cite{MR1637896} also known as
\emph{composition-decomposition method} \cite{MR3136260}
(see Proposition~\ref{prop:expansive-recognizable-aperiodic}).
That method was used in \cite{MR3978536}
to show that $\Omega_\U$ is self-similar, minimal and aperiodic
based on the notion of marker tiles and recognizability.
The reader may refer to Section~\ref{sec:self-similar}
for the definition of self-similarity.

The same method can be used in the context of Wang shifts that are not
self-similar. The idea is to prove that a Wang shift is similar to another one
which is known to be aperiodic (see
Lemma~\ref{lem:aperiodic-implies-aperiodic}).
This reminds of \cite{MR4015135}
where the authors study the recognizability for sequences of morphisms
in the theory of $S$-adic systems on $\Z$ \cite{MR3330561}.
Note that applying a sequence of recognizable substitutions in the context of
hierarchical tilings of $\R^d$ was also considered in
\cite{MR3226791,frank_introduction_2018}.

Due to the fact that we have many steps to perform to understand Jeandel-Rao tilings, we
improve the method used in \cite{MR3978536} 
which,
based on the existence of marker tiles $M\subset\T$
among a set of Wang tiles $\T$, proved the existence of 
a set of Wang tiles $\Scal$ and a
recognizable $2$-dimensional morphism $\Omega_\Scal\to\Omega_\T$ that is onto up to
a shift. 
The improvement is that we provide two algorithms.
Algorithm~\ref{alg:find-markers} 
finds a set of marker tiles $M\subset\T$ (if it exists)
from a set of Wang
tiles $\T$ and a given surrounding radius to consider (the surrounding radius
input is necessary since otherwise it is undecidable).
Algorithm~\ref{alg:find-recognizable-sub-from-markers}
computes the set of Wang tiles $\Scal$ and the recognizable
$2$-dimensional morphism $\Omega_\Scal\to\Omega_\T$ from a set of markers $M\subset\T$.
The morphism is of the form
$\square\mapsto\square,\square\mapsto\dominoV{none}{none}{}$
or of the form $\square\mapsto\square,\square\mapsto\dominoH{none}{none}{}$
mapping each tile from $\Scal$ on a tile in $\T$ or on a domino of two tiles in
$\T$.

We use these two algorithms to describe the substitutive structure of the Wang
tilings $\Omega_0$ made with the tiles from Jeandel-Rao set of tiles $\T_0$.
It turns out that Jeandel-Rao tiles has a subset of markers which allow
to desubstitute uniquely any Jeandel-Rao tiling.  
In other words, the two algorithms prove the existence of a set of Wang tiles
$\T_1$, its Wang shift $\Omega_1=\Omega_{\T_1}$ and
a recognizable $2$-dimensional
morphism $\omega_0:\Omega_1\to\Omega_0$ that is onto up to a shift.  
We reapply the same method
again and again and again on the resulting tiles 
to get sets of Wang tiles $\T_2$, $\T_3$ and $\T_4$.

The two algorithms stop to work for the set of Wang tiles $\T_4$ since it has
no markers.
The first reason is that $\Omega_4$ has fault lines as it is the case for Robinson tilings
\cite{MR0297572}. These fault lines are also present in $\Omega_0$ as there
exists tilings in $\Omega_0$ with a biinfinite horizontal sequence of tiles all
of them with color 0 on the bottom edge.
Such a horizontal fault line of 0's can be seen in 
    Figure~\ref{fig:jeandel-rao-some-tiling}. Translating to the left or to the
    right the upper-half tiling above the fault line preserves the validity of 
    the Wang tiling but breaks the 19 patterns obtained from the substitutive
    structure.
    This implies that not all tilings in $\Omega_0$ and in $\Omega_4$ are the
    image of tilings in $\Omega_\U$ by a sequence of substitutions. 
We deal with this issue as done in \cite{GAHLER2012627}
by creating a new set of tiles $\T_5$ which adds decorations on tiles $\T_4$
and a map $\jmath:\T_5\to\T_4$ which removes the decorations.
We prove that $\jmath$ is one-to-one on tilings $\Omega_5$.
The advantage is that the Wang shift $\Omega_5=\Omega_{\T_5}$ has no fault
lines.

The set of Wang tiles $\T_5$ has no markers neither and the reason is that tilings in $\Omega_5$ admits diagonal markers instead of marker tiles appearing on rows or columns.
So we introduce a new set of tiles $\T_6$ such that tilings in $\Omega_6$
are tilings in $\Omega_5$ sheared by the action of the matrix
$\left(\begin{smallmatrix}
        1 & 1 \\
        0 & 1
\end{smallmatrix}\right)^{-1}$.

The set of Wang tiles $\T_6$ has markers, thus we can reapply the two algorithms
again. We get a new set of Wang tiles $\T_7$ and a recognizable morphism
$\Omega_7\to\Omega_6$ that is onto up to a shift.
We apply this method five more times to get the set of Wang tiles $\T_{12}$ and its
Wang shift $\Omega_{12}$.
We prove that the Wang shift $\Omega_{12}$ is equivalent to
$\Omega_\U$ where $\U$ is a set of 19 Wang tiles which was
introduced in \cite{MR3978536}.
It was proved that the Wang shift $\Omega_\U$
is self-similar, aperiodic and minimal
as there exists an expansive and primitive morphism
$\omega_\U:\Omega_\U\to\Omega_\U$ that is
recognizable and onto up to a shift.

The statement of the main result of this contribution on the substitutive
structure of aperiodic Jeandel-Rao Wang tilings is below
in which we use the following notation
\begin{equation*}
    {\overline{X}}^{\sigma} 
    = \bigcup_{\bk\in\Z^2}\sigma^\bk X
    = \bigcup_{\bk\in\Z^2}\{\sigma^\bk(x) \mid x\in X\}
\end{equation*}
for the closure of a set $X\subset\A^{\Z^2}$ under the shift $\sigma$.

\begin{theorem}\label{thm:introduction}
    Let $\Omega_0$ be the Jeandel-Rao Wang shift.
There exist sets of Wang tiles $\{\T_{i}\}_{1\leq i\leq 12}$
together with their associated Wang shifts $\{\Omega_{i}\}_{1\leq i\leq 12}$
that provide the substitutive structure of Jeandel-Rao tilings.
More precisely,
\begin{enumerate}[\rm (i)]
    \item
    There exists a sequence of recognizable $2$-dimensional morphisms:
    \begin{equation*}
        \Omega_0 \xleftarrow{\omega_0}
        \Omega_1 \xleftarrow{\omega_1}
        \Omega_2 \xleftarrow{\omega_2}
        \Omega_3 \xleftarrow{\omega_3}
        \Omega_4
    \end{equation*}
    that are onto up to a shift, i.e.,
    $\overline{\omega_i(\Omega_{i+1})}^\sigma=\Omega_{i}$
    for each $i\in\{0,1,2,3\}$.
\item There exists an embedding $\jmath:\Omega_5\to\Omega_4$ which is a
    topological conjugacy onto its image.
\item 
There exists a shear conjugacy
$\eta:\Omega_6\to\Omega_5$
which shears Wang tilings by the action of the matrix
$\left(\begin{smallmatrix}
        1 & 1 \\
        0 & 1
\end{smallmatrix}\right)$.

    \item
    There exists a sequence of recognizable $2$-dimensional morphisms:
    \begin{equation*}
        \Omega_6 \xleftarrow{\omega_6}
        \Omega_7 \xleftarrow{\omega_7}
        \Omega_8 \xleftarrow{\omega_8}
        \Omega_9 \xleftarrow{\omega_9}
        \Omega_{10} \xleftarrow{\omega_{10}}
        \Omega_{11} \xleftarrow{\omega_{11}}
        \Omega_{12} 
    \end{equation*}
    that are onto up to a shift, i.e.,
    $\overline{\omega_i(\Omega_{i+1})}^\sigma=\Omega_{i}$
    for each $i\in\{6,7,8,9,10,11\}$.

\item The Wang shift $\Omega_{12}$ is equivalent to $\Omega_\U$, thus is
    self-similar, aperiodic and minimal.
\end{enumerate}
\end{theorem}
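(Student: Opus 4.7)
The plan is to treat the theorem as an output of the two algorithms announced in the introduction, applied in sequence, with two non-algorithmic insertions (\textit{viz.}\ the decoration map $\jmath$ and the shear conjugacy $\eta$) to overcome obstructions at step~$4$ and step~$5$. First I would set up the general lemma I will reuse ten times: if $M\subset\T$ is a set of marker tiles for $\Omega_\T$ with a sufficiently large surrounding radius, then Algorithm~\ref{alg:find-recognizable-sub-from-markers} produces a set of Wang tiles $\Scal$, whose elements correspond either to tiles of $\T\setminus M$ or to horizontal/vertical dominoes made of one tile in $M$ and one tile in $\T\setminus M$, together with a $2$-dimensional morphism $\omega:\Omega_\Scal\to\Omega_\T$ which is recognizable and satisfies $\overline{\omega(\Omega_\Scal)}^{\sigma}=\Omega_\T$ (here the general argument of recognizability from the marker structure, given in the recognizability section, is what I would invoke).

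With this lemma in hand, part (i) reduces to running Algorithm~\ref{alg:find-markers} on $\T_0$ to exhibit $M_0\subset\T_0$, then feeding $(\T_0,M_0)$ to Algorithm~\ref{alg:find-recognizable-sub-from-markers} to obtain $(\T_1,\omega_0)$ with $|\T_1|=13$; then iterating on $\T_1,\T_2,\T_3$ to produce $\omega_1,\omega_2,\omega_3$ with the announced cardinalities $20,24,28$. Each application is a finite verification carried out by the algorithms, and the ``onto up to a shift'' property and recognizability are guaranteed by the general lemma. For part (iv) the same mechanism is applied six more times starting from $\T_6$, producing $\T_7,\dots,\T_{12}$ with cardinalities $20,20,22,18,21,19$; the only new content is the finite check that markers exist at each stage, which the algorithm performs.

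The non-algorithmic core of the argument sits in (ii) and (iii). For (ii), I would first explain why Algorithm~\ref{alg:find-markers} fails on $\T_4$: the fault lines inherited from $\Omega_0$ (the biinfinite horizontal row of $0$'s visible in Figure~\ref{fig:jeandel-rao-some-tiling}) give rise to tilings in $\Omega_4$ that admit arbitrary horizontal translations of the two half-planes and therefore cannot be desubstituted uniquely, precluding markers. I would then define $\T_5$ by decorating the tiles of $\T_4$ so that a distinguished diagonal coloring is forced across each would-be fault, and define the letter-to-letter map $\jmath:\T_5\to\T_4$ that forgets the decoration. The key verification is: (a) $\jmath$ extended to $\Omega_5\to\Omega_4$ is continuous, shift-equivariant, and injective (the decorations are reconstructible from the underlying $\T_4$-tiling once one fixes a consistent diagonal), and (b) $\jmath(\Omega_5)$ is closed in $\Omega_4$, so $\jmath$ is a topological conjugacy onto its image $X_4$. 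For (iii), the set $\T_5$ has no horizontal or vertical marker but, after unshearing by $\bigl(\begin{smallmatrix}1&1\\0&1\end{smallmatrix}\bigr)^{-1}$, the resulting markers become aligned with the canonical $\Z^2$-basis; I would define $\T_6$ to be the set of Wang tiles obtained by the shear conjugacy algorithm and $\eta$ the resulting change-of-base map, and check that $\eta$ is a bijective, bi-continuous $\Z^2$-equivariant-up-to-$\GL_2(\Z)$ map so that it qualifies as a shear conjugacy in the sense of the dedicated section.

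For (v) I would compare $\T_{12}$ with the 19-tile set $\U$ of \cite{MR3978536}. The cardinalities already agree, so the task is to exhibit an explicit bijection $\rho:\U\to\T_{12}$ respecting horizontal and vertical color-matching relations; this amounts to a finite check that the two labeled adjacency relations coincide up to relabeling of edge colors, at which point $\Omega_{12}$ and $\Omega_\U$ are literally the same Wang shift, and the self-similarity, minimality and aperiodicity of $\Omega_\U$ established in \cite{MR3978536} transfer verbatim. The main obstacle is undoubtedly (ii)--(iii): finding the right decoration $\T_5$ and the right shear so that markers reappear is the only non-mechanical part of the construction; everything else is a certified computation performed by the two algorithms.
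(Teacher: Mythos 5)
Your proposal follows the same overall route as the paper: parts (i) and (iv) are delegated to the marker-finding and desubstitution algorithms (Theorem~\ref{thm:exist-homeo} playing the role of your reusable lemma), part (ii) to the decoration map $\jmath$, part (iii) to the shear conjugacy $\eta$, and part (v) to an explicit color bijection with $\U$. In that sense the architecture is right.

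There is, however, one concrete gap in part (i). You assert that iterating the algorithm on $\T_3$ produces $\omega_3$ and a tile set of cardinality $28$. It does not: Algorithm~\ref{alg:find-recognizable-sub-from-markers} applied to $\T_3$ outputs a morphism $\omega_3'$ and a set $\T_4'$ of \emph{30} tiles. Two of those tiles --- the ones corresponding to the transitions $21330\xrightarrow{0|0}23310\xrightarrow{0|1}21103$ --- never occur in any tiling of $\Omega_4$, and removing them is \emph{not} something the marker machinery certifies. It requires a separate and much harder verification (Proposition~\ref{prop:rao-impossible-52}): one must show that no $71\times 9$ rectangle tiled by $\T_4'$ can contain the offending tile at its center, a fact established only by a SAT/MILP computation, and which cannot be detected by small surroundings (valid surroundings of height $9$ and width $14$ exist, cf.\ Figure~\ref{fig:surrounding}). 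Only after this does one set $\T_4=\T_4'\setminus\{\cdot,\cdot\}$ and $\omega_3=\omega_3'\circ\iota$ (Corollary~\ref{cor:can-remove-2-tile}), and only the $28$-tile set admits the decoration $\T_5$ with $29$ tiles used in part (ii). Your proof as written silently skips this step, and everything downstream of it depends on it. The rest of the proposal --- in particular the injectivity of $\jmath$ via reconstructibility of the decorations and the well-definedness of $\eta$ --- matches the paper's arguments in substance.
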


This result has the following consequences.

\begin{corollary}\label{cor:aperiodic-minimal-5-12}
    $\Omega_i$ is aperiodic and minimal
    for every $i$ with $5\leq i\leq 12$.
\end{corollary}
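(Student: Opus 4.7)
The plan is to run a downward induction starting from the base case $\Omega_{12}$ and propagate aperiodicity and minimality through the maps $\omega_{11},\omega_{10},\dots,\omega_6,\eta$ listed in Theorem~\ref{thm:introduction}. The base case is immediate: by Theorem~\ref{thm:introduction}(v), $\Omega_{12}$ is equivalent to $\Omega_\U$, and $\Omega_\U$ is known from \cite{MR3978536} to be self-similar, aperiodic, and minimal. Topological conjugacies preserve both aperiodicity and minimality, so $\Omega_{12}$ inherits these properties.

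For the inductive step through the substitutions, I would fix $i\in\{6,7,8,9,10,11\}$ and assume $\Omega_{i+1}$ is aperiodic and minimal. Aperiodicity of $\Omega_i$ follows from Lemma~\ref{lem:aperiodic-implies-aperiodic}, which is precisely designed to say that a Wang shift admitting an aperiodic preimage under a recognizable morphism that is onto up to a shift is itself aperiodic; since $\omega_i:\Omega_{i+1}\to\Omega_i$ satisfies the hypotheses by Theorem~\ref{thm:introduction}(iv), this applies directly. For minimality, I would argue as follows: pick any $y\in\Omega_i$. Since $\Omega_i=\overline{\omega_i(\Omega_{i+1})}^{\sigma}$, there exist $x\in\Omega_{i+1}$ and $\bk\in\Z^2$ (or a limit of such) with $y=\sigma^\bk\omega_i(x)$. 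The $\sigma$-orbit closure of $x$ in $\Omega_{i+1}$ is all of $\Omega_{i+1}$ by minimality; continuity of $\omega_i$ and the fact that it commutes with the shift then imply that the $\sigma$-orbit closure of $y$ contains $\sigma^{\Z^2}\omega_i(\Omega_{i+1})$, whose closure is $\Omega_i$. Thus every point of $\Omega_i$ has dense orbit, which is minimality.

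Iterating this argument six times, starting from $\Omega_{12}$, gives aperiodicity and minimality for $\Omega_{11},\Omega_{10},\Omega_9,\Omega_8,\Omega_7,\Omega_6$. It then remains to handle $\Omega_5$. For this I would invoke Theorem~\ref{thm:introduction}(iii): $\eta:\Omega_6\to\Omega_5$ is a shear conjugacy, i.e., a homeomorphism that intertwines the two $\Z^2$-actions through the $\GL_2(\Z)$ matrix $\bigl(\begin{smallmatrix}1&1\\0&1\end{smallmatrix}\bigr)$. Since this matrix is an automorphism of $\Z^2$, a nontrivial period of a tiling in $\Omega_5$ would pull back to a nontrivial period in $\Omega_6$, so aperiodicity transfers; and since $\eta$ is a homeomorphism whose action on the shift group is by an automorphism, minimality transfers as well.

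The only delicate point is the minimality transfer through $\omega_i$, because "onto up to a shift" is a weaker hypothesis than "factor map" in the traditional sense. The main obstacle therefore lies in verifying that the shift-orbit closure of the image $\omega_i(\Omega_{i+1})$ really fills $\Omega_i$ densely enough to carry minimality forward; given the precise definition of $\overline{\,\cdot\,}^{\sigma}$ from the excerpt and the continuity and shift-equivariance of the $2$-dimensional morphisms $\omega_i$, this amounts to unwinding the definitions, so the corollary follows cleanly from the deep results already established in Theorem~\ref{thm:introduction}.
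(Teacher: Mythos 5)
Your proof is correct and follows essentially the same route as the paper: establish the base case $\Omega_{12}\cong\Omega_\U$, propagate aperiodicity down the chain via Lemma~\ref{lem:aperiodic-implies-aperiodic}, propagate minimality via the orbit-closure argument (which is exactly the content of Lemma~\ref{lem:minimal-implies-minimal}, available to cite directly), and finish with the shear conjugacy $\eta$ for $\Omega_5$. The only cosmetic difference is that you unpack the minimality-transfer lemma inline and treat $\eta$ by a direct $\GL_2(\Z)$-automorphism argument rather than by noting that a shear conjugacy is in particular recognizable and onto up to a shift, so the same two lemmas apply to it as well.
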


Let $X_{4}=\jmath(\Omega_{5})$ be the image of the embedding $\jmath$,
as well as
$X_3=\overline{\omega_{3}(X_{4})}^\sigma$,
$X_2=\overline{\omega_{2}(X_{3})}^\sigma$,
$X_1=\overline{\omega_{1}(X_{2})}^\sigma$ and
$X_0=\overline{\omega_{0}(X_{1})}^\sigma$.

\begin{corollary}\label{cor:aperiodic-minimal-0-4}
    $X_i\subseteq\Omega_i$ is an aperiodic and minimal subshift of $\Omega_i$
    for every $i$ with $0\leq i\leq 4$.
\end{corollary}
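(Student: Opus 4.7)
The plan is to argue by downward induction on $i$, from $i=4$ down to $i=0$, using Corollary~\ref{cor:aperiodic-minimal-5-12} as the starting input. The base case $i=4$ is immediate: by that corollary, $\Omega_5$ is aperiodic and minimal, and by Theorem~\ref{thm:introduction}(ii) the map $\jmath:\Omega_5\to X_4$ is a topological conjugacy onto its image $X_4$. Both aperiodicity (a period in $X_4$ would pull back through $\jmath^{-1}$ to a period in $\Omega_5$) and minimality (topological conjugacies preserve dense orbits) transfer verbatim to $X_4\subseteq\Omega_4$.

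For the inductive step, I would assume that $X_{i+1}\subseteq\Omega_{i+1}$ is aperiodic and minimal with $i\in\{0,1,2,3\}$, and deduce the same for $X_i=\overline{\omega_i(X_{i+1})}^\sigma$. Minimality is the easier direction: $\omega_i$ is continuous and its combinatorial action on patches is prescribed by the $1\times 1$/domino substitution rule stated just before Theorem~\ref{thm:introduction}. Every patch occurring in $X_i$ appears inside some $\sigma^{\bk}\omega_i(y)$ with $y\in X_{i+1}$, and the uniform recurrence of patches in the minimal system $X_{i+1}$ pushes forward through $\omega_i$ and the shift to give uniform recurrence of patches in $X_i$; this is a standard factor-style argument for subshifts and yields minimality.

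For aperiodicity, I would first establish as an auxiliary lemma that $\bigcup_{\bk\in\Z^2}\sigma^{\bk}\omega_i(X_{i+1})$ is already closed, using compactness of $X_{i+1}$ and recognizability of $\omega_i$ (recognizability lets one locate the substitution grid inside any limit tiling and read off the preimage tile by tile). Having done this, suppose $x\in X_i$ satisfies $\sigma^{\bp}x=x$ with $\bp\neq\zero$; write $x=\sigma^{\bk}\omega_i(y)$ for some $y\in X_{i+1}$ and $\bk\in\Z^2$. Then $\sigma^{\bp}\omega_i(y)=\omega_i(y)$, and recognizability of $\omega_i$ (see Section~\ref{sec:recognizability}) forces $\sigma^{\bq}y=y$ for a $\bq\in\Z^2$ uniquely determined by $\bp$. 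Because each $\T_{i+1}$-tile maps under $\omega_i$ to a $\T_i$-tile or a domino, the substitution is genuinely expanding in at least one coordinate direction, so $\bq\neq\zero$, contradicting the aperiodicity of $X_{i+1}$.

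The main obstacle will be making the period descent $\bp\mapsto\bq$ quantitatively precise: one must rule out that $\bp$ is some degenerate shift sending each polyomino of $\omega_i(y)$ to itself without moving the underlying $y$-grid. I would handle this by separating two cases according to whether $\omega_i$ has the $1\times 1$/$2\times 1$ form or the $1\times 1$/$1\times 2$ form, and in each case extract a concrete formula relating $\bp$ and $\bq$ from the marker-based recognizability of Algorithms~\ref{alg:find-markers} and~\ref{alg:find-recognizable-sub-from-markers}. Once the auxiliary closedness lemma and the quantitative period-descent lemma are packaged as general statements about recognizable morphisms of Wang shifts, the corollary follows by a clean four-step induction invoking them for $\omega_3,\omega_2,\omega_1,\omega_0$ in turn.
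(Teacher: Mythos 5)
Your route is the same as the paper's: establish $X_4$ via the conjugacy $\jmath$, then push aperiodicity and minimality down the chain $X_4\to X_3\to\cdots\to X_0$ using recognizability of each $\omega_i$. The difference is only in packaging: the two auxiliary statements you propose to prove are already available in the paper as Lemma~\ref{lem:minimal-implies-minimal} (minimality passes through any $d$-dimensional morphism) and Lemma~\ref{lem:aperiodic-implies-aperiodic} (aperiodicity passes through a recognizable morphism), so the paper's proof of the corollary is just a citation of these together with the recognizability facts from Proposition~\ref{prop:from-jeandel-rao-0-to-4}, Corollary~\ref{cor:can-remove-2-tile} and Proposition~\ref{prop:jmath-5-to-4-is-one-to-one-continuous}. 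Your closedness remark is legitimate but routine: any $\sigma^{\bk}\omega_i(y)$ can be re-centered, so $\overline{\omega_i(X_{i+1})}^\sigma$ is a finite union of continuous images of the compact set $X_{i+1}$.

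The one place where you are making the argument harder than it is is the ``main obstacle'' of the period descent. No case analysis on the shape of the substitution and no quantitative formula relating $\bp$ and $\bq$ is needed. If $\sigma^{\bp}x=x$ with $x=\sigma^{\bk}\omega_i(y)$ and $(\bk,y)$ centered, then $(\bp+\bk,y)$ is another $\omega_i$-representation of $x$; the degenerate case you worry about is exactly the case where $x_\zero$ still lies in the image of $y_\zero$ under this new representation, i.e.\ the case where $(\bp+\bk,y)$ is itself centered --- and then uniqueness of centered representations forces $\bp+\bk=\bk$, hence $\bp=\zero$, a contradiction. Otherwise $x_\zero$ lies in the image of $y_{\bq}$ for some $\bq\neq\zero$, re-centering gives a centered representation $(\bk',\sigma^{\bq}y)$, and recognizability yields $y=\sigma^{\bq}y$, contradicting aperiodicity of $X_{i+1}$. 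This is precisely the proof of Lemma~\ref{lem:aperiodic-implies-aperiodic}; the expansion properties of $\omega_i$ play no role.
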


\subsection*{Structure of the article}
After the introduction and the preliminaries,
the article is structured into four parts.
In Part~\ref{part:1},
we recall the definition of markers, the
desubstitution of Wang shifts and we propose algorithms to compute them
and
we desubstitute Jeandel-Rao tilings from $\Omega_0$ to $\Omega_4$.
In Part~\ref{part:2},
we construct the set of tiles $\T_5$ by adding decorations to tiles $\T_4$ to avoid
fault lines in tilings of $\Omega_4$, we prove that
$\jmath:\Omega_5\to\Omega_4$ is an embedding and
we construct the shear conjugacy $\eta:\Omega_6\to\Omega_5$.
In Part~\ref{part:3}
we desubstitute tilings from $\Omega_6$ to $\Omega_{12}$
and
we prove the main results.
In Part~\ref{part:4},
we examine the set $\Omega_4\setminus X_4$
and show that Jeandel-Rao tilings are not minimal.


\subsection*{Code}

All computations based on SageMath \cite{sagemathv8.9} with \texttt{slabbe}
optional package \cite{labbe_slabbe_0_6_2019} 
associated to this contribution are gathered in a SageMath Jupyter
notebook \cite{labbe_ipynb_arxiv_1808_07768_v4}.

\subsection*{Acknowledgments}

I want to thank Vincent Delecroix for many helpful discussions at LaBRI in
Bordeaux during the preparation of this article including
some hints on how to prove Proposition~\ref{prop:has-measure-0}.
I am thankful to Michaël Rao for providing me the first proof of
Proposition~\ref{prop:rao-impossible-52}.
This contribution would not have been possible without
the Gurobi linear program solver \cite{gurobi} which was very helpful in
solving many instances of tiling problems in seconds (instead of minutes or
hours) but it turns out that Knuth's dancing links algorithm
\cite{knuth_dancing_2000} performs well to find markers.

I acknowledge financial support from the Laboratoire International
Franco-Québécois de Recherche en Combinatoire (LIRCO), the 
Agence Nationale de la Recherche Agence Nationale de la Recherche through the
project CODYS (ANR-18-CE40-0007) and the Horizon  2020  European  Research
Infrastructure  project  OpenDreamKit (676541).

I am very thankful to the anonymous referees for their in-depth reading and
valuable comments from which I learned and leaded to a great improvement of the
presentation.
The revision of the article have benefited from the support of
Erwin Schrödinger International Institute
for Mathematics and Physics
during my stay at the conference Numeration 2019 (Vienna).

%

\section{Preliminaries on Wang tilings and $d$-dimensional words}
\label{sec:preliminaries}

In this section, we introduce 
subshifts, shifts of finite type,
Wang tiles,
fusion of Wang tiles,
the transducer representation of Wang tiles,
$d$-dimensional words, morphisms, and languages.
It is mostly the same as the preliminary section of
\cite{MR3978536}.

We denote by $\Z=\{\dots,-1,0,1,2,\dots\}$ the integers and
by $\N=\{0,1,2,\dots\}$ the nonnegative integers.

\subsection{Subshifts and shifts of finite type}


We follow the notations of \cite{MR1861953}.
Let $\A$ be a finite set, $d\geq 1$, and let $\A^{\Z^d}$ be the set of all maps
$x:\Z^d\to\A$, furnished with the 
product topology of the discrete topology on $\A$.

We write a typical point $x\in\A^{\Z^d}$ as $x=(x_\bm)=(x_\bm:\bm\in\Z^d)$,
where $x_\bm\in\A$ denotes the value of $x$ at $\bm$. 
The topology $\A^{\Z^d}$ is compatible with the metric $\delta$ defined for all
$x,x'\in\A^{\Z^d}$ by $\delta(x,x')=2^{-\min\left\{|\bn|\,:\,
x_\bn\neq x'_\bn\right\}}$.
The \emph{shift action} $\sigma:\bn\mapsto
\sigma^\bn$ of $\Z^d$ on $\A^{\Z^d}$ is defined by
\begin{equation}\label{eq:shift-action}
    (\sigma^\bn(x))_\bm = x_{\bm+\bn}
\end{equation}
for every $x=(x_\bm)\in\A^{\Z^d}$ and $\bn\in\Z^d$. A subset $X\subset
\A^{\Z^d}$ is \emph{shift-invariant} if $\sigma^\bn(X)=X$ for every
$\bn\in\Z^d$, and a closed, shift-invariant subset $X\subset\A^{\Z^d}$ is a
\emph{subshift}. 
If $X\subset\A^{\Z^d}$ is a subshift we write
$\sigma$ for the restriction of the shift-action
\eqref{eq:shift-action} to $X$. 
If $X\subset\A^{\Z^d}$ is a subshift it will sometimes be helpful to specify the
shift-action of $\Z^d$ explicitly and to write $(X,\sigma)$ instead of $X$.
A subshift $(X,\sigma)$ is called \emph{minimal} if $X$ does not
contain any nonempty, proper, closed shift-invariant subset.

For any subset $S\subset\Z^d$ we denote by $\pi_S:\A^{\Z^d}\to\A^S$ the
projection map which restricts every $x\in\A^{\Z^d}$ to $S$. 
A \emph{pattern} is a function $p:S\to\A$ for some finite subset
$S\subset\Z^d$.
A subshift $X\subset\A^{\Z^d}$ is a 
\emph{shift of finite type} (SFT) if there exists a finite set $\F$
of \emph{forbidden} patterns such that
\begin{equation}\label{eq:SFT}
    X = \{x\in\A^{\Z^d} \mid \pi_S\circ\sigma^\bn(x)\notin\F
    \text{ for every } \bn\in\Z^d \text{ and } S\subset\Z^d\}.
\end{equation}
In this case, we write $X=\SFT(\F)$.

We adapt the definition of conjugacy of dynamical systems from
\cite[p. 185]{MR1369092} to subshifts.
Let $X\subset\A^{\Z^d}$ and $Y\subset\B^{\Z^d}$ be two subshifts.
A \emph{homomorphism} $\theta:(X,\sigma)\to(Y,\sigma)$ is a continuous function
$\theta:X\to Y$ satisfying the commuting property
that $\sigma^\bk\circ\theta=\theta\circ\sigma^\bk$ for every $\bk\in\Z^d$.
A homomorphism is called an \emph{embedding}
if it is one-to-one, a \emph{factor map} if it is onto, and a \emph{topological
conjugacy} if it is both one-to-one and onto and its inverse map is continuous.
Two subshifts are \emph{topologically conjugate} if there is a topological
conjugacy between them.

If $X\subset\A^{\Z^d}$ and $Y\subset\B^{\Z^d}$ are two subshifts,
we say that a \emph{$\GL_d(\Z)$-homomorphism} $\theta:(X,\sigma)\to(Y,\sigma)$ is a continuous function
$\theta:X\to Y$ satisfying the commuting property
that 
$\sigma^{M\bk}\circ\theta=\theta\circ\sigma^\bk$ 
for every $\bk\in\Z^d$ for some matrix $M\in \GL_d(\Z)$.
A $\GL_d(\Z)$-homomorphism is called an \emph{$\GL_d(\Z)$-conjugacy} if it is
both one-to-one and onto and its inverse map is continuous.
A $\GL_d(\Z)$-conjugacy is called an \emph{shear conjugacy}
if the matrix $M\in\GL_d(\Z)$ is a shear matrix.
The notion of $\GL_d(\Z)$-conjugacy corresponds to \emph{flip-conjugacy} when
$d=1$ 
\cite{MR1613140}
and to \emph{extended symmetry} when $X=Y$
\cite{MR3721878}.

\subsection{Wang tiles}

A \emph{Wang tile} $\tau=\tile{$a$}{$b$}{$c$}{$d$}$ is a unit square with colored edges
formally represented as a tuple of four colors $(a,b,c,d)\in I\times J\times
I\times J$
where $I$, $J$ are
two finite sets (the vertical and horizontal colors respectively). 
For each Wang tile $\tau=(a,b,c,d)$, we denote by
$\scright(\tau)=a$,
$\sctop(\tau)=b$,
$\scleft(\tau)=c$,
$\scbottom(\tau)=d$
the colors of the right, top, left, and bottom edges of $\tau$
\cite{wang_proving_1961,MR0297572}.

Let $\T$ be a set of Wang tiles. A \emph{valid tiling of $\Z^2$ by $\T$} is an assignment $x$
of tiles to each position of $\Z^2$ so that contiguous edges have the same
color, that is, it is a function $x:\Z^2\to\T$ satisfying
\begin{align}
    \scright\circ x(\bn)&=\scleft\circ x(\bn+\be_1)\label{eq:validwangtiling1}\\
    \sctop\circ x(\bn)&=\scbottom\circ x(\bn+\be_2)\label{eq:validwangtiling2}
\end{align}
for every $\bn\in\Z^2$
where $\be_1=(1,0)$ and $\be_2=(0,1)$.
We denote by $\Omega_\T\subset\T^{\Z^2}$ the set of all valid Wang tilings of $\Z^2$
by $\T$ and we call it the \emph{Wang shift} of $\T$. It is a SFT of
the form \eqref{eq:SFT} since there exists a finite set $\F$ of forbidden patterns
made of all horizontal and vertical dominoes of two tiles that do not share an
edge of the same color.

A set of Wang tiles $\T$ of cardinality $n\in\N$ will be written as a set
$\{\tau_0,\tau_1,\dots\tau_{n-1}\}$. Using the obvious bijection between the
sets $\{0,1,\dots, n-1\}$ and $\T$, any tiling $x\in\Omega_\T$ can be seen as a
map $\Z^2\to\{0,1,\dots, n-1\}$ and vice versa.

A set of Wang tiles $\T$ \emph{tiles} the plane if $\Omega_\T\neq\varnothing$
and \emph{does not tile} the plane if $\Omega_\T=\varnothing$.
A tiling $x\in\Omega_\T$ is \emph{periodic} if there is a nonzero period
$\bn\in\Z^2\setminus\{(0,0)\}$ such that $x=\sigma^\bn(x)$
and otherwise it is said \emph{nonperiodic}.
A set of Wang tiles $\T$ is \emph{periodic} if there is a tiling
$x\in\Omega_\T$ which is periodic. A set of Wang tiles $\T$ is \emph{aperiodic} if
$\Omega_\T\neq\varnothing$ and every tiling $x\in\Omega_\T$ is nonperiodic.
As explained in the first page of \cite{MR0297572} 
(see also \cite[Prop. 5.9]{MR3136260}),
if $\T$ is periodic, then there is a tiling $x$ by $\T$ with two linearly
independent translation vectors (in particular a tiling $x$ with vertical
and horizontal translation vectors).

We say that two sets of Wang tiles $\T$ and $\Scal$ are
\emph{equivalent} if there exist two bijections $i:I\to I'$ $j:J\to J'$ such
that
\begin{equation*}
    \Scal = \{(i(a),j(b),i(c),j(d)) \mid (a,b,c,d) \in \T\}.
\end{equation*}


\subsection{Fusion of Wang tiles}

Now, we introduce a fusion operation on Wang tiles
that share an edge in a tiling according to 
Equations~\ref{eq:validwangtiling1} and~\ref{eq:validwangtiling2}.
Suppose that colors $\{A,B,C,D,X,Y,W,Z\}$ belongs to a 
magma $(\mathcal{C},\cdot)$.
We define two binary operations on Wang tiles
$u=\tile{Y}{B}{X}{A}$ and
$v=\tile{Z}{D}{W}{C}$
as
\begin{equation*}
u\boxbar v=\tile{Z}{B$\cdot$D}{X}{A$\cdot$C}
    \quad
    \text{ if }
    Y = W
    \qquad
    \text{ and }
    \qquad
u\boxminus v=
    \raisebox{-3mm}{
\begin{tikzpicture}[scale=0.9]
\draw (0, 0) -- (1, 0);
\draw (0, 0) -- (0, 1);
\draw (1, 1) -- (1, 0);
\draw (1, 1) -- (0, 1);
\node[rotate=90,font=\tiny] at (0.8, 0.5) {Y$\cdot$Z};
\node[rotate=0,font=\tiny] at (0.5, 0.8) {D};
\node[rotate=90,font=\tiny] at (0.2, 0.5) {X$\cdot$W};
\node[rotate=0,font=\tiny] at (0.5, 0.2) {A};
\end{tikzpicture}}
    \quad \text{ if } B = C.
\end{equation*}
If $Y\neq W$, we say that $u\boxbar v$ is \emph{not defined}.
Similarly, if $B\neq C$, we say that $u\boxminus v$ is \emph{not defined}.
In this contribution, we may assume that the operation $\cdot$ is
associative so we always denote it implicitly by concatenation of colors.

In what follows, we propose algorithms and results that works for both
operations $\boxbar$ and $\boxminus$. 
It is thus desirable to have a common notation to denote both, so we define
\[
    u \boxslash^1 v = u \boxbar v
    \qquad
    \text{ and }
    \qquad
    u \boxslash^2 v = u \boxminus v.
\]
If $u\boxslash^i v$ is defined for some $i\in\{1,2\}$, it means that tiles $u$
and $v$ can appear at position $\bn$ and $\bn+\be_i$ in a tiling for some
$\bn\in\Z^d$. 
For each $i\in\{1,2\}$, one can define a new set of tiles from 
two set of Wang tiles $\T$ and $\Scal$ as
\begin{equation*}
    \T\boxslash^i\Scal = \{u\boxslash^i v \text{ defined }\mid
                        u\in\T,v\in\Scal\}.
\end{equation*}

\subsection{Transducer representation of Wang tiles}

A transducer $M$ is a labeled directed graph whose nodes are called
\emph{states} and edges are called \emph{transitions}. The transitions are
labeled by pairs $a|b$ of letters. The first letter $a$ is the input symbol and
the second letter $b$ is the output symbol. There is no initial nor final state.
A transducer $M$ computes a relation $\rho(M)$ between bi-infinite sequences of
letters. 

As observed in \cite{MR1417578} and extensively used in
\cite{jeandel_aperiodic_2015}, any finite set of Wang tiles may be interpreted
as a transducer. To a given set of tiles $\T$, the states of the corresponding
transducer $M_\T$ are the colors of the vertical edges. The colors of
horizontal edges are the input and output symbols. There is a transition from
state $s$ to state $t$ with label $a|b$ if and only if there is a tile
$(t,b,s,a)\in\T$ whose left, right, bottom and top edges are colored by $s$,
$t$, $a$ and $b$, respectively:
\begin{equation*}
    M_\T = \left\{ s\xrightarrow{a|b} t : 
           \tile{$t$}{$b$}{$s$}{$a$}=(t,b,s,a) \in \T \right\}.
\end{equation*}
The Jeandel-Rao set of Wang tiles defined in
Figure~\ref{fig:jeandel_rao_tile_set} can be seen as a transducer with 4
states and 11 transitions consisting of two connected components $T_0$ and
$T_1$ (see Figure~\ref{fig:transducer}).
\begin{figure}[h]
\begin{center}
\begin{tikzpicture}[scale=2,>=latex,auto]
\node at (-1,0)  {$T_0:$};
\node at (4,0)  {$T_1:$};
\node[rectangle,draw,thick] (node_2) at (5,0)  {$2$};
\node[rectangle,draw,thick] (node_0) at (0,1)  {$0$};
\node[rectangle,draw,thick] (node_1) at (2,1)  {$1$};
\node[rectangle,draw,thick] (node_3) at (1,0)  {$3$};
\draw[->] (node_0) edge[loop left] node {$1|0$} (node_0);
\draw[->] (node_1) edge[loop right] node {$4|2$} (node_1);
\draw[->] (node_2) edge[loop above] node {$0|2,1|4$} (node_2);
\draw[->] (node_3) edge[loop below] node {$3|1$} (node_3);
\draw [->,bend left=10]  (node_0) to node {$2|1$} (node_3);
\draw [->,bend right=10]  (node_1) to node[swap] {$2|3$} (node_3);
\draw [->,bend right=10] (node_1) to node[swap] {$2|2$} (node_0);
\draw [->,bend left=10]  (node_3) to node {$1|1$} (node_0);
\draw [->,bend right=10]  (node_3) to node[swap] {$1|1,2|2$} (node_1);
\end{tikzpicture}
\end{center}
\caption{The Jeandel-Rao set of tiles $\T_0$ seen as a transducer $M_{\T_0}$.
    Each tile of $\T_0$ corresponds to a transition.
    For example, the first tile in Figure~\ref{fig:jeandel_rao_tile_set}
    is associated to the transition $2\xrightarrow{1|4}2$, etc.
    The transducer $M_{\T_0}$
    consists of two connected components identified as $T_0$ and $T_1$ 
    in \cite{jeandel_aperiodic_2015}.}
\label{fig:transducer}
\end{figure}
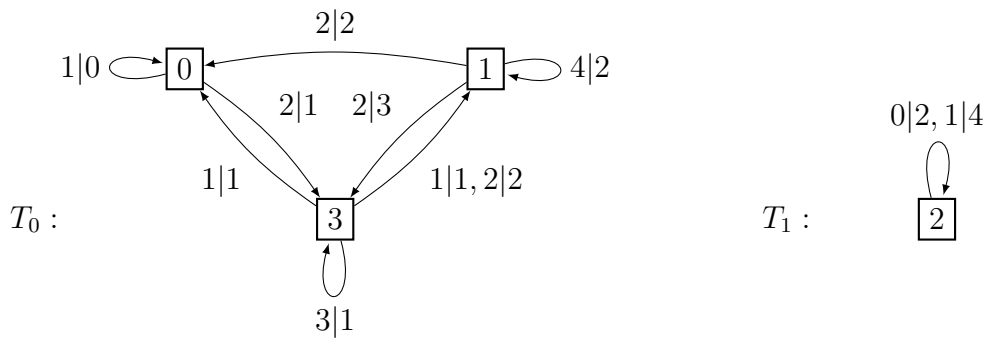
Sequences $x$ and $y$ are in the relation $\rho(M_\T)$ if and only
if there exists a row of tiles, with matching vertical edges, whose bottom edges
form the sequence $x$ and whose top edges form the sequence $y$. 
For example, consider the $6\times1$ rectangle tiled with Jeandel-Rao tiles:
\begin{center}
    \begin{tikzpicture}[scale=1]
\tikzstyle{every node}=[font=\footnotesize]
\fill[green] (1, 0) -- (0.5, 0.5) -- (1, 1);
\fill[red] (0, 1) -- (0.5, 0.5) -- (1, 1);
\fill[white] (0, 0) -- (0.5, 0.5) -- (0, 1);
\fill[cyan] (0, 0) -- (0.5, 0.5) -- (1, 0);
\draw (0, 1) -- ++ (1,0);
\draw (0, 0) -- ++ (0,1);
\draw (0, 0) -- ++ (1,0);
\node[rotate=0,black] at (0.8, 0.5) {3};
\node[rotate=0,black] at (0.5, 0.8) {1};
\node[rotate=0,black] at (0.2, 0.5) {0};
\node[rotate=0,black] at (0.5, 0.2) {2};
\fill[green] (2, 0) -- (1.5, 0.5) -- (2, 1);
\fill[red] (1, 1) -- (1.5, 0.5) -- (2, 1);
\fill[green] (1, 0) -- (1.5, 0.5) -- (1, 1);
\fill[green] (1, 0) -- (1.5, 0.5) -- (2, 0);
\draw (1, 1) -- ++ (1,0);
\draw (1, 0) -- ++ (0,1);
\draw (1, 0) -- ++ (1,0);
\node[rotate=0,black] at (1.8, 0.5) {3};
\node[rotate=0,black] at (1.5, 0.8) {1};
\node[rotate=0,black] at (1.2, 0.5) {3};
\node[rotate=0,black] at (1.5, 0.2) {3};
\fill[red] (3, 0) -- (2.5, 0.5) -- (3, 1);
\fill[cyan] (2, 1) -- (2.5, 0.5) -- (3, 1);
\fill[green] (2, 0) -- (2.5, 0.5) -- (2, 1);
\fill[cyan] (2, 0) -- (2.5, 0.5) -- (3, 0);
\draw (2, 1) -- ++ (1,0);
\draw (2, 0) -- ++ (0,1);
\draw (2, 0) -- ++ (1,0);
\node[rotate=0,black] at (2.8, 0.5) {1};
\node[rotate=0,black] at (2.5, 0.8) {2};
\node[rotate=0,black] at (2.2, 0.5) {3};
\node[rotate=0,black] at (2.5, 0.2) {2};
\fill[green] (4, 0) -- (3.5, 0.5) -- (4, 1);
\fill[green] (3, 1) -- (3.5, 0.5) -- (4, 1);
\fill[red] (3, 0) -- (3.5, 0.5) -- (3, 1);
\fill[cyan] (3, 0) -- (3.5, 0.5) -- (4, 0);
\draw (3, 1) -- ++ (1,0);
\draw (3, 0) -- ++ (0,1);
\draw (3, 0) -- ++ (1,0);
\node[rotate=0,black] at (3.8, 0.5) {3};
\node[rotate=0,black] at (3.5, 0.8) {3};
\node[rotate=0,black] at (3.2, 0.5) {1};
\node[rotate=0,black] at (3.5, 0.2) {2};
\fill[red] (5, 0) -- (4.5, 0.5) -- (5, 1);
\fill[red] (4, 1) -- (4.5, 0.5) -- (5, 1);
\fill[green] (4, 0) -- (4.5, 0.5) -- (4, 1);
\fill[red] (4, 0) -- (4.5, 0.5) -- (5, 0);
\draw (4, 1) -- ++ (1,0);
\draw (4, 0) -- ++ (0,1);
\draw (4, 0) -- ++ (1,0);
\node[rotate=0,black] at (4.8, 0.5) {1};
\node[rotate=0,black] at (4.5, 0.8) {1};
\node[rotate=0,black] at (4.2, 0.5) {3};
\node[rotate=0,black] at (4.5, 0.2) {1};
\fill[white] (6, 0) -- (5.5, 0.5) -- (6, 1);
\fill[cyan] (5, 1) -- (5.5, 0.5) -- (6, 1);
\fill[red] (5, 0) -- (5.5, 0.5) -- (5, 1);
\fill[cyan] (5, 0) -- (5.5, 0.5) -- (6, 0);
\draw (6, 0) -- ++ (0,1);
\draw (5, 1) -- ++ (1,0);
\draw (5, 0) -- ++ (0,1);
\draw (5, 0) -- ++ (1,0);
\node[rotate=0,black] at (5.8, 0.5) {0};
\node[rotate=0,black] at (5.5, 0.8) {2};
\node[rotate=0,black] at (5.2, 0.5) {1};
\node[rotate=0,black] at (5.5, 0.2) {2};
\end{tikzpicture}
\end{center}
The sequence of bottom colors 
is $232212$.
Starting in state $0$ and reading that word as input in the transducer
$M_{\T_0}$ we follow the transitions:
\[
0\xrightarrow{2|1}
3\xrightarrow{3|1}
3\xrightarrow{2|2}
1\xrightarrow{2|3}
3\xrightarrow{1|1}
1\xrightarrow{2|2}
0
\]
We finish in state $0$ and we get $112312$ as output which corresponds
to the sequence of top colors of the same row.
In general, there is a
one-to-one correspondence between valid tilings of the plane, and
the execution of the transducer on biinfinite sequences.

Notice that the result of the usual composition of transducers
corresponds to the transducer of $\T\boxslash^2\Scal$:
\begin{equation}\label{eq:boxminus_with_transducer}
    M_\T\circ M_{\Scal} = M_{\T\boxslash^2\Scal}.
\end{equation}
As done in \cite{jeandel_aperiodic_2015},
the result of the composition can be filtered by 
recursively removing any source or sink state from the transducer
reducing the size of the set of tiles.
This is very helpful for doing computations.

\subsection{$d$-dimensional word}

In this section, we recall the definition of $d$-dimensional word that appeared
in \cite{MR2579856} and we keep the notation $u\odot^i v$ they proposed
for the concatenation. 

We denote by
$\{\be_k|1\leq k\leq d\}$ the canonical
basis of $\Z^d$ where $d\geq1$ is an integer.
If $i\leq j$ are integers, then $\llbracket i, j\rrbracket$ denotes the
interval of integers $\{i, i+1, \dots, j\}$.
Let $\bn=(n_1,\dots,n_d)\in\N^d$ and $\A$ be an alphabet.
We denote by $\A^{\bn}$ the set of functions
\begin{equation*}
    u:
\llbracket 0,n_1-1\rrbracket
\times
\cdots
\times
\llbracket 0,n_d-1\rrbracket
\to\A.
\end{equation*}
An element $u\in\A^\bn$ is called a
\emph{$d$-dimensional word $u$ of shape $\bn=(n_1,\dots,n_d)\in\N^d$}
on the alphabet~$\A$.
We use the notation $\shape(u)=\bn$ when necessary.
The set of all finite $d$-dimensional words is 
$\A^{*^d}=\{\A^\bn\mid\bn\in\N^d\}$.
A $d$-dimensional word of shape $\be_k+\sum_{i=1}^d\be_i$ is called a
\emph{domino in the direction $\be_k$}.
When the context is clear, we write $\A$ instead of $\A^{(1,\dots,1)}$.
When $d=2$, we represent a $d$-dimensional word $u$ of shape $(n_1,n_2)$ as a
matrix with Cartesian coordinates:
\begin{equation*}
    u=
    \left(\begin{array}{ccc}
        u_{0,n_2-1} &\dots   & u_{n_1-1,n_2-1} \\
        \dots   &\dots   & \dots \\
        u_{0,0} &\dots   & u_{n_1-1,0}
    \end{array}\right).
\end{equation*}
Let $\bn,\bm\in\N^d$ and $u\in\A^\bn$ and $v\in\A^\bm$.
If there exists an index $i$ such that the
shapes $\bn$ and $\bm$ are equal except maybe at index $i$,
then the \emph{concatenation of $u$ and $v$ in the direction $\be_i$} 
is \emph{defined}: it is
the 
$d$-dimensional word $u\odot^i v$ of shape $(n_1,\dots,n_{i-1},n_i+m_i,n_{i+1},\dots,n_d)\in\N^d$
given as
\begin{equation*}
    (u\odot^i v) (\ba) = 
\begin{cases}
    u(\ba)          & \text{if}\quad 0 \leq a_i < n_i,\\
    v(\ba-n_i\be_i) & \text{if}\quad n_i \leq a_i < n_i+m_i.
\end{cases}
\end{equation*}
If the shapes $\bn$ and $\bm$ are not equal except at
index $i$, we say that the concatenation of $u\in\A^\bn$ and $v\in\A^\bm$ in
the direction $\be_i$ is \emph{not defined}.
The following equation illustrates the concatenation of words in the direction $\be_1$ and $\be_2$ when $d=2$:
\[
    \arraycolsep=2.5pt
\left(\begin{array}{ccccc}
2 & 8 & 7  \\
7 & 3 & 9 \\
1 & 1 & 0 \\
6 & 6 & 7 \\
7 & 4 & 3 
\end{array}\right)
\odot^1
\left[
\left(\begin{array}{ccccc}
4 & 5 \\
10 & 5
\end{array}\right)
\odot^2
\left(\begin{array}{ccccc}
3 & 10 \\
9 & 9 \\
0 & 0 \\
\end{array}\right)
\right]
    =
\left(\begin{array}{ccccc}
2 & 8 & 7  \\
7 & 3 & 9 \\
1 & 1 & 0 \\
6 & 6 & 7 \\
7 & 4 & 3 
\end{array}\right)
\odot^1
\left(\begin{array}{ccccc}
3 & 10 \\
9 & 9 \\
0 & 0 \\
4 & 5 \\
10 & 5
\end{array}\right)
    =
\left(\begin{array}{ccccc}
2 & 8 & 7 & 3 & 10 \\
7 & 3 & 9 & 9 & 9 \\
1 & 1 & 0 & 0 & 0 \\
6 & 6 & 7 & 4 & 5 \\
7 & 4 & 3 & 10 & 5
\end{array}\right).
\]

Let $\bn,\bm\in\N^d$ and $u\in\A^\bn$ and $v\in\A^\bm$.
We say that $u$ \emph{occurs in $v$ at position $\bp\in\N^d$} if
$v$ is large enough, i.e., $\bm-\bp-\bn\in\N^d$ and
\[
    v(\ba+\bp) = u(\ba)
\]
for all $\ba=(a_1,\dots,a_d)\in\N^d$ such that 
$0\leq a_i<n_i$ with $1\leq i\leq d$.
If $u$ occurs in $v$ at some position, then we say that $u$ is a
$d$-dimensional \emph{subword} or \emph{factor} of $v$.

The distinction between tilings and 2-dimensional words is illustrated in the
Figure~\ref{fig:tiling-vs-word}. Our point of view is to consider Wang tilings
as 2-dimensional words. We 
always use the alphabet $\llbracket 0,n-1\rrbracket$ to represent a set of Wang tiles of cardinality $n$ with some chosen bijection between the two sets.
For Jeandel-Rao tiles, we use the alphabet
             $\llbracket 0,10\rrbracket$
             following the order from left to right shown in
             Figure~\ref{fig:jeandel_rao_tile_set}.
Depending on the context, we illustrate 2-dimensional words and tilings in this contribution
in one of the three ways depicted in Figure~\ref{fig:tiling-vs-word}
acknowledging that there is a bijection between the representations.

\begin{figure}[h]
    \raisebox{-25mm}{\begin{tikzpicture}[scale=1]
\tikzstyle{every node}=[font=\footnotesize]
\draw (0, 0) -- ++ (0,1);
\draw (0, 0) -- ++ (1,0);
\node[rotate=0,black] at (0.8, 0.5) {3};
\node[rotate=0,black] at (0.5, 0.8) {1};
\node[rotate=0,black] at (0.2, 0.5) {0};
\node[rotate=0,black] at (0.5, 0.2) {2};
\draw (0, 1) -- ++ (0,1);
\draw (0, 1) -- ++ (1,0);
\node[rotate=0,black] at (0.8, 1.5) {0};
\node[rotate=0,black] at (0.5, 1.8) {0};
\node[rotate=0,black] at (0.2, 1.5) {0};
\node[rotate=0,black] at (0.5, 1.2) {1};
\draw (0, 2) -- ++ (0,1);
\draw (0, 2) -- ++ (1,0);
\node[rotate=0,black] at (0.8, 2.5) {2};
\node[rotate=0,black] at (0.5, 2.8) {2};
\node[rotate=0,black] at (0.2, 2.5) {2};
\node[rotate=0,black] at (0.5, 2.2) {0};
\draw (0, 3) -- ++ (0,1);
\draw (0, 3) -- ++ (1,0);
\node[rotate=0,black] at (0.8, 3.5) {3};
\node[rotate=0,black] at (0.5, 3.8) {1};
\node[rotate=0,black] at (0.2, 3.5) {0};
\node[rotate=0,black] at (0.5, 3.2) {2};
\draw (0, 5) -- ++ (1,0);
\draw (0, 4) -- ++ (0,1);
\draw (0, 4) -- ++ (1,0);
\node[rotate=0,black] at (0.8, 4.5) {1};
\node[rotate=0,black] at (0.5, 4.8) {1};
\node[rotate=0,black] at (0.2, 4.5) {3};
\node[rotate=0,black] at (0.5, 4.2) {1};
\draw (1, 0) -- ++ (0,1);
\draw (1, 0) -- ++ (1,0);
\node[rotate=0,black] at (1.8, 0.5) {3};
\node[rotate=0,black] at (1.5, 0.8) {1};
\node[rotate=0,black] at (1.2, 0.5) {3};
\node[rotate=0,black] at (1.5, 0.2) {3};
\draw (1, 1) -- ++ (0,1);
\draw (1, 1) -- ++ (1,0);
\node[rotate=0,black] at (1.8, 1.5) {0};
\node[rotate=0,black] at (1.5, 1.8) {0};
\node[rotate=0,black] at (1.2, 1.5) {0};
\node[rotate=0,black] at (1.5, 1.2) {1};
\draw (1, 2) -- ++ (0,1);
\draw (1, 2) -- ++ (1,0);
\node[rotate=0,black] at (1.8, 2.5) {2};
\node[rotate=0,black] at (1.5, 2.8) {2};
\node[rotate=0,black] at (1.2, 2.5) {2};
\node[rotate=0,black] at (1.5, 2.2) {0};
\draw (1, 3) -- ++ (0,1);
\draw (1, 3) -- ++ (1,0);
\node[rotate=0,black] at (1.8, 3.5) {1};
\node[rotate=0,black] at (1.5, 3.8) {2};
\node[rotate=0,black] at (1.2, 3.5) {3};
\node[rotate=0,black] at (1.5, 3.2) {2};
\draw (1, 5) -- ++ (1,0);
\draw (1, 4) -- ++ (0,1);
\draw (1, 4) -- ++ (1,0);
\node[rotate=0,black] at (1.8, 4.5) {0};
\node[rotate=0,black] at (1.5, 4.8) {2};
\node[rotate=0,black] at (1.2, 4.5) {1};
\node[rotate=0,black] at (1.5, 4.2) {2};
\draw (2, 0) -- ++ (0,1);
\draw (2, 0) -- ++ (1,0);
\node[rotate=0,black] at (2.8, 0.5) {1};
\node[rotate=0,black] at (2.5, 0.8) {2};
\node[rotate=0,black] at (2.2, 0.5) {3};
\node[rotate=0,black] at (2.5, 0.2) {2};
\draw (2, 1) -- ++ (0,1);
\draw (2, 1) -- ++ (1,0);
\node[rotate=0,black] at (2.8, 1.5) {3};
\node[rotate=0,black] at (2.5, 1.8) {1};
\node[rotate=0,black] at (2.2, 1.5) {0};
\node[rotate=0,black] at (2.5, 1.2) {2};
\draw (2, 2) -- ++ (0,1);
\draw (2, 2) -- ++ (1,0);
\node[rotate=0,black] at (2.8, 2.5) {2};
\node[rotate=0,black] at (2.5, 2.8) {4};
\node[rotate=0,black] at (2.2, 2.5) {2};
\node[rotate=0,black] at (2.5, 2.2) {1};
\draw (2, 3) -- ++ (0,1);
\draw (2, 3) -- ++ (1,0);
\node[rotate=0,black] at (2.8, 3.5) {1};
\node[rotate=0,black] at (2.5, 3.8) {2};
\node[rotate=0,black] at (2.2, 3.5) {1};
\node[rotate=0,black] at (2.5, 3.2) {4};
\draw (2, 5) -- ++ (1,0);
\draw (2, 4) -- ++ (0,1);
\draw (2, 4) -- ++ (1,0);
\node[rotate=0,black] at (2.8, 4.5) {3};
\node[rotate=0,black] at (2.5, 4.8) {1};
\node[rotate=0,black] at (2.2, 4.5) {0};
\node[rotate=0,black] at (2.5, 4.2) {2};
\draw (3, 0) -- ++ (0,1);
\draw (3, 0) -- ++ (1,0);
\node[rotate=0,black] at (3.8, 0.5) {3};
\node[rotate=0,black] at (3.5, 0.8) {3};
\node[rotate=0,black] at (3.2, 0.5) {1};
\node[rotate=0,black] at (3.5, 0.2) {2};
\draw (3, 1) -- ++ (0,1);
\draw (3, 1) -- ++ (1,0);
\node[rotate=0,black] at (3.8, 1.5) {3};
\node[rotate=0,black] at (3.5, 1.8) {1};
\node[rotate=0,black] at (3.2, 1.5) {3};
\node[rotate=0,black] at (3.5, 1.2) {3};
\draw (3, 2) -- ++ (0,1);
\draw (3, 2) -- ++ (1,0);
\node[rotate=0,black] at (3.8, 2.5) {2};
\node[rotate=0,black] at (3.5, 2.8) {4};
\node[rotate=0,black] at (3.2, 2.5) {2};
\node[rotate=0,black] at (3.5, 2.2) {1};
\draw (3, 3) -- ++ (0,1);
\draw (3, 3) -- ++ (1,0);
\node[rotate=0,black] at (3.8, 3.5) {1};
\node[rotate=0,black] at (3.5, 3.8) {2};
\node[rotate=0,black] at (3.2, 3.5) {1};
\node[rotate=0,black] at (3.5, 3.2) {4};
\draw (3, 5) -- ++ (1,0);
\draw (3, 4) -- ++ (0,1);
\draw (3, 4) -- ++ (1,0);
\node[rotate=0,black] at (3.8, 4.5) {1};
\node[rotate=0,black] at (3.5, 4.8) {2};
\node[rotate=0,black] at (3.2, 4.5) {3};
\node[rotate=0,black] at (3.5, 4.2) {2};
\draw (5, 0) -- ++ (0,1);
\draw (4, 0) -- ++ (0,1);
\draw (4, 0) -- ++ (1,0);
\node[rotate=0,black] at (4.8, 0.5) {0};
\node[rotate=0,black] at (4.5, 0.8) {1};
\node[rotate=0,black] at (4.2, 0.5) {3};
\node[rotate=0,black] at (4.5, 0.2) {1};
\draw (5, 1) -- ++ (0,1);
\draw (4, 1) -- ++ (0,1);
\draw (4, 1) -- ++ (1,0);
\node[rotate=0,black] at (4.8, 1.5) {0};
\node[rotate=0,black] at (4.5, 1.8) {1};
\node[rotate=0,black] at (4.2, 1.5) {3};
\node[rotate=0,black] at (4.5, 1.2) {1};
\draw (5, 2) -- ++ (0,1);
\draw (4, 2) -- ++ (0,1);
\draw (4, 2) -- ++ (1,0);
\node[rotate=0,black] at (4.8, 2.5) {2};
\node[rotate=0,black] at (4.5, 2.8) {4};
\node[rotate=0,black] at (4.2, 2.5) {2};
\node[rotate=0,black] at (4.5, 2.2) {1};
\draw (5, 3) -- ++ (0,1);
\draw (4, 3) -- ++ (0,1);
\draw (4, 3) -- ++ (1,0);
\node[rotate=0,black] at (4.8, 3.5) {1};
\node[rotate=0,black] at (4.5, 3.8) {2};
\node[rotate=0,black] at (4.2, 3.5) {1};
\node[rotate=0,black] at (4.5, 3.2) {4};
\draw (5, 4) -- ++ (0,1);
\draw (4, 5) -- ++ (1,0);
\draw (4, 4) -- ++ (0,1);
\draw (4, 4) -- ++ (1,0);
\node[rotate=0,black] at (4.8, 4.5) {3};
\node[rotate=0,black] at (4.5, 4.8) {3};
\node[rotate=0,black] at (4.2, 4.5) {1};
\node[rotate=0,black] at (4.5, 4.2) {2};
\end{tikzpicture}}
    \qquad
    $\input{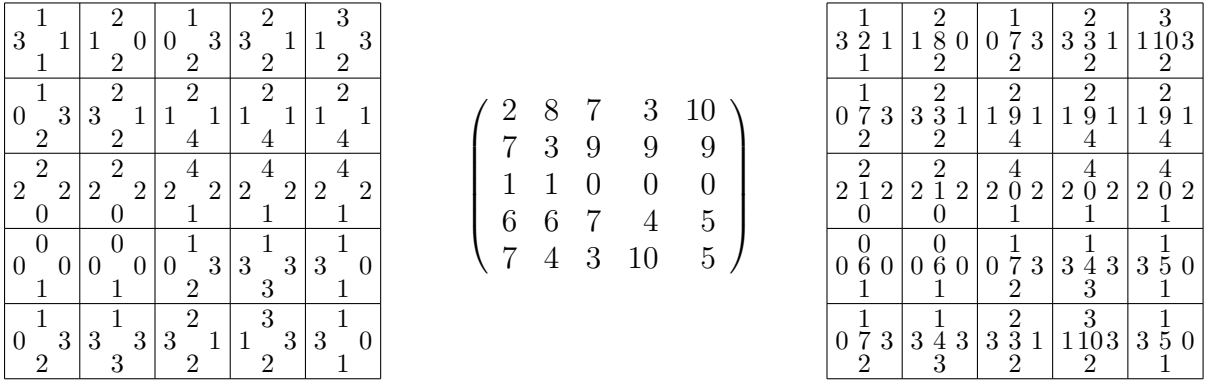}$
    \qquad
    \raisebox{-25mm}{\begin{tikzpicture}[scale=1]
\tikzstyle{every node}=[font=\footnotesize]
\node[] at (0.5, 0.5) {7};
\draw (0, 0) -- ++ (0,1);
\draw (0, 0) -- ++ (1,0);
\node[rotate=0,black] at (0.8, 0.5) {3};
\node[rotate=0,black] at (0.5, 0.8) {1};
\node[rotate=0,black] at (0.2, 0.5) {0};
\node[rotate=0,black] at (0.5, 0.2) {2};
\node[] at (0.5, 1.5) {6};
\draw (0, 1) -- ++ (0,1);
\draw (0, 1) -- ++ (1,0);
\node[rotate=0,black] at (0.8, 1.5) {0};
\node[rotate=0,black] at (0.5, 1.8) {0};
\node[rotate=0,black] at (0.2, 1.5) {0};
\node[rotate=0,black] at (0.5, 1.2) {1};
\node[] at (0.5, 2.5) {1};
\draw (0, 2) -- ++ (0,1);
\draw (0, 2) -- ++ (1,0);
\node[rotate=0,black] at (0.8, 2.5) {2};
\node[rotate=0,black] at (0.5, 2.8) {2};
\node[rotate=0,black] at (0.2, 2.5) {2};
\node[rotate=0,black] at (0.5, 2.2) {0};
\node[] at (0.5, 3.5) {7};
\draw (0, 3) -- ++ (0,1);
\draw (0, 3) -- ++ (1,0);
\node[rotate=0,black] at (0.8, 3.5) {3};
\node[rotate=0,black] at (0.5, 3.8) {1};
\node[rotate=0,black] at (0.2, 3.5) {0};
\node[rotate=0,black] at (0.5, 3.2) {2};
\node[] at (0.5, 4.5) {2};
\draw (0, 5) -- ++ (1,0);
\draw (0, 4) -- ++ (0,1);
\draw (0, 4) -- ++ (1,0);
\node[rotate=0,black] at (0.8, 4.5) {1};
\node[rotate=0,black] at (0.5, 4.8) {1};
\node[rotate=0,black] at (0.2, 4.5) {3};
\node[rotate=0,black] at (0.5, 4.2) {1};
\node[] at (1.5, 0.5) {4};
\draw (1, 0) -- ++ (0,1);
\draw (1, 0) -- ++ (1,0);
\node[rotate=0,black] at (1.8, 0.5) {3};
\node[rotate=0,black] at (1.5, 0.8) {1};
\node[rotate=0,black] at (1.2, 0.5) {3};
\node[rotate=0,black] at (1.5, 0.2) {3};
\node[] at (1.5, 1.5) {6};
\draw (1, 1) -- ++ (0,1);
\draw (1, 1) -- ++ (1,0);
\node[rotate=0,black] at (1.8, 1.5) {0};
\node[rotate=0,black] at (1.5, 1.8) {0};
\node[rotate=0,black] at (1.2, 1.5) {0};
\node[rotate=0,black] at (1.5, 1.2) {1};
\node[] at (1.5, 2.5) {1};
\draw (1, 2) -- ++ (0,1);
\draw (1, 2) -- ++ (1,0);
\node[rotate=0,black] at (1.8, 2.5) {2};
\node[rotate=0,black] at (1.5, 2.8) {2};
\node[rotate=0,black] at (1.2, 2.5) {2};
\node[rotate=0,black] at (1.5, 2.2) {0};
\node[] at (1.5, 3.5) {3};
\draw (1, 3) -- ++ (0,1);
\draw (1, 3) -- ++ (1,0);
\node[rotate=0,black] at (1.8, 3.5) {1};
\node[rotate=0,black] at (1.5, 3.8) {2};
\node[rotate=0,black] at (1.2, 3.5) {3};
\node[rotate=0,black] at (1.5, 3.2) {2};
\node[] at (1.5, 4.5) {8};
\draw (1, 5) -- ++ (1,0);
\draw (1, 4) -- ++ (0,1);
\draw (1, 4) -- ++ (1,0);
\node[rotate=0,black] at (1.8, 4.5) {0};
\node[rotate=0,black] at (1.5, 4.8) {2};
\node[rotate=0,black] at (1.2, 4.5) {1};
\node[rotate=0,black] at (1.5, 4.2) {2};
\node[] at (2.5, 0.5) {3};
\draw (2, 0) -- ++ (0,1);
\draw (2, 0) -- ++ (1,0);
\node[rotate=0,black] at (2.8, 0.5) {1};
\node[rotate=0,black] at (2.5, 0.8) {2};
\node[rotate=0,black] at (2.2, 0.5) {3};
\node[rotate=0,black] at (2.5, 0.2) {2};
\node[] at (2.5, 1.5) {7};
\draw (2, 1) -- ++ (0,1);
\draw (2, 1) -- ++ (1,0);
\node[rotate=0,black] at (2.8, 1.5) {3};
\node[rotate=0,black] at (2.5, 1.8) {1};
\node[rotate=0,black] at (2.2, 1.5) {0};
\node[rotate=0,black] at (2.5, 1.2) {2};
\node[] at (2.5, 2.5) {0};
\draw (2, 2) -- ++ (0,1);
\draw (2, 2) -- ++ (1,0);
\node[rotate=0,black] at (2.8, 2.5) {2};
\node[rotate=0,black] at (2.5, 2.8) {4};
\node[rotate=0,black] at (2.2, 2.5) {2};
\node[rotate=0,black] at (2.5, 2.2) {1};
\node[] at (2.5, 3.5) {9};
\draw (2, 3) -- ++ (0,1);
\draw (2, 3) -- ++ (1,0);
\node[rotate=0,black] at (2.8, 3.5) {1};
\node[rotate=0,black] at (2.5, 3.8) {2};
\node[rotate=0,black] at (2.2, 3.5) {1};
\node[rotate=0,black] at (2.5, 3.2) {4};
\node[] at (2.5, 4.5) {7};
\draw (2, 5) -- ++ (1,0);
\draw (2, 4) -- ++ (0,1);
\draw (2, 4) -- ++ (1,0);
\node[rotate=0,black] at (2.8, 4.5) {3};
\node[rotate=0,black] at (2.5, 4.8) {1};
\node[rotate=0,black] at (2.2, 4.5) {0};
\node[rotate=0,black] at (2.5, 4.2) {2};
\node[] at (3.5, 0.5) {10};
\draw (3, 0) -- ++ (0,1);
\draw (3, 0) -- ++ (1,0);
\node[rotate=0,black] at (3.8, 0.5) {3};
\node[rotate=0,black] at (3.5, 0.8) {3};
\node[rotate=0,black] at (3.2, 0.5) {1};
\node[rotate=0,black] at (3.5, 0.2) {2};
\node[] at (3.5, 1.5) {4};
\draw (3, 1) -- ++ (0,1);
\draw (3, 1) -- ++ (1,0);
\node[rotate=0,black] at (3.8, 1.5) {3};
\node[rotate=0,black] at (3.5, 1.8) {1};
\node[rotate=0,black] at (3.2, 1.5) {3};
\node[rotate=0,black] at (3.5, 1.2) {3};
\node[] at (3.5, 2.5) {0};
\draw (3, 2) -- ++ (0,1);
\draw (3, 2) -- ++ (1,0);
\node[rotate=0,black] at (3.8, 2.5) {2};
\node[rotate=0,black] at (3.5, 2.8) {4};
\node[rotate=0,black] at (3.2, 2.5) {2};
\node[rotate=0,black] at (3.5, 2.2) {1};
\node[] at (3.5, 3.5) {9};
\draw (3, 3) -- ++ (0,1);
\draw (3, 3) -- ++ (1,0);
\node[rotate=0,black] at (3.8, 3.5) {1};
\node[rotate=0,black] at (3.5, 3.8) {2};
\node[rotate=0,black] at (3.2, 3.5) {1};
\node[rotate=0,black] at (3.5, 3.2) {4};
\node[] at (3.5, 4.5) {3};
\draw (3, 5) -- ++ (1,0);
\draw (3, 4) -- ++ (0,1);
\draw (3, 4) -- ++ (1,0);
\node[rotate=0,black] at (3.8, 4.5) {1};
\node[rotate=0,black] at (3.5, 4.8) {2};
\node[rotate=0,black] at (3.2, 4.5) {3};
\node[rotate=0,black] at (3.5, 4.2) {2};
\node[] at (4.5, 0.5) {5};
\draw (5, 0) -- ++ (0,1);
\draw (4, 0) -- ++ (0,1);
\draw (4, 0) -- ++ (1,0);
\node[rotate=0,black] at (4.8, 0.5) {0};
\node[rotate=0,black] at (4.5, 0.8) {1};
\node[rotate=0,black] at (4.2, 0.5) {3};
\node[rotate=0,black] at (4.5, 0.2) {1};
\node[] at (4.5, 1.5) {5};
\draw (5, 1) -- ++ (0,1);
\draw (4, 1) -- ++ (0,1);
\draw (4, 1) -- ++ (1,0);
\node[rotate=0,black] at (4.8, 1.5) {0};
\node[rotate=0,black] at (4.5, 1.8) {1};
\node[rotate=0,black] at (4.2, 1.5) {3};
\node[rotate=0,black] at (4.5, 1.2) {1};
\node[] at (4.5, 2.5) {0};
\draw (5, 2) -- ++ (0,1);
\draw (4, 2) -- ++ (0,1);
\draw (4, 2) -- ++ (1,0);
\node[rotate=0,black] at (4.8, 2.5) {2};
\node[rotate=0,black] at (4.5, 2.8) {4};
\node[rotate=0,black] at (4.2, 2.5) {2};
\node[rotate=0,black] at (4.5, 2.2) {1};
\node[] at (4.5, 3.5) {9};
\draw (5, 3) -- ++ (0,1);
\draw (4, 3) -- ++ (0,1);
\draw (4, 3) -- ++ (1,0);
\node[rotate=0,black] at (4.8, 3.5) {1};
\node[rotate=0,black] at (4.5, 3.8) {2};
\node[rotate=0,black] at (4.2, 3.5) {1};
\node[rotate=0,black] at (4.5, 3.2) {4};
\node[] at (4.5, 4.5) {10};
\draw (5, 4) -- ++ (0,1);
\draw (4, 5) -- ++ (1,0);
\draw (4, 4) -- ++ (0,1);
\draw (4, 4) -- ++ (1,0);
\node[rotate=0,black] at (4.8, 4.5) {3};
\node[rotate=0,black] at (4.5, 4.8) {3};
\node[rotate=0,black] at (4.2, 4.5) {1};
\node[rotate=0,black] at (4.5, 4.2) {2};
\end{tikzpicture}}
    \caption{Left: a Wang tiling with Jeandel-Rao tiles. 
             Middle: the 2-dimensional word over the 
             alphabet $\llbracket 0,10\rrbracket$
             giving the indices of Jeandel-Rao tiles 
             appearing in the tiling on the left.
             Right: letters of the 2-dimensional word 
             embedded in the tiling at the center of each tile.}
    \label{fig:tiling-vs-word}
\end{figure}

\subsection{$d$-dimensional language}

A subset $L\subseteq\A^{*^d}$ is called a $d$-dimensional \emph{language}. The
\emph{factorial closure} of a language $L$ is
\begin{equation*}
    \overline{L}^{Fact}
    = \{u\in\A^{*^d} \mid u\text{ is a $d$-dimensional subword of some } 
                           v\in L\}.
\end{equation*}
A language $L$ is \emph{factorial} if $\overline{L}^{Fact}=L$.
All languages considered in this contribution are factorial.
Given a tiling $x\in\A^{\Z^d}$, the \emph{language} $\L(x)$ defined by $x$ is
\begin{equation*}
    \L(x) = \{u\in\A^{*^d} \mid u\text{ is a $d$-dimensional subword of } x\}.
\end{equation*}
The \emph{language} of a subshift $X\subseteq\A^{\Z^d}$ is
    $\L_X = \cup_{x\in X} \L(x)$.
Conversely, given a factorial language $L\subseteq\A^{*^d}$ we define the subshift
\begin{equation*}
    \X_L = \{x\in\A^{\Z^d}\mid \L(x)\subseteq L\}.
\end{equation*}
A $d$-dimensional subword $u\in\A^{*^d}$ is \emph{allowed} in a 
subshift $X\subset\A^{\Z^d}$ if $u\in\L_X$
and it is \emph{forbidden} in $X$ if $u\notin\L_X$.
A language $L\subseteq\A^{*^d}$ is \emph{forbidden} in a 
subshift
$X\subset\A^{\Z^d}$ if $L\cap\L_X=\varnothing$.

\subsection{$d$-dimensional morphisms}

In this section, we generalize the definition of $d$-dimensional morphisms
\cite{MR2579856} to the case where the domain and codomain are different as for
$S$-adic systems \cite{MR3330561}.

Let $\A$ and $\B$ be two alphabets.
Let $L\subseteq\A^{*^d}$ be a factorial language.
A function $\omega:L\to\B^{*^d}$ is a \emph{$d$-dimensional
morphism} if for every
$i$ with $1\leq i\leq d$,
and every $u,v\in L$ such that $u\odot^i v\in L$ is defined
we have
that the concatenation $\omega(u)\odot^i \omega(v)$
in direction $\be_i$ is defined and
\begin{equation*}
    \omega(u\odot^i v) = \omega(u)\odot^i \omega(v).
\end{equation*}
Note that the left-hand side of the equation is defined since
$u\odot^i v$ belongs to the domain of $\omega$.
A $d$-dimensional morphism $L\to\B^{*^d}$ is thus completely defined from the
image of the letters in $\A$, so we sometimes denote
a $d$-dimensional morphism as a rule $\A\to\B^{*^d}$ 
when the language $L$ is unspecified.

The next lemma can be deduced from the definition.
It says that when $d\geq 2$ every $d$-dimensional morphism
defined on the whole set $L=\A^{*^d}$ is uniform.
We say that a $d$-dimensional morphism $\omega:L\to\B^{*^d}$ is \emph{uniform}
if there exists a shape $\bn\in\N^d$ such that $\omega(a)\in\B^\bn$ for
every letter $a\in\A$.
These are called block-substitutions in \cite{frank_introduction_2018}.

\begin{lemma}
If $d\geq 2$, every $d$-dimensional morphism $\omega:\A^{*^d}\to\B^{*^d}$ is uniform.
\end{lemma}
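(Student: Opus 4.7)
The plan is to exploit the fact that when $d\geq 2$ there are at least two independent directions in which one can concatenate, and to compare the shapes of $\omega(a)$ and $\omega(b)$ for arbitrary letters $a,b\in\A$ using concatenations in each of these directions.

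First I would observe that each letter $a\in\A$, viewed as an element of $\A^{*^d}$, has shape $(1,1,\dots,1)$. Hence for any two letters $a,b\in\A$ and any direction $i\in\{1,\dots,d\}$, the concatenation $a\odot^i b$ is defined (the shapes trivially agree in every coordinate except $i$, where they happen to agree too). Since $L=\A^{*^d}$, every such word lies in the domain of $\omega$, and the morphism property forces $\omega(a)\odot^i\omega(b)$ to be defined as well. By the definition of $\odot^i$, this means $\shape(\omega(a))$ and $\shape(\omega(b))$ agree in every coordinate except possibly the $i$-th.

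Next I would run this argument for two distinct directions, which is the only place $d\geq 2$ enters. Choose $i_1\neq i_2$ in $\{1,\dots,d\}$. From the constraint coming from $i_1$, the shapes $\shape(\omega(a))$ and $\shape(\omega(b))$ coincide in every coordinate except possibly $i_1$; from the constraint coming from $i_2$, they coincide in every coordinate except possibly $i_2$. Intersecting these two conditions yields agreement in all coordinates, so $\shape(\omega(a))=\shape(\omega(b))$. Since $a,b\in\A$ were arbitrary, there is a common shape $\bn\in\N^d$ with $\omega(a)\in\B^{\bn}$ for every $a\in\A$, which is precisely the definition of uniformity.

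There is essentially no hard step here: the result follows directly by unpacking the definitions, and the role of $d\geq 2$ is only to ensure that two distinct indices $i_1,i_2$ exist so that the two compatibility conditions together pin down every coordinate of the shape. The contrast with $d=1$ is worth noting: in dimension one there is a single direction of concatenation, so the argument collapses and nonuniform morphisms (ordinary word morphisms) are perfectly consistent with the definition.
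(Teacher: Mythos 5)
Your proof is correct. The paper offers no proof of this lemma (it only remarks that it ``can be deduced from the definition''), and your argument is exactly that deduction: since any two letters have shape $(1,\dots,1)$, the concatenations $a\odot^{i}b$ lie in the domain $\A^{*^d}$ for every direction $i$, forcing $\shape(\omega(a))$ and $\shape(\omega(b))$ to agree outside coordinate $i$, and intersecting over two distinct directions (available precisely when $d\geq 2$) pins down all coordinates.
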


Therefore, to consider non-uniform $d$-dimensional morphisms when $d\geq 2$, we
need to restrict the domain to a strict subset $L\subsetneq\A^{*^d}$.
In \cite{MR2579856} and \cite[p.144]{MR1014984}, they consider the case $\A=\B$
and they restrict the domain of $d$-dimensional morphisms to the language they
generate.

Given a language $L\subseteq\A^{*^d}$ of $d$-dimensional words and 
a $d$-dimensional morphism $\omega:L\to\B^{*^d}$, we define the image of the
language $L$ under $\omega$ as the language
\begin{equation*}
\overline{\omega(L)}^{Fact}
    = \{u\in\B^{*^d} \mid u\text{ is a $d$-dimensional subword of }
                  \omega(v) \text{ with } v\in L\}
    \subseteq \B^{*^d}.
\end{equation*}

Let $L\subseteq\A^{*^d}$ be a factorial language
and $\Xcal_L\subseteq\A^{\Z^d}$ be the subshift generated by $L$.
A $d$-dimensional morphism 
$\omega:L \to\B^{*^d}$ 
can be extended to a $d$-dimensional morphism
$\omega:\Xcal_L\to\B^{\Z^d}$
in such a way that the origin of $\omega(x)$ is at zero position
in the word $\omega(x_\zero)$
for all $x\in\Xcal_L$.
In general, the closure under the shift of the image of a subshift $X\subseteq\A^{\Z^d}$ under $\omega$
is the subshift
\begin{equation*}
\overline{\omega(X)}^{\sigma}
    = \{\sigma^\bk\omega(x)\in\B^{\Z^d} \mid \bk\in\Z^d, x\in X\}
    \subseteq \B^{\Z^d}.
\end{equation*}
Now we show that $d$-dimensional morphisms preserve minimality of subshifts.

\begin{lemma}\label{lem:minimal-implies-minimal}
    Let $\omega:X\to\B^{\Z^d}$ be a $d$-dimensional morphism for some 
    $X\subseteq\A^{\Z^d}$.
    If $X$ is a minimal subshift, then $\overline{\omega(X)}^{\sigma}$ is
    a minimal subshift.
\end{lemma}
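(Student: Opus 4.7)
My plan is to use the standard characterization that a subshift $Y$ is minimal if and only if every finite pattern $q \in \L_Y$ appears in every $y \in Y$ (equivalently, every point of $Y$ is uniformly recurrent; the equivalence follows by a compactness argument on the open cover of $Y$ by the cylinders ``$q$ appears within distance $N$ of the origin''). Setting $Y := \overline{\omega(X)}^\sigma$, it suffices to show that any finite pattern appearing somewhere in $Y$ in fact appears in every $y \in Y$.

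First I would observe that since $Y$ is the shift-closure of $\omega(X)$, any finite pattern $q \in \L_Y$ must already occur as a subword of $\omega(x)$ for some $x \in X$. Indeed, a point of $Y$ is a limit of shifts $\sigma^{\bk_n} \omega(x_n)$, and if $q$ appears in such a limit then, by finiteness of $q$, it appears in $\sigma^{\bk_n} \omega(x_n)$ for all large $n$, hence in $\omega(x_n)$. Consequently $q$ is a subword of $\omega(p)$ for some finite pattern $p \in \L_X$.

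Next, by minimality of $X$, the pattern $p$ is uniformly recurrent: a compactness argument (of the sort mentioned above) gives an integer $N$ such that $p$ occurs inside every subword of shape $(N, \dots, N)$ of every $x' \in X$. Since $\omega$ is a $d$-dimensional morphism, the image $\omega(p)$ is a single finite pattern of a fixed shape depending only on $p$, and the compatibility $\omega(u \odot^i v) = \omega(u) \odot^i \omega(v)$ guarantees that each occurrence of $p$ in $x'$ produces a genuine occurrence of $\omega(p)$ inside $\omega(x')$. Hence $\omega(p)$, and a fortiori $q$, occurs in $\omega(x')$ with uniformly bounded gaps for every $x' \in X$.

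Finally, given any $y \in Y$, write $y = \lim_n \sigma^{\bk_n} \omega(x_n')$ with $x_n' \in X$. For any finite box $R \subset \Z^d$ around the origin, one has $y_{|R} = (\sigma^{\bk_n} \omega(x_n'))_{|R}$ for all large $n$. Choosing $R$ larger than the bounded-gap constant of $q$ inside any $\omega(x')$ forces $q$ to appear inside $y_{|R}$, and therefore in $y$. This holds for every $y \in Y$, so $Y$ is minimal. The only real subtlety I foresee is controlling the recurrence bound of $\omega(p)$ in $\omega(x')$ when $\omega$ is non-uniform, but this causes no trouble because the relevant quantity is the diameter of the single finite pattern $\omega(p)$, which is determined by $p$ alone, not by the variable shapes $\omega(a)$ of individual letters.
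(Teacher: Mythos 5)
Your proof is correct, but it follows a genuinely different route from the paper's. The paper argues directly from the topological definition of minimality: it takes an arbitrary nonempty closed shift-invariant $Z\subseteq\overline{\omega(X)}^{\sigma}$, picks $z=\sigma^{\bk'}\omega(x')\in Z$ and $u=\sigma^{\bk}\omega(x)$, uses minimality of $X$ to write $x=\lim_n\sigma^{\bk_n}x'$, and pushes this through $\omega$ by continuity to get $u=\lim_n\sigma^{\bk+\bh_n-\bk'}z\in Z$; the only delicate point there is producing the shift exponents $\bh_n$ that intertwine $\omega$ with $\sigma^{\bk_n}$ when $\omega$ is non-uniform. You instead work with the combinatorial characterization of minimality via languages: every $q\in\L_{\overline{\omega(X)}^{\sigma}}$ sits inside $\omega(p)$ for a finite $p\in\L_X$, minimality of $X$ plus compactness gives a uniform recurrence bound for $p$, and the concatenation property $\omega(u\odot^i v)=\omega(u)\odot^i\omega(v)$ transports occurrences of $p$ to occurrences of $\omega(p)$, so $q$ occurs syndetically in every point of the image subshift. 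Your version costs an extra compactness argument (for the uniform bound $N$) but buys a fully finitary treatment that sidesteps the continuity-and-commutation step entirely; your closing remark about non-uniform morphisms is the right one, with the small addendum that the bounded-gap constant in $\omega(x')$ also uses that the letter images $\omega(a)$ have uniformly bounded shape (automatic since $\A$ is finite). One cosmetic caution: your parenthetical ``equivalently, every point of $Y$ is uniformly recurrent'' is not by itself equivalent to minimality (a disjoint union of two minimal subshifts is a counterexample); the characterization you actually use, namely that every pattern of $\L_Y$ occurs in every point of $Y$, is the correct one and your argument relies only on that.
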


\begin{proof}
    Let $\varnothing\neq Z\subseteq\overline{\omega(X)}^{\sigma}$ be a closed
    shift-invariant subset.
    We want to show that $\overline{\omega(X)}^{\sigma}\subseteq Z$.
    Let $u\in\overline{\omega(X)}^{\sigma}$.
    Thus $u=\sigma^\bk\omega(x)$ for some $\bk\in\Z^d$ and $x\in X$.
    Since $Z\neq\varnothing$, there exists $z\in Z$.
    Thus $z=\sigma^{\bk'}\omega(x')$ for some $\bk'\in\Z^d$ and $x'\in X$.
    Since $X$ is minimal, there exists a sequence $(\bk_n)_{n\in\N}$,
    $\bk_n\in\Z^d$, such that $x=\lim_{n\to\infty}\sigma^{\bk_n} x'$.
    For some other sequence $(\bh_n)_{n\in\N}$, $\bh_n\in\Z^d$, we have
    \begin{equation*}
        u 
        = \sigma^\bk\omega(x) 
        = \sigma^\bk\omega\left(\lim_{n\to\infty}\sigma^{\bk_n} x'\right) 
        = \sigma^\bk\lim_{n\to\infty}\sigma^{\bh_n} \omega\left(x'\right)
        = \lim_{n\to\infty}\sigma^{\bk+\bh_n-\bk'} z.
    \end{equation*}
    Since $Z$ is closed and shift-invariant, it follows that $u\in Z$.
\end{proof}

Suppose now that $\A=\B$.
We say that a $d$-dimensional morphism $\omega:L\to\A^{*^d}$, with $L\subset\A^{*^d}$, is
\emph{expansive}
if for every $a\in\A$ and $K\in\N$,
there exists $m\in\N$ such that 
$\min(\shape(\omega^m(a)))>K$.
We say that $\omega$ is \emph{primitive}
if there exists $m\in\N$ such that
for every $a,b\in\A$ the letter $b$ occurs in $\omega^m(a)$.

The definition of prolongable substitutions \cite[Def.
1.2.18--19]{MR2742574} can be adapted in the case of $d$-dimensional morphisms.
Let $\omega:L \to\A^{*^d}$ with $L\subseteq\A^{*^d}$ be a $d$-dimensional morphism.
Let $s\in\{+1,-1\}^d$.
We say that $\omega$ is \emph{prolongable on a letter $a\in\A$} in the
hyperoctant of sign $s$ if the letter $a$ appears in the appropriate corner of
its own image $\omega(a)$ more precisely at position
$p=(p_1,\dots,p_d)\in\N^d$ where
\[
p_i = 
\begin{cases}
0       & \text{if}\quad s_i = +1,\\
n_i-1   & \text{if}\quad s_i = -1.
\end{cases}
\]
where $\bn=(n_1,\dots,n_d)\in\N^d$ is the shape of $\omega(a)$.
If $\omega$ is prolongable on letter $a\in\A$ in the hyperoctant of
sign $s$ 
and if
$\lim_{m\to\infty}\min(\shape(\omega^m(a)))=\infty$, then
$\lim_{m\to\infty}\omega^m(a)$ is a $d$-dimensional infinite word
$s_0\N\times\dots\times s_{d-1}\N\to\A$.

\subsection{Self-similar subshifts}\label{sec:self-similar}

In this section we consider languages and subshifts defined from substitutions
leading to self-similar structures.
A subshift $X\subseteq\A^{\Z^d}$ (resp. a language $L\subseteq\A^{*^d}$)
is \emph{self-similar}
if there exists an expansive
$d$-dimensional morphism $\omega:\A\to\A^{*^d}$ such that
$X=\overline{\omega(X)}^\sigma$
(resp.  $L=\overline{\omega(L)}^{Fact}$).

Self-similar languages and subshifts can be constructed by iterative
application of the morphism $\omega$ starting with the letters.
The \emph{language} $\L_\omega$ defined by an expansive $d$-dimensional
morphism $\omega:\A\to\A^{*^d}$ is
\begin{equation*}
    \L_\omega = \{u\in\A^{*^d} \mid u\text{ is a $d$-dimensional subword of }
    \omega^n(a) \text{ for some } a\in\A\text{ and } n\in\N \}.
\end{equation*}
It satisfies
$\L_\omega=\overline{\omega(\L_\omega)}^{Fact}$
and thus is self-similar.
The \emph{substitutive shift} $\X_\omega=\X_{\L_\omega}$
defined from the language of $\omega$ is a self-similar subshift.
If $\omega$ is primitive then $\X_\omega$ is minimal using standard arguments
\cite[\S 5.2]{MR2590264}.
Thus $\X_\omega$ is the smallest nonempty subshift
$X\subseteq\A^{\Z^d}$ satisfying $X=\overline{\omega(X)}^{\sigma}$.

\subsection{$d$-dimensional recognizability and aperiodicity}\label{sec:recognizability}

The definition of recognizability dates back to the work of Host, Quéffelec and
Mossé \cite{MR1168468}. See also \cite{akiyama_mosse_2017} who proposed a
completion of the statement and proof for B. Mossé's unilateral recognizability
theorem.
The definition introduced below is based on work of Berthé et al.
\cite{MR4015135} on the recognizability in the case
of $S$-adic systems where more than one substitution is involved.

Let $X\subseteq\A^{\Z^d}$ and
$\omega:X\to\B^{\Z^d}$ be a $d$-dimensional morphism.
If $y\in\overline{\omega(X)}^{\sigma}$, i.e.,
$y=\sigma^\bk\omega(x)$ for some $x\in X$ and $\bk\in\Z^d$, where $\sigma$ is
the $d$-dimensional shift map, we say that $(\bk,x)$ is a
\emph{$\omega$-representation of $y$}. We say that it is \emph{centered} if
$y_\zero$ lies inside of the image of $x_\zero$, i.e., if
$\zero\leq\bk<\shape(\omega(x_\zero))$ coordinate-wise.
We say that $\omega$ is \emph{recognizable in $X\subseteq\A^{\Z^d}$}
if each $y\in\B^{\Z^d}$ has at most one centered $\omega$-representation 
$(\bk,x)$ with $x\in X$.


The next proposition is well-known, see \cite{MR1637896,MR1168468}, who showed that
recognizability and aperiodicity are equivalent for primitive substitutive
sequences. We state only one direction (the easy one) of the equivalence.
Its proof with the same notations can be found in
\cite{MR3978536}.

\begin{proposition}\label{prop:expansive-recognizable-aperiodic}
    Let $X\subseteq\A^{\Z^d}$ and
    $\omega:X\to\A^{\Z^d}$ be an expansive $d$-dimensional morphism.
    If $X$ is a self-similar subshift such that
    $\overline{\omega(X)}^\sigma=X$ and
    $\omega$ is recognizable in $X$, then $X$ is aperiodic.
\end{proposition}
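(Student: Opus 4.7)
The plan is to argue by contradiction: suppose $X$ contains some $x$ with a nontrivial period $\bp\in\Z^d\setminus\{\zero\}$, i.e., $\sigma^\bp x=x$, and derive a contradiction by showing that $\bp$ must be trivial, exploiting that arbitrarily large iterates of $\omega$ remain recognizable while the tiles they produce grow without bound.

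The first step will be to note that the iterates $\omega^m\colon X\to X$ are well-defined, since $\omega(X)\subseteq\overline{\omega(X)}^\sigma=X$, and to verify by induction on $m$ that each $\omega^m$ is recognizable in $X$. The key observation is that a centered $\omega^m$-representation $(\bk,z)$ of $y$ induces a centered $\omega$-representation $(\bk,\omega^{m-1}(z))$ of $y$, since $\shape(\omega(\omega^{m-1}(z)_\zero)) = \shape(\omega^m(z_\zero))$ so the centering inequality is preserved; recognizability of $\omega$ at this outer layer together with recognizability of $\omega^{m-1}$ applied to the resulting equation $\omega^{m-1}(z_1)=\omega^{m-1}(z_2)$ (both with the trivially centered representations $(\zero,z_i)$) yields the inductive conclusion.

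The core step will apply, for each $m$, the following reasoning. Since $x\in X=\overline{\omega^m(X)}^\sigma$, there is a centered $\omega^m$-representation $(\bk,y)$ of $x$ with $y\in X$, depending implicitly on $m$. The image $\omega^m(y)$ partitions $\Z^d$ into translated disjoint blocks $\omega^m(y_\bq)$ placed at well-defined offsets $\mathrm{off}_m(\bq)\in\Z^d$ with $\mathrm{off}_m(\zero)=\zero$. The shifted origin $\bk+\bp$ lies in a unique such block, say the one at index $\bq$, so setting $\bk'=\bk+\bp-\mathrm{off}_m(\bq)$ produces a second centered $\omega^m$-representation $(\bk',\sigma^{\bq}y)$ of the same point $x=\sigma^\bp x$. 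Recognizability of $\omega^m$ then forces $\bk=\bk'$ and $y=\sigma^{\bq}y$; the first identity simplifies to $\bp=\mathrm{off}_m(\bq)$.

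The final step will invoke expansivity: by hypothesis, $\min(\shape(\omega^m(a)))\to\infty$ as $m\to\infty$ for every $a\in\A$, so any offset $\mathrm{off}_m(\bq)$ with $\bq\neq\zero$ has at least one coordinate whose absolute value tends to infinity with $m$. For $m$ large enough, no such offset can equal the fixed vector $\bp$, which forces $\bq=\zero$ and therefore $\bp=\mathrm{off}_m(\zero)=\zero$, contradicting the choice of $\bp$. The delicate point throughout is the bookkeeping: one must check that $(\bk',\sigma^{\bq}y)$ is genuinely centered, i.e., $\zero\leq\bk'<\shape(\omega^m((\sigma^{\bq}y)_\zero))$, and that recognizability propagates to all iterates as claimed. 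Both are routine unpackings of the definitions, but they form the real technical content of the argument.
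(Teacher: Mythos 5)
Your overall strategy --- pull the period $\bp$ back through the substitution using recognizability, then let expansivity force $\bp=\zero$ --- is the standard one and is essentially the argument the paper relies on (it defers the proof to the earlier reference, and gives the same style of computation in the proof of Lemma~\ref{lem:aperiodic-implies-aperiodic}). Your core step and the final expansivity step are correct. The gap is in your induction establishing that $\omega^m$ is recognizable. You claim that a centered $\omega^m$-representation $(\bk,z)$ of $y$ induces a centered $\omega$-representation $(\bk,\omega^{m-1}(z))$ because $\shape(\omega(\omega^{m-1}(z)_\zero))=\shape(\omega^m(z_\zero))$. That equality is false: $\omega^{m-1}(z)_\zero$ is a single letter $a$ of the configuration $\omega^{m-1}(z)$, so the centering condition for the $\omega$-representation is $\zero\leq\bk<\shape(\omega(a))$, whereas you only know $\zero\leq\bk<\shape(\omega^m(z_\zero))=\shape(\omega(\omega^{m-1}(z_\zero)))$, the shape of the image of the whole word $\omega^{m-1}(z_\zero)$ --- a much larger box. (For $a\mapsto ab$, $b\mapsto a$ one has $\shape(\omega(a))=2$ but $\shape(\omega^2(a))=3$.) So the induced $\omega$-representation is generally not centered and recognizability of $\omega$ cannot be applied to it as written.

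The lemma you need (powers, or more generally compositions, of recognizable morphisms are recognizable) is true, but its proof requires re-centering: write $\bk=\mathrm{off}(\bj)+\bl$ where $\bj$ with $\zero\leq\bj<\shape(\omega^{m-1}(z_\zero))$ indexes the letter of $\omega^{m-1}(z)$ whose $\omega$-block contains position $\bk$; then $(\bl,\sigma^{\bj}\omega^{m-1}(z))$ is centered for $\omega$, recognizability of $\omega$ identifies $\bl$ and $\sigma^{\bj}\omega^{m-1}(z)$, and recognizability of $\omega^{m-1}$ applied to the centered representations $(\bj_i,z_i)$ of that common configuration identifies $\bj$ and $z$, whence $\bk$. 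Alternatively --- and this is how the cited proof proceeds --- you can avoid the lemma entirely by iterating one step at a time: recognizability of $\omega$ alone gives $x=\sigma^{\bk_1}\omega(x_1)$ with $x_1=\sigma^{\bq_1}x_1$ and $\bp=\mathrm{off}(\bq_1)$; if $\bq_1=\zero$ you are done, otherwise repeat with $x_1$ and $\bq_1$, and after $m$ steps the composed offsets give $\|\bp\|_\infty\geq\min_a\min(\shape(\omega^m(a)))\to\infty$, the same contradiction without ever invoking recognizability of $\omega^m$.
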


The next lemma is very important for the current contribution.

\begin{lemma}\label{lem:aperiodic-implies-aperiodic}
    Let $\omega:X\to \B^{\Z^d}$ be some $d$-dimensional morphism where
    $X\subseteq\A^{\Z^d}$ is a subshift.
    If $X$ is aperiodic and $\omega$ is recognizable in $X$, then
    $\overline{\omega(X)}^\sigma$ is aperiodic.
\end{lemma}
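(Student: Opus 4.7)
The plan is to argue by contradiction. Suppose $Y := \overline{\omega(X)}^\sigma$ admits a periodic point, i.e., there exist $y \in Y$ and $\mathbf{p} \in \Z^d \setminus \{\zero\}$ with $\sigma^{\mathbf{p}}(y) = y$. The strategy is to translate this periodicity of $y$ into a periodicity of some preimage $x \in X$, and then invoke the aperiodicity of $X$ for a contradiction. The bridge between the two worlds is recognizability: it pins down the unique centered $\omega$-representation of $y$, which the shift $\sigma^{\mathbf{p}}$ must preserve.

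First I would establish that every element of $Y$ has at least one centered $\omega$-representation. By definition of $Y$, write $y = \sigma^{\mathbf{k}} \omega(x)$ for some $x \in X$ and $\mathbf{k} \in \Z^d$. Because the image $\omega(x)$ decomposes as a tiling of $\Z^d$ by the supertiles $\{\omega(x_\mathbf{n})\}_{\mathbf{n} \in \Z^d}$ (with $\omega(x_\zero)$ containing the origin, by the convention fixed when $\omega$ was extended to $X$), the position $\mathbf{k}$ falls inside a unique supertile $\omega(x_\mathbf{n})$ at some inner offset $\mathbf{k}^\ast$ with $\zero \leq \mathbf{k}^\ast < \shape(\omega(x_\mathbf{n}))$. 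Then $(\mathbf{k}^\ast, \sigma^{\mathbf{n}} x)$ is a centered $\omega$-representation of $y$, and I use this one in what follows.

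Next I apply the periodicity. Fix a centered representation $(\mathbf{k}, x)$ of $y$, so that $y = \sigma^{\mathbf{k}}\omega(x)$ with $\zero \leq \mathbf{k} < \shape(\omega(x_\zero))$. Then
\begin{equation*}
    y \;=\; \sigma^{\mathbf{p}}(y) \;=\; \sigma^{\mathbf{p}+\mathbf{k}}\omega(x),
\end{equation*}
so the origin of $y$ corresponds to position $\mathbf{p}+\mathbf{k}$ in the tiling $\omega(x)$. Locate the supertile $\omega(x_\mathbf{n})$ containing $\mathbf{p}+\mathbf{k}$ and let $\mathbf{k}'$ be the corresponding inner offset; then $(\mathbf{k}', \sigma^{\mathbf{n}} x)$ is another centered $\omega$-representation of $y$, with $\sigma^{\mathbf{n}}x \in X$ because $X$ is shift-invariant.

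Finally, since $\omega$ is recognizable in $X$, the two centered representations must coincide, forcing $\mathbf{k}' = \mathbf{k}$ and $\sigma^{\mathbf{n}} x = x$. If $\mathbf{n} \neq \zero$, then $x$ is periodic, contradicting the hypothesis that $X$ is aperiodic. If $\mathbf{n} = \zero$, then the supertile is $\omega(x_\zero)$ starting at the origin, so $\mathbf{k}' = \mathbf{p}+\mathbf{k}$; combined with $\mathbf{k}' = \mathbf{k}$ this yields $\mathbf{p} = \zero$, contradicting $\mathbf{p} \neq \zero$. Either way we reach a contradiction, proving aperiodicity of $\overline{\omega(X)}^\sigma$. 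The main obstacle — and the point to handle carefully — is the bookkeeping in the second step: because $\omega$ need not be uniform here (some tiles map to a single tile, others to a domino), I must justify that the supertile decomposition of $\omega(x)$ really partitions $\Z^d$ and that the map $(\mathbf{n}, x) \mapsto (\mathbf{k}', \sigma^{\mathbf{n}} x)$ used in both existence and uniqueness is well defined. This is precisely the content of extending a morphism $L \to \mathcal{B}^{*^d}$ to the subshift $X \to \mathcal{B}^{\Z^d}$ as set up earlier in the preliminaries.
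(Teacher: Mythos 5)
Your proof is correct and follows essentially the same route as the paper: produce a second centered $\omega$-representation of $y$ from the shifted equality $y=\sigma^{\bp+\bk}\omega(x)$ and let recognizability force either $\bp=\zero$ or a nontrivial period of $x$. The only differences are presentational — you verify existence of a centered representation explicitly and split off the $\bn=\zero$ case at the end, where the paper absorbs it into the remark that the representation $(\bp+\bk,x)$ cannot be centered.
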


\begin{proof}
    Let $y\in\overline{\omega(X)}^\sigma$.
    Then, there exist $\bk\in\Z^d$ and $x\in X$ such that 
    $(\bk, x)$ is a centered $\omega$-representation of $y$, i.e.,
    $y=\sigma^\bk\omega(x)$.
    Suppose by contradiction that $y$ has a nontrivial period
    $\bp\in\Z^d\setminus\zero$.
    Since $y =\sigma^\bp y =\sigma^{\bp+\bk}\omega(x)$,
    we have that
    $(\bp+\bk, x)$ is a $\omega$-representation of $y$.
    Since $\omega$ is recognizable, this representation is not centered.
    Therefore there exists $\bq\in\Z^d\setminus\zero$ such that
    $y_\zero$ lies in the image of $x_\bq=(\sigma^\bq x)_\zero$.
    Therefore there exists $\bk'\in\Z^d$ such that
    $(\bk',\sigma^\bq x)$ is a centered $\omega$-representation of $y$.
    Since $\omega$ is recognizable, we conclude that
    $\bk=\bk'$ and $x=\sigma^\bq x$. Then $x\in X$ is periodic which is a
    contradiction.
\end{proof}

%


Let $X$, $Y$ be two subshifts
and $\omega:X\to Y$ be a $d$-dimensional morphism.
In general, $\omega(X)$ is not closed under the shift which implies that
$\omega$ is not onto. This motivates the following definition.
If $Y=\overline{\omega(X)}^\sigma$, then
we say that $\omega$
is \emph{onto up to a shift}.

\begin{remark}
We believe the existence of a $d$-dimensional morphism $\omega:X\to \B^{\Z^d}$
which is recognizable in $X$ implies the existence of a
\emph{homeomorphism} between $X$ and $\overline{\omega(X)}^\sigma$ where 
elements of $X$ and $\overline{\omega(X)}^\sigma$ are tilings of $\R^d$ instead of $\Z^d$
and the alphabet is replaced by a set of tiles in $\R^d$
(as done in \cite{MR1452190} or see also stone inflations in \cite{MR3136260}).
We do not attempt a proof of this here
as we use in this contribution the point of view of symbolic dynamics and tilings
of $\Z^d$ which makes some notions easier (concatenation, morphisms).
On the other hand, notions like injectivity, surjectivity and
homeomorphisms are less natural as we must use recognizability and expressions
like ``onto up to a shift'' instead.
\end{remark}


%

\part{Desubstituting Wang tilings with markers}\label{part:1}

\section{Markers, desubstitution of Wang shifts and algorithms}
\label{sec:markers}

In this section, we recall from \cite{MR3978536} the notion of
markers and the result on the existence of a $2$-dimensional morphism 
between two Wang shifts that is recognizable and onto up to a shift.
We also propose new algorithms to find markers and desubstitutions of Wang
shifts.

\subsection{Markers}

In what follows we consider a set of Wang tiles as an alphabet and a Wang tiling as
$2$-dimensional word. This allows to use the concepts of languages,
$2$-dimensional morphisms, recognizability introduced in the preliminaries in
the context of Wang tilings.

\begin{definition}\label{def:markers}
    Let $\T$ be a set of Wang tiles 
    and let $\Omega_\T$ be its Wang shift.
    A nonempty subset $M\subset\T$ is called \emph{markers in the
    direction $\be_1$} 
    if positions of tiles from $M$ are nonadjacent columns, that is,
    for all tilings $w\in\Omega_\T$
    there exists $A\subset\Z$ such that
    \begin{equation*}
        w^{-1}(M) = A \times \Z
        \quad
        \text{ with }
        \quad
        1\notin A-A.
    \end{equation*}
    It is called \emph{markers in the direction $\be_2$} if
    positions of tiles from $M$ are nonadjacent rows, that is,
    for all tilings $w\in\Omega_\T$
    there exists $B\subset\Z$ such that
    \begin{equation*}
        w^{-1}(M) = \Z \times B
        \quad
        \text{ with }
        \quad
        1\notin B-B.
    \end{equation*}
\end{definition}

Note that it follows from the definition that a subset of markers is a proper
subset of $\T$ as the case $M=\T$ is impossible.

\begin{figure}[h]
\begin{center}
    \includegraphics[width=.85\linewidth]{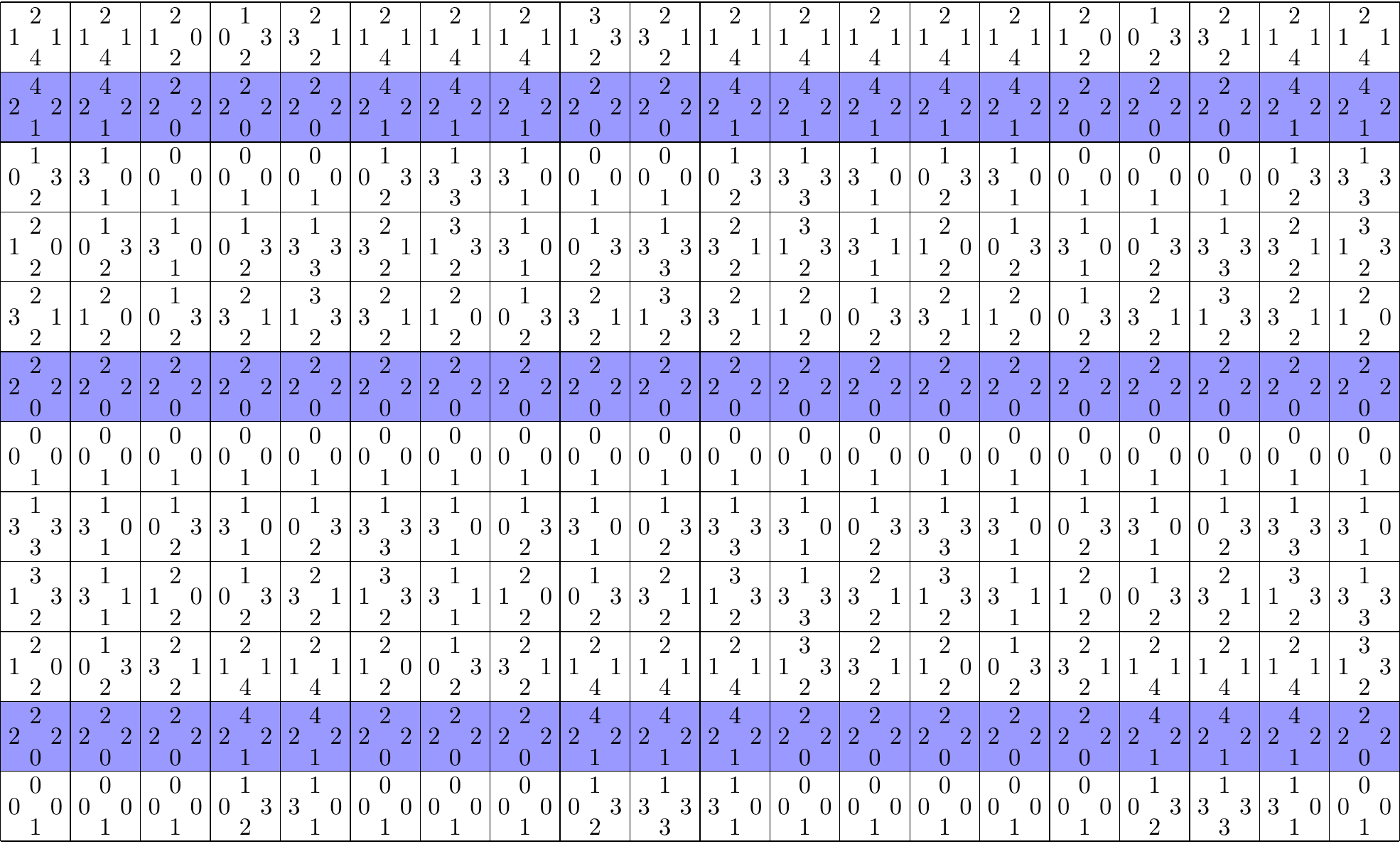}
\end{center}
\caption{ 
    This is a copy without colors of the Jeandel-Rao tiling shown in
    Figure~\ref{fig:jeandel-rao-some-tiling}.
    The tiles $M=\{(2,4,2,1), (2,2,2,0)\}\subset\T_0$ are markers in the direction $\be_2$
    and are shown with a blue background. They form complete nonadjacent rows
    of tiles in any tilings in $\Omega_0$.}
\label{fig:jeandel-rao-some-tiling-with-markers}
\end{figure}

Let us illustrate the definition on the set $\T_0$ of Jeandel-Rao tiles.
The two tiles in the subset
$M=\left\{
\raisebox{-3mm}{
\begin{tikzpicture}
[scale=.8]
\tikzstyle{every node}=[font=\footnotesize]
\fill[cyan] (1.0, 0.0) -- (0.5, 0.5) -- (1.0, 1.0);
\fill[lightgray] (0.0, 1.0) -- (0.5, 0.5) -- (1.0, 1.0);
\fill[cyan] (0.0, 0.0) -- (0.5, 0.5) -- (0.0, 1.0);
\fill[red] (0.0, 0.0) -- (0.5, 0.5) -- (1.0, 0.0);
\draw (1.0, 0.0) -- ++ (0,1);
\draw (0.0, 1.0) -- ++ (1,0);
\draw (0.0, 0.0) -- ++ (0,1);
\draw (0.0, 0.0) -- ++ (1,0);
\node[rotate=0,black] at (0.8, 0.5) {2};
\node[rotate=0,black] at (0.5, 0.8) {4};
\node[rotate=0,black] at (0.2, 0.5) {2};
\node[rotate=0,black] at (0.5, 0.2) {1};
\end{tikzpicture}
}
,
\raisebox{-3mm}{
\begin{tikzpicture}
[scale=.8]
\tikzstyle{every node}=[font=\footnotesize]
\fill[cyan] (2.1, 0.0) -- (1.6, 0.5) -- (2.1, 1.0);
\fill[cyan] (1.1, 1.0) -- (1.6, 0.5) -- (2.1, 1.0);
\fill[cyan] (1.1, 0.0) -- (1.6, 0.5) -- (1.1, 1.0);
\fill[white] (1.1, 0.0) -- (1.6, 0.5) -- (2.1, 0.0);
\draw (2.1, 0.0) -- ++ (0,1);
\draw (1.1, 1.0) -- ++ (1,0);
\draw (1.1, 0.0) -- ++ (0,1);
\draw (1.1, 0.0) -- ++ (1,0);
\node[rotate=0,black] at (1.9000000000000001, 0.5) {2};
\node[rotate=0,black] at (1.6, 0.8) {2};
\node[rotate=0,black] at (1.3, 0.5) {2};
\node[rotate=0,black] at (1.6, 0.2) {0};
\end{tikzpicture}
}
\right\}\subset\T_0$
both have left and right color $2$ (blue)
but none of the other nine tiles from $\T_0\setminus M$ have this color on their
left and right sides (see Figure~\ref{fig:jeandel_rao_tile_set}).
Therefore any valid Wang tiling $w:\Z^2\to\T_0$ is made of horizontal lines using either tiles from $M$ or from $\T_0\setminus M$
(see Figure~\ref{fig:jeandel-rao-some-tiling-with-markers}).
Moreover, the horizontal lines using tiles from $M$ are nonadjacent
simply since top colors of tiles from $M$ (2 and 4) do not match their bottom
colors (0 and 1). 
Those two conditions implies that
the positions of the tiles in $M$ in any tiling are non adjacent complete
rows in $\Z^2$, 
that is, $M$ is a subset of
markers in the direction $\be_2$.
Proving that a subset of tiles is a subset of
markers uses very local observations. It leads to the following criteria.

\begin{lemma}\label{lem:criteria-markers}
    Let $\T$ be a set of Wang tiles 
    and let $\Omega_\T$ be its Wang shift.
    A nonempty subset $M\subset\T$ is a subset of markers in the
    direction $\be_1$ if and only if
    \begin{equation*}
        M\odot^1 M, \qquad
        M\odot^2 (\T\setminus M), \qquad
        (\T\setminus M) \odot^2 M
    \end{equation*}
    are forbidden in $\Omega_\T$.
    A nonempty subset $M\subset\T$ is a subset of markers in the
    direction $\be_2$ if and only if
    \begin{equation*}
        M\odot^2 M, \qquad
        M\odot^1 (\T\setminus M), \qquad
        (\T\setminus M) \odot^1 M
    \end{equation*}
    are forbidden in $\Omega_\T$.
\end{lemma}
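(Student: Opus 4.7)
The plan is to prove the biconditional by unpacking Definition~\ref{def:markers}. Since the two halves of the statement are symmetric under the exchange of the coordinates $\be_1$ and $\be_2$, it suffices to treat the case of markers in direction $\be_1$; the statement for direction $\be_2$ then follows by swapping the roles of rows and columns throughout.

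For the forward direction, I would assume $M\subset\T$ is a subset of markers in direction $\be_1$ and pick any tiling $w\in\Omega_\T$, so that $w^{-1}(M)=A\times\Z$ for some $A\subset\Z$ with $1\notin A-A$. If $M\odot^1 M$ occurred as a subword of $w$ at some position $(i,j)$, then both $(i,j)$ and $(i+1,j)$ would lie in $A\times\Z$, giving $i,i+1\in A$ and contradicting $1\notin A-A$. If $M\odot^2(\T\setminus M)$ occurred at position $(i,j)$, then $(i,j)\in A\times\Z$ would force $i\in A$, while $(i,j+1)\notin A\times\Z$ would force $i\notin A$, a contradiction; the pattern $(\T\setminus M)\odot^2 M$ is ruled out by the same argument with the roles reversed.

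For the converse, I would suppose the three patterns are forbidden in $\Omega_\T$, pick any $w\in\Omega_\T$, and set
\[
A=\{i\in\Z : w(i,0)\in M\}.
\]
The key observation is that the forbiddenness of both $M\odot^2(\T\setminus M)$ and $(\T\setminus M)\odot^2 M$ means exactly that for every column $i\in\Z$ and every $j\in\Z$, the tiles $w(i,j)$ and $w(i,j+1)$ are either both in $M$ or both in $\T\setminus M$. Arguing by induction on $|j|$ in both directions, the condition $w(i,j)\in M$ is independent of $j$, so $w^{-1}(M)=A\times\Z$. Finally, if two consecutive integers $i,i+1$ both belonged to $A$, then the tiles $w(i,0),w(i+1,0)\in M$ would form an occurrence of $M\odot^1 M$ in $w$, contradicting the remaining hypothesis; hence $1\notin A-A$, and $M$ is a subset of markers in direction $\be_1$.

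I do not expect any serious obstacle here: the lemma is essentially a reformulation of Definition~\ref{def:markers} in purely local terms, and the only mild subtlety is to notice that the two vertical forbidden patterns together give an equivalence (not just an implication) between $w(i,j)\in M$ and $w(i,j+1)\in M$, which is what allows one to propagate membership in $M$ along an entire column from a single tile.
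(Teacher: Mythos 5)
Your proof is correct and follows essentially the same route as the paper's: unpack Definition~\ref{def:markers} for the forward direction, and for the converse use the two forbidden $\odot^2$ (resp.\ $\odot^1$) dominoes to propagate membership in $M$ along an entire column (resp.\ row) and the remaining forbidden domino to get $1\notin A-A$. The only difference is that you treat the $\be_1$ case explicitly while the paper treats $\be_2$, and you spell out the column-propagation as an induction, which the paper leaves informal.
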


\begin{proof}
    Suppose that $M\subset\T$ is a subset of markers in the
    direction $\be_2$ (the proof for direction $\be_1$ is the similar).
    For any tiling $w\in\Omega_\T$,
    there exists $B\subset\Z$ such that
        $w^{-1}(M) = \Z \times B$ with
        $1\notin B-B$.
    In any tiling $w\in\Omega_\T$,
    this means that the tile to the left and to the right of any tile in $M$
    also belongs to $M$. 
    Therefore, $M\odot^1 (\T\setminus M)$ and $(\T\setminus M) \odot^1 M$
    are forbidden in $\Omega_\T$.
    Moreover, the fact that
        $1\notin B-B$ implies that
        $M\odot^2 M$ is forbidden in $\Omega_\T$.

    Conversely, suppose that
        $M\odot^2 M$,
        $M\odot^1 (\T\setminus M)$ and
        $(\T\setminus M) \odot^1 M$
    are forbidden in $\Omega_\T$.
    The last two conditions implies that 
    in any tiling $w\in\Omega_\T$,
    the tile to the left and to the right
    of any tile in $M$ also belongs to $M$.
    Therefore tiles in $M$ appears as complete rows in $w$, that is,
        $w^{-1}(M) = \Z \times B$ for some $B\subset\Z$.
    Since $M\odot^2 M$ is forbidden in $\Omega_\T$, it means that
    the rows are nonadjacent, or equivalently, $1\notin B-B$.
    We conclude that $M$ is a subset of markers in the
    direction $\be_2$.
\end{proof}

Lemma~\ref{lem:criteria-markers} provides a way to prove that a subset is a
subset of markers and searching for them. For that, we need the following definition.

\begin{definition}[\bf surrounding of radius $r$]
    Let $X=\SFT(\F)\subset\A^{\Z^2}$ be a shift of finite type for some
    finite set $\F$ of forbidden patterns. A $2$-dimensional word
$u\in\A^\bn$,
with $\bn=(n_1,n_2)\in\N^2$,
admits a \emph{surrounding of radius $r\in\N$}
if there exists $w\in\A^{\bn+2(r,r)}$ 
such that 
    $u$ occurs in $w$ at position $(r,r)$
    and
$w$ contains no occurrences of forbidden patterns from $\F$.
\end{definition}

If a word admits a surrounding of radius $r\in\N$, it does not mean
it is in the language of the SFT. But if it admits no surrounding of radius $r$
for some $r\in\N$, then for sure it is not in the language of the SFT.
We state the following lemma in the context of Wang tiles.

\begin{lemma}\label{lem:surrounding}
    Let $\T$ be a set of Wang tiles and $u\in\T^\bn$ be a rectangular pattern
    seen as a $2$-dimensional word with $\bn=(n_1,n_2)\in\N^2$. If
    $u$ is allowed in $\Omega_\T$, then for every $r\in\N$ the word $u$ has a
    surrounding of radius $r$.
\end{lemma}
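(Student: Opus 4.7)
The plan is to unfold the definitions. Saying that $u$ is allowed in $\Omega_\T$ means precisely that $u\in\L_{\Omega_\T}=\bigcup_{w\in\Omega_\T}\L(w)$, so there exist a valid tiling $w\in\Omega_\T$ and a position $\bp\in\Z^2$ at which $u$ occurs in $w$. The natural candidate for a surrounding of radius $r$ is then the restriction of $w$ to the rectangular window of shape $\bn+2(r,r)$ obtained by enlarging the occurrence of $u$ by $r$ cells in every direction.

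More precisely, I would define $v:\llbracket 0,n_1+2r-1\rrbracket\times\llbracket 0,n_2+2r-1\rrbracket\to\T$ by
\[
v(\ba)=w(\ba+\bp-(r,r))
\]
for every $\ba$ in that rectangle. By construction, $v\in\T^{\bn+2(r,r)}$ and for $\ba$ with $0\le a_i<n_i$ one has $v(\ba+(r,r))=w(\ba+\bp)=u(\ba)$, so $u$ occurs in $v$ at position $(r,r)$, which gives the first requirement on a surrounding.

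It remains to check that $v$ contains no occurrences of patterns forbidden in $\Omega_\T$. Since $\Omega_\T$ is the SFT determined by the finite set $\F$ of forbidden horizontal and vertical dominoes with mismatching colors (cf.\ \eqref{eq:SFT}), and since $w\in\Omega_\T$, the tiling $w$ satisfies $\pi_S\circ\sigma^\bn(w)\notin\F$ for every $\bn\in\Z^2$ and every finite $S\subset\Z^2$. Any factor of $v$ is, by the defining equality $v(\ba)=w(\ba+\bp-(r,r))$, also a factor of $w$; hence no element of $\F$ occurs in $v$. This is exactly the condition in the definition of a surrounding of radius $r$, and it holds for every $r\in\N$.

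There is essentially no obstacle: the statement is a direct translation between two equivalent ways of witnessing that a pattern belongs to the language of an SFT (appearance in a biinfinite valid tiling versus extendability to arbitrarily large valid rectangular patches). The only thing worth being careful about is index bookkeeping when locating $u$ inside $v$ via the translation by $\bp-(r,r)$, and the observation that the property ``contains no forbidden pattern from $\F$'' is inherited from $w$ by any rectangular restriction.
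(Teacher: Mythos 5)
Your proof is correct and is exactly the argument the paper has in mind: the lemma is stated without proof there, being regarded as an immediate unfolding of the definitions (an allowed pattern occurs in some valid tiling, and restricting that tiling to the enlarged window yields a surrounding free of forbidden patterns for every $r$). Your careful index bookkeeping simply makes explicit what the paper leaves implicit.
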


Equivalently the lemma says that
if there exists $r\in\N$ such that $u$ has no surrounding of radius~$r$,
then~$u$ is forbidden in $\Omega_\T$ and this is how we use
Lemma~\ref{lem:surrounding} to find markers.
We propose Algorithm~\ref{alg:find-markers} to compute markers from a Wang
tile set and a chosen surrounding radius to bound the computations. 
If the algorithm finds nothing, then maybe there is no markers or maybe 
one should try again after increasing the surrounding radius.
We prove in the next lemma that if the output is nonempty, it contains a subset of markers.

\begin{algorithm}[h]
    \caption{Find markers.
        If no markers are found, one should try increasing the radius $r$.}
    \label{alg:find-markers}
  \begin{algorithmic}[1]
    \Require $\T$ is a set of Wang tiles;
             $i\in\{1,2\}$ is a direction $\be_i$;
             $r\in\N$ is some radius.
      \Function{FindMarkers}{$\T$, $i$, $r$}
        \State $j\gets 3-i$
        \State $D_j \gets \left\{(u,v)\in\T^2\mid \text{ domino } u\odot^jv \text{
             admits a surrounding of radius $r$ with tiles in $\T$}\right\}$
        \State $U \gets \{\{u\}\mid u\in\T\}$
            \Comment Suggestion: use a union-find data structure 
        \ForAll{$(u,v) \in D_j$}
            \State Merge the sets containing $u$ and $v$ in the partition $U$.
        \EndFor
        \State $D_i \gets \left\{(u,v)\in\T^2\mid \text{ domino } u\odot^iv \text{
             admits a surrounding of radius $r$ with tiles in $\T$}\right\}$
         \State\Return $\{\text{set } M \text{ in the partition } U \mid
                               \left(M\times M\right) \cap D_i=\varnothing\}$
      \EndFunction
      \Ensure The output contains zero, one or more subsets of markers 
              in the direction $\be_i$.
  \end{algorithmic}
\end{algorithm}

\begin{lemma}
    If there exists $r\in\N$ and $i\in\{1,2\}$ such that the output of
      $\Call{FindMarkers}{\T, i, r}$
      contains a set $M$, then $M\subset\T$ is a subset of markers in the
      direction $\be_i$.
\end{lemma}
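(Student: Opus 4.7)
The plan is to deduce the conclusion directly from the criterion in Lemma~\ref{lem:criteria-markers} by checking its three forbidden-pattern conditions, using the contrapositive of Lemma~\ref{lem:surrounding} as the only tool relating ``admits no surrounding of radius $r$'' to ``forbidden in $\Omega_\T$''. Fix the notation of the algorithm: $j = 3-i$, and let $M$ be one of the blocks returned by $\Call{FindMarkers}{\T,i,r}$. I need to show that all elements of $M\odot^i M$, of $M\odot^j(\T\setminus M)$, and of $(\T\setminus M)\odot^j M$ are forbidden in $\Omega_\T$.

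For the first condition, observe that the algorithm only returns $M$ when $(M\times M)\cap D_i = \varnothing$, where $D_i$ by construction collects exactly those pairs $(u,v)\in\T^2$ such that the domino $u\odot^i v$ admits a surrounding of radius $r$ with tiles in $\T$. Hence for every $u,v\in M$, the domino $u\odot^i v$ admits no such surrounding, and Lemma~\ref{lem:surrounding} (contrapositive) forces $u\odot^i v$ to be forbidden in $\Omega_\T$.

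For the second and third conditions, I would argue by contradiction. Suppose some $u\in M$ and $v\in\T\setminus M$ are such that $u\odot^j v$ is allowed in $\Omega_\T$. Then $u\odot^j v$ admits a surrounding of radius $r$, so $(u,v)\in D_j$. But then in the main loop the algorithm merges the blocks of the partition $U$ containing $u$ and $v$, so at termination $u$ and $v$ lie in the same block. Since $M$ is a block of that partition and contains $u$, this forces $v\in M$, contradicting $v\in\T\setminus M$. The symmetric argument for $(v,u)\in D_j$ handles $(\T\setminus M)\odot^j M$.

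The three conditions of Lemma~\ref{lem:criteria-markers} in direction $\be_i$ are therefore satisfied, so $M$ is a subset of markers in the direction $\be_i$. There is no real obstacle here: the bookkeeping to keep $i$ and $j=3-i$ in the right roles (dominoes in direction $\be_j$ drive the merging, while dominoes in direction $\be_i$ are what must be absent inside the block) is the only point where a careless writeup could confuse the reader, so I would make that correspondence explicit from the outset.
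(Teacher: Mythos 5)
Your proof is correct and follows essentially the same route as the paper's: both verify the three forbidden-domino conditions of Lemma~\ref{lem:criteria-markers} via the contrapositive of Lemma~\ref{lem:surrounding}, with your write-up merely spelling out the union-find merging step that the paper compresses into ``lines 3 to 6 imply\dots''. The only detail the paper checks that you skip is that $M$ is nonempty (required by the definition of markers), which is immediate since the blocks of the partition $U$ start as singletons and are only ever merged.
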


\begin{proof}
    Suppose that $i=2$, the case $i=1$ being similar.
    The output set $M$ is nonempty since is was created from the union of
    nonempty sets (see lines 4-6 in Algorithm~\ref{alg:find-markers}).
    Using Lemma~\ref{lem:surrounding},
    lines 3 to 6 implies that
        $M\odot^1 (\T\setminus M)$ and
        $(\T\setminus M) \odot^1 M$
    are forbidden in $\Omega_\T$.
    The lines 7 and 8 implies that
        $M\odot^2 M$ is forbidden in $\Omega_\T$.
    Then $M\subset\T$ is a subset of markers in the direction~$\be_i$.
\end{proof}

We believe that if a set of Wang tiles $\T$ has a subset of markers in the
direction $\be_i$ then there exists a surrounding radius $r\in\N$ such that
$\Call{FindMarkers}{\T, i, r}$ outputs this set of markers.
The fact that there is no upper bound for the surrounding radius is related to
the undecidability of the domino problem.
In practice, in the study of Jeandel-Rao tilings, a surrounding radius of 1, 2
or 3 is enough.

\subsection{Desubstitution of Wang shifts}

In this section we prove that
if a set of Wang tiles $\T$ has a subset of marker tiles, then
there exists another set $\Scal$ of Wang tiles and a 
nontrivial recognizable $2$-dimensional morphism
$\Omega_\Scal\to\Omega_\T$ that is onto up to a shift.
Thus, every Wang
tiling by $\T$ is up to a shift the image under a nontrivial $2$-dimensional
morphism $\omega$ of a unique Wang tiling in $\Omega_\Scal$.
We will see that the $2$-dimensional morphism is essentially $1$-dimensional.
The result proved here extends the one proved in \cite{MR3978536}. 

Before stating the result, let us see how the two markers in Jeandel-Rao tiles
allows to desubstitute tilings. We know from the previous section that markers
appears as nonadjacent rows in Jeandel-Rao tilings. Therefore the row above
(and below) some row of markers is made of nonmarker tiles. Let us consider the row above.
The idea is to collapse that row onto the row of markers just below and we perform this using
fusion of tiles.
More precisely, the first marker tile has color 4 on top and only one tile has color 4
on the bottom. Therefore, that marker tile is always matched on top with that tile.
Similarly, the second marker tile has color 2 on top and 
there are only four tiles
that have color 2 on the bottom. Therefore, the second marker
is always matched on top with one of those 4 tiles.
This means that 
one can glue, e.g. with tape, each
marker with one of the possible tiles that can go above of it. Doing so, one
gets five dominoes that may be represented as new Wang tiles with the fusion
operation $\boxminus$ introduced in the preliminaries as schematized in the following equation:
\[
    \left\{
    \raisebox{-9mm}{
    \begin{tikzpicture}[scale=1]
\tikzstyle{every node}=[font=\footnotesize]
\fill[cyan] (1, 0) -- (0.5, 0.5) -- (1, 1);
\fill[lightgray] (0, 1) -- (0.5, 0.5) -- (1, 1);
\fill[cyan] (0, 0) -- (0.5, 0.5) -- (0, 1);
\fill[red] (0, 0) -- (0.5, 0.5) -- (1, 0);
\node[] at (0.5, 0.5) {0};
\draw (1, 0) -- ++ (0,1);
\draw (0, 0) -- ++ (0,1);
\draw (0, 0) -- ++ (1,0);
\node[rotate=0,black] at (0.8, 0.5) {2};
\node[rotate=0,black] at (0.5, 0.8) {4};
\node[rotate=0,black] at (0.2, 0.5) {2};
\node[rotate=0,black] at (0.5, 0.2) {1};
\fill[red] (1, 1) -- (0.5, 1.5) -- (1, 2);
\fill[cyan] (0, 2) -- (0.5, 1.5) -- (1, 2);
\fill[red] (0, 1) -- (0.5, 1.5) -- (0, 2);
\fill[lightgray] (0, 1) -- (0.5, 1.5) -- (1, 1);
\node[] at (0.5, 1.5) {9};
\draw (1, 1) -- ++ (0,1);
\draw (0, 2) -- ++ (1,0);
\draw (0, 1) -- ++ (0,1);
\draw (0, 1) -- ++ (1,0);
\node[rotate=0,black] at (0.8, 1.5) {1};
\node[rotate=0,black] at (0.5, 1.8) {2};
\node[rotate=0,black] at (0.2, 1.5) {1};
\node[rotate=0,black] at (0.5, 1.2) {4};
\end{tikzpicture}
    \begin{tikzpicture}[scale=1]
\tikzstyle{every node}=[font=\footnotesize]
\fill[cyan] (1, 0) -- (0.5, 0.5) -- (1, 1);
\fill[cyan] (0, 1) -- (0.5, 0.5) -- (1, 1);
\fill[cyan] (0, 0) -- (0.5, 0.5) -- (0, 1);
\fill[white] (0, 0) -- (0.5, 0.5) -- (1, 0);
\node[] at (0.5, 0.5) {1};
\draw (1, 0) -- ++ (0,1);
\draw (0, 0) -- ++ (0,1);
\draw (0, 0) -- ++ (1,0);
\node[rotate=0,black] at (0.8, 0.5) {2};
\node[rotate=0,black] at (0.5, 0.8) {2};
\node[rotate=0,black] at (0.2, 0.5) {2};
\node[rotate=0,black] at (0.5, 0.2) {0};
\fill[red] (1, 1) -- (0.5, 1.5) -- (1, 2);
\fill[cyan] (0, 2) -- (0.5, 1.5) -- (1, 2);
\fill[green] (0, 1) -- (0.5, 1.5) -- (0, 2);
\fill[cyan] (0, 1) -- (0.5, 1.5) -- (1, 1);
\node[] at (0.5, 1.5) {3};
\draw (1, 1) -- ++ (0,1);
\draw (0, 2) -- ++ (1,0);
\draw (0, 1) -- ++ (0,1);
\draw (0, 1) -- ++ (1,0);
\node[rotate=0,black] at (0.8, 1.5) {1};
\node[rotate=0,black] at (0.5, 1.8) {2};
\node[rotate=0,black] at (0.2, 1.5) {3};
\node[rotate=0,black] at (0.5, 1.2) {2};
\end{tikzpicture}
    \begin{tikzpicture}[scale=1]
\tikzstyle{every node}=[font=\footnotesize]
\fill[cyan] (1, 0) -- (0.5, 0.5) -- (1, 1);
\fill[cyan] (0, 1) -- (0.5, 0.5) -- (1, 1);
\fill[cyan] (0, 0) -- (0.5, 0.5) -- (0, 1);
\fill[white] (0, 0) -- (0.5, 0.5) -- (1, 0);
\node[] at (0.5, 0.5) {1};
\draw (1, 0) -- ++ (0,1);
\draw (0, 0) -- ++ (0,1);
\draw (0, 0) -- ++ (1,0);
\node[rotate=0,black] at (0.8, 0.5) {2};
\node[rotate=0,black] at (0.5, 0.8) {2};
\node[rotate=0,black] at (0.2, 0.5) {2};
\node[rotate=0,black] at (0.5, 0.2) {0};
\fill[green] (1, 1) -- (0.5, 1.5) -- (1, 2);
\fill[red] (0, 2) -- (0.5, 1.5) -- (1, 2);
\fill[white] (0, 1) -- (0.5, 1.5) -- (0, 2);
\fill[cyan] (0, 1) -- (0.5, 1.5) -- (1, 1);
\node[] at (0.5, 1.5) {7};
\draw (1, 1) -- ++ (0,1);
\draw (0, 2) -- ++ (1,0);
\draw (0, 1) -- ++ (0,1);
\draw (0, 1) -- ++ (1,0);
\node[rotate=0,black] at (0.8, 1.5) {3};
\node[rotate=0,black] at (0.5, 1.8) {1};
\node[rotate=0,black] at (0.2, 1.5) {0};
\node[rotate=0,black] at (0.5, 1.2) {2};
\end{tikzpicture}
    \begin{tikzpicture}[scale=1]
\tikzstyle{every node}=[font=\footnotesize]
\fill[cyan] (1, 0) -- (0.5, 0.5) -- (1, 1);
\fill[cyan] (0, 1) -- (0.5, 0.5) -- (1, 1);
\fill[cyan] (0, 0) -- (0.5, 0.5) -- (0, 1);
\fill[white] (0, 0) -- (0.5, 0.5) -- (1, 0);
\node[] at (0.5, 0.5) {1};
\draw (1, 0) -- ++ (0,1);
\draw (0, 0) -- ++ (0,1);
\draw (0, 0) -- ++ (1,0);
\node[rotate=0,black] at (0.8, 0.5) {2};
\node[rotate=0,black] at (0.5, 0.8) {2};
\node[rotate=0,black] at (0.2, 0.5) {2};
\node[rotate=0,black] at (0.5, 0.2) {0};
\fill[white] (1, 1) -- (0.5, 1.5) -- (1, 2);
\fill[cyan] (0, 2) -- (0.5, 1.5) -- (1, 2);
\fill[red] (0, 1) -- (0.5, 1.5) -- (0, 2);
\fill[cyan] (0, 1) -- (0.5, 1.5) -- (1, 1);
\node[] at (0.5, 1.5) {8};
\draw (1, 1) -- ++ (0,1);
\draw (0, 2) -- ++ (1,0);
\draw (0, 1) -- ++ (0,1);
\draw (0, 1) -- ++ (1,0);
\node[rotate=0,black] at (0.8, 1.5) {0};
\node[rotate=0,black] at (0.5, 1.8) {2};
\node[rotate=0,black] at (0.2, 1.5) {1};
\node[rotate=0,black] at (0.5, 1.2) {2};
\end{tikzpicture}
    \begin{tikzpicture}[scale=1]
\tikzstyle{every node}=[font=\footnotesize]
\fill[cyan] (1, 0) -- (0.5, 0.5) -- (1, 1);
\fill[cyan] (0, 1) -- (0.5, 0.5) -- (1, 1);
\fill[cyan] (0, 0) -- (0.5, 0.5) -- (0, 1);
\fill[white] (0, 0) -- (0.5, 0.5) -- (1, 0);
\node[] at (0.5, 0.5) {1};
\draw (1, 0) -- ++ (0,1);
\draw (0, 0) -- ++ (0,1);
\draw (0, 0) -- ++ (1,0);
\node[rotate=0,black] at (0.8, 0.5) {2};
\node[rotate=0,black] at (0.5, 0.8) {2};
\node[rotate=0,black] at (0.2, 0.5) {2};
\node[rotate=0,black] at (0.5, 0.2) {0};
\fill[green] (1, 1) -- (0.5, 1.5) -- (1, 2);
\fill[green] (0, 2) -- (0.5, 1.5) -- (1, 2);
\fill[red] (0, 1) -- (0.5, 1.5) -- (0, 2);
\fill[cyan] (0, 1) -- (0.5, 1.5) -- (1, 1);
\node[] at (0.5, 1.5) {10};
\draw (1, 1) -- ++ (0,1);
\draw (0, 2) -- ++ (1,0);
\draw (0, 1) -- ++ (0,1);
\draw (0, 1) -- ++ (1,0);
\node[rotate=0,black] at (0.8, 1.5) {3};
\node[rotate=0,black] at (0.5, 1.8) {3};
\node[rotate=0,black] at (0.2, 1.5) {1};
\node[rotate=0,black] at (0.5, 1.2) {2};
\end{tikzpicture}}\right\}
    +
    \raisebox{-8mm}{
    \includegraphics[width=3cm]{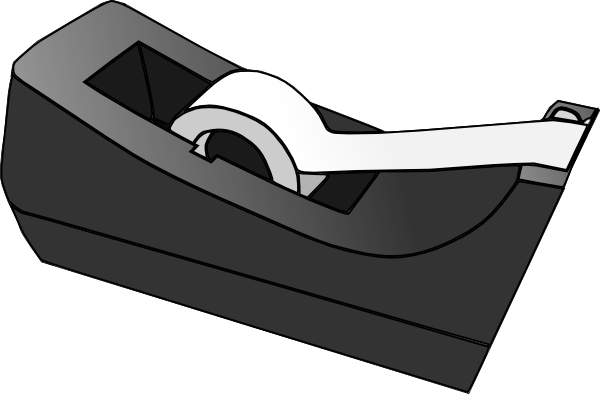}}
    \approx
    \left\{
    \raisebox{-4mm}{
\begin{tikzpicture}
[scale=1]
\tikzstyle{every node}=[font=\small]
\draw (9.8, 0.0) -- ++ (0,1);
\draw (8.8, 1.0) -- ++ (1,0);
\draw (8.8, 0.0) -- ++ (0,1);
\draw (8.8, 0.0) -- ++ (1,0);
\node[rotate=90,black] at (9.600000000000001, 0.5) {21};
\node[rotate=0,black] at (9.3, 0.8) {2};
\node[rotate=90,black] at (9.0, 0.5) {21};
\node[rotate=0,black] at (9.3, 0.2) {1};
\draw (10.9, 0.0) -- ++ (0,1);
\draw (9.9, 1.0) -- ++ (1,0);
\draw (9.9, 0.0) -- ++ (0,1);
\draw (9.9, 0.0) -- ++ (1,0);
\node[rotate=90,black] at (10.700000000000001, 0.5) {21};
\node[rotate=0,black] at (10.4, 0.8) {2};
\node[rotate=90,black] at (10.1, 0.5) {23};
\node[rotate=0,black] at (10.4, 0.2) {0};
\draw (12.0, 0.0) -- ++ (0,1);
\draw (11.0, 1.0) -- ++ (1,0);
\draw (11.0, 0.0) -- ++ (0,1);
\draw (11.0, 0.0) -- ++ (1,0);
\node[rotate=90,black] at (11.8, 0.5) {23};
\node[rotate=0,black] at (11.5, 0.8) {1};
\node[rotate=90,black] at (11.2, 0.5) {20};
\node[rotate=0,black] at (11.5, 0.2) {0};
\draw (13.100000000000001, 0.0) -- ++ (0,1);
\draw (12.100000000000001, 1.0) -- ++ (1,0);
\draw (12.100000000000001, 0.0) -- ++ (0,1);
\draw (12.100000000000001, 0.0) -- ++ (1,0);
\node[rotate=90,black] at (12.900000000000002, 0.5) {20};
\node[rotate=0,black] at (12.600000000000001, 0.8) {2};
\node[rotate=90,black] at (12.3, 0.5) {21};
\node[rotate=0,black] at (12.600000000000001, 0.2) {0};
\draw (14.200000000000001, 0.0) -- ++ (0,1);
\draw (13.200000000000001, 1.0) -- ++ (1,0);
\draw (13.200000000000001, 0.0) -- ++ (0,1);
\draw (13.200000000000001, 0.0) -- ++ (1,0);
\node[rotate=90,black] at (14.000000000000002, 0.5) {23};
\node[rotate=0,black] at (13.700000000000001, 0.8) {3};
\node[rotate=90,black] at (13.4, 0.5) {21};
\node[rotate=0,black] at (13.700000000000001, 0.2) {0};
\end{tikzpicture}}
\right\}.
\]

Therefore to build some Jeandel-Rao tiling, it is sufficient to build a tiling
with another set $\T_1$ of Wang tiles obtained from the set $\T_0$ after removing the
markers and adding the five tiles obtained from the fusion operation. 
One may also remove the tile with color
4 on the bottom since it always appear above of a marker tile.
One may recover some Jeandel-Rao tiling by applying a $2$-dimensional morphism
which replaces the five merged tiles by their associated equivalent dominoes,
see Figure~\ref{fig:image-of-substitution}. 
It turns out
that this decomposition is unique as we prove below.
The creation of the set $\T_1$ from $\T_0$ gives the intuition on the
construction of Algorithm~\ref{alg:find-recognizable-sub-from-markers} which 
follows the same recipe and takes any set of Wang tiles with markers as input.

\begin{figure}[h]
    \begin{center}
    \begin{tikzpicture}[auto,scale=1.4]
        \node (A) at (0,0) {\includegraphics[width=.39\linewidth]{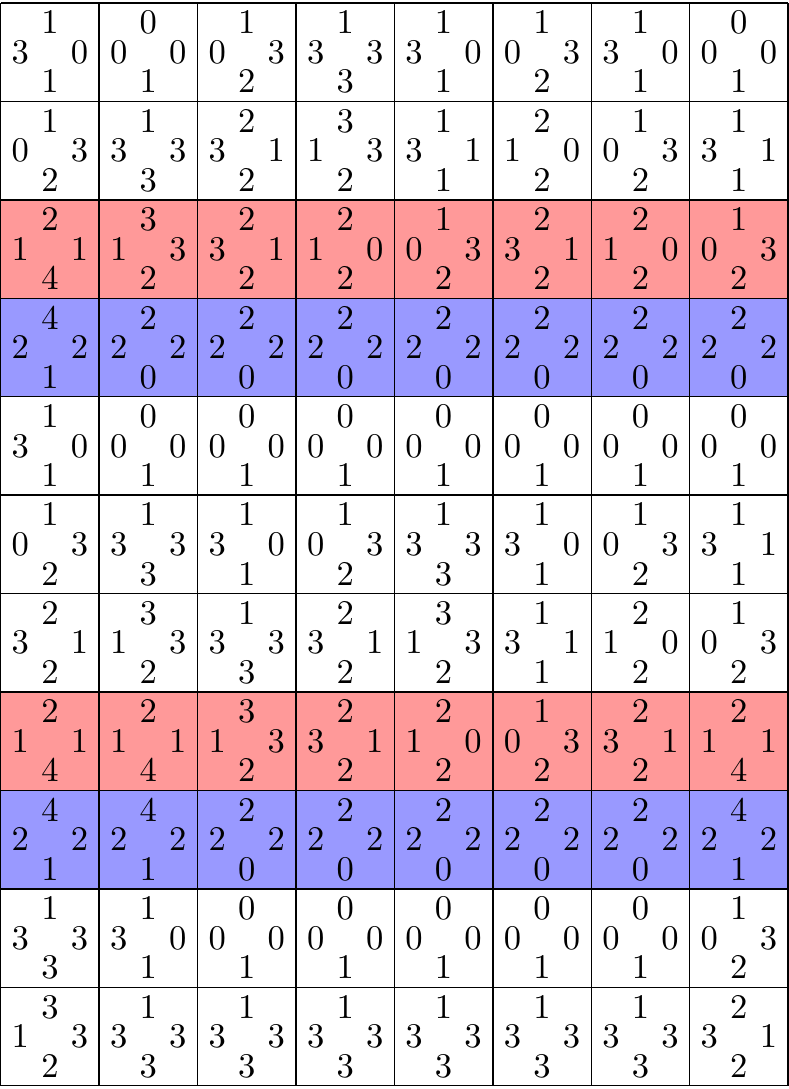}};
        \node (B) at (6,0) {\includegraphics[width=.39\linewidth]{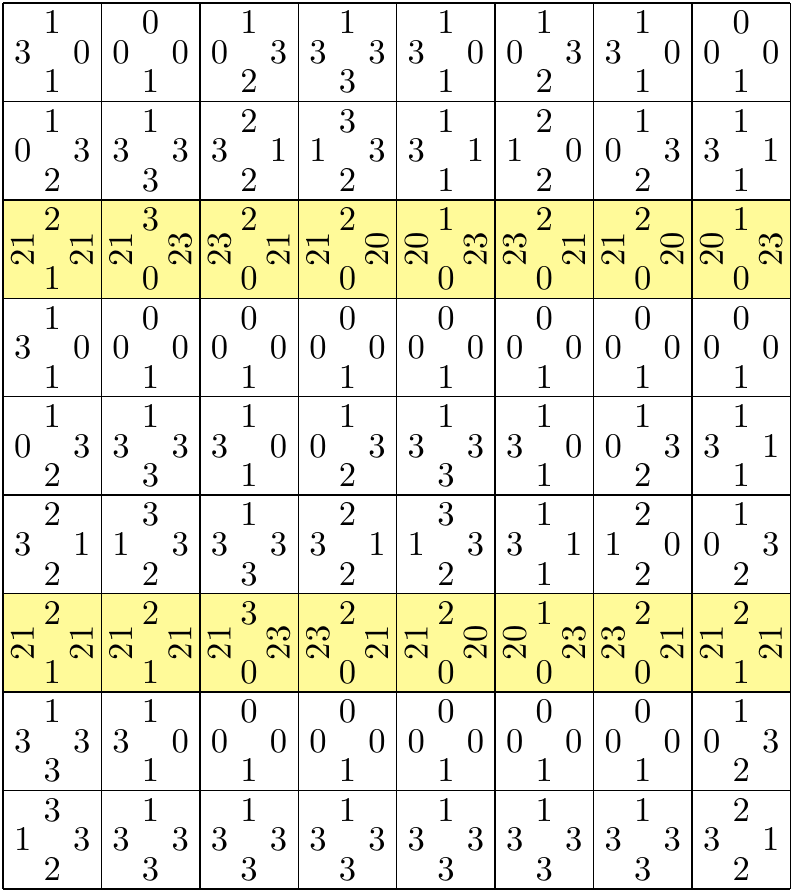}};
        \draw[<-|] (A) to node {$\omega_0$} (B);
    \end{tikzpicture}
    \end{center}
    \caption{The effect of the $2$-dimensional morphism
    $\omega_0:\Omega_1\to\Omega_0$ on tilings.}
    \label{fig:image-of-substitution}
\end{figure}

We can now state the small generalization of Theorem 10 from
\cite{MR3978536}. Indeed, while the presence of markers allows to
desubstitute tilings, there is a choice to be made.  We may construct the
substitution in such a way that the markers are on the left or
on the right in the image of letters that are dominoes (bottom or top for
marker in the direction $\be_2$). We make this distinction in the statement of
the following result.

\begin{theorem}\label{thm:exist-homeo}
    Let $\T$ be a set of Wang tiles 
    and let $\Omega_\T$ be its Wang shift.
    If there exists a subset
    $M\subset\T$ 
    of markers in the direction 
    $\be_i\in\{\be_1,\be_2\}$,
    then 
\begin{enumerate}[\rm (i)]
\item there exists
    a set of Wang tiles $\Scal_R$
    and a $2$-dimensional morphism
    $\omega_R:\Omega_{\Scal_R}\to\Omega_\T$
    such that 
    \begin{equation*}
        \omega_R(\Scal_R)\subseteq (\T\setminus M)\cup 
        \left((\T\setminus M)\odot^i M\right)
    \end{equation*}
    which is recognizable and onto up to a shift and
\item
    there exists a set of Wang tiles $\Scal_L$ and a $2$-dimensional morphism
    $\omega_L:\Omega_{\Scal_L}\to\Omega_\T$
    such that 
    \begin{equation*}
        \omega_L(\Scal_L)\subseteq (\T\setminus M)\cup 
        \left(M\odot^i (\T\setminus M)\right)
    \end{equation*}
    which recognizable and onto up to a shift.
\end{enumerate}
    There exists a surrounding radius $r\in\N$ such that $\omega_R$ and
    $\omega_L$ are computed using
    Algorithm~\ref{alg:find-recognizable-sub-from-markers}.
\end{theorem}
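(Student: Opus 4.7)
The plan is to construct $(\Scal_R,\omega_R)$ and $(\Scal_L,\omega_L)$ explicitly from $\T$ and $M$, focusing on case~(i) with $i=2$ since the other three configurations follow by the same recipe after reflecting coordinates or swapping top/bottom. The guiding principle is the marker hypothesis: in every $w\in\Omega_\T$ one has $w^{-1}(M)=\Z\times B$ with $1\notin B-B$, so marker rows are nonadjacent complete rows and each marker row sits directly above a row of non-marker tiles. This suggests fusing each marker with the non-marker immediately below it.

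Concretely, I define $\Scal_R$ as the disjoint union of (a) the non-marker tiles $u\in\T\setminus M$ that occur in some tiling of $\Omega_\T$ in a row not immediately below a marker row, kept with their original single-colour edges, and (b) the fused Wang tiles $u\boxminus m$ for each pair $(u,m)\in(\T\setminus M)\times M$ such that the vertical domino $u\odot^2 m$ occurs in some tiling of $\Omega_\T$; here the bottom and top colours of $u\boxminus m$ are inherited from $u$ and $m$, while the left and right colours are the \emph{pairs} formed from the respective left and right colours of $u$ and $m$. The morphism $\omega_R$ is then defined by $u\mapsto u$ on (a) and $u\boxminus m\mapsto u\odot^2 m$ on (b), and the containment $\omega_R(\Scal_R)\subseteq(\T\setminus M)\cup((\T\setminus M)\odot^2 M)$ is immediate. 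Case~(ii) and direction $\be_1$ are obtained analogously, fusing with a marker on the other side via $\boxminus$ or $\boxbar$.

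Three properties must then be verified. (1) $\omega_R$ is a well-defined $2$-dimensional morphism: horizontal concatenation is respected because paired edges agree iff both components agree and single-colour edges are never compatible with pair-colour edges (so type (a) and type (b) tiles never meet horizontally in any $x\in\Omega_{\Scal_R}$); vertical concatenation reduces to matching in $\T$ directly. (2) $\omega_R$ is onto up to a shift: given $w\in\Omega_\T$, the marker rows canonically partition $\Z^2$ into height-$1$ cells (on non-marker rows not below a marker) and height-$2$ blocks (on each marker row glued with the non-marker row directly below it), which lifts to some $x\in\Omega_{\Scal_R}$ with $\omega_R(x)=\sigma^\bk w$ for a suitable $\bk\in\{\zero,-\be_2\}$. (3) $\omega_R$ is recognizable: for any $y\in\overline{\omega_R(\Omega_{\Scal_R})}^\sigma$, the cells of $y$ taking values in $M$ form nonadjacent complete rows by the marker property (applied to any underlying $\omega_R(x)\in\Omega_\T$), which uniquely determines the partition of $\Z^2$ into single cells and fused $2$-cell blocks, and hence the preimage tile $x_\zero$; the centeredness constraint $\zero\leq\bk<\shape(\omega_R(x_\zero))$ then uniquely pins down $\bk$ and thus $(\bk,x)$.

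For the algorithmic claim, Algorithm~\ref{alg:find-recognizable-sub-from-markers} produces $(\Scal_R,\omega_R)$ (and similarly the other pair) provided the surrounding radius $r$ is sufficiently large: the candidate set of fusion dominos is finite, each truly admissible $u\odot^2 m$ admits surroundings of every radius by Lemma~\ref{lem:surrounding}, and each inadmissible one fails to admit a surrounding of some finite radius by compactness of $\Omega_\T$, so beyond some threshold $r$ the algorithm enumerates exactly the admissible dominos. I expect the main technical obstacle to lie in the recognizability step, where one must carefully combine the marker property, which locates the fusion cells inside $y$, with the centeredness constraint, which removes the remaining vertical shift ambiguity, in order to conclude uniqueness of the centered $\omega_R$-representation.
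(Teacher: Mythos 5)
Your construction is correct, and in substance it is the same one the paper relies on: the singleton/fused-domino decomposition you describe is exactly what Algorithm~\ref{alg:find-recognizable-sub-from-markers} produces, and your three verification steps (well-definedness via the incompatibility of single-colour and pair-colour vertical edges, surjectivity up to a shift via the canonical partition induced by the marker rows, and recognizability via the fact that the marker positions $\Z\times B$ in any image tiling determine the block partition uniquely) are the content of the cited proof. The only real difference is in presentation: the paper does not reprove part (i) but imports it as Theorem~10 of \cite{MR3978536}, and it obtains part (ii) not by redoing the construction symmetrically, as you do, but by reflecting the tile set across the vertical (or horizontal) axis, applying part (i) to the reflected set and reflecting back. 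The reflection trick buys a two-line derivation of (ii) and guarantees for free that the two cases behave identically; your direct symmetric construction is equally valid and has the mild advantage of being self-contained. Your compactness argument for the existence of a sufficient surrounding radius $r$ (admissible dominoes survive every radius by Lemma~\ref{lem:surrounding}, inadmissible ones die at some finite radius, and there are finitely many candidates) is the standard one and is fine.
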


\begin{proof}
    The existence of $\omega_R$ was proved in \cite[Theorem
    10]{MR3978536}. Its proof follows the line of
    Algorithm~\ref{alg:find-recognizable-sub-from-markers}.

    The existence of $\omega_L$ was not proved in \cite[Theorem
    10]{MR3978536}, but it can be deduced by considering the
    image of the set of Wang tiles $\T$ under a reflection by the vertical or
    horizontal axis. For example, if $\be_i=\be_1$, then we consider the image
    under a reflection by the vertical axis:
    \begin{equation*}
        \T' = \left\{ \tile{$c$}{$b$}{$a$}{$d$}\,\,\,\middle|\,
        \tile{$a$}{$b$}{$c$}{$d$}=(a,b,c,d) \in \T \right\}
    \end{equation*}
    and the image $M'\subset\T'$ of markers accordingly.
    Then \cite[Theorem 10]{MR3978536} provide the existence of
    a set of Wang tiles $\Scal'$ and
    a $2$-dimensional morphism $\omega':\Scal'\to\T'$ which is 
        recognizable, onto up to a shift and
    such that
    \begin{equation*}
        \omega'(\Scal')\subseteq (\T'\setminus M')\cup 
        \left((\T'\setminus M')\odot^i M'\right).
    \end{equation*}
    By taking the image of $\omega'$ under a reflection by the vertical axis,
    we get the desired $\omega_L$ and $\Scal_L$.
\end{proof}

\begin{algorithm}
    \caption{Find a recognizable desubstitution of $\Omega_\T$ from markers}
    \label{alg:find-recognizable-sub-from-markers}
  \begin{algorithmic}[1]
          \Require $\T$ is a set of Wang tiles;
                   $M\subset\T$ is a subset of markers;
                   $i\in\{1,2\}$ is a direction $e_i$;
                   $r\in\N$ is a surrounding radius;
                   $s\in\{\scleft,\scright\}$ determines
                   whether the image of merged tiles is 
                   in $M\odot^i (\T\setminus M)$ (markers on the left)
                   or in $(\T\setminus M)\odot^i M$ (markers on the right).
      \Function{FindSubstitution}{$\T$, $M$, $i$, $r$, $s$}
        \State $D \gets \left\{(u,v)\in\T^2\mid \text{ domino } u\odot^iv \text{
             admits a surrounding of radius $r$ with tiles in $\T$}\right\}$
        \If{$s=\scleft$}
            \State $ P \gets \left\{(u,v)\in D
                \mid u\in M \text{ and } v\in\T\setminus M \right\}$
            \State $K \gets \left\{v\in\T\setminus M\mid \text{ there exists }
                            u \in \T\setminus M \text{ such that }
                            (u,v) \in D \right\}$
        \ElsIf{$s=\scright$}
            \State $ P \gets \left\{(u,v)\in D
                \mid u\in\T\setminus M \text{ and } v\in M \right\}$
            \State $K \gets \left\{u\in\T\setminus M\mid \text{ there exists }
                            v \in \T\setminus M \text{ such that }
                            (u,v) \in D \right\}$
        \EndIf
        \State $K\gets\Call{Sort}{K}$, $P\gets\Call{Sort}{P}$
        \Comment lexicographically on the indices of tiles
        \State $\Scal\gets K\cup \{u\boxslash^iv\mid(u,v)\in P\}$
        \Comment defines uniquely indices of tiles in $\Scal$ from $0$ to
        $|\Scal|-1$.
        \State \Return $\Scal$, $\omega:\Omega_\Scal\to\Omega_\T:
        \begin{cases}
        u\boxslash^i v
        \quad \mapsto \quad
        u\odot^i v
        & \text{ if }
        (u, v) \in P\\
        u \quad \mapsto \quad
        u & \text{ if } u \in K.
        \end{cases}$
      \EndFunction
      \Ensure $\Scal$ is a set of Wang tiles;
              $\omega:\Omega_\Scal\to\Omega_\T$ is 
        recognizable and onto up to a shift.
  \end{algorithmic}
\end{algorithm}

In the definition of $\omega$
in Algorithm~\ref{alg:find-recognizable-sub-from-markers},
given two Wang tiles $u$ and $v$ such that
$u\boxslash^i v$ is defined for $i\in\{1,2\}$, the map
\begin{equation*}
u\boxslash^i v
\quad
\mapsto
\quad
u\odot^i v
\end{equation*}
can be seen as a decomposition of Wang tiles:
\begin{center}
\begin{tikzpicture}
[scale=0.900000000000000]
\draw (0, 0) -- (1, 0);
\draw (0, 0) -- (0, 1);
\draw (1, 1) -- (1, 0);
\draw (1, 1) -- (0, 1);
\node[rotate=0,font=\tiny] at (0.8, 0.5) {Z};
\node[rotate=0,font=\tiny] at (0.5, 0.8) {BD};
\node[rotate=0,font=\tiny] at (0.2, 0.5) {X};
\node[rotate=0,font=\tiny] at (0.5, 0.2) {AC};
\node at (1.5,.5) {$\mapsto$};
\node at (3.0,0.5) {\begin{tikzpicture}[scale=0.900000000000000]
\draw (0, 0) -- (1, 0);
\draw (0, 0) -- (0, 1);
\draw (1, 1) -- (1, 0);
\draw (1, 1) -- (0, 1);
\node[rotate=0,font=\tiny] at (0.8, 0.5) {Y};
\node[rotate=0,font=\tiny] at (0.5, 0.8) {B};
\node[rotate=0,font=\tiny] at (0.2, 0.5) {X};
\node[rotate=0,font=\tiny] at (0.5, 0.2) {A};
\draw (1, 0) -- (2, 0);
\draw (1, 0) -- (1, 1);
\draw (2, 1) -- (2, 0);
\draw (2, 1) -- (1, 1);
\node[rotate=0,font=\tiny] at (1.8, 0.5) {Z};
\node[rotate=0,font=\tiny] at (1.5, 0.8) {D};
\node[rotate=0,font=\tiny] at (1.2, 0.5) {Y};
\node[rotate=0,font=\tiny] at (1.5, 0.2) {C};
\end{tikzpicture}};
\end{tikzpicture}
\qquad
    \raisebox{5mm}{\text{ or }}
\qquad
\begin{tikzpicture}
[scale=0.900000000000000]
\draw (0, 0) -- (1, 0);
\draw (0, 0) -- (0, 1);
\draw (1, 1) -- (1, 0);
\draw (1, 1) -- (0, 1);
\node[rotate=90,font=\tiny] at (0.8, 0.5) {YZ};
\node[rotate=0,font=\tiny] at (0.5, 0.8) {D};
\node[rotate=90,font=\tiny] at (0.2, 0.5) {XW};
\node[rotate=0,font=\tiny] at (0.5, 0.2) {A};
\node at (1.5,.5) {$\mapsto$};
\node at (2.5,0.5) {\begin{tikzpicture}[scale=0.900000000000000]
\draw (0, 0) -- (1, 0);
\draw (0, 0) -- (0, 1);
\draw (1, 1) -- (1, 0);
\draw (1, 1) -- (0, 1);
\node[rotate=0,font=\tiny] at (0.8, 0.5) {Y};
\node[rotate=0,font=\tiny] at (0.5, 0.8) {B};
\node[rotate=0,font=\tiny] at (0.2, 0.5) {X};
\node[rotate=0,font=\tiny] at (0.5, 0.2) {A};
\draw (0, 1) -- (1, 1);
\draw (0, 1) -- (0, 2);
\draw (1, 2) -- (1, 1);
\draw (1, 2) -- (0, 2);
\node[rotate=0,font=\tiny] at (0.8, 1.5) {Z};
\node[rotate=0,font=\tiny] at (0.5, 1.8) {D};
\node[rotate=0,font=\tiny] at (0.2, 1.5) {W};
\node[rotate=0,font=\tiny] at (0.5, 1.2) {B};
\end{tikzpicture}};
\end{tikzpicture}
\end{center}
whether $i=1$ or $i=2$.
The reader may wonder how the substitution decides the color $Y$ (color $B$ if
$i=2$) from its input tiles.  The answer is that
Algorithm~\ref{alg:find-recognizable-sub-from-markers} is performing
a desubstitution. Therefore the two tiles sharing the vertical side with
letter $Y$ are known from the start and the algorithm just creates a new tile
$(Z,BD,X,AC)$ and claims that it will always get replaced by the two tiles with
shared edge with color $Y$.

\begin{remark}
In Algorithm~\ref{alg:find-recognizable-sub-from-markers}, the set of tiles
$\Scal$ can be computed with the help of transducers since
the fusion operation $\boxslash^i$ on Wang tiles can be seen as the composition
of transitions (see Equation~\eqref{eq:boxminus_with_transducer}).
\end{remark}

\section{Desubstitution from $\Omega_0$ to $\Omega_4$:
$\Omega_0\xleftarrow{\omega_0}
\Omega_1\xleftarrow{\omega_1}
\Omega_2\xleftarrow{\omega_2}
\Omega_3\xleftarrow{\omega_3'}
\Omega_4$}
\label{sec:desubstitution-0-to-4}

When Algorithm~\ref{alg:find-markers} returns a nonempty list of sets of
markers, then we may use Algorithm~\ref{alg:find-recognizable-sub-from-markers}
to find the substitution from the markers found. This allows to desubstitute
uniquely tilings in $\Omega_0$ into tilings in $\Omega_4$.

\begin{proposition}\label{prop:from-jeandel-rao-0-to-4}
    Let $\Omega_0$ be the Jeandel-Rao Wang shift.
There exist sets of Wang tiles 
$\T_1$, $\T_2$, $\T_3$ and $\T'_4$
together with their associated Wang shifts 
$\Omega_1$, $\Omega_2$, $\Omega_3$ and $\Omega_4$
and there exists a sequence of recognizable $2$-dimensional morphisms:
    \begin{equation*}
        \Omega_0 \xleftarrow{\omega_0}
        \Omega_1 \xleftarrow{\omega_1}
        \Omega_2 \xleftarrow{\omega_2}
        \Omega_3 \xleftarrow{\omega_3'}
        \Omega_4
    \end{equation*}
    that are onto up to a shift, i.e.,
    $\overline{\omega_i(\Omega_{i+1})}^\sigma=\Omega_{i}$
    for each $i\in\{0,1,2\}$ and
    $\overline{\omega_3'(\Omega_{4})}^\sigma=\Omega_{3}$.
\end{proposition}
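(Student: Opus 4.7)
The plan is to iterate the machinery developed in Section~\ref{sec:markers}. Concretely, for each $i\in\{0,1,2,3\}$ I will (a) exhibit a subset $M_i\subset\T_i$ of markers in some direction $\be_{k_i}\in\{\be_1,\be_2\}$, certified by running Algorithm~\ref{alg:find-markers} on $\T_i$ with a suitable surrounding radius $r_i$, and (b) feed $(\T_i,M_i,k_i,r_i,s_i)$ into Algorithm~\ref{alg:find-recognizable-sub-from-markers} to produce the next set of Wang tiles $\T_{i+1}$ together with a 2-dimensional morphism $\omega_i:\Omega_{\T_{i+1}}\to\Omega_{\T_i}$. By Theorem~\ref{thm:exist-homeo}, the output $\omega_i$ is recognizable and satisfies $\overline{\omega_i(\Omega_{\T_{i+1}})}^{\sigma}=\Omega_{\T_i}$, which is exactly the statement we need. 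Setting $\Omega_i=\Omega_{\T_i}$ for $1\le i\le 3$ and $\Omega_4=\Omega_{\T'_4}$ then yields the claimed chain, with the last arrow renamed $\omega_3'$.

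For the base step, I have already described in the discussion preceding Theorem~\ref{thm:exist-homeo} that the subset
\[
M_0=\{(2,4,2,1),(2,2,2,0)\}\subset\T_0
\]
is a set of markers in the direction $\be_2$: the only tiles in $\T_0$ with color $2$ on both vertical edges are in $M_0$ (so $M_0\odot^1(\T_0\setminus M_0)$ and $(\T_0\setminus M_0)\odot^1 M_0$ are forbidden), and the top colors $\{2,4\}$ of tiles in $M_0$ do not occur among their bottom colors $\{0,1\}$ (so $M_0\odot^2 M_0$ is forbidden). By Lemma~\ref{lem:criteria-markers} this is enough; equivalently, Algorithm~\ref{alg:find-markers} detects $M_0$ already with surrounding radius $r=1$. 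Applying Algorithm~\ref{alg:find-recognizable-sub-from-markers} with $i=2$ and $s=\textsc{right}$ gives the 13 tiles of $\T_1$ (matching the cardinality table) and the morphism $\omega_0$ sketched in Figure~\ref{fig:image-of-substitution}: five of the new tiles are genuine vertical dominoes $u\boxminus v$ with $u\in\T_0\setminus M_0$, $v\in M_0$, and eight are inherited tiles from $\T_0\setminus M_0$ (after discarding the one tile whose bottom color $4$ forces it to sit immediately above a marker).

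For the three remaining steps I perform the same procedure on $\T_1$, $\T_2$, $\T_3$ in turn. The direction $\be_{k_i}$ and the side $s_i$ are chosen so that the resulting sets of Wang tiles have the cardinalities announced in the main cardinality table (13, 20, 24, 28), and the radius $r_i$ is taken just large enough for Algorithm~\ref{alg:find-markers} to certify the markers; in practice $r_i\in\{1,2,3\}$ suffices. The concrete verification is a finite check run in the accompanying SageMath notebook \cite{labbe_ipynb_arxiv_1808_07768_v4}: for each $i$ one lists the candidate sets returned by \textsc{FindMarkers}, picks one, runs \textsc{FindSubstitution}, and reads off $\T_{i+1}$ and $\omega_i$. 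Composed with the previous arrow, one obtains the displayed sequence, and recognizability together with the ``onto up to a shift'' property is inherited at every step directly from Theorem~\ref{thm:exist-homeo}.

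The delicate point, and essentially the only non-mechanical part, is the existence of a suitable marker subset at each stage. Algorithm~\ref{alg:find-markers} is only a semi-decision procedure: increasing the surrounding radius can only rule out more candidates, and there is no a priori bound (this is tied to the undecidability of the domino problem). Thus the real content of the proposition is that the particular sequence $\T_0,\T_1,\T_2,\T_3$ arising from Jeandel--Rao tiles \emph{does} admit markers at every step, a fact that must be checked by explicit computation. Once this check is performed, the remaining verifications—that the morphisms are recognizable, onto up to a shift, and compose in the claimed order—are immediate consequences of Theorem~\ref{thm:exist-homeo}, so the proof reduces to reporting the four successful applications of the two algorithms.
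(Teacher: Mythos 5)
Your proposal is correct and takes essentially the same approach as the paper: run \textsc{FindMarkers} and \textsc{FindSubstitution} four times in succession, invoke Theorem~\ref{thm:exist-homeo} to get recognizability and the ``onto up to a shift'' property at each stage, and observe that the only genuine content is the computational certification that each $\T_i$ admits markers (the paper succeeds with surrounding radii $1,1,2,3$). One small slip: in the first step the paper collapses the non-marker row onto the marker row \emph{below} it, i.e.\ it calls the algorithm with side $=\textsc{left}$ so that the merged tiles lie in $M_0\odot^2(\T_0\setminus M_0)$, not $s=\textsc{right}$ as you wrote; Theorem~\ref{thm:exist-homeo} guarantees either choice yields a valid recognizable morphism, but only the paper's choice reproduces the displayed $\omega_0$ and $\T_1$.
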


\newcommand\omegaO{\arraycolsep=1.4pt
 \begin{array}{lllllll}
0\mapsto \left(2\right)
,&
1\mapsto \left(3\right)
,&
2\mapsto \left(4\right)
,&
3\mapsto \left(5\right)
,&
4\mapsto \left(6\right)
,&
5\mapsto \left(7\right)
,&
6\mapsto \left(8\right)
,\\
7\mapsto \left(10\right)
,&
8\mapsto \left(\begin{array}{r}
9 \\
0
\end{array}\right)
,&
9\mapsto \left(\begin{array}{r}
3 \\
1
\end{array}\right)
,&
10\mapsto \left(\begin{array}{r}
7 \\
1
\end{array}\right)
,&
11\mapsto \left(\begin{array}{r}
8 \\
1
\end{array}\right)
,&
12\mapsto \left(\begin{array}{r}
10 \\
1
\end{array}\right)
.
\end{array}}
\newcommand\omegaI{\arraycolsep=1.4pt
 \begin{array}{llllll}
0\mapsto \left(0\right)
,&
1\mapsto \left(1\right)
,&
2\mapsto \left(2\right)
,&
3\mapsto \left(3\right)
,&
4\mapsto \left(4\right)
,&
5\mapsto \left(5\right)
,\\
6\mapsto \left(6\right)
,&
7\mapsto \left(7\right)
,&
8\mapsto \left(\begin{array}{r}
1 \\
8
\end{array}\right)
,&
9\mapsto \left(\begin{array}{r}
5 \\
8
\end{array}\right)
,&
10\mapsto \left(\begin{array}{r}
6 \\
8
\end{array}\right)
,&
11\mapsto \left(\begin{array}{r}
7 \\
8
\end{array}\right)
,\\
12\mapsto \left(\begin{array}{r}
1 \\
9
\end{array}\right)
,&
13\mapsto \left(\begin{array}{r}
5 \\
9
\end{array}\right)
,&
14\mapsto \left(\begin{array}{r}
6 \\
9
\end{array}\right)
,&
15\mapsto \left(\begin{array}{r}
0 \\
10
\end{array}\right)
,&
16\mapsto \left(\begin{array}{r}
3 \\
10
\end{array}\right)
,&
17\mapsto \left(\begin{array}{r}
5 \\
11
\end{array}\right)
,\\
18\mapsto \left(\begin{array}{r}
7 \\
11
\end{array}\right)
,&
19\mapsto \left(\begin{array}{r}
2 \\
12
\end{array}\right)
.
\end{array}}
\newcommand\omegaII{\arraycolsep=1.4pt
 \begin{array}{llllll}
0\mapsto \left(2\right)
,&
1\mapsto \left(3\right)
,&
2\mapsto \left(4\right)
,&
3\mapsto \left(5\right)
,&
4\mapsto \left(\begin{array}{r}
1 \\
8
\end{array}\right)
,&
5\mapsto \left(\begin{array}{r}
5 \\
8
\end{array}\right)
,\\
6\mapsto \left(\begin{array}{r}
6 \\
8
\end{array}\right)
,&
7\mapsto \left(\begin{array}{r}
0 \\
9
\end{array}\right)
,&
8\mapsto \left(\begin{array}{r}
3 \\
9
\end{array}\right)
,&
9\mapsto \left(\begin{array}{r}
5 \\
10
\end{array}\right)
,&
10\mapsto \left(\begin{array}{r}
7 \\
10
\end{array}\right)
,&
11\mapsto \left(\begin{array}{r}
2 \\
11
\end{array}\right)
,\\
12\mapsto \left(\begin{array}{r}
5 \\
12
\end{array}\right)
,&
13\mapsto \left(\begin{array}{r}
6 \\
12
\end{array}\right)
,&
14\mapsto \left(\begin{array}{r}
4 \\
13
\end{array}\right)
,&
15\mapsto \left(\begin{array}{r}
5 \\
14
\end{array}\right)
,&
16\mapsto \left(\begin{array}{r}
3 \\
15
\end{array}\right)
,&
17\mapsto \left(\begin{array}{r}
3 \\
16
\end{array}\right)
,\\
18\mapsto \left(\begin{array}{r}
4 \\
16
\end{array}\right)
,&
19\mapsto \left(\begin{array}{r}
3 \\
17
\end{array}\right)
,&
20\mapsto \left(\begin{array}{r}
2 \\
18
\end{array}\right)
,&
21\mapsto \left(\begin{array}{r}
0 \\
19
\end{array}\right)
,&
22\mapsto \left(\begin{array}{r}
3 \\
19
\end{array}\right)
,&
23\mapsto \left(\begin{array}{r}
4 \\
19
\end{array}\right)
.
\end{array}}
\newcommand\omegaIIIp{\arraycolsep=1.4pt
 \begin{array}{llllll}
0\mapsto \left(5\right)
,&
1\mapsto \left(8\right)
,&
2\mapsto \left(11\right)
,&
3\mapsto \left(12\right)
,&
4\mapsto \left(14\right)
,&
5\mapsto \left(15\right)
,\\
6\mapsto \left(16\right)
,&
7\mapsto \left(17\right)
,&
8\mapsto \left(18\right)
,&
9\mapsto \left(19\right)
,&
10\mapsto \left(20\right)
,&
11\mapsto \left(22\right)
,\\
12\mapsto \left(23\right)
,&
13\mapsto \left(\begin{array}{r}
3 \\
4
\end{array}\right)
,&
14\mapsto \left(\begin{array}{r}
2 \\
5
\end{array}\right)
,&
15\mapsto \left(\begin{array}{r}
3 \\
6
\end{array}\right)
,&
16\mapsto \left(\begin{array}{r}
1 \\
7
\end{array}\right)
,&
17\mapsto \left(\begin{array}{r}
2 \\
7
\end{array}\right)
,\\
18\mapsto \left(\begin{array}{r}
1 \\
8
\end{array}\right)
,&
19\mapsto \left(\begin{array}{r}
2 \\
8
\end{array}\right)
,&
20\mapsto \left(\begin{array}{r}
1 \\
9
\end{array}\right)
,&
21\mapsto \left(\begin{array}{r}
0 \\
10
\end{array}\right)
,&
22\mapsto \left(\begin{array}{r}
2 \\
11
\end{array}\right)
,&
23\mapsto \left(\begin{array}{r}
2 \\
12
\end{array}\right)
,\\
24\mapsto \left(\begin{array}{r}
3 \\
13
\end{array}\right)
,&
25\mapsto \left(\begin{array}{r}
2 \\
15
\end{array}\right)
,&
26\mapsto \left(\begin{array}{r}
2 \\
16
\end{array}\right)
,&
27\mapsto \left(\begin{array}{r}
2 \\
20
\end{array}\right)
,&
28\mapsto \left(\begin{array}{r}
2 \\
21
\end{array}\right)
,&
29\mapsto \left(\begin{array}{r}
2 \\
22
\end{array}\right)
.
\end{array}}
\newcommand\iotaIII{\arraycolsep=1.4pt
 \begin{array}{lllllll}
0\mapsto \left(0\right)
,&
1\mapsto \left(1\right)
,&
2\mapsto \left(2\right)
,&
3\mapsto \left(3\right)
,&
4\mapsto \left(4\right)
,&
5\mapsto \left(5\right)
,&
6\mapsto \left(6\right)
,\\
7\mapsto \left(7\right)
,&
8\mapsto \left(8\right)
,&
9\mapsto \left(9\right)
,&
10\mapsto \left(10\right)
,&
11\mapsto \left(11\right)
,&
12\mapsto \left(12\right)
,&
13\mapsto \left(13\right)
,\\
14\mapsto \left(14\right)
,&
15\mapsto \left(15\right)
,&
16\mapsto \left(16\right)
,&
17\mapsto \left(17\right)
,&
18\mapsto \left(18\right)
,&
19\mapsto \left(19\right)
,&
20\mapsto \left(20\right)
,\\
21\mapsto \left(21\right)
,&
22\mapsto \left(22\right)
,&
23\mapsto \left(23\right)
,&
24\mapsto \left(25\right)
,&
25\mapsto \left(26\right)
,&
26\mapsto \left(27\right)
,&
27\mapsto \left(29\right)
.
\end{array}}
\newcommand\omegaOtoIV{\arraycolsep=1.4pt
 \begin{array}{lllllll}
0\mapsto \left(\begin{array}{r}
7 \\
3 \\
9 \\
0
\end{array}\right)
,&
1\mapsto \left(\begin{array}{r}
5 \\
7 \\
9 \\
0
\end{array}\right)
,&
2\mapsto \left(\begin{array}{r}
4 \\
10 \\
9 \\
0
\end{array}\right)
,&
3\mapsto \left(\begin{array}{r}
7 \\
3 \\
3 \\
1
\end{array}\right)
,&
4\mapsto \left(\begin{array}{r}
6 \\
7 \\
3 \\
1
\end{array}\right)
,&
5\mapsto \left(\begin{array}{r}
7 \\
8 \\
3 \\
1
\end{array}\right)
,&
6\mapsto \left(\begin{array}{r}
5 \\
2 \\
7 \\
1
\end{array}\right)
,\\
7\mapsto \left(\begin{array}{r}
5 \\
5 \\
7 \\
1
\end{array}\right)
,&
8\mapsto \left(\begin{array}{r}
6 \\
5 \\
7 \\
1
\end{array}\right)
,&
9\mapsto \left(\begin{array}{r}
5 \\
7 \\
8 \\
1
\end{array}\right)
,&
10\mapsto \left(\begin{array}{r}
4 \\
10 \\
8 \\
1
\end{array}\right)
,&
11\mapsto \left(\begin{array}{r}
5 \\
4 \\
10 \\
1
\end{array}\right)
,&
12\mapsto \left(\begin{array}{r}
6 \\
4 \\
10 \\
1
\end{array}\right)
,&
13\mapsto \left(\begin{array}{r}
7 \\
3 \\
3 \\
9 \\
0
\end{array}\right)
,\\
14\mapsto \left(\begin{array}{r}
6 \\
7 \\
3 \\
9 \\
0
\end{array}\right)
,&
15\mapsto \left(\begin{array}{r}
7 \\
8 \\
3 \\
9 \\
0
\end{array}\right)
,&
16\mapsto \left(\begin{array}{r}
5 \\
2 \\
7 \\
9 \\
0
\end{array}\right)
,&
17\mapsto \left(\begin{array}{r}
6 \\
2 \\
7 \\
9 \\
0
\end{array}\right)
,&
18\mapsto \left(\begin{array}{r}
5 \\
5 \\
7 \\
9 \\
0
\end{array}\right)
,&
19\mapsto \left(\begin{array}{r}
6 \\
5 \\
7 \\
9 \\
0
\end{array}\right)
,&
20\mapsto \left(\begin{array}{r}
5 \\
7 \\
8 \\
9 \\
0
\end{array}\right)
,\\
21\mapsto \left(\begin{array}{r}
4 \\
10 \\
8 \\
9 \\
0
\end{array}\right)
,&
22\mapsto \left(\begin{array}{r}
6 \\
4 \\
10 \\
9 \\
0
\end{array}\right)
,&
23\mapsto \left(\begin{array}{r}
6 \\
7 \\
3 \\
3 \\
1
\end{array}\right)
,&
24\mapsto \left(\begin{array}{r}
6 \\
7 \\
8 \\
3 \\
1
\end{array}\right)
,&
25\mapsto \left(\begin{array}{r}
6 \\
5 \\
2 \\
7 \\
1
\end{array}\right)
,&
26\mapsto \left(\begin{array}{r}
6 \\
4 \\
10 \\
8 \\
1
\end{array}\right)
,&
27\mapsto \left(\begin{array}{r}
6 \\
5 \\
4 \\
10 \\
1
\end{array}\right)
.
\end{array}}
\newcommand\omegaIV{\arraycolsep=1.4pt
 \begin{array}{lllllll}
0\mapsto \left(0\right)
,&
1\mapsto \left(1\right)
,&
2\mapsto \left(2\right)
,&
3\mapsto \left(3\right)
,&
4\mapsto \left(4\right)
,&
5\mapsto \left(5\right)
,&
6\mapsto \left(6\right)
,\\
7\mapsto \left(7\right)
,&
8\mapsto \left(8\right)
,&
9\mapsto \left(9\right)
,&
10\mapsto \left(10\right)
,&
11\mapsto \left(11\right)
,&
12\mapsto \left(12\right)
,&
13\mapsto \left(13\right)
,\\
14\mapsto \left(14\right)
,&
15\mapsto \left(15\right)
,&
16\mapsto \left(16\right)
,&
17\mapsto \left(17\right)
,&
18\mapsto \left(18\right)
,&
19\mapsto \left(19\right)
,&
20\mapsto \left(20\right)
,\\
21\mapsto \left(21\right)
,&
22\mapsto \left(22\right)
,&
23\mapsto \left(22\right)
,&
24\mapsto \left(23\right)
,&
25\mapsto \left(24\right)
,&
26\mapsto \left(25\right)
,&
27\mapsto \left(26\right)
,\\
28\mapsto \left(27\right)
.
\end{array}}
\newcommand\omegaV{\arraycolsep=1.4pt
 \begin{array}{llllll}
0\mapsto \left(0\right)
,&
1\mapsto \left(1\right)
,&
2\mapsto \left(2\right)
,&
3\mapsto \left(3\right)
,&
4\mapsto \left(4\right)
,&
5\mapsto \left(5\right)
,\\
6\mapsto \left(6\right)
,&
7\mapsto \left(7\right)
,&
8\mapsto \left(8\right)
,&
9\mapsto \left(9\right)
,&
10\mapsto \left(10\right)
,&
11\mapsto \left(11\right)
,\\
12\mapsto \left(12\right)
,&
13\mapsto \left(13\right)
,&
14\mapsto \left(14\right)
,&
15\mapsto \left(15\right)
,&
16\mapsto \left(16\right)
,&
17\mapsto \left(17\right)
,\\
18\mapsto \left(18\right)
,&
19\mapsto \left(19\right)
,&
20\mapsto \left(20\right)
,&
21\mapsto \left(21\right)
,&
22\mapsto \left(22\right)
,&
23\mapsto \left(23\right)
,\\
24\mapsto \left(24\right)
,&
25\mapsto \left(25\right)
,&
26\mapsto \left(26\right)
,&
27\mapsto \left(27\right)
,&
28\mapsto \left(28\right)
.
\end{array}}
\newcommand\omegaVI{\arraycolsep=1.4pt
 \begin{array}{llll}
0\mapsto \left(2\right)
,&
1\mapsto \left(9\right)
,&
2\mapsto \left(10\right)
,&
3\mapsto \left(20\right)
,\\
4\mapsto \left(21\right)
,&
5\mapsto \left(22\right)
,&
6\mapsto \left(27\right)
,&
7\mapsto \left(1,\,0\right)
,\\
8\mapsto \left(6,\,5\right)
,&
9\mapsto \left(7,\,4\right)
,&
10\mapsto \left(8,\,4\right)
,&
11\mapsto \left(11,\,3\right)
,\\
12\mapsto \left(12,\,3\right)
,&
13\mapsto \left(16,\,15\right)
,&
14\mapsto \left(17,\,15\right)
,&
15\mapsto \left(18,\,14\right)
,\\
16\mapsto \left(19,\,14\right)
,&
17\mapsto \left(23,\,13\right)
,&
18\mapsto \left(26,\,25\right)
,&
19\mapsto \left(28,\,24\right)
.
\end{array}}
\newcommand\omegaVII{\arraycolsep=1.4pt
 \begin{array}{llll}
0\mapsto \left(8\right)
,&
1\mapsto \left(9\right)
,&
2\mapsto \left(10\right)
,&
3\mapsto \left(15\right)
,\\
4\mapsto \left(16\right)
,&
5\mapsto \left(18\right)
,&
6\mapsto \left(19\right)
,&
7\mapsto \left(7,\,0\right)
,\\
8\mapsto \left(7,\,2\right)
,&
9\mapsto \left(8,\,1\right)
,&
10\mapsto \left(11,\,2\right)
,&
11\mapsto \left(12,\,2\right)
,\\
12\mapsto \left(13,\,3\right)
,&
13\mapsto \left(14,\,3\right)
,&
14\mapsto \left(15,\,5\right)
,&
15\mapsto \left(15,\,6\right)
,\\
16\mapsto \left(16,\,5\right)
,&
17\mapsto \left(16,\,6\right)
,&
18\mapsto \left(17,\,4\right)
,&
19\mapsto \left(19,\,6\right)
.
\end{array}}
\newcommand\omegaVIII{\arraycolsep=1.4pt
 \begin{array}{llllll}
0\mapsto \left(3\right)
,&
1\mapsto \left(4\right)
,&
2\mapsto \left(5\right)
,&
3\mapsto \left(6\right)
,&
4\mapsto \left(12\right)
,&
5\mapsto \left(13\right)
,\\
6\mapsto \left(14\right)
,&
7\mapsto \left(15\right)
,&
8\mapsto \left(18\right)
,&
9\mapsto \left(\begin{array}{r}
0 \\
4
\end{array}\right)
,&
10\mapsto \left(\begin{array}{r}
0 \\
5
\end{array}\right)
,&
11\mapsto \left(\begin{array}{r}
1 \\
5
\end{array}\right)
,\\
12\mapsto \left(\begin{array}{r}
2 \\
5
\end{array}\right)
,&
13\mapsto \left(\begin{array}{r}
0 \\
6
\end{array}\right)
,&
14\mapsto \left(\begin{array}{r}
8 \\
13
\end{array}\right)
,&
15\mapsto \left(\begin{array}{r}
10 \\
14
\end{array}\right)
,&
16\mapsto \left(\begin{array}{r}
10 \\
15
\end{array}\right)
,&
17\mapsto \left(\begin{array}{r}
11 \\
16
\end{array}\right)
,\\
18\mapsto \left(\begin{array}{r}
9 \\
17
\end{array}\right)
,&
19\mapsto \left(\begin{array}{r}
11 \\
17
\end{array}\right)
,&
20\mapsto \left(\begin{array}{r}
7 \\
18
\end{array}\right)
,&
21\mapsto \left(\begin{array}{r}
9 \\
19
\end{array}\right)
.
\end{array}}
\newcommand\omegaIX{\arraycolsep=1.4pt
 \begin{array}{llll}
0\mapsto \left(8\right)
,&
1\mapsto \left(14\right)
,&
2\mapsto \left(17\right)
,&
3\mapsto \left(20\right)
,\\
4\mapsto \left(4,\,1\right)
,&
5\mapsto \left(5,\,1\right)
,&
6\mapsto \left(6,\,3\right)
,&
7\mapsto \left(7,\,2\right)
,\\
8\mapsto \left(8,\,0\right)
,&
9\mapsto \left(14,\,9\right)
,&
10\mapsto \left(15,\,13\right)
,&
11\mapsto \left(16,\,10\right)
,\\
12\mapsto \left(16,\,11\right)
,&
13\mapsto \left(17,\,13\right)
,&
14\mapsto \left(18,\,12\right)
,&
15\mapsto \left(19,\,10\right)
,\\
16\mapsto \left(19,\,11\right)
,&
17\mapsto \left(21,\,12\right)
.
\end{array}}
\newcommand\omegaX{\arraycolsep=1.4pt
 \begin{array}{llllll}
0\mapsto \left(1\right)
,&
1\mapsto \left(2\right)
,&
2\mapsto \left(3\right)
,&
3\mapsto \left(12\right)
,&
4\mapsto \left(13\right)
,&
5\mapsto \left(14\right)
,\\
6\mapsto \left(15\right)
,&
7\mapsto \left(16\right)
,&
8\mapsto \left(17\right)
,&
9\mapsto \left(\begin{array}{r}
0 \\
1
\end{array}\right)
,&
10\mapsto \left(\begin{array}{r}
0 \\
2
\end{array}\right)
,&
11\mapsto \left(\begin{array}{r}
0 \\
3
\end{array}\right)
,\\
12\mapsto \left(\begin{array}{r}
8 \\
9
\end{array}\right)
,&
13\mapsto \left(\begin{array}{r}
4 \\
10
\end{array}\right)
,&
14\mapsto \left(\begin{array}{r}
4 \\
11
\end{array}\right)
,&
15\mapsto \left(\begin{array}{r}
6 \\
12
\end{array}\right)
,&
16\mapsto \left(\begin{array}{r}
5 \\
13
\end{array}\right)
,&
17\mapsto \left(\begin{array}{r}
8 \\
13
\end{array}\right)
,\\
18\mapsto \left(\begin{array}{r}
7 \\
14
\end{array}\right)
,&
19\mapsto \left(\begin{array}{r}
5 \\
15
\end{array}\right)
,&
20\mapsto \left(\begin{array}{r}
7 \\
17
\end{array}\right)
.
\end{array}}
\newcommand\omegaXI{\arraycolsep=1.4pt
 \begin{array}{llll}
0\mapsto \left(5\right)
,&
1\mapsto \left(8\right)
,&
2\mapsto \left(14\right)
,&
3\mapsto \left(15\right)
,\\
4\mapsto \left(18\right)
,&
5\mapsto \left(20\right)
,&
6\mapsto \left(3,\,1\right)
,&
7\mapsto \left(4,\,2\right)
,\\
8\mapsto \left(5,\,1\right)
,&
9\mapsto \left(6,\,0\right)
,&
10\mapsto \left(7,\,1\right)
,&
11\mapsto \left(8,\,1\right)
,\\
12\mapsto \left(12,\,11\right)
,&
13\mapsto \left(13,\,11\right)
,&
14\mapsto \left(14,\,9\right)
,&
15\mapsto \left(15,\,10\right)
,\\
16\mapsto \left(16,\,11\right)
,&
17\mapsto \left(17,\,11\right)
,&
18\mapsto \left(19,\,9\right)
.
\end{array}}
\newcommand\omegaXII{\arraycolsep=1.4pt
 \begin{array}{lllll}
0\mapsto \left(8\right)
,&
1\mapsto \left(9\right)
,&
2\mapsto \left(11\right)
,&
3\mapsto \left(13\right)
,&
4\mapsto \left(14\right)
,\\
5\mapsto \left(15\right)
,&
6\mapsto \left(16\right)
,&
7\mapsto \left(17\right)
,&
8\mapsto \left(\begin{array}{r}
0 \\
8
\end{array}\right)
,&
9\mapsto \left(\begin{array}{r}
1 \\
9
\end{array}\right)
,\\
10\mapsto \left(\begin{array}{r}
1 \\
10
\end{array}\right)
,&
11\mapsto \left(\begin{array}{r}
1 \\
11
\end{array}\right)
,&
12\mapsto \left(\begin{array}{r}
6 \\
12
\end{array}\right)
,&
13\mapsto \left(\begin{array}{r}
4 \\
13
\end{array}\right)
,&
14\mapsto \left(\begin{array}{r}
7 \\
13
\end{array}\right)
,\\
15\mapsto \left(\begin{array}{r}
2 \\
14
\end{array}\right)
,&
16\mapsto \left(\begin{array}{r}
6 \\
14
\end{array}\right)
,&
17\mapsto \left(\begin{array}{r}
7 \\
15
\end{array}\right)
,&
18\mapsto \left(\begin{array}{r}
3 \\
16
\end{array}\right)
,&
19\mapsto \left(\begin{array}{r}
3 \\
17
\end{array}\right)
,\\
20\mapsto \left(\begin{array}{r}
5 \\
18
\end{array}\right)
.
\end{array}}
\newcommand\omegaXIII{\arraycolsep=1.4pt
 \begin{array}{lllll}
0\mapsto \left(6\right)
,&
1\mapsto \left(7\right)
,&
2\mapsto \left(15\right)
,&
3\mapsto \left(16\right)
,&
4\mapsto \left(18\right)
,\\
5\mapsto \left(19\right)
,&
6\mapsto \left(3,\,1\right)
,&
7\mapsto \left(4,\,0\right)
,&
8\mapsto \left(5,\,0\right)
,&
9\mapsto \left(5,\,2\right)
,\\
10\mapsto \left(6,\,0\right)
,&
11\mapsto \left(7,\,0\right)
,&
12\mapsto \left(12,\,9\right)
,&
13\mapsto \left(13,\,9\right)
,&
14\mapsto \left(14,\,9\right)
,\\
15\mapsto \left(15,\,8\right)
,&
16\mapsto \left(16,\,11\right)
,&
17\mapsto \left(17,\,11\right)
,&
18\mapsto \left(20,\,10\right)
.
\end{array}}
\newcommand\omegaUtoXII{\arraycolsep=1.4pt
 \begin{array}{lllll}
0\mapsto \left(0\right)
,&
1\mapsto \left(1\right)
,&
2\mapsto \left(9\right)
,&
3\mapsto \left(7\right)
,&
4\mapsto \left(8\right)
,\\
5\mapsto \left(11\right)
,&
6\mapsto \left(10\right)
,&
7\mapsto \left(6\right)
,&
8\mapsto \left(2\right)
,&
9\mapsto \left(4\right)
,\\
10\mapsto \left(5\right)
,&
11\mapsto \left(3\right)
,&
12\mapsto \left(18\right)
,&
13\mapsto \left(14\right)
,&
14\mapsto \left(16\right)
,\\
15\mapsto \left(13\right)
,&
16\mapsto \left(12\right)
,&
17\mapsto \left(17\right)
,&
18\mapsto \left(15\right)
.
\end{array}}
\newcommand\omegaUtoU{\arraycolsep=1.4pt
 \begin{array}{lllll}
0\mapsto \left(17\right)
,&
1\mapsto \left(16\right)
,&
2\mapsto \left(15,\,11\right)
,&
3\mapsto \left(13,\,9\right)
,&
4\mapsto \left(17,\,8\right)
,\\
5\mapsto \left(16,\,8\right)
,&
6\mapsto \left(15,\,8\right)
,&
7\mapsto \left(14,\,8\right)
,&
8\mapsto \left(\begin{array}{r}
6 \\
14
\end{array}\right)
,&
9\mapsto \left(\begin{array}{r}
3 \\
17
\end{array}\right)
,\\
10\mapsto \left(\begin{array}{r}
3 \\
16
\end{array}\right)
,&
11\mapsto \left(\begin{array}{r}
2 \\
14
\end{array}\right)
,&
12\mapsto \left(\begin{array}{rr}
7 & 1 \\
15 & 11
\end{array}\right)
,&
13\mapsto \left(\begin{array}{rr}
6 & 1 \\
14 & 11
\end{array}\right)
,&
14\mapsto \left(\begin{array}{rr}
7 & 1 \\
13 & 9
\end{array}\right)
,\\
15\mapsto \left(\begin{array}{rr}
6 & 1 \\
12 & 9
\end{array}\right)
,&
16\mapsto \left(\begin{array}{rr}
5 & 1 \\
18 & 10
\end{array}\right)
,&
17\mapsto \left(\begin{array}{rr}
4 & 1 \\
13 & 9
\end{array}\right)
,&
18\mapsto \left(\begin{array}{rr}
2 & 0 \\
14 & 8
\end{array}\right)
.
\end{array}}
\newcommand\TOtable{\begin{tabular}{lllll}
Id & Right & Top & Left & Bottom \\ \hline
$0$ & 2 & 4 & 2 & 1 \\
$1$ & 2 & 2 & 2 & 0 \\
$2$ & 1 & 1 & 3 & 1 \\
$3$ & 1 & 2 & 3 & 2 \\
$4$ & 3 & 1 & 3 & 3 \\
$5$ & 0 & 1 & 3 & 1 \\
$6$ & 0 & 0 & 0 & 1 \\
$7$ & 3 & 1 & 0 & 2 \\
$8$ & 0 & 2 & 1 & 2 \\
$9$ & 1 & 2 & 1 & 4 \\
$10$ & 3 & 3 & 1 & 2 \\
\end{tabular}}
\newcommand\TItable{\begin{tabular}{lllll}
Id & Right & Top & Left & Bottom \\ \hline
$0$ & 1 & 1 & 3 & 1 \\
$1$ & 1 & 2 & 3 & 2 \\
$2$ & 3 & 1 & 3 & 3 \\
$3$ & 0 & 1 & 3 & 1 \\
$4$ & 0 & 0 & 0 & 1 \\
$5$ & 3 & 1 & 0 & 2 \\
$6$ & 0 & 2 & 1 & 2 \\
$7$ & 3 & 3 & 1 & 2 \\
$8$ & 21 & 2 & 21 & 1 \\
$9$ & 21 & 2 & 23 & 0 \\
$10$ & 23 & 1 & 20 & 0 \\
$11$ & 20 & 2 & 21 & 0 \\
$12$ & 23 & 3 & 21 & 0 \\
\end{tabular}}
\newcommand\TIItable{\begin{tabular}{lllll}
Id & Right & Top & Left & Bottom \\ \hline
$0$ & 1 & 1 & 3 & 1 \\
$1$ & 1 & 2 & 3 & 2 \\
$2$ & 3 & 1 & 3 & 3 \\
$3$ & 0 & 1 & 3 & 1 \\
$4$ & 0 & 0 & 0 & 1 \\
$5$ & 3 & 1 & 0 & 2 \\
$6$ & 0 & 2 & 1 & 2 \\
$7$ & 3 & 3 & 1 & 2 \\
$8$ & 211 & 2 & 213 & 1 \\
$9$ & 213 & 1 & 210 & 1 \\
$10$ & 210 & 2 & 211 & 1 \\
$11$ & 213 & 3 & 211 & 1 \\
$12$ & 211 & 2 & 233 & 0 \\
$13$ & 213 & 1 & 230 & 0 \\
$14$ & 210 & 2 & 231 & 0 \\
$15$ & 231 & 1 & 203 & 0 \\
$16$ & 230 & 1 & 203 & 0 \\
$17$ & 203 & 1 & 210 & 0 \\
$18$ & 203 & 3 & 211 & 0 \\
$19$ & 233 & 1 & 213 & 0 \\
\end{tabular}}
\newcommand\TIIItable{\begin{tabular}{lllll}
Id & Right & Top & Left & Bottom \\ \hline
$0$ & 3 & 1 & 3 & 3 \\
$1$ & 0 & 1 & 3 & 1 \\
$2$ & 0 & 0 & 0 & 1 \\
$3$ & 3 & 1 & 0 & 2 \\
$4$ & 2111 & 2 & 2133 & 1 \\
$5$ & 2113 & 1 & 2130 & 1 \\
$6$ & 2110 & 2 & 2131 & 1 \\
$7$ & 2131 & 1 & 2103 & 1 \\
$8$ & 2130 & 1 & 2103 & 1 \\
$9$ & 2103 & 1 & 2110 & 1 \\
$10$ & 2103 & 3 & 2111 & 1 \\
$11$ & 2133 & 1 & 2113 & 1 \\
$12$ & 2113 & 1 & 2330 & 0 \\
$13$ & 2110 & 2 & 2331 & 0 \\
$14$ & 2130 & 0 & 2300 & 0 \\
$15$ & 2103 & 1 & 2310 & 0 \\
$16$ & 2310 & 1 & 2033 & 0 \\
$17$ & 2300 & 1 & 2033 & 0 \\
$18$ & 2300 & 0 & 2030 & 0 \\
$19$ & 2030 & 1 & 2103 & 0 \\
$20$ & 2033 & 1 & 2113 & 0 \\
$21$ & 2331 & 1 & 2133 & 0 \\
$22$ & 2330 & 1 & 2133 & 0 \\
$23$ & 2330 & 0 & 2130 & 0 \\
\end{tabular}}
\newcommand\TIVptable{\begin{tabular}{lllll}
Id & Right & Top & Left & Bottom \\ \hline
$0$ & 2113 & 1 & 2130 & 1 \\
$1$ & 2130 & 1 & 2103 & 1 \\
$2$ & 2133 & 1 & 2113 & 1 \\
$3$ & 2113 & 1 & 2330 & 0 \\
$4$ & 2130 & 0 & 2300 & 0 \\
$5$ & 2103 & 1 & 2310 & 0 \\
$6$ & 2310 & 1 & 2033 & 0 \\
$7$ & 2300 & 1 & 2033 & 0 \\
$8$ & 2300 & 0 & 2030 & 0 \\
$9$ & 2030 & 1 & 2103 & 0 \\
$10$ & 2033 & 1 & 2113 & 0 \\
$11$ & 2330 & 1 & 2133 & 0 \\
$12$ & 2330 & 0 & 2130 & 0 \\
$13$ & 21113 & 1 & 21330 & 1 \\
$14$ & 21130 & 0 & 21300 & 1 \\
$15$ & 21103 & 1 & 21310 & 1 \\
$16$ & 21310 & 1 & 21033 & 1 \\
$17$ & 21310 & 0 & 21030 & 1 \\
$18$ & 21300 & 1 & 21033 & 1 \\
$19$ & 21300 & 0 & 21030 & 1 \\
$20$ & 21030 & 1 & 21103 & 1 \\
$21$ & 21033 & 1 & 21113 & 1 \\
$22$ & 21330 & 0 & 21130 & 1 \\
$23$ & 21130 & 0 & 23300 & 0 \\
$24$ & 21103 & 1 & 23310 & 0 \\
$25$ & 21030 & 0 & 23100 & 0 \\
$26$ & 23100 & 0 & 20330 & 0 \\
$27$ & 20330 & 0 & 21130 & 0 \\
$28$ & 23310 & 0 & 21330 & 0 \\
$29$ & 23300 & 0 & 21330 & 0 \\
\end{tabular}}
\newcommand\TIVtable{\begin{tabular}{lllll}
Id & Right & Top & Left & Bottom \\ \hline
$0$ & 2113 & 1 & 2130 & 1 \\
$1$ & 2130 & 1 & 2103 & 1 \\
$2$ & 2133 & 1 & 2113 & 1 \\
$3$ & 2113 & 1 & 2330 & 0 \\
$4$ & 2130 & 0 & 2300 & 0 \\
$5$ & 2103 & 1 & 2310 & 0 \\
$6$ & 2310 & 1 & 2033 & 0 \\
$7$ & 2300 & 1 & 2033 & 0 \\
$8$ & 2300 & 0 & 2030 & 0 \\
$9$ & 2030 & 1 & 2103 & 0 \\
$10$ & 2033 & 1 & 2113 & 0 \\
$11$ & 2330 & 1 & 2133 & 0 \\
$12$ & 2330 & 0 & 2130 & 0 \\
$13$ & 21113 & 1 & 21330 & 1 \\
$14$ & 21130 & 0 & 21300 & 1 \\
$15$ & 21103 & 1 & 21310 & 1 \\
$16$ & 21310 & 1 & 21033 & 1 \\
$17$ & 21310 & 0 & 21030 & 1 \\
$18$ & 21300 & 1 & 21033 & 1 \\
$19$ & 21300 & 0 & 21030 & 1 \\
$20$ & 21030 & 1 & 21103 & 1 \\
$21$ & 21033 & 1 & 21113 & 1 \\
$22$ & 21330 & 0 & 21130 & 1 \\
$23$ & 21130 & 0 & 23300 & 0 \\
$24$ & 21030 & 0 & 23100 & 0 \\
$25$ & 23100 & 0 & 20330 & 0 \\
$26$ & 20330 & 0 & 21130 & 0 \\
$27$ & 23300 & 0 & 21330 & 0 \\
\end{tabular}}
\newcommand\TVtable{\begin{tabular}{lllll}
Id & Right & Top & Left & Bottom \\ \hline
$0$ & 2113 & 5 & 2130 & 1 \\
$1$ & 2130 & 1 & 2103 & 5 \\
$2$ & 2133 & 1 & 2113 & 1 \\
$3$ & 2113 & 5 & 2330 & 0 \\
$4$ & 2130 & 6 & 2300 & 0 \\
$5$ & 2103 & 5 & 2310 & 0 \\
$6$ & 2310 & 1 & 2033 & 6 \\
$7$ & 2300 & 1 & 2033 & 6 \\
$8$ & 2300 & 0 & 2030 & 6 \\
$9$ & 2030 & 1 & 2103 & 0 \\
$10$ & 2033 & 1 & 2113 & 0 \\
$11$ & 2330 & 1 & 2133 & 6 \\
$12$ & 2330 & 0 & 2130 & 6 \\
$13$ & 21113 & 5 & 21330 & 1 \\
$14$ & 21130 & 6 & 21300 & 1 \\
$15$ & 21103 & 5 & 21310 & 1 \\
$16$ & 21310 & 1 & 21033 & 5 \\
$17$ & 21310 & 0 & 21030 & 5 \\
$18$ & 21300 & 1 & 21033 & 5 \\
$19$ & 21300 & 0 & 21030 & 5 \\
$20$ & 21030 & 1 & 21103 & 1 \\
$21$ & 21033 & 1 & 21113 & 1 \\
$22$ & 21330 & 0 & 21130 & 1 \\
$23$ & 21330 & 0 & 21130 & 5 \\
$24$ & 21130 & 6 & 23300 & 0 \\
$25$ & 21030 & 6 & 23100 & 0 \\
$26$ & 23100 & 0 & 20330 & 6 \\
$27$ & 20330 & 0 & 21130 & 0 \\
$28$ & 23300 & 0 & 21330 & 6 \\
\end{tabular}}
\newcommand\TVItable{\begin{tabular}{lllll}
Id & Right & Top & Left & Bottom \\ \hline
$0$ & 21131 & 1 & 21305 & 1 \\
$1$ & 21305 & 5 & 21031 & 5 \\
$2$ & 21331 & 1 & 21131 & 1 \\
$3$ & 21131 & 1 & 23305 & 0 \\
$4$ & 21300 & 0 & 23006 & 0 \\
$5$ & 21031 & 1 & 23105 & 0 \\
$6$ & 23105 & 5 & 20331 & 6 \\
$7$ & 23006 & 6 & 20331 & 6 \\
$8$ & 23006 & 6 & 20300 & 6 \\
$9$ & 20300 & 0 & 21031 & 0 \\
$10$ & 20331 & 1 & 21131 & 0 \\
$11$ & 23305 & 5 & 21331 & 6 \\
$12$ & 23305 & 5 & 21300 & 6 \\
$13$ & 211131 & 1 & 213305 & 1 \\
$14$ & 211300 & 0 & 213006 & 1 \\
$15$ & 211031 & 1 & 213105 & 1 \\
$16$ & 213105 & 5 & 210331 & 5 \\
$17$ & 213105 & 5 & 210300 & 5 \\
$18$ & 213006 & 6 & 210331 & 5 \\
$19$ & 213006 & 6 & 210300 & 5 \\
$20$ & 210300 & 0 & 211031 & 1 \\
$21$ & 210331 & 1 & 211131 & 1 \\
$22$ & 213300 & 0 & 211300 & 1 \\
$23$ & 213305 & 5 & 211300 & 5 \\
$24$ & 211300 & 0 & 233006 & 0 \\
$25$ & 210300 & 0 & 231006 & 0 \\
$26$ & 231006 & 6 & 203300 & 6 \\
$27$ & 203300 & 0 & 211300 & 0 \\
$28$ & 233006 & 6 & 213300 & 6 \\
\end{tabular}}
\newcommand\TVIItable{\begin{tabular}{lllll}
Id & Right & Top & Left & Bottom \\ \hline
$0$ & 21331 & 1 & 21131 & 1 \\
$1$ & 20300 & 0 & 21031 & 0 \\
$2$ & 20331 & 1 & 21131 & 0 \\
$3$ & 210300 & 0 & 211031 & 1 \\
$4$ & 210331 & 1 & 211131 & 1 \\
$5$ & 213300 & 0 & 211300 & 1 \\
$6$ & 203300 & 0 & 211300 & 0 \\
$7$ & 21131 & 51 & 21031 & 51 \\
$8$ & 21031 & 51 & 20331 & 60 \\
$9$ & 21300 & 60 & 20331 & 60 \\
$10$ & 21300 & 60 & 20300 & 60 \\
$11$ & 21131 & 51 & 21331 & 60 \\
$12$ & 21131 & 51 & 21300 & 60 \\
$13$ & 211031 & 51 & 210331 & 51 \\
$14$ & 211031 & 51 & 210300 & 51 \\
$15$ & 211300 & 60 & 210331 & 51 \\
$16$ & 211300 & 60 & 210300 & 51 \\
$17$ & 211131 & 51 & 211300 & 51 \\
$18$ & 210300 & 60 & 203300 & 60 \\
$19$ & 211300 & 60 & 213300 & 60 \\
\end{tabular}}
\newcommand\TVIIItable{\begin{tabular}{lllll}
Id & Right & Top & Left & Bottom \\ \hline
$0$ & 21031 & 51 & 20331 & 60 \\
$1$ & 21300 & 60 & 20331 & 60 \\
$2$ & 21300 & 60 & 20300 & 60 \\
$3$ & 211300 & 60 & 210331 & 51 \\
$4$ & 211300 & 60 & 210300 & 51 \\
$5$ & 210300 & 60 & 203300 & 60 \\
$6$ & 211300 & 60 & 213300 & 60 \\
$7$ & 21331 & 511 & 21031 & 511 \\
$8$ & 20331 & 511 & 21031 & 510 \\
$9$ & 20300 & 510 & 20331 & 600 \\
$10$ & 20331 & 511 & 21331 & 600 \\
$11$ & 20331 & 511 & 21300 & 600 \\
$12$ & 210300 & 510 & 210331 & 511 \\
$13$ & 210300 & 510 & 210300 & 511 \\
$14$ & 213300 & 600 & 210331 & 511 \\
$15$ & 203300 & 600 & 210331 & 510 \\
$16$ & 213300 & 600 & 210300 & 511 \\
$17$ & 203300 & 600 & 210300 & 510 \\
$18$ & 210331 & 511 & 211300 & 511 \\
$19$ & 203300 & 600 & 213300 & 600 \\
\end{tabular}}
\newcommand\TIXtable{\begin{tabular}{lllll}
Id & Right & Top & Left & Bottom \\ \hline
$0$ & 211300 & 60 & 210331 & 51 \\
$1$ & 211300 & 60 & 210300 & 51 \\
$2$ & 210300 & 60 & 203300 & 60 \\
$3$ & 211300 & 60 & 213300 & 60 \\
$4$ & 210300 & 510 & 210331 & 511 \\
$5$ & 210300 & 510 & 210300 & 511 \\
$6$ & 213300 & 600 & 210331 & 511 \\
$7$ & 203300 & 600 & 210331 & 510 \\
$8$ & 210331 & 511 & 211300 & 511 \\
$9$ & 21130021031 & 51 & 21030020331 & 51 \\
$10$ & 21030021031 & 51 & 20330020331 & 60 \\
$11$ & 21030021300 & 60 & 20330020331 & 60 \\
$12$ & 21030021300 & 60 & 20330020300 & 60 \\
$13$ & 21130021031 & 51 & 21330020331 & 60 \\
$14$ & 21030020331 & 511 & 21030021031 & 511 \\
$15$ & 21330020331 & 511 & 21033121331 & 511 \\
$16$ & 20330020331 & 511 & 21033121331 & 510 \\
$17$ & 21330020331 & 511 & 21030021300 & 511 \\
$18$ & 20330020300 & 510 & 21030020331 & 510 \\
$19$ & 20330020331 & 511 & 21030021300 & 510 \\
$20$ & 21033121331 & 511 & 21130021031 & 511 \\
$21$ & 20330020300 & 510 & 21330020331 & 600 \\
\end{tabular}}
\newcommand\TXtable{\begin{tabular}{lllll}
Id & Right & Top & Left & Bottom \\ \hline
$0$ & 210331 & 511 & 211300 & 511 \\
$1$ & 21030020331 & 511 & 21030021031 & 511 \\
$2$ & 21330020331 & 511 & 21030021300 & 511 \\
$3$ & 21033121331 & 511 & 21130021031 & 511 \\
$4$ & 211300 & 51060 & 210331 & 51151 \\
$5$ & 211300 & 51060 & 210300 & 51151 \\
$6$ & 211300 & 60060 & 210331 & 51160 \\
$7$ & 210300 & 60060 & 210331 & 51060 \\
$8$ & 211300 & 51160 & 211300 & 51151 \\
$9$ & 21130021031 & 51151 & 21030021031 & 51151 \\
$10$ & 21130021031 & 51151 & 21033121331 & 51160 \\
$11$ & 21030021031 & 51151 & 21033121331 & 51060 \\
$12$ & 21030021300 & 51160 & 21033121331 & 51060 \\
$13$ & 21130021031 & 51151 & 21030021300 & 51160 \\
$14$ & 21030021300 & 51060 & 21030020331 & 51060 \\
$15$ & 21030021031 & 51151 & 21030021300 & 51060 \\
$16$ & 21030021300 & 51160 & 21030021300 & 51060 \\
$17$ & 21030021300 & 51060 & 21330020331 & 60060 \\
\end{tabular}}
\newcommand\TXItable{\begin{tabular}{lllll}
Id & Right & Top & Left & Bottom \\ \hline
$0$ & 21030020331 & 511 & 21030021031 & 511 \\
$1$ & 21330020331 & 511 & 21030021300 & 511 \\
$2$ & 21033121331 & 511 & 21130021031 & 511 \\
$3$ & 21030021300 & 51160 & 21033121331 & 51060 \\
$4$ & 21130021031 & 51151 & 21030021300 & 51160 \\
$5$ & 21030021300 & 51060 & 21030020331 & 51060 \\
$6$ & 21030021031 & 51151 & 21030021300 & 51060 \\
$7$ & 21030021300 & 51160 & 21030021300 & 51060 \\
$8$ & 21030021300 & 51060 & 21330020331 & 60060 \\
$9$ & 21030020331210331 & 511 & 21030021031211300 & 511 \\
$10$ & 21330020331210331 & 511 & 21030021300211300 & 511 \\
$11$ & 21033121331210331 & 511 & 21130021031211300 & 511 \\
$12$ & 21130021031211300 & 51160 & 21030021031211300 & 51151 \\
$13$ & 21130021031211300 & 51060 & 21033121331210331 & 51160 \\
$14$ & 21030021031211300 & 51060 & 21033121331210331 & 51060 \\
$15$ & 21030021300211300 & 60060 & 21033121331210331 & 51060 \\
$16$ & 21130021031211300 & 51060 & 21030021300210300 & 51160 \\
$17$ & 21130021031211300 & 51160 & 21030021300211300 & 51160 \\
$18$ & 21030021300210300 & 60060 & 21030020331210331 & 51060 \\
$19$ & 21030021031211300 & 51060 & 21030021300210300 & 51060 \\
$20$ & 21030021300210300 & 60060 & 21330020331210331 & 60060 \\
\end{tabular}}
\newcommand\TXIItable{\begin{tabular}{lllll}
Id & Right & Top & Left & Bottom \\ \hline
$0$ & 21030021300 & 51060 & 21030020331 & 51060 \\
$1$ & 21030021300 & 51060 & 21330020331 & 60060 \\
$2$ & 21030021031211300 & 51060 & 21033121331210331 & 51060 \\
$3$ & 21030021300211300 & 60060 & 21033121331210331 & 51060 \\
$4$ & 21030021300210300 & 60060 & 21030020331210331 & 51060 \\
$5$ & 21030021300210300 & 60060 & 21330020331210331 & 60060 \\
$6$ & 21330020331 & 51160511 & 21033121331 & 51060511 \\
$7$ & 21033121331 & 51151511 & 21030021300 & 51160511 \\
$8$ & 21330020331 & 51060511 & 21030020331 & 51060511 \\
$9$ & 21030020331 & 51151511 & 21030021300 & 51060511 \\
$10$ & 21330020331 & 51160511 & 21030021300 & 51060511 \\
$11$ & 21330020331 & 51060511 & 21330020331 & 60060511 \\
$12$ & 21033121331210331 & 51160511 & 21030021031211300 & 51151511 \\
$13$ & 21033121331210331 & 51060511 & 21033121331210331 & 51160511 \\
$14$ & 21030020331210331 & 51060511 & 21033121331210331 & 51060511 \\
$15$ & 21330020331210331 & 60060511 & 21033121331210331 & 51060511 \\
$16$ & 21033121331210331 & 51060511 & 21030021300210300 & 51160511 \\
$17$ & 21033121331210331 & 51160511 & 21030021300211300 & 51160511 \\
$18$ & 21030020331210331 & 51060511 & 21030021300210300 & 51060511 \\
\end{tabular}}
\newcommand\TXIIItable{\begin{tabular}{lllll}
Id & Right & Top & Left & Bottom \\ \hline
$0$ & B & O & I & O \\
$1$ & G & L & E & O \\
$2$ & A & L & I & O \\
$3$ & E & P & I & P \\
$4$ & I & P & G & K \\
$5$ & I & P & I & K \\
$6$ & I & K & B & M \\
$7$ & I & K & A & K \\
$8$ & BF & O & IJ & O \\
$9$ & GF & O & EH & O \\
$10$ & GF & O & CH & L \\
$11$ & AF & O & IH & O \\
$12$ & EH & K & GF & P \\
$13$ & EH & P & IJ & P \\
$14$ & EH & K & ID & P \\
$15$ & IJ & M & GF & K \\
$16$ & IH & K & GF & K \\
$17$ & IH & K & ID & K \\
$18$ & ID & M & BF & M \\
$19$ & ID & M & AF & K \\
$20$ & CH & P & IH & P \\
\end{tabular}}
\newcommand\TXIVtable{\begin{tabular}{lllll}
Id & Right & Top & Left & Bottom \\ \hline
$0$ & I & K & B & M \\
$1$ & I & K & A & K \\
$2$ & IJ & M & GF & K \\
$3$ & IH & K & GF & K \\
$4$ & ID & M & BF & M \\
$5$ & ID & M & AF & K \\
$6$ & G & PL & I & PO \\
$7$ & B & PO & G & KO \\
$8$ & B & PO & I & KO \\
$9$ & A & PL & I & KO \\
$10$ & B & KO & B & MO \\
$11$ & B & KO & A & KO \\
$12$ & GF & KO & GF & PO \\
$13$ & GF & PO & IJ & PO \\
$14$ & GF & KO & ID & PO \\
$15$ & BF & MO & GF & KO \\
$16$ & AF & KO & GF & KO \\
$17$ & AF & KO & ID & KO \\
$18$ & GF & PO & IH & PL \\
\end{tabular}}
\newcommand\TXIItablewithU{\begin{tabular}{lllll}
Id & Right & Top & Left & Bottom \\ \hline
$0$ & F=21030021300 & O=51060 & J=21030020331 & O=51060 \\
$1$ & F=21030021300 & O=51060 & H=21330020331 & L=60060 \\
$2$ & B=21030021031211300 & O=51060 & I=21033121331210331 & O=51060 \\
$3$ & A=21030021300211300 & L=60060 & I=21033121331210331 & O=51060 \\
$4$ & G=21030021300210300 & L=60060 & E=21030020331210331 & O=51060 \\
$5$ & G=21030021300210300 & L=60060 & C=21330020331210331 & L=60060 \\
$6$ & H=21330020331 & K=51160511 & D=21033121331 & P=51060511 \\
$7$ & D=21033121331 & M=51151511 & F=21030021300 & K=51160511 \\
$8$ & H=21330020331 & P=51060511 & J=21030020331 & P=51060511 \\
$9$ & J=21030020331 & M=51151511 & F=21030021300 & P=51060511 \\
$10$ & H=21330020331 & K=51160511 & F=21030021300 & P=51060511 \\
$11$ & H=21330020331 & P=51060511 & H=21330020331 & N=60060511 \\
$12$ & I=21033121331210331 & K=51160511 & B=21030021031211300 & M=51151511 \\
$13$ & I=21033121331210331 & P=51060511 & I=21033121331210331 & K=51160511 \\
$14$ & E=21030020331210331 & P=51060511 & I=21033121331210331 & P=51060511 \\
$15$ & C=21330020331210331 & N=60060511 & I=21033121331210331 & P=51060511 \\
$16$ & I=21033121331210331 & P=51060511 & G=21030021300210300 & K=51160511 \\
$17$ & I=21033121331210331 & K=51160511 & A=21030021300211300 & K=51160511 \\
$18$ & E=21030020331210331 & P=51060511 & G=21030021300210300 & P=51060511 \\
\end{tabular}}
\newcommand\markersO{\left\{0, 1\right\}}
\newcommand\markersI{\left\{8, 9, 10, 11, 12\right\}}
\newcommand\markersII{\left\{8, 9, 10, 11, 12, 13, 14, 15, 16, 17, 18, 19\right\}}
\newcommand\markersIII{\left\{0, 1, 2, 3\right\}}
\newcommand\markersVI{\left\{1, 6, 7, 8, 11, 12, 16, 17, 18, 19, 23, 26, 28\right\}}
\newcommand\markersVII{\left\{0, 1, 2, 3, 4, 5, 6\right\}}
\newcommand\markersVIII{\left\{0, 1, 2, 7, 8, 9, 10, 11\right\}}
\newcommand\markersIX{\left\{0, 1, 2, 3, 9, 10, 11, 12, 13\right\}}
\newcommand\markersX{\left\{0, 4, 5, 6, 7, 8\right\}}
\newcommand\markersXI{\left\{0, 1, 2, 9, 10, 11\right\}}
\newcommand\markersXII{\left\{0, 1, 2, 3, 4, 5, 6, 7\right\}}
\newcommand\markersXIII{\left\{0, 1, 2, 8, 9, 10, 11\right\}}

\begin{proof}
    The proof is done by executing the function
    $\Call{FindMarkers}$ 
    followed by 
    $\Call{FindSubstitution}$
    and repeating this process four times in total.
    Each time $\Call{FindMarkers}$ 
    finds at least one subset of markers using a surrounding radius 
    of size at most 3.
    Thus using Theorem~\ref{thm:exist-homeo},
    we may find a desubstitution of tilings.

    The proof is done in SageMath \cite{sagemathv8.9}
    using \texttt{slabbe} optional package
    \cite{labbe_slabbe_0_6_2019}.
    First we define the Jeandel-Rao set of Wang tiles $\T_0$:
    {\footnotesize
\begin{verbatim}
sage: from slabbe import WangTileSet
sage: tiles = [(2,4,2,1), (2,2,2,0), (1,1,3,1), (1,2,3,2), (3,1,3,3), (0,1,3,1), 
....:          (0,0,0,1), (3,1,0,2), (0,2,1,2), (1,2,1,4), (3,3,1,2)]
sage: tiles = [[str(a) for a in tile] for tile in tiles]
sage: T0 = WangTileSet(tiles)
\end{verbatim}
    }
\[
\T_0 =\left\{\raisebox{-5mm}{\includegraphics{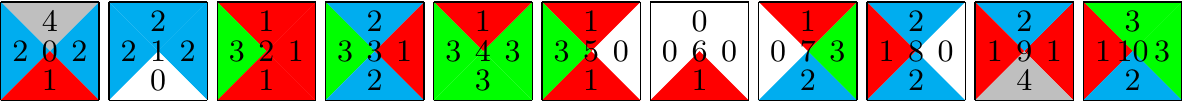}}\right\}
\]

We use Knuth's dancing links algorithm \cite{knuth_dancing_2000} because it is faster at this task than
MILP or SAT solvers.
We desubstitute $\T_0$:
    {\footnotesize
\begin{verbatim}
sage: T0.find_markers(i=2, radius=1, solver="dancing_links")
[[0, 1]]
sage: M0 = [0,1]
sage: T1,omega0 = T0.find_substitution(M=M0, i=2, side="left", 
....:                                  radius=1, solver="dancing_links")
\end{verbatim}
    }
and we obtain:
    \[
\omega_0:\Omega_1\to\Omega_0:\left\{ \omegaO\right.
    \]
    \[
\T_1 =\left\{\raisebox{-5mm}{\includegraphics{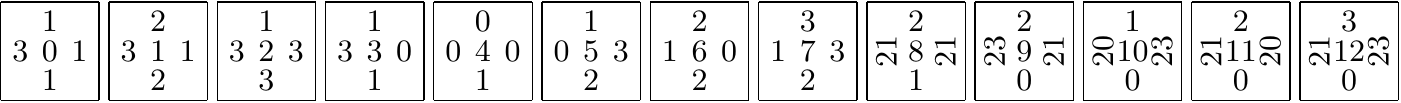}}\right\}
    \]
The above computation of $\omega_0$ and $\T_1$ confirms the observations we made
    earlier in Section~\ref{sec:markers}.

We desubstitute $\T_1$:
    {\footnotesize
\begin{verbatim}
sage: T1.find_markers(i=2, radius=1, solver="dancing_links")
[[8, 9, 10, 11, 12]]
sage: M1 = [8, 9, 10, 11, 12]
sage: T2,omega1 = T1.find_substitution(M=M1, i=2, side="left", 
....:                                  radius=1, solver="dancing_links")
\end{verbatim}
    }
and we obtain:
    \[
\omega_1:\Omega_2\to\Omega_1:\left\{ \omegaI\right.
    \]
    \[
\T_2 =\left\{\raisebox{-10mm}{\includegraphics{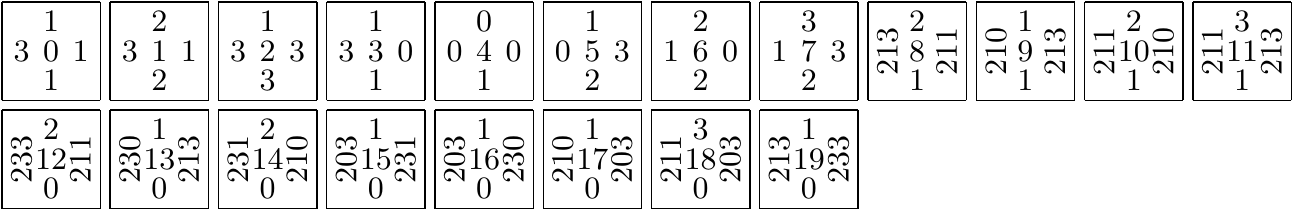}}\right\}
    \]

We desubstitute $\T_2$:
    {\footnotesize
\begin{verbatim}
sage: T2.find_markers(i=2, radius=2, solver="dancing_links")
[[8, 9, 10, 11, 12, 13, 14, 15, 16, 17, 18, 19]]
sage: M2 = [8, 9, 10, 11, 12, 13, 14, 15, 16, 17, 18, 19]
sage: T3,omega2 = T2.find_substitution(M=M2, i=2, side="left", 
....:                                  radius=2, solver="dancing_links")
\end{verbatim}
    }
and we obtain:
    \[
\omega_2:\Omega_3\to\Omega_2:\left\{ \omegaII\right.
    \]
    \[
\T_3 =\left\{\raisebox{-10mm}{\includegraphics{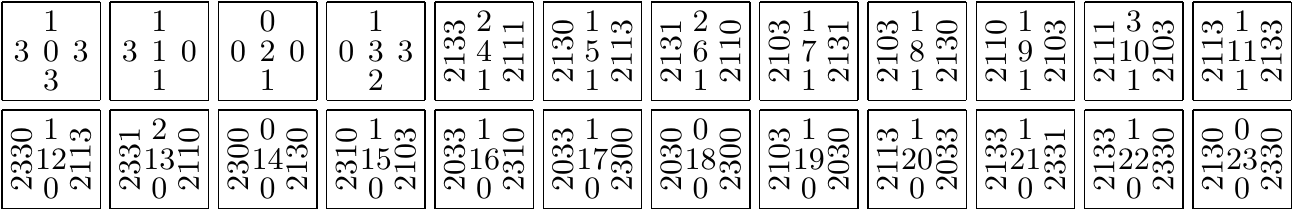}}\right\}
    \]

We desubstitute $\T_3$:
    {\footnotesize
\begin{verbatim}
sage: T3.find_markers(i=2, radius=3, solver="dancing_links")
[[0, 1, 2, 3]]
sage: M3 = [0, 1, 2, 3]
sage: T4p,omega3p = T3.find_substitution(M=M3, i=2, side="right", 
....:                                    radius=3, solver="dancing_links")
\end{verbatim}
}
and we obtain:
    \[
\omega_3':\Omega_4\to\Omega_3:\left\{ \omegaIIIp\right.
    \]
    \[
\T_4' =\left\{\raisebox{-15mm}{\includegraphics{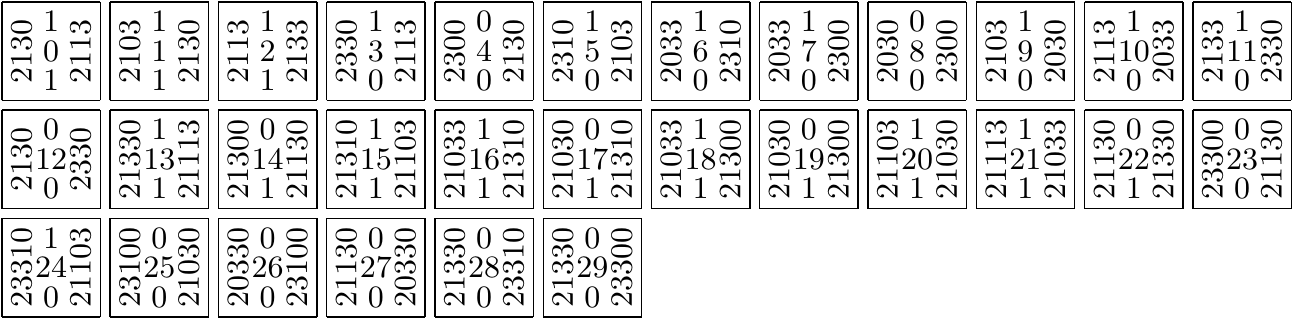}}\right\}
    \]

\end{proof}


The set of tiles $\T_4'$ contains 30 tiles. We show in
the next section that it contains 2 useless tiles, 
i.e., tiles that never appear in any tiling of $\Omega_4$,
so that we can remove those two tiles from $\T_4'$.

\section{Removing two useless tiles from $\T_4'$ to get $\T_4$}
\label{sec:removing-2-tiles-from-T4}

In this section, we state some observations on what cannot be done
with the Jeandel-Rao set of tiles. This allows to gain some initial local
intuitions on what is allowed and what is forbidden in Jeandel-Rao tilings but
also illustrates that not everything can be decided by using only a small
neighborhood.

\subsection{Some forbidden patterns in Jeandel-Rao tilings}

The next lemma shows easily by hand that some patterns are forbidden in
$\Omega_0$. The first two patterns were already proven impossible in
\cite[Proposition 1]{jeandel_aperiodic_2015}.

\begin{lemma}
    \label{lem:impossible-by-hand}
    No tiling of the plane with Jeandel-Rao set of tiles contains
    any of the patterns shown in Figure~\ref{fig:impossible}.
\end{lemma}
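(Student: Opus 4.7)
The plan is to prove each forbidden pattern impossible by elementary exhaustive case analysis on the 11 tiles of $\T_0$, exploiting the fact that each color appears on the left/right (resp. top/bottom) side of only a small number of tiles.

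First, for the two patterns attributed to Jeandel and Rao, I would simply cite \cite[Proposition 1]{jeandel_aperiodic_2015}, which already contains the case analysis. For each of the remaining patterns in Figure~\ref{fig:impossible}, I would proceed as follows. Reading off Figure~\ref{fig:jeandel_rao_tile_set}, I would first compile, for each color $c\in\{0,1,2,3,4\}$ and each side $s\in\{\scright,\sctop,\scleft,\scbottom\}$, the short list of tiles $\tau\in\T_0$ with $s(\tau)=c$. For instance, color $4$ appears on the top only for tile $0$ and on the bottom only for tile $9$, and color $2$ on the left appears only on tiles $0,1,3$. These lists cut the search drastically.

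For each forbidden pattern $p$, I would then fix the partial information already encoded in $p$ (some colors, some tiles), and propagate the constraints \eqref{eq:validwangtiling1}--\eqref{eq:validwangtiling2} outwards tile by tile. At each step, the tiles available in a given position form a small subset of $\T_0$, and branching on this subset, every branch must eventually force two adjacent edges to carry incompatible colors. Since each pattern has at most a handful of undetermined positions and each choice has at most a few options, the resulting search tree is small enough to be done by hand.

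The step I expect to be the main obstacle is recognizing, for patterns whose impossibility is not immediate from their own interior, exactly how far one must look outside the pattern before a contradiction surfaces. In the presence of markers $\{0,1\}$ analyzed in Section~\ref{sec:markers}, a natural auxiliary observation is that tiles $0$ and $1$ appear in complete nonadjacent horizontal rows; this global structural fact can be invoked to eliminate placements of colors $0,4$ (which are characteristic of the markers' vertical interaction) that would otherwise require longer local arguments. If pure local propagation fails for some pattern, I would fall back on a small computer verification: enumerate all completions of the pattern in a surrounding of radius $r\in\{1,2,3\}$ (exactly the surrounding notion formalized before Lemma~\ref{lem:surrounding}) using the \texttt{slabbe} package with Knuth's dancing links solver, and check that the set of completions is empty, which by Lemma~\ref{lem:surrounding} implies the pattern is forbidden in $\Omega_0$.
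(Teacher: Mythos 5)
Your proposal is correct in substance, but it misses the organizational device that makes the paper's proof genuinely doable by hand. The paper does not treat the five patterns independently: it proves the first two by a one-step forced extension (the two flanking tiles above, resp.\ below, are forced and no tile fits in the middle), and then shows that each subsequent pattern, after a short \emph{forced} extension, must contain one of the \emph{previously} forbidden patterns --- pattern~3's only completion above uses pattern~1, pattern~4's only lateral completion exhibits pattern~3 in its top row, and pattern~5's unique $1\times 4$ left extension produces a forbidden row. This cascading reduction keeps every individual step to a handful of forced tiles. Your flat outward propagation would eventually reach the same contradictions, but for patterns~4 and~5 the direct contradiction lies two or three reduction layers away, so the search tree is noticeably larger than "a handful of undetermined positions"; your own hedge about not knowing how far to look is exactly where the paper's chaining pays off. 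Your fallback --- a bounded-radius surrounding check via Lemma~\ref{lem:surrounding} with a solver --- is legitimate and is in fact the method the paper reserves for the genuinely hard case (Proposition~\ref{prop:rao-impossible-52}), but it forfeits the "by hand" character that this particular lemma is meant to have. One small caution: invoking the marker structure of $\{0,1\}$ is fine since it is established independently in Section~\ref{sec:markers}, but it is not needed here and the paper does not use it.
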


\begin{figure}[h]
\includegraphics[scale=.7]{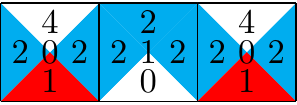}
\qquad
\includegraphics[scale=.7]{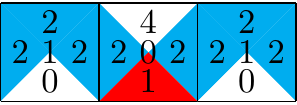}
\qquad
\includegraphics[scale=.7]{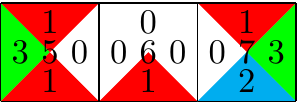}
\qquad
\includegraphics[scale=.7]{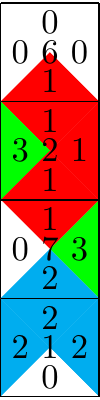}
\qquad
\includegraphics[scale=.7]{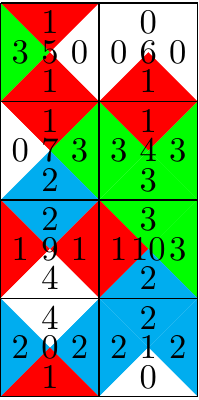}
    \caption{Those patterns are forbidden in any tiling of the plane using
    Jeandel-Rao set of tiles. The first two were proved impossible in Proposition~1
    of in \cite{jeandel_aperiodic_2015}.}
    \label{fig:impossible}
\end{figure}

\begin{proof}
    There is no way to find three tiles to put above (resp. below) the first
    (resp. second) pattern. Indeed, the two on the side are forced, but no
    tile can fit in the middle:
\begin{center}
\includegraphics[scale=.7]{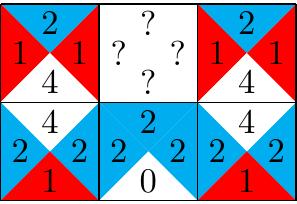},
\qquad
\includegraphics[scale=.7]{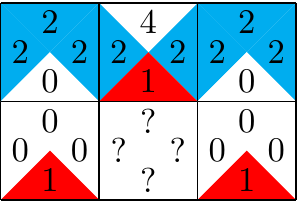}.
\end{center}
The only way to put three tiles above the third pattern is to use the first
pattern which we just have shown is forbidden:
\begin{center}
\includegraphics[scale=.7]{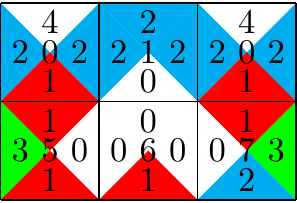}.
\end{center}
The only way to put four tiles to the left and to the right of the
fourth pattern is shown below and we remark the third forbidden pattern appears
on the top row:
\begin{center}
\includegraphics[scale=.7]{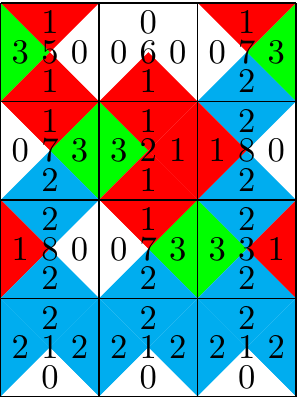}.
\end{center}
There is a unique way to tile the $1\times 4$ rectangle to the left of the
fifth considered pattern:
\begin{center}
    \includegraphics[scale=.7]{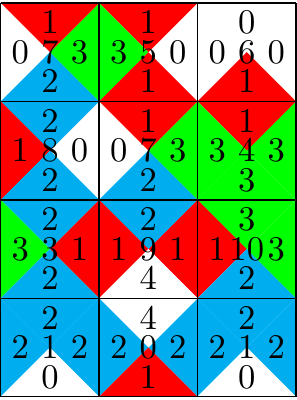}
\end{center}
    We observe that the row on the bottom is forbidden.
\end{proof}

\subsection{Jeandel-Rao transducer representation of $\T_4$}

   In \cite{jeandel_aperiodic_2015}, the set $\T_0$ of 11 Wang
   tiles is represented as a transducer made of two connected components $T_0$
   and $T_1$, see Figure~\ref{fig:transducer}.
   They observed that the strongly connected
   components of the product of transducers $T_1T_1$, $T_1T_0T_1$,
   $T_1T_0T_0T_1$ and $T_0T_0T_0T_0T_0$ are empty.  
   The emptiness of the products imply that the only possible tilings in
   $\Omega_0$ are made of horizontal strips of height 4 or 5 generated by the
   products $T_a=T_1T_0T_0T_0T_0$ and $T_b=T_1T_0T_0T_0$
   shown in Figure~\ref{fig:jeandel_rao_figure_7}.

   Such tilings indeed exist as proved by \cite{jeandel_aperiodic_2015}.
   Jeandel and Rao proved aperiodicity of $\T_0$ by proving that
   the sequence of horizontal strips of height $5$ and of height
   $4$ coded respectively by the transducers $T_a$ and $T_b$
   is exactly the set of factors of the Fibonacci word, i.e., the fixed point of
   the morphism $a\mapsto ab, b\mapsto a$ (see also
   Proposition~\ref{prop:Fibonacci}).

   As a consequence, every tiling in $\Omega_0$ can be desubstituted uniquely by the 31
   patterns of rectangular shape $1\times 4$ or $1\times 5$ associated to each
   of the transitions of the two transducers shown at
   Figure~\ref{fig:jeandel_rao_figure_7}.
   The reader may compare these 31 transitions with the set $\T_4'$ of 30 Wang
   tiles computed in the previous section and see that the transition
   $2030\xrightarrow{0|0} 2310$ does not correspond to a tile in~$\T_4'$.

\begin{figure}[h]
\begin{tikzpicture}[auto,yscale=1.3,xscale=2,>=latex]
    \node at (-1,1) {$T_b:$};
    \node[rectangle,draw] (2030) at (4, 0) {2030};
    \node[rectangle,draw] (2033) at (2, 0) {2033};
    \node[rectangle,draw] (2103) at (4, 2) {2103};
    \node[rectangle,draw] (2113) at (1, 0) {2113};
    \node[rectangle,draw] (2130) at (2, 2) {2130};
    \node[rectangle,draw] (2133) at (0, 0) {2133};
    \node[rectangle,draw] (2300) at (3, 1) {2300};
    \node[rectangle,draw] (2310) at (3, 0) {2310};
    \node[rectangle,draw] (2330) at (0, 2) {2330};
    \draw[->] (2030) -- node[swap] {0|0} (2300);
    \draw[->] (2033) -- node {0|1} (2300);
    \draw[->] (2033) -- node[swap] {0|1} (2310);
    \draw[->] (2103) -- node {0|1} (2030);
    \draw[->] (2103) -- node[swap] {1|1} (2130);
    \draw[->] (2113) -- node[swap] {0|1} (2033);
    \draw[->] (2113) -- node {1|1} (2133);
    \draw[->] (2130) -- node[swap] {1|1} (2113);
    \draw[->] (2130) -- node[swap] {0|0} (2330);
    \draw[->] (2133) -- node {0|1} (2330);
    \draw[->] (2300) -- node {0|0} (2130);
    \draw[->] (2310) -- node[pos=.7] {0|1} (2103);
    \draw[->] (2330) -- node {0|1} (2113);
    \draw[->,dashed] (2030) -- node {0|0} (2310); 
\end{tikzpicture}
\begin{tikzpicture}[auto,scale=2,>=latex]
    \node at (-1,1) {$T_a:$};
    \node[draw,rectangle] (20330) at (2, 2) {20330};
    \node[draw,rectangle] (21030) at (5, 2) {21030};
    \node[draw,rectangle] (21033) at (3, 1) {21033};
    \node[draw,rectangle] (21103) at (5, 0) {21103};
    \node[draw,rectangle] (21113) at (2, 1) {21113};
    \node[draw,rectangle] (21130) at (0, 2) {21130};
    \node[draw,rectangle] (21300) at (4, 1.4) {21300};
    \node[draw,rectangle] (21310) at (4, 0.6) {21310};
    \node[draw,rectangle] (21330) at (1, 1) {21330};
    \node[draw,rectangle] (23100) at (3, 2) {23100};
    \node[draw,rectangle] (23300) at (0, 0) {23300};
    \node[draw,dashed,rectangle] (23310) at (1, 0) {23310}; 
    \draw[->] (20330) -- node {0|0} (23100);
    \draw[->] (21030) -- node[swap] {1|0} (21300);
    \draw[->] (21030) -- node {1|0} (21310);
    \draw[->] (21033) -- node {1|1} (21300);
    \draw[->] (21033) -- node {1|1} (21310);
    \draw[->] (21103) -- node[swap] {1|1} (21030);
    \draw[->] (21113) -- node {1|1} (21033);
    \draw[->] (21130) -- node {0|0} (20330);
    \draw[->] (21130) -- node {1|0} (21330);
    \draw[->] (21300) -- node {1|0} (21130);
    \draw[->] (21310) -- node {1|1} (21103);
    \draw[->] (21330) -- node {1|1} (21113);
    \draw[->] (21330) -- node[swap] {0|0} (23300);
    \draw[->] (23100) -- node {0|0} (21030);
    \draw[->] (23300) -- node {0|0} (21130);
    \draw[->,dashed] (21330) -- node {0|0} (23310); 
    \draw[->,dashed] (23310) -- node {0|1} (21103); 
\end{tikzpicture}
    \caption{
        The strongly connected components of $T_a=T_1T_0T_0T_0T_0$ (below) and
        $T_b=T_1T_0T_0T_0$ (above) after deletion of sink and source
        transitions. This is a reproduction of Figure 7 (b) page 18 of
        Jeandel-Rao preprint.
        The edges 
        $2030\xrightarrow{0|0} 2310$
        and
        $21330\xrightarrow{0|0} 23310\xrightarrow{0|1} 21103$
        are dashed because we show that the associated patterns are forbidden in
        any tiling of the plane (Lemma~\ref{lem:impossible-by-hand} and
        Proposition~\ref{prop:rao-impossible-52}). 
        Each solid transition corresponds to a Wang tile in the set $\T_4$.}
    \label{fig:jeandel_rao_figure_7}
\end{figure}
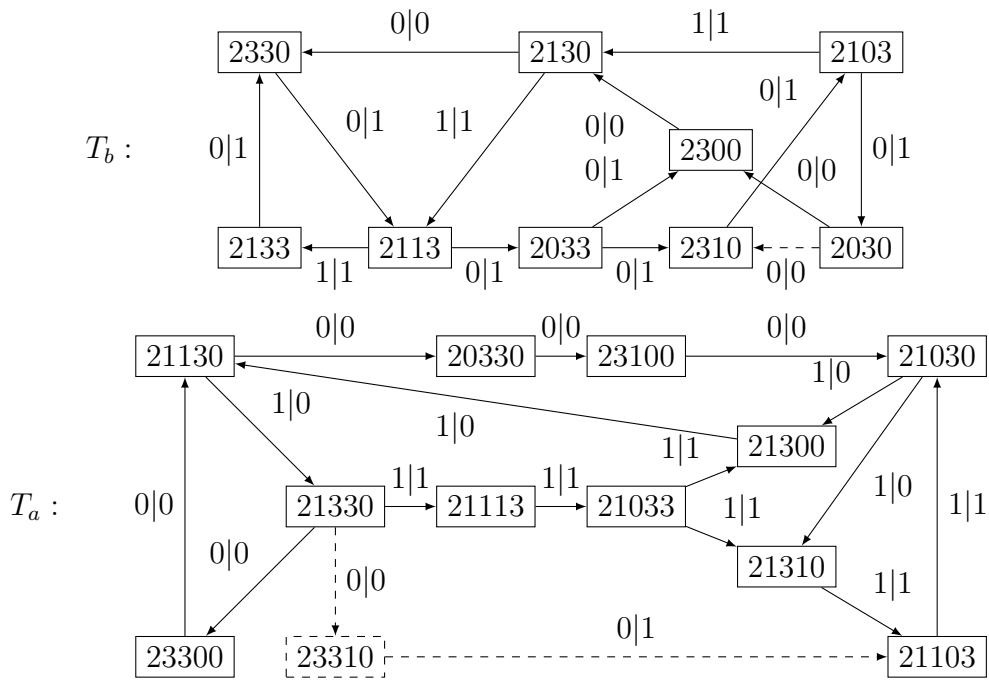

\begin{figure}[h]
\begin{center}
    \includegraphics[scale=.7]{article2_impossible_2030_2310.pdf}
\qquad
\begin{tikzpicture}[scale=1.2]
\tikzstyle{every node}=[font=\footnotesize]
    \node at (0.5,0.5) {$\varnothing$};
\draw (0, 0) -- (1, 0);
\draw (0, 0) -- (0, 1);
\draw (1, 1) -- (1, 0);
\draw (1, 1) -- (0, 1);
\node[rotate=90] at (0.8, 0.5) {2310};
\node[rotate=0] at (0.5, 0.8) {0};
\node[rotate=90] at (0.2, 0.5) {2030};
\node[rotate=0] at (0.5, 0.2) {0};
\end{tikzpicture}
\qquad
    \includegraphics[scale=.7]{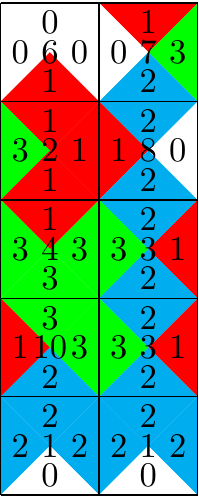}
\qquad
\begin{tikzpicture}[scale=1.2]
\tikzstyle{every node}=[font=\footnotesize]
    \node at (0.5,0.5) {28};
\draw (0, 0) -- (1, 0);
\draw (0, 0) -- (0, 1);
\draw (1, 1) -- (1, 0);
\draw (1, 1) -- (0, 1);
\node[rotate=90] at (0.8, 0.5) {23310};
\node[rotate=0] at (0.5, 0.8) {0};
\node[rotate=90] at (0.2, 0.5) {21330};
\node[rotate=0] at (0.5, 0.2) {0};
\begin{scope}[xshift=1cm]
    \node at (0.5,0.5) {24};
\draw (0, 0) -- (1, 0);
\draw (0, 0) -- (0, 1);
\draw (1, 1) -- (1, 0);
\draw (1, 1) -- (0, 1);
\node[rotate=90] at (0.8, 0.5) {21103};
\node[rotate=0] at (0.5, 0.8) {1};
\node[rotate=90] at (0.2, 0.5) {23310};
\node[rotate=0] at (0.5, 0.2) {0};
\end{scope}
\end{tikzpicture}
\end{center}
    \caption{
    From left to right:
    the pattern in $\T_0$;
    the tile 
    associated to the transition $2030\xrightarrow{0|0} 2310$
    which does not appear in $\T_4'$;
    the pattern in $\T_0$
    and the two tiles in $\T_4'$
    associated to the consecutive transitions 
        $21330\xrightarrow{0|0}23310\xrightarrow{0|1}21103$.
    All these patterns are forbidden in $\Omega_0$ and in $\Omega_4$. 
    The proof for the former transition is easy and is done in
    Lemma~\ref{lem:impossible-by-hand}.
    The proof for the latter is impossible without a computer.}
\label{fig:21330-0|0-23310-0|1-21103}
\end{figure}

In this section we show that only 28 of the 31 transitions shown in
Figure~\ref{fig:jeandel_rao_figure_7} correspond to patterns that
appear in $\Omega_0$.
The patterns in
$\Omega_0$ and $\Omega_4$ associated to the three transitions that can be
removed are shown in Figure~\ref{fig:21330-0|0-23310-0|1-21103}.
The proof that the pattern associated to the transition $2030\xrightarrow{0|0}
2310$ does not appear in $\Omega_0$ is easy and is done in
Lemma~\ref{lem:impossible-by-hand}.
Another independent proof of this is
Proposition~\ref{prop:from-jeandel-rao-0-to-4} since the tile $(2310,0,2030,0)$
associated to the transition $2030\xrightarrow{0|0} 2310$ 
does not belong to $\T_4'$.

The proof that the other two transitions are useless is much more difficult
as some large valid surroundings of the pattern 
associated to $21330\xrightarrow{0|0}23310\xrightarrow{0|1}21103$
can be found (see Figure~\ref{fig:surrounding}), so we have to go far until we
reach a forbidden pattern.
Moreover Proposition~\ref{prop:from-jeandel-rao-0-to-4} 
fails to prove it since
the two tiles associated to the transitions
$21330\xrightarrow{0|0}23310\xrightarrow{0|1}21103$
belong to $\T_4'$.

\begin{figure}
\begin{center}
\includegraphics[width=.8\linewidth]{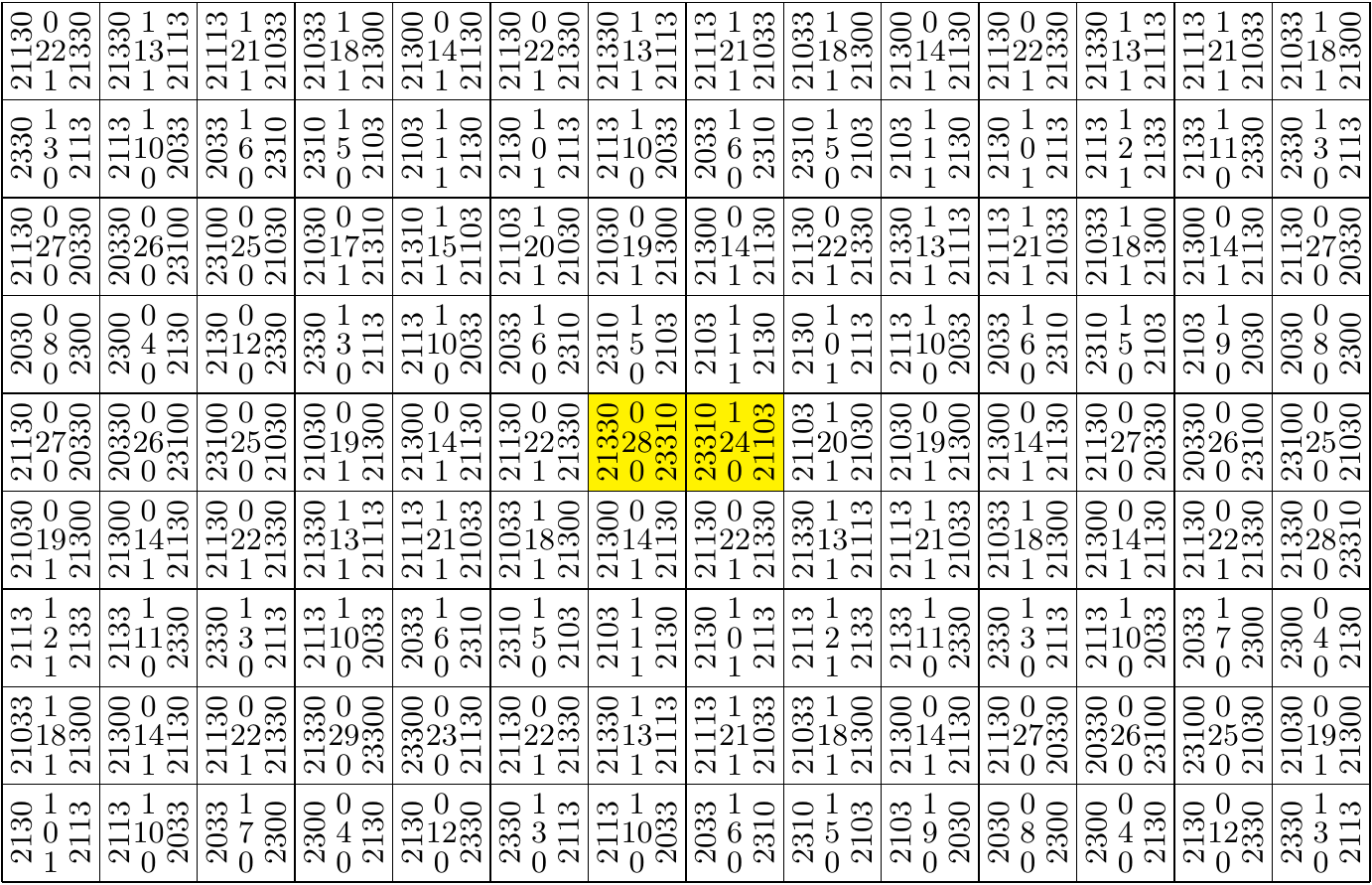}
\end{center}
\caption{A surrounding of height 9 and width 14 with tiles of $\T_4'$ around
the pattern $21330\xrightarrow{0|0}23310\xrightarrow{0|1}21103$.}
\label{fig:surrounding}
\end{figure}

\subsection{Removing two tiles from $\T_4'$}

The first proof of this result was provided by Michaël Rao using the transducer
approach during a visit of him at LaBRI, Bordeaux France (personal
communication, November 2017). We provide here an independent proof 
also done by computer exploration with SageMath \cite{sagemathv8.9}.


\begin{proposition}\label{prop:rao-impossible-52}
    (Michaël Rao, 2017)
    No tiling of the plane with Jeandel-Rao set of tiles contains the 
    pattern $21330\xrightarrow{0|0}23310\xrightarrow{0|1}21103$.
\end{proposition}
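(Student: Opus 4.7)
The plan is to establish the claim purely by a bounded computation, exploiting the fact that $\Omega_0$ is a shift of finite type. By Lemma~\ref{lem:surrounding}, if the $3\times 5$ rectangular pattern $P$ associated with the transitions $21330 \xrightarrow{0|0} 23310 \xrightarrow{0|1} 21103$ were in the language of $\Omega_0$, then for every radius $r\in\N$ one could surround $P$ by a valid block of Wang tiles of shape $(3+2r,\,5+2r)$ whose interior contains no forbidden horizontal or vertical domino from $\T_0$. Thus it suffices to exhibit a single radius $r$ for which no such surrounding exists: the contrapositive of Lemma~\ref{lem:surrounding} then says that $P$ is forbidden in $\Omega_0$.

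Concretely, I would fix $P$ as a hard constraint placed at the centre of an $(3+2r)\times(5+2r)$ grid, encode the Wang-tiling equations \eqref{eq:validwangtiling1}--\eqref{eq:validwangtiling2} on that grid with colours from $\T_0$, and feed the resulting exact-cover or 0/1 satisfiability instance to a solver (dancing links, SAT, or the Gurobi MILP interface, as already used for finding markers in Section~\ref{sec:markers}). Starting from small values of $r$, I would increase $r$ until the solver reports infeasibility, and then declare that radius to be a certificate. Since Figure~\ref{fig:surrounding} already exhibits a valid $14\times 9$ surrounding, one knows in advance that $r=6$ is insufficient, so in practice one begins searching from $r$ strictly larger than that.

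The substantive issue is to pick $r$ large enough that infeasibility actually occurs, while keeping the instance tractable. This is purely an empirical question about the decay of admissible surroundings of $P$; heuristically one expects the markers and the strip structure revealed in Figure~\ref{fig:jeandel_rao_figure_7} to force contradictions after only a few further rows of transducer transitions, since the set of admissible completions of $P$ shrinks rapidly when extending vertically through the strip transducer $T_a$. To keep the search feasible, one can also interleave the computation with simplifications from Lemma~\ref{lem:impossible-by-hand}, pruning any partial surrounding that would force a $010$, $101$ or $567$ substring on some row. I expect the main obstacle to be neither mathematical nor conceptual but rather the calibration of $r$: the pattern is carefully placed among the nonadjacent markers, and removing it requires propagating constraints far enough that a global incompatibility becomes visible. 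Once the solver returns \textsc{unsat} for some concrete radius, the proof is complete, and the output (the unsatisfiability certificate or the exhaustive case distinction performed by the solver) can be reproduced deterministically from the SageMath notebook \cite{labbe_ipynb_arxiv_1808_07768_v4}.
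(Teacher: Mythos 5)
Your logical skeleton is the same as the paper's: fix the pattern at the centre of a finite rectangle, encode the adjacency constraints as an exact-cover/SAT/MILP instance, and invoke the contrapositive of Lemma~\ref{lem:surrounding} once the solver reports infeasibility. The paper does exactly this. The differences are in the two places where the method has to be made to actually terminate. First, the paper runs the search over the supertile alphabet $\T_4'$ (each tile being a $1\times 4$ or $1\times 5$ column of $\T_0$-tiles, via the transducer decomposition of Figure~\ref{fig:jeandel_rao_figure_7}), and uses a deliberately anisotropic window: a $71\times 9$ rectangle of $\T_4'$-tiles with the single tile $(21103,1,23310,0)$ preassigned at $(35,4)$. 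The obstruction propagates essentially horizontally, so one needs width $\approx 71$ but very little height. Your square surrounding of radius $r$ around the pattern in $\T_0$ would need $r\approx 34$ to reach the same width, forcing a height of $\approx 73$ rows of $\T_0$-tiles; that instance is an order of magnitude larger and it is not clear it is tractable, whereas the paper's instance solves in seconds. (The paper also first reduces to forbidding the single domino $23310\xrightarrow{0|1}21103$, since it occurs iff the full two-step pattern does; this is minor.)

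Second, and more importantly, as written your text is a search plan rather than a proof: the statement is only established once a concrete radius with an \textsc{unsat} certificate is exhibited, and you explicitly leave the calibration of $r$ open. Nothing in your argument guarantees that such an $r$ exists a priori — that is precisely the content of the proposition. So to turn your proposal into a proof you must either report the successful parameters (as the paper does with $71\times 9$ over $\T_4'$) or give a theoretical bound, and the latter is not available here. I would also caution against your suggested pruning via Lemma~\ref{lem:impossible-by-hand}: it is sound but unnecessary once you work over $\T_4'$, since those local obstructions are already absorbed into the supertile alphabet.
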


\begin{proof}
    To show that the pattern $23310\xrightarrow{0|1}21103$ is forbidden is
    enough as it must occur if and only if the other pattern
    $21330\xrightarrow{0|0}23310$ occurs.  We show that there exists no
    rectangle of width 71 and height 9 tiled with the 30 tiles of $\T_4'$
    with the tile $23310\xrightarrow{0|1}21103$ in the center of that rectangle.

    We do the proof 
    in SageMath \cite{sagemathv8.9} with the open source optional package
    \texttt{slabbe} \cite{labbe_slabbe_0_6_2019} developed by the author. 
    In that package,
    a Wang tiling problem is reduced to
    mixed integer linear program (MILP), 
    boolean satisfiability problem (SAT)
    or exact cover problem (solved with Knuth's dancing links algorithm).
    
    Below is the code to construct the problem. We 
    construct the solver searching for
    a tiling of the rectangle of size $71\times 9$ 
    tiled with the tiles from $\T_4'$
    with the tile $(21103, 1, 23310, 0)$
    in the center of that rectangle,
    that is, at position $(35,4)$:
    {\footnotesize
\begin{verbatim}
sage: T4p[24]
('21103', '1', '23310', '0')
sage: S = T4p.solver(width=71, height=9, preassigned_tiles={(35,4):24})
\end{verbatim}}
    Using a reduction of that problem to SAT
    using 19170 variables and 1078659 clauses,
    the Glucose SAT solver \cite{glucose} says there is no solution in less than 4 seconds:
    {\footnotesize
\begin{verbatim}
sage: %time S.has_solution(solver="glucose")
CPU times: user 2.02 s, sys: 104 ms, total: 2.12 s
Wall time: 3.93 s
False
\end{verbatim}}

    Using a reduction of the same problem to a MILP instance,
    using 19170 variables and 1838 constraints,
    the linear program is shown to have no solution using
    \texttt{Gurobi} solver \cite{gurobi} 
    in about 45 seconds on a normal 2018 desktop computer with 8 available
    cpus:
    {\footnotesize
\begin{verbatim}
sage: %time S.has_solution(solver="Gurobi")
CPU times: user 2min 42s, sys: 2.7 s, total: 2min 45s
Wall time: 44.7 s
False
\end{verbatim}}
\end{proof}


As a consequence, we have the following corollary:

\begin{corollary}\label{cor:can-remove-2-tile}
    The Wang shift $\Omega_4$ generated by $\T_4'$ is generated by a subset
    \begin{equation*}
    \T_4=\T_4'\setminus \left\{
\raisebox{-4mm}{
    \rm
\begin{tikzpicture}[scale=1.2]
\tikzstyle{every node}=[font=\footnotesize]
\draw (0, 0) -- (1, 0);
\draw (0, 0) -- (0, 1);
\draw (1, 1) -- (1, 0);
\draw (1, 1) -- (0, 1);
\node[rotate=90] at (0.8, 0.5) {23310};
\node[rotate=0] at (0.5, 0.8) {0};
\node[rotate=90] at (0.2, 0.5) {21330};
\node[rotate=0] at (0.5, 0.2) {0};
\end{tikzpicture},
\begin{tikzpicture}[scale=1.2]
\tikzstyle{every node}=[font=\footnotesize]
\draw (0, 0) -- (1, 0);
\draw (0, 0) -- (0, 1);
\draw (1, 1) -- (1, 0);
\draw (1, 1) -- (0, 1);
\node[rotate=90] at (0.8, 0.5) {21103};
\node[rotate=0] at (0.5, 0.8) {1};
\node[rotate=90] at (0.2, 0.5) {23310};
\node[rotate=0] at (0.5, 0.2) {0};
\end{tikzpicture}}
        \right\}.
    \end{equation*}
    There exists a one-to-one map $\iota:\T_4\to\T_4'$
    such that its extension to the Wang shift $\Omega_4$
    is the identity.
    The morphism $\omega_3:\Omega_4\to\Omega_3$ defined as
    $\omega_3=\omega_3'\circ\iota$
    is recognizable and onto up to a shift.
\end{corollary}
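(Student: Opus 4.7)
The plan is to deduce the three claims of the corollary as direct consequences of the structural results already established, in particular Proposition~\ref{prop:rao-impossible-52} and Proposition~\ref{prop:from-jeandel-rao-0-to-4}. The main point is that the two tiles being removed correspond to transitions through the intermediate state $23310$ in the transducer $T_a$ of Figure~\ref{fig:jeandel_rao_figure_7}, and this state is ``rigid'' in the sense that it has a unique incoming and a unique outgoing transition within the strongly connected component, so one cannot appear in a tiling without the other.

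First, I would argue that neither of the two removed tiles can appear in any tiling $x \in \Omega_4 = \Omega_{\T_4'}$. Suppose for contradiction that the tile $(21103,1,23310,0)$ occurs at some position $\bn \in \Z^2$. Since $x$ is a biinfinite tiling, there is a tile at position $\bn-\be_1$ whose right edge matches $23310$; in the transducer $T_a$, the only transition entering state $23310$ in the strongly connected component is $21330\xrightarrow{0|0}23310$, so the tile at $\bn-\be_1$ must be $(23310,0,21330,0)$. Together the two tiles form the horizontal pattern $21330\xrightarrow{0|0}23310\xrightarrow{0|1}21103$. Applying the composition $\omega_0\,\omega_1\,\omega_2\,\omega_3' : \Omega_4 \to \Omega_0$ produces a tiling in $\Omega_0$ containing this forbidden pattern, contradicting Proposition~\ref{prop:rao-impossible-52}. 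The symmetric argument (the only outgoing transition from $23310$ is $23310\xrightarrow{0|1}21103$) handles the other tile.

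Consequently, every tiling of $\Z^2$ by $\T_4'$ in fact uses only tiles in $\T_4 = \T_4' \setminus \{(23310,0,21330,0),(21103,1,23310,0)\}$, so $\Omega_{\T_4} = \Omega_{\T_4'} = \Omega_4$ as subshifts. Define $\iota : \T_4 \hookrightarrow \T_4'$ to be the inclusion. Since no tiling uses the two extra tiles, the induced map on Wang shifts $\iota : \Omega_{\T_4} \to \Omega_{\T_4'}$ is the identity, in particular a topological conjugacy and one-to-one.

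Finally, set $\omega_3 = \omega_3' \circ \iota$. Recognizability is preserved under composition with a topological conjugacy: since $\iota$ is bijective on $\Omega_4$ and $\omega_3'$ is recognizable in $\Omega_4$ by Proposition~\ref{prop:from-jeandel-rao-0-to-4}, any two centered $\omega_3$-representations of a point $y\in\Omega_3$ yield centered $\omega_3'$-representations via $\iota$ and hence coincide. For surjectivity up to a shift, we compute
\begin{equation*}
    \overline{\omega_3(\Omega_{\T_4})}^\sigma
    = \overline{\omega_3'\bigl(\iota(\Omega_{\T_4})\bigr)}^\sigma
    = \overline{\omega_3'(\Omega_4)}^\sigma
    = \Omega_3,
\end{equation*}
again by Proposition~\ref{prop:from-jeandel-rao-0-to-4}. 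The only step requiring genuine input is the use of Proposition~\ref{prop:rao-impossible-52}; the rest is bookkeeping. I expect no further obstacle, as the transducer uniqueness of the transitions through $23310$ is immediate from inspection of Figure~\ref{fig:jeandel_rao_figure_7}.
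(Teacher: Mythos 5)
Your argument is correct and is essentially the route the paper intends: the corollary is stated as a direct consequence of Proposition~\ref{prop:rao-impossible-52}, using exactly the observation that the unique transitions into and out of state $23310$ force each of the two tiles to produce the forbidden pattern, so neither occurs in any tiling and removing them leaves $\Omega_4$ unchanged. The only cosmetic difference is that you pass to $\Omega_0$ via $\omega_0\,\omega_1\,\omega_2\,\omega_3'$ to invoke the proposition, whereas its proof already rules out the pattern directly at the level of $\T_4'$-tilings; either way the conclusion and the remaining bookkeeping for $\iota$ and $\omega_3$ are fine.
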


Both $\T_4$ and $\iota:\Omega_4\to\Omega_4$ are
shown below.

\[
    \iota:\T_4\to\T_4' \left\{\iotaIII\right.
\]
\[
    \T_4 =\left\{\raisebox{-15mm}{\includegraphics{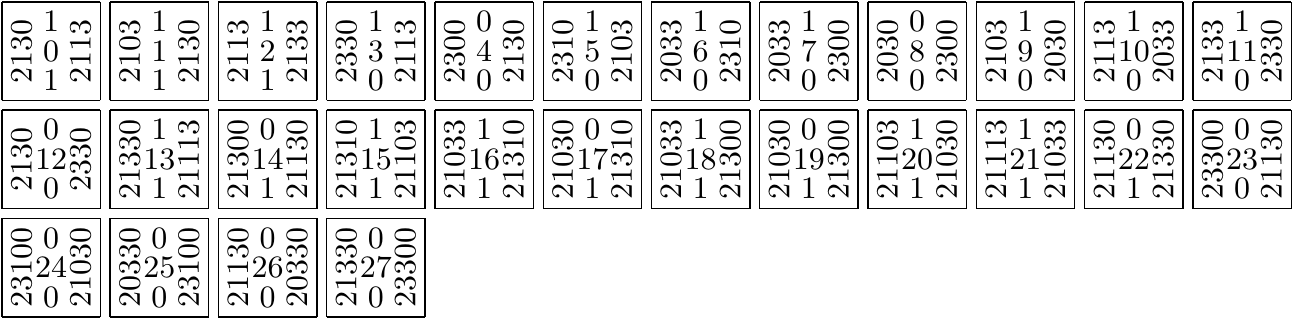}}\right\}
\]
The product morphism 
$\omega_0\,\omega_1\,\omega_2\,\omega_3:\Omega_4\to\Omega_0$
is below and is illustrated with tiles in Figure~\ref{fig:omega-4-to-0}.
\[
    \left\{\omegaOtoIV\right.
\]

The next natural step would be to find markers $M\subset\T_4$ but 
we face a problem: $\T_4$ has no markers.
Even worse: there are two problems. Each of these two distinct reasons is dealt
in the next two sections.

\part{Dealing with the troubles}\label{part:2}

\section{Adding decorations : $\Omega_4\xleftarrow{\jmath}\Omega_5$}
\label{sec:adding-decorations-jmath}

As mentioned, the set of tiles $\T_4$ has no markers. 
It has no markers in the direction $\be_2$ for a reason that is dealt in the next section.
It has no markers in the direction $\be_1$ because
there are tilings in $\Omega_4$ that have a horizontal biinfinite line
consisting of color 0 only (there are also some other tilings that have a
horizontal biinfinite line of 1's only).
We call such a line of a constant color a \emph{fault line}. 
The upper half-plane above a horizontal fault line can be translated
to the left or to the right keeping the validity of the tiling. The validity of this
sliding operation prevents the existence of markers in the direction $\be_1$ in $\T_4$ as
it naturally breaks the column-structure imposed by such marker tiles.

Horizontal fault lines also happens for Jeandel-Rao tilings themselves.
See Figure~\ref{fig:jeandel-rao-some-tiling} which illustrates a tiling in $\Omega_0$ containing a fault line of 0's.  
This implies that the set $\T_0$ of Jeandel-Rao tiles also has no markers in
the direction $\be_1$ although it has markers in the direction $\be_2$.
The reader may observe in Figure~\ref{fig:jeandel-rao-some-tiling} that sliding
the upper half-plane above the horizontal fault line of 0's breaks the structure of supertiles.
Such tilings are still valid Wang tilings, they belong to the Jeandel-Rao Wang shift
but they can not be desubstituted further since they do not belong to $X_0$ (see Theorem~\ref{thm:main-result-in-simple-terms}). Moreover, we believe that $\Omega_0\setminus X_0$
contains nothing else than those obtained by sliding along the fault line
(see Conjecture~\ref{conj:fault line}).
This is also why the Jeandel-Rao Wang
shift is not minimal and this is developed later in Part~\ref{part:4} of this contribution.

Fault lines happen also with Robinson tilings and this is the
reason why the Robinson tilings are not minimal. In \cite{GAHLER2012627}, the
authors add decorations on Robinson tiles to avoid fault lines and they described
the unique minimal subshift of Robinson tilings.

In this section, to avoid fault lines in $\Omega_4$, we add decorations on the
bottom and top edges of tiles of $\T_4$. We replace some 0's by 6's and some
1's by 5's.  More precisely, we define a new set $\T_5$ of 29 Wang tiles as
\begin{equation*}
    \T_5 = \left\{\raisebox{-15mm}{\includegraphics{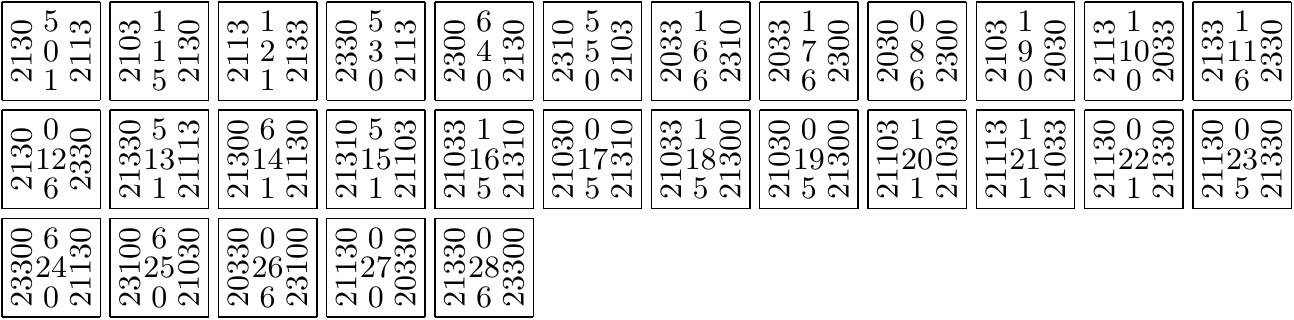}}\right\}
\end{equation*}
together with its Wang shift $\Omega_5=\Omega_{\T_5}$.
We define the following projection of colors:
\begin{equation*}
\begin{array}{rcl}
    \jmath:\{0,1,5,6\} & \to & \{0,1\}\\
x & \mapsto &
\begin{cases}
0  & \text{ if } x \in\{0,6\},\\
1  & \text{ if } x \in\{1,5\}.
\end{cases}
\end{array}
\end{equation*}
When $\jmath$ is applied to the bottom and top colors of tiles of $\T_5$, 
we obtain another map (that we also denote by $\jmath$)
\begin{equation*}
\begin{array}{rccc}
    \jmath:&\T_5 & \to & \T_4\\
    &\tile{Y}{A}{X}{C} & \mapsto &
\tile{Y}{$\jmath$(A)}{X}{$\jmath$(C)}.
\end{array}
\end{equation*}
which maps the tiles $\T_5$ onto the tiles $\T_4$.
The map $\jmath$ is not one-to-one on the tiles of $\T_5$ as
two distinct tiles are mapped to the same:
    \begin{equation}\label{eq:pi_not_injective}
        \jmath\left(
\raisebox{-5mm}{
    \rm
\begin{tikzpicture}[scale=1.2]
\tikzstyle{every node}=[font=\footnotesize]
\draw (0, 0) -- (1, 0);
\draw (0, 0) -- (0, 1);
\draw (1, 1) -- (1, 0);
\draw (1, 1) -- (0, 1);
\node at (.5,.5) {23};
\node[rotate=90] at (0.8, 0.5) {21330};
\node[rotate=0] at (0.5, 0.8) {0};
\node[rotate=90] at (0.2, 0.5) {21130};
\node[rotate=0] at (0.5, 0.2) {5};
\end{tikzpicture}}\;\right)
=
\raisebox{-5mm}{
\begin{tikzpicture}[scale=1.2]
\tikzstyle{every node}=[font=\footnotesize]
\draw (0, 0) -- (1, 0);
\draw (0, 0) -- (0, 1);
\draw (1, 1) -- (1, 0);
\draw (1, 1) -- (0, 1);
\node at (.5,.5) {22};
\node[rotate=90] at (0.8, 0.5) {21330};
\node[rotate=0] at (0.5, 0.8) {0};
\node[rotate=90] at (0.2, 0.5) {21130};
\node[rotate=0] at (0.5, 0.2) {1};
\end{tikzpicture}}
=
        \jmath\left(
\raisebox{-5mm}{
\begin{tikzpicture}[scale=1.2]
\tikzstyle{every node}=[font=\footnotesize]
\draw (0, 0) -- (1, 0);
\draw (0, 0) -- (0, 1);
\draw (1, 1) -- (1, 0);
\draw (1, 1) -- (0, 1);
\node at (.5,.5) {22};
\node[rotate=90] at (0.8, 0.5) {21330};
\node[rotate=0] at (0.5, 0.8) {0};
\node[rotate=90] at (0.2, 0.5) {21130};
\node[rotate=0] at (0.5, 0.2) {1};
\end{tikzpicture}} \;\right).
\end{equation}
But $\jmath$ defines a one-to-one $2$-dimensional morphism on tilings of
$\Omega_5$ given by
\[
    \jmath:\Omega_5\to\Omega_4:\left\{\omegaIV\right.
\]

\begin{proposition}\label{prop:jmath-5-to-4-is-one-to-one-continuous}
    The $2$-dimensional morphism $\jmath:\Omega_5\to\Omega_4$ is an embedding
    and is a topological conjugacy onto its image $X_4=\jmath(\Omega_5)$.
\end{proposition}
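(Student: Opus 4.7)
The plan is to establish the proposition in three steps. First I will verify that $\jmath$ is a well-defined $2$-dimensional morphism $\Omega_5\to\Omega_4$ that is continuous and shift-equivariant. The horizontal-matching condition in $\Omega_4$ is automatic, because the tile map $\jmath$ leaves the left and right colors of each tile untouched. Vertical matching follows because $\jmath$ is applied pointwise to top and bottom colors: two adjacent edges that carry the same color in $\{0,1,5,6\}$ still carry the same color in $\{0,1\}$ after projection. Continuity and shift-equivariance are immediate from the fact that $\jmath$ is defined coordinate-by-coordinate.

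The real content is the injectivity of $\jmath$ on $\Omega_5$, and this is where I expect the work to lie. The key observation, already recorded in Equation~\eqref{eq:pi_not_injective}, is that the underlying map $\jmath:\T_5\to\T_4$ identifies exactly one pair of tiles, namely tiles $22$ and $23$ of $\T_5$, which agree on their right, top and left edges but carry bottom colors $1$ and $5$, respectively. Suppose $x,x'\in\Omega_5$ satisfy $\jmath(x)=\jmath(x')$ and differ at some position $(n,m)\in\Z^2$. Then, after a swap if necessary, $x_{n,m}$ is tile $22$ and $x'_{n,m}$ is tile $23$ of $\T_5$. Vertical matching in each tiling forces the tile at $(n,m-1)$ to have top color $1$ in $x$ and top color $5$ in $x'$. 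Now both tiles $22$ and $23$ of $\T_5$ have top color $0$, so neither $x_{n,m-1}$ nor $x'_{n,m-1}$ can lie in the ambiguous pair. Hence each is uniquely determined by its common $\jmath$-image, so $x_{n,m-1}=x'_{n,m-1}$, contradicting the fact that their top colors differ. Therefore $x=x'$ on all of $\Z^2$.

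Finally, the topological-conjugacy conclusion is a soft consequence: $\Omega_5$ is a closed subset of the compact space $\T_5^{\Z^2}$, hence compact; $\jmath$ is a continuous injection; so $\jmath$ is a homeomorphism onto its image $X_4=\jmath(\Omega_5)$, which is in particular compact and closed in $\Omega_4$. Combined with the shift-equivariance from the first step, this gives a topological conjugacy between $(\Omega_5,\sigma)$ and $(X_4,\sigma)$. The main obstacle in the plan is the injectivity step: one must guarantee that the ambiguity in the lift of tile $22$ of $\T_4$ cannot propagate through the tiling, and the construction of $\T_5$ succeeds in blocking that propagation precisely because the two identified tiles share the same top color $0$, so any column-wise ambiguity is resolved by the single tile immediately below.
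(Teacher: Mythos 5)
Your proof is correct and follows essentially the same route as the paper: the injectivity argument hinges, exactly as in the paper's proof, on the fact that the only ambiguous pair (tiles $22$ and $23$ of $\T_5$) differ solely in their bottom color ($1$ versus $5$) while both have top color $0$, so the tile immediately below cannot itself be ambiguous and thus resolves the lift. Your explicit appeal to compactness for the continuity of the inverse is a minor addition the paper leaves implicit, but it does not constitute a different approach.
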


We recall that an embedding commutes with the $\Z^2$-actions, i.e., the shift
maps $\sigma$.

\begin{proof}
    First we prove that the map $\jmath:\Omega_5\to\Omega_4$ is well-defined.
    Indeed, we check that for each tile $u\in\T_5$, we have $\jmath(u)\in\T_4$.
    Moreover, if tiles $u,v\in\T_5$ can be adjacent in a tiling in $\Omega_5$,
    then $\jmath(u)$, $\jmath(v)$ can be adjacent in a tiling of $\Omega_4$.
    Therefore, if $x\in\Omega_5$, then $\jmath(x)\in\Omega_4$ which proves that
    the map $\jmath:\Omega_5\to\Omega_4$ is well-defined.

    Now we prove that the map $\jmath:\Omega_5\to\Omega_4$ is one-to-one.
    Suppose that $x,y\in\Omega_5$ are such that $\jmath(x)=\jmath(y)$.
    Let $z\in\Omega_4$ be the tiling such that $z=\jmath(x)=\jmath(y)$.
    Let $P\subset\Z^2$ be the positions of tiles in $z$ that have more than one
    preimage under $\jmath$.
    Notice that the only two tiles of $\T_5$ that are mapped to the same tile
    in $\T_4$ are the ones given at Equation~\eqref{eq:pi_not_injective}.
    So we have:
    \begin{equation*}
        P 
        = z^{-1}\left(\{t\in\T_4\mid 
        \#\jmath^{-1}(t)>1
        \}\right)
        = z^{-1}\left(
                \raisebox{-5mm}{
                \begin{tikzpicture}[scale=1.2]
                \tikzstyle{every node}=[font=\footnotesize]
                \draw (0, 0) -- (1, 0);
                \draw (0, 0) -- (0, 1);
                \draw (1, 1) -- (1, 0);
                \draw (1, 1) -- (0, 1);
                \node[rotate=90] at (0.8, 0.5) {21330};
                \node[rotate=0] at (0.5, 0.8) {0};
                \node[rotate=90] at (0.2, 0.5) {21130};
                \node[rotate=0] at (0.5, 0.2) {1};
                \end{tikzpicture}}
            \;\right).
    \end{equation*}
    By definition, we have $x_p = y_p$ for every $p\in\Z^2\setminus P$.
    Let $p\in P$ and suppose by contradiction that $x_p\neq y_p$.
    From Equation~\eqref{eq:pi_not_injective}, this means that the bottom
    colors of $x_p$ and $y_p$ are distinct:
    \[
        \scbottom(x_p)
        \neq
        \scbottom(y_p).
    \]
    This also means that the top colors of the tile just below are:
    \[
        \{\sctop(x_{p-\be_2}), \sctop(y_{p-\be_2})\}=
        \{\scbottom(x_p), \scbottom(y_p)\}=\{1,5\}.
    \]
    Therefore we have that $p-\be_2\notin P$ or otherwise
        $\{\sctop(x_{p-\be_2}), \sctop(y_{p-\be_2})\}=\{0\}$.
    This means that $x_{p-\be_2} = y_{p-\be_2}$.
    Therefore the top colors of $x_{p-\be_2} = y_{p-\be_2}$ are the same which
    implies that the bottom colors of $x_p$ and $y_p$ are the same
    \[
        \scbottom(x_p)
        =
        \sctop(x_{p-\be_2})
        =
        \sctop(y_{p-\be_2})
        =
        \scbottom(y_p)
    \]
    which is a contradiction. This proves that $\jmath:\Omega_5\to\Omega_4$ is
    one-to-one.

    We prove that $\jmath$ is continuous.
    If $x,x'\in\Omega_5$ then
    \begin{equation*}
        \delta(\jmath(x), \jmath(x')) = \delta(x,x').
    \end{equation*}
    Thus if $(x_n)_{n\in\N}$ is a sequence of tilings $x_n\in\Omega_5$ such
    that $x=\lim_{n\to\infty}x_n$ exists, then
    $\jmath(x)=\lim_{n\to\infty}\jmath(x_n)$ and $\jmath$ is continuous.
\end{proof}

The map $\jmath:\Omega_5\to\Omega_4$ is not onto but we believe it is $\mu$-almost
the case where $\mu$ is any shift invariant probability measure on $\Omega_4$.
In Section~\ref{sec:fault lines}
we provide an example of a tiling $x\in\Omega_4\setminus X_4$
and we explain why we believe that
$\Omega_4\setminus X_4$ has measure 0.
The code to construct the set of Wang tiles $\T_5$ introduced in this section is below:
    {\footnotesize
\begin{verbatim}
sage: tiles5 = [(2113, 5, 2130, 1), (2130, 1, 2103, 5), (2133, 1, 2113, 1),
....: (2113, 5, 2330, 0), (2130, 6, 2300, 0), (2103, 5, 2310, 0),
....: (2310, 1, 2033, 6), (2300, 1, 2033, 6), (2300, 0, 2030, 6),
....: (2030, 1, 2103, 0), (2033, 1, 2113, 0), (2330, 1, 2133, 6),
....: (2330, 0, 2130, 6), (21113, 5, 21330, 1), (21130, 6, 21300, 1),
....: (21103, 5, 21310, 1), (21310, 1, 21033, 5), (21310, 0, 21030, 5),
....: (21300, 1, 21033, 5), (21300, 0, 21030, 5), (21030, 1, 21103, 1),
....: (21033, 1, 21113, 1), (21330, 0, 21130, 1), (21330, 0, 21130, 5),
....: (21130, 6, 23300, 0), (21030, 6, 23100, 0), (23100, 0, 20330, 6),
....: (20330, 0, 21130, 0), (23300, 0, 21330, 6)]
sage: T5 = WangTileSet([[str(a) for a in tile] for tile in tiles5])
\end{verbatim}}

We finish this section by a proof that the image of $\jmath$ is subshift of
finite type. This result will be helpful in a future work on $X_0$ but its
proof is more easily done here.

\begin{lemma}
    $X_0$, $X_1$, $X_2$, $X_3$ and $X_4$ are subshifts of finite type.
\end{lemma}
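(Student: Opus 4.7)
The plan is to prove the five statements by downward induction on $i$ from $i = 4$ to $i = 0$. Each Wang shift $\Omega_i$ is already an SFT, so at each step it is enough to exhibit finitely many additional forbidden patterns of bounded support cutting $X_i$ out of $\Omega_i$.

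For the base case $X_4 = \jmath(\Omega_5)$, I would construct an explicit sliding block code $\Phi : \Omega_4 \to \T_5^{\Z^2}$ of window $\{\zero, -\be_2\}$ extending $\jmath^{-1}$ from $X_4$ to all of $\Omega_4$. By Equation~\eqref{eq:pi_not_injective}, only one tile of $\T_4$ has two preimages in $\T_5$, differing solely in their bottom color; the proof of Proposition~\ref{prop:jmath-5-to-4-is-one-to-one-continuous} moreover shows that this ambiguous tile has top color $0$, so two ambiguous tiles can never stack vertically inside a tiling of $\Omega_4$. Consequently the rule ``if $y_p$ is unambiguous, take its unique preimage; otherwise, take the preimage of $y_p$ whose bottom color matches the (uniquely determined) top color of the preimage of $y_{p-\be_2}$'' is well defined at every $p \in \Z^2$ for every $y \in \Omega_4$. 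This yields $X_4 = \Omega_4 \cap \Phi^{-1}(\Omega_5)$, an SFT as the intersection of an SFT with the preimage of an SFT under a sliding block code.

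For the inductive step $X_{i+1} \Rightarrow X_i$ with $i \in \{0,1,2,3\}$, I would exploit recognizability of $\omega_i$ to pull back forbidden patterns. A compactness argument on $\Omega_i$, parallel to the continuity proof of the desubstitution map $\psi_i : \Omega_i \to \Omega_{i+1}$ sending $y$ to the unique $x \in \Omega_{i+1}$ appearing in its centered $\omega_i$-representation, produces a radius $R_i \in \N$ such that for every $y \in \Omega_i$ and every $p \in \Z^2$ the window $y|_{p + [-R_i,R_i]^2}$ determines both the letter $a \in \T_{i+1}$ whose macro-tile $\omega_i(a)$ covers position $p$ and the offset of $p$ inside that macro-tile. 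Since each $\omega_i$ sends letters to single tiles or vertical dominoes, for every forbidden pattern $f$ of $X_{i+1}$ (of bounded support) there is an associated finite collection of patterns in $\Omega_i$, of support bounded in terms of $|f|$ and $R_i$, whose presence locally forces $f$ inside $\psi_i(y)$. Taking the union of these pullbacks over the finitely many forbidden patterns defining $X_{i+1}$ furnishes a finite set $\F_i$ of patterns of bounded support with $X_i$ equal to the set of tilings in $\Omega_i$ avoiding $\F_i$; thus $X_i$ is an SFT.

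The main technical point will be in the inductive step: the morphisms $\omega_i$ are non-uniform (mixing $1 \times 1$ and $1 \times 2$ macro-tiles), so $\psi_i$ is not shift-equivariant and Curtis--Hedlund--Lyndon does not apply directly. What rescues the argument is exactly recognizability, which ensures that the irregular macro-grid is nonetheless locally determined within a bounded radius, converting finitely many forbidden patterns of $X_{i+1}$ into finitely many forbidden patterns of $X_i$.
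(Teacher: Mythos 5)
Your proof is correct. On the base case it is essentially the paper's argument in different clothing: the paper also inverts $\jmath$ by a local rule reading the tile at $p$ together with the tile at $p-\be_2$ (choosing between the two preimages $22$ and $23$ of the single ambiguous tile according to the tile below), and presents $X_4$ as $\SFT(G)$ where $G$ is the complement, among all dominoes over $\T_4$, of the $\jmath$-images of a computer-verified list of $\T_5$-dominoes admitting a radius-$3$ surrounding; your $X_4=\Omega_4\cap\Phi^{-1}(\Omega_5)$ is the same construction packaged as a sliding block code. (Do make explicit that a matching preimage always exists, not merely that ambiguous tiles cannot stack: the ambiguous tile of $\T_4$ has bottom color $1$, so the tile beneath it has top color $1$, whose unique $\T_5$-preimage has top color in $\jmath^{-1}(1)=\{1,5\}$ and hence matches exactly one of tiles $22$ and $23$.) Where you genuinely diverge is the inductive step. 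The paper handles $X_0,\dots,X_3$ in one sentence, declaring $X_0=\SFT(\omega_0\omega_1\omega_2\omega_3(G)\cup H)$ with $H$ the invalid dominoes of $\T_0$, i.e.\ it takes as forbidden patterns the literal images of the dominoes in $G$ under the composed substitution. You instead pull forbidden patterns back through each $\omega_i$ via a local-recognizability radius obtained by compactness. That is more work, but it is also the more robust route: the inclusion $X_0\subseteq\SFT(\omega_0\cdots\omega_3(G)\cup H)$ needs the fact that an image $\omega(g)$ cannot occur in a tiling of $X_0$ at a position misaligned with the macro-tile grid, which is precisely what your radius $R_i$ controls and what the paper leaves buried in the word ``recognizability''. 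Your version buys a general statement (the image of an SFT under a recognizable substitution, sitting inside an SFT, is again an SFT) at the price of an explicit list of defining patterns, which the paper's computation supplies.
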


\begin{proof}
    We want to show that $X_4=\SFT(G)$ where $G$ is the
    following set of forbidden dominoes
    \[
        G = \left(\left(\T_4\odot^1\T_4\right)\cup
            \left(\T_4\odot^2\T_4\right)\right)
            \setminus
            \jmath(D)
    \]
    and $D$ is the set of dominoes of tiles in $\T_5$ that allows some
    surrounding:
    \[
        D = \left\{u\odot^iv \in\T_5^{*^2}\mid 
           i\in\{1,2\} 
           \text{ and } u\odot^iv \text{
           admits a surrounding of radius $3$ with tiles in $\T_5$}\right\}.
    \]
    The set $D$ contains 37 horizontal dominoes and 75 vertical dominoes as
    shown in the computation below.
{\footnotesize
\begin{verbatim}
    sage: D_horizontal = T5.dominoes_with_surrounding(i=1, radius=3, solver="dancing_links")
    sage: len(D_horizontal)
    37
    sage: D_vertical = T5.dominoes_with_surrounding(i=2, radius=3, solver="dancing_links")
    sage: len(D_vertical)
    75
\end{verbatim}}

    We show that $\SFT(G)\supseteq X_4$.
    Since $X_4=\jmath(\Omega_5)$, let $x\in\Omega_5$.
    Let $a\odot^i b$ be a domino appearing in $\jmath(x)$.
    There exists some domino $u\odot^i v$ appearing in $x$
    such that $a\odot^i b=\jmath(u\odot^i v)$.
    Thus $u\odot^i v\in D$ and $a\odot^i b=\jmath(u\odot^i v)\in\jmath(D)$.
    Therefore $a\odot^i b\notin G$.
    Thus $\jmath(x)\in\SFT(G)$.

    We show that $\SFT(G)\subseteq X_4$.
    Let us identify the tiles in $\T_4$ with their indices
    in $\llbracket 0, 27\rrbracket$
    and the tiles in $\T_5$ with their indices
    in $\llbracket 0, 28\rrbracket$.
    Recall that the map $\jmath$ is one-to-one everywhere on $\T_5$ except
    tiles 22 and 23 that are mapped to the tile 22.
Note that among the vertical dominoes in $D$, the only one where the top tile
is 22 or 23 are
$7 \odot^2 22$,
$18\odot^2 22$,
$0\odot^2 23$,
$3 \odot^2 23$ and 
$13\odot^2 23$:
{\footnotesize
\begin{verbatim}
    sage: sorted((u,v) for (u,v) in D_vertical if v in [22,23])
    [(0, 23), (3, 23), (7, 22), (13, 23), (18, 22)]
\end{verbatim}}
\noindent
    Let $x\in\SFT(G)\subset\llbracket 0, 27\rrbracket^{\Z^2}$.
    We want to construct $y\in\Omega_5$ such that $x=\jmath(y)$.
    Let $y\in\T_5^{\Z^2}=\llbracket 0, 28\rrbracket^{\Z^2}$
    such that for every $p\in\Z^2$ we have
    \[
    y_p = 
    \begin{cases}
    \jmath^{-1}(x_p) & \text{if}\quad x_p\neq 22,\\
    22   & \text{if}\quad x_p=22 \text{ and } x_{p-\be_2}\in\{7,18\},\\
    23   & \text{if}\quad x_p=22 \text{ and } x_{p-\be_2}\in\{0,3,13\}.
    \end{cases}
    \]
    By definition, we have $x=\jmath(y)$.
    Suppose by contradiction that $y\notin\Omega_5$.
    This means that $y$ contains a domino $a\odot^i b$ 
    which is not valid, in particular $a\odot^i b\notin D$.
    Then $\jmath(a\odot^i b)$ is a domino which occurs in $x\in\SFT(G)$. 
    Thus $\jmath(a\odot^i b)\notin G$ which means 
    $\jmath(a\odot^i b)\in\jmath(D)$.
    Therefore there exists $c\odot^i d\in D$ such that
    $\jmath(a\odot^i b)=\jmath(c\odot^i d)$.
    The equality $a\odot^i b=c\odot^i d$ leads to a contradiction.
    Then $\{a,c\}=\{22,23\}$ or $\{b,d\}=\{22,23\}$.
    The fact that $c\odot^2 d\in D$ and $a\odot^2 b\notin D$
    and since only the bottom edge of tiles $22$ and $23$ in $\T_5$ differ
    implies that $i=2$, $\{b,d\}=\{22,23\}$.
    Tiles $\{22,23\}$ are not adjacent in $y$ since tile $22$ is not
    adjacent in $x$. It implies that $\{a,c\}\cap\{22,23\}=\varnothing$.
    Thus $\jmath(a)=\jmath(c)$ implies that $a=c$.
    Since $a\odot^2 b$ is not valid,
    we must have $b=23$ and $d=22$.
    We reach a contradiction because
    all occurrences of $a\odot^2 23$ in $y$
    are valid since $a\in\jmath^{-1}(\{0,3,13\})=\{0,3,13\}$.

    From the recognizability of $\omega_0\omega_1\omega_2\omega_3$,
    it follows that $X_0=\SFT(\omega_0\omega_1\omega_2\omega_3(G)\cup H)$
    where
    \[
        H = 
        \left(\left(\T_0\odot^1\T_0\right)\cup
            \left(\T_0\odot^2\T_0\right)\right)
            \setminus
        \left\{u\odot^iv \mid 
        u,v\in\T_0,
        u\odot^iv 
           \text{ is valid and }
           i\in\{1,2\} 
               \right\}.
    \]
    Thus $X_0$ is a subshift of finite type and similarly for intermediate
    $X_1$, $X_2$ and $X_3$.
\end{proof}

\section{A shear conjugacy: $\Omega_5\xleftarrow{\eta}\Omega_6$}
\label{sec:shear-eta}

In the previous section, we added decorations to avoid horizontal fault lines
but this is not enough.
There are still no subset of
markers in $\T_5$ neither in the direction $\be_1$ nor in the direction $\be_2$. 
But the tilings in $\Omega_5$ have a particular property
illustrated in Figure~\ref{fig:tiling_with_T5}. On each line of
slope 1 in a tiling in $\Omega_5$, we see tiles from a subset $M\subset\T_5$.
To deal with this case, we need to adapt the approach.

\begin{figure}[h]
\begin{center}
    \includegraphics{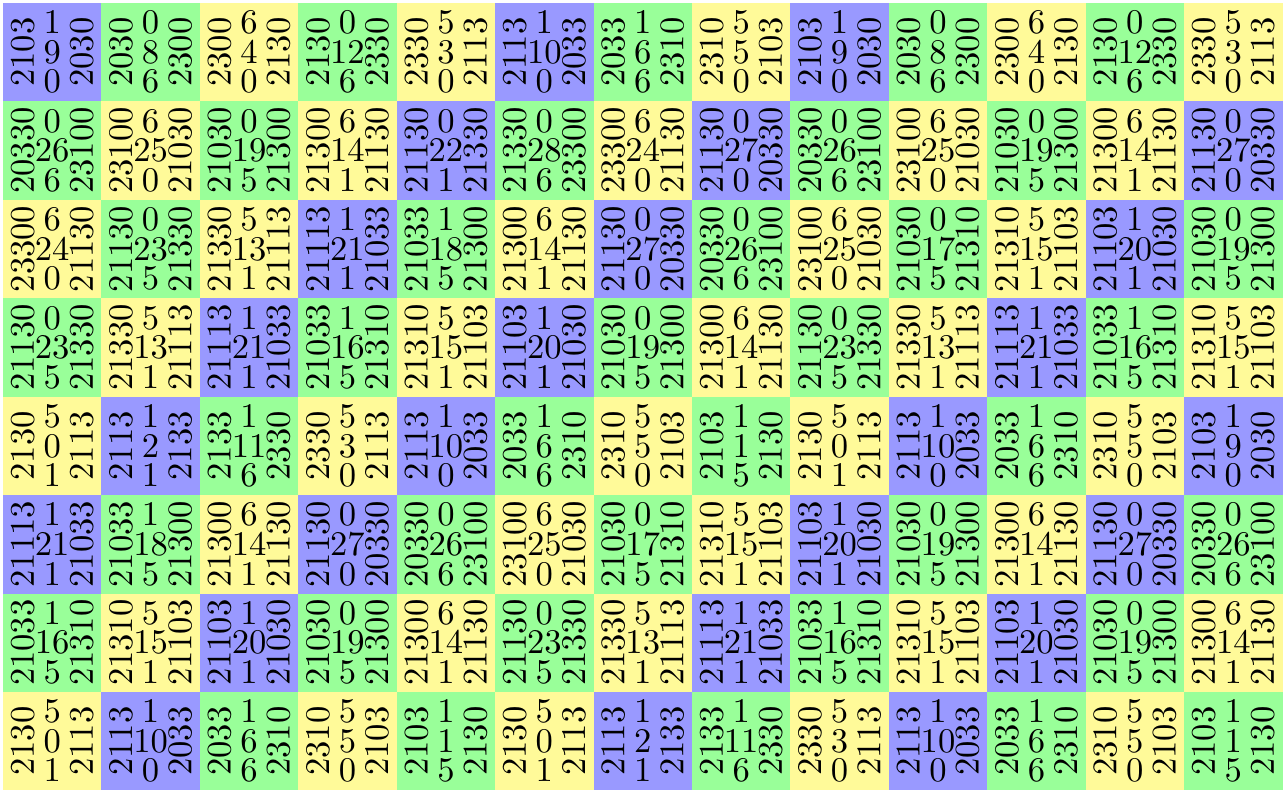}
\end{center}
    \caption{
        In a tiling in $\Omega_5$, all tiles appear in any chosen vertical
        column, so one can not find markers in direction $\be_1$ nor $\be_2$ to
        desubstitute those tilings. But the tiles appearing on a line of slope
        1 belong to either of the following three sets:
        $\{1, 6, 7, 8, 11, 12, 16, 17, 18, 19, 23, 26, 28\}$ (yellow),
        $\{0, 3, 4, 5, 13, 14, 15, 24, 25\}$ (green),
        $\{2, 9, 10, 20, 21, 22, 27\}$ (blue).
        These are \emph{kind of diagonal} markers.}
    \label{fig:tiling_with_T5}
\end{figure}

One possibility is to generalize the notion of markers to directions other than
$\be_1$ and $\be_2$ including markers appearing on lines of slopes $1$, $-1$
and possibly other rational slopes. 
A bad consequence of this option is that we need to
adapt all subsequent steps including Theorem~\ref{thm:exist-homeo} and
Algorithm~\ref{alg:find-recognizable-sub-from-markers}.
Their modification is possible but the resulting new algorithms will be more
complicated as they need to compute patches of size at least $2\times 2$ 
with their surroundings. If possible, we want to avoid to do that.
Instead, we change bijectively the set of tiles $\T_5$ so that the tilings
with the new tiles $\T_6$ are the image under the application of a shear matrix
on the tilings with the original tiles.

In this section, we introduce an operation on any set of Wang tiles which has
the effect of the application of a shear matrix 
$\left(\begin{smallmatrix}
        1 & 1 \\
        0 & 1
\end{smallmatrix}\right)$
on the associated tilings. Let
$\T$ be a set of Wang tiles and
\begin{equation*}
    \textsc{Dominoes}^i(\T) = \{(s,t)\in\T^2 \mid 
    \text{ the pattern } s\odot^i t 
    \text{ appears in some tilings of }\Omega_\T\}
\end{equation*}
be the set of allowed dominoes in the direction $i\in\{1,2\}$.
We define the map $\theta$ as the fusion of some hidden flat tile that appears
on top of two adjacent tiles (see
Figure~\ref{fig:stepbystepshear}):
\begin{equation*}
\begin{array}{rccl}
    \theta:& \textsc{Dominoes}^1(\T) & \to & \Scal\\
    &\left(\tile{Y}{A}{X}{C}, \tile{Z}{B}{Y}{D}\;\right)
    & \mapsto &
    \tilerot{YB}{B}{XA}{C}
    =
    \tile{Y}{A}{X}{C}
    \boxslash^2
    \tile{B}{B}{A}{A}
\end{array}
\end{equation*}
where $\Scal=\theta(\textsc{Dominoes}^1(\T))$ is the image of $\theta$.

\begin{figure}
\begin{tikzpicture}[auto,scale=0.9]
\tikzstyle{every node}=[font=\tiny]
\newcommand\image[1]{
\begin{tikzpicture}[scale=0.9]
\draw (0, 0) -- (1, 0);
\draw (0, 0) -- (0, 1);
\draw (1, 1) -- (1, 0);
\draw (1, 1) -- (0, 1);
\node at (0.8, 0.5) {Y};
\node at (0.5, 0.8) {A};
\node at (0.2, 0.5) {X};
\node at (0.5, 0.2) {C};
    \draw (1,1) -- node[swap] {B} ++ (#1:1) 
                -- node[swap] {B} ++ (-1,0)
                -- node[swap] {A} ++ (#1+180:1);
\end{tikzpicture}}
\node (A) at (0,0) {\image{0}};
\node (B) at (3,0) {\image{15}};
\node (C) at (6,0) {\image{45}};
\node (D) at (9,0) {\image{60}};
\node (E) at (12,0) {\image{90}};
\node (F) at (15,0) {\tilerot{YB}{B}{XA}{C}};
\draw[-to,very thick] (A) edge (B);
\draw[-to,very thick] (B) edge (C);
\draw[-to,very thick] (C) edge (D);
\draw[-to,very thick] (D) edge (E);
\draw[-to,very thick] (E) edge node {$\boxslash^2$} (F);
\end{tikzpicture}
\caption{A step by step visual explanation of the map
    $\theta:\textsc{Dominoes}^1(\T)\to\Scal$. A flat tile
    $(B,B,A,A)$ on top of two adjacent tiles is unfolded and merged
    with the left one with the fusion operation $\boxslash^2$.
    On tilings this operation is a shear conjugacy 
    $\Omega_\T\to\Omega_\Scal$ corresponding to the
    application of a shear transformation.}
    \label{fig:stepbystepshear}
\end{figure}

The map $\theta$ extends to tilings
as a map $\theta:\Omega_\T\to\Omega_\Scal$.
For any $x\in\Omega_\T$,
\begin{equation*}
\begin{array}{rccl}
    \theta(x):&\Z^2 & \to & \Scal\\
    & p & \mapsto & \theta(x_{Mp}, x_{Mp+\be_1})
\end{array}
\end{equation*}
where $M=\left(\begin{smallmatrix}
        1 & 1 \\
        0 & 1
\end{smallmatrix}\right)$.
The matrix $M$ is essential in the definition of $\theta$ because it assures
that $\theta(x)$ is a valid tiling of $\Z^2$ which is proved in the next lemma.
We also construct the map
\begin{equation*}
\begin{array}{rccc}
    \eta:& \Scal & \to & \T\\
    & \tilerot{YB}{B}{XA}{C}
    & \mapsto 
    & \tile{Y}{A}{X}{C}
\end{array}
\end{equation*}
The map $\eta$ extends to tilings 
    $\eta:\Omega_\Scal\to\Omega_\T$
as follows. For any $y\in\Omega_\Scal$,
\begin{equation}\label{eq:eta-on-tilings}
\begin{array}{rccl}
    \eta(y):&\Z^2 & \to & \T\\
    & p & \mapsto & \eta(y_{M^{-1}p})
\end{array}
\end{equation}
where $M=\left(\begin{smallmatrix}
        1 & 1 \\
        0 & 1
\end{smallmatrix}\right)$.
We show in the next lemma that $\eta$ is well-defined and is the inverse of $\theta$.

\begin{lemma}
    The map $\theta:\Omega_\T\to\Omega_\Scal$ is a shear conjugacy
    satisfying
    $\theta\circ\sigma^\bn=\sigma^{M^{-1}\bn}\circ\theta$
    with
$M=\left(\begin{smallmatrix}
        1 & 1 \\
        0 & 1
\end{smallmatrix}\right)$.
\end{lemma}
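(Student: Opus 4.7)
The plan is to establish four facts: $\theta$ maps $\Omega_\T$ into $\Omega_\Scal$; it is shift-equivariant with factor $M^{-1}$; the map $\eta$ is a well-defined two-sided inverse; and both $\theta$ and $\eta$ are continuous. Together these certify that $\theta$ is a shear conjugacy.

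I will first check that $\theta(x)$ is a valid Wang tiling for any $x\in\Omega_\T$. Writing $x_{Mp}=(Y,A,X,C)$ and $x_{Mp+\be_1}=(Z,B,Y,D)$, the tile $\theta(x)_p$ has right color $YB$, top $B$, left $XA$, bottom $C$. Horizontal compatibility with $\theta(x)_{p+\be_1}$ is immediate since $M\be_1=\be_1$, so that neighbor is the fusion of $x_{Mp+\be_1}$ and $x_{Mp+2\be_1}$, yielding left color $YB$. Vertical compatibility with $\theta(x)_{p+\be_2}$ uses $M\be_2=\be_1+\be_2$: the neighbor is the fusion of $x_{Mp+\be_1+\be_2}$ and $x_{Mp+2\be_1+\be_2}$, and its bottom color equals the bottom of $x_{Mp+\be_1+\be_2}$, which coincides with the top color $B$ of $x_{Mp+\be_1}$ by vertical compatibility of $x$. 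The shift-equivariance is then a one-line substitution using $M(p+M^{-1}\bn)=Mp+\bn$.

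The heart of the argument is inverting $\theta$. Given $y\in\Omega_\Scal$, I plan to set $x_{Mq}=\eta(y_q)$ for every $q\in\Z^2$; since $M$ is unimodular, this defines $x$ on all of $\Z^2$. Horizontal compatibility of $x$ will follow directly from horizontal compatibility of $y$, which forces the first letter of the left color of $y_{q+\be_1}$ to equal the first letter of the right color of $y_q$. Vertical compatibility is the main technical step: because $M^{-1}\be_2=-\be_1+\be_2$, vertical neighbors in $x$ correspond to diagonal shifts in $y$, so I must bridge through the intermediate tile $y_{q-\be_1}$. Horizontal compatibility between $y_{q-\be_1}$ and $y_q$ shows that the top color of $\eta(y_q)$ equals the top color of $y_{q-\be_1}$, and vertical compatibility between $y_{q-\be_1}$ and $y_{q-\be_1+\be_2}$ shows that this equals the bottom color of $\eta(y_{q-\be_1+\be_2})$, exactly as required. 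Once $x\in\Omega_\T$ is established, the identity $\theta(x)=y$ is checked componentwise, using horizontal compatibility of $y$ to match the encoded top colors of $\theta(x)_p$ and $y_p$; the identity $\eta(\theta(x))=x$ is immediate from the formulas.

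Continuity of both maps is immediate because each is defined by a strictly local rule, hence Lipschitz for the product metric. The principal obstacle will be the vertical compatibility chase in the inversion step: the shear $M^{-1}$ breaks the direct correspondence between vertical neighbors in the two shifts, so one must exploit precisely the horizontal-vertical interplay encoded in $\Omega_\Scal$ via the intermediate tile $y_{q-\be_1}$ in order to assemble the local data from $y$ into a valid global $\T$-tiling.
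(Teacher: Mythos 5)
Your proposal is correct and follows essentially the same route as the paper: verify that $\theta$ lands in $\Omega_\Scal$, construct $\eta$ as the inverse and check its well-definedness via the diagonal chase $\sctop(\eta(y_q))=\sctop(y_{q-\be_1})=\scbottom(\eta(y_{q-\be_1+\be_2}))$ (the paper packages the horizontal and vertical matchings into one concatenated chain of color equalities, but it is the same computation), then conclude with continuity and the equivariance identity $M(p+M^{-1}\bn)=Mp+\bn$. The only organizational difference is that you obtain injectivity of $\theta$ from the two-sided inverse $\eta\circ\theta=\mathrm{id}$, whereas the paper proves injectivity separately from the fact that $\theta$ is one-to-one in its first coordinate; both are fine.
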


\begin{proof}
    First we show that $\theta$ is well-defined.
    Let $x\in\Omega_\T$ and $y=\theta(x)$.
    Let $p\in\Z^2$.
    We prove that colors on vertical edges match:
    \begin{align*}
    \scright(y_p) 
        &=\scright(x_{Mp})\cdot\sctop(x_{Mp+\be_1})\\
        &=\scleft(x_{Mp+\be_1})\cdot\sctop(x_{Mp+\be_1})\\
        &=\scleft(x_{M(p+\be_1)})\cdot\sctop(x_{M(p+\be_1)})
        =\scleft(y_{p+\be_1}).
    \end{align*}
    We prove that colors on horizontal edges match:
\begin{equation*}
    \sctop(y_p)
    = \sctop(x_{Mp+\be_1})
    =\scbottom(x_{Mp+\be_1+\be_2})
    =\scbottom(x_{M(p+\be_2)})
    =\scbottom(y_{p+\be_2}).
\end{equation*}

We prove that $\theta$ is one-to-one. Suppose that $x,x'\in\Omega_\T$ are two
tilings such that $x\neq x'$. Then there exists $p\in\Z^2$ such that
$x_p\neq x'_p$.  Let $y=\theta(x)$ and $y'=\theta(x')$. 
Since $\theta:(\textsc{Dominoes}^1(\T))\to\Scal$ is one-to-one on the first coordinate, 
    we have
    \begin{equation*}
        y_{M^{-1}p}
        = \theta(x_p, x_{p+\be_1})
        \neq \theta(x'_p, x'_{p+\be_1})
        = y'_{M^{-1}p}.
    \end{equation*}
    Therefore $\theta(x)\neq\theta(x')$.

We prove that the map $\eta:\Omega_\Scal\to\Omega_\T$ is well-defined.
To achieve this, we prove that colors on vertical edges match.
    Let $y\in\Omega_\Scal$, $x=\eta(y)$ and $p\in\Z^2$. We have
    \begin{align*}
    \scleft(x_p) \cdot \sctop(x_p) 
        &=\scleft(y_{M^{-1}p})\\
        &=\scright(y_{M^{-1}p-\be_1})\\
        &=\scright(y_{M^{-1}(p-\be_1)})\\
        &=\scright(x_{p-\be_1})\cdot\sctop(y_{M^{-1}(p-\be_1)})\\
        &=\scright(x_{p-\be_1})\cdot\scbottom(y_{M^{-1}(p-\be_1)+\be_2})\\
        &=\scright(x_{p-\be_1})\cdot\scbottom(y_{M^{-1}(p+\be_2)})\\
        &=\scright(x_{p-\be_1})\cdot\scbottom(x_{p+\be_2})
    \end{align*}
    from which we deduce
    \begin{align*}
        \scleft(x_p) = \scright(x_{p-\be_1})
        \qquad \text{ and } \qquad
        \sctop(x_p) = \scbottom(x_{p+\be_2}).
    \end{align*}
    Thus $x=\eta(y)\in\Omega_\T$ is a valid Wang tiling.

We now prove that $\theta$ is onto. Let $y\in\Omega_\Scal$
and take $x=\eta(y)\in\Omega_\T$.
    We have $\theta(x)=y$ since
    \begin{equation*}
        \theta(x)_p = \theta(x_{Mp}, x_{Mp+\be_1})
                    = \theta(\eta(y_{M^{-1}Mp}), y_{M^{-1}(Mp+\be_1)})
                    = \theta(\eta(y_{p}), \eta(y_{p+\be_1}))
                    = y_p.
    \end{equation*}
    Therefore $\theta$ is onto and $\eta$ is the inverse of $\theta$.

    We prove that $\theta$ and $\eta$ are continuous.
    If $x,x'\in\Omega_\T$ and $y,y'\in\Omega_\Scal$, then
    \begin{equation*}
        \delta(\theta(x), \theta(x')) < \sqrt{\delta(x,x')}
        \qquad \text{ and } \qquad
        \delta(\eta(y), \eta(y')) < \sqrt{\delta(y,y')}.
    \end{equation*}
    Thus if $(x_n)_{n\in\N}$ is a sequence of tilings $x_n\in\Omega_\T$ such
    that $x=\lim_{n\to\infty}x_n$ exists, then
    $\theta(x)=\lim_{n\to\infty}\theta(x_n)$ and $\theta$ is continuous.
    Similarly, $\eta$ is continuous.

    We prove that $\theta$ is a shear conjugacy.
    If $x\in\Omega_\T$, $\bn\in\Z^2$ and $p\in\Z^2$, then
    \begin{align*}
        \left[
            \sigma^{M^{-1}\bn}\circ\theta(x)
            \right](p)
        &= \left[ \theta(x) \right](p+M^{-1}\bn)
         = \theta(x_{M(p+M^{-1}\bn)},
                  x_{M(p+M^{-1}\bn)+\be_1})\\
        &= \theta(x_{Mp+\bn},
                  x_{Mp+\bn+\be_1})
         = \theta((\sigma^{\bn}x)_{Mp},
                  (\sigma^{\bn}x)_{Mp+\be_1})\\
        &= \left[ \theta(\sigma^{\bn}x) \right](p)
        = \left[ \theta\circ \sigma^{\bn}(x) \right](p).
    \end{align*}
    Thus the map $\theta:\Omega_\T\to\Omega_\Scal$ is a shear conjugacy
    satisfying
    $\theta\circ\sigma^\bn=\sigma^{M^{-1}\bn}\circ\theta$
    with
$M=\left(\begin{smallmatrix}
        1 & 1 \\
        0 & 1
\end{smallmatrix}\right)$.
\end{proof}

\begin{corollary}\label{cor:eta-6-to-5-is-conjugacy}
    The map $\eta:\Omega_\Scal\to\Omega_\T$ is a shear conjugacy
    satisfying
    $\eta\circ\sigma^\bn=\sigma^{M\bn}\circ\eta$
    with
$M=\left(\begin{smallmatrix}
        1 & 1 \\
        0 & 1
\end{smallmatrix}\right)$.
\end{corollary}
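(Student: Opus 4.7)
The plan is to deduce the corollary directly from the preceding lemma, which has already done all the substantive work. That lemma established that $\theta:\Omega_\T\to\Omega_\Scal$ is a bijection whose inverse is precisely the map $\eta$ defined by \eqref{eq:eta-on-tilings}, and that $\eta$ is continuous. Hence $\eta$ is already a homeomorphism, and the only thing left to verify is the shear commutation relation with $M$ (rather than $M^{-1}$).

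To get the commutation relation, I would start from the identity $\theta\circ\sigma^\bk=\sigma^{M^{-1}\bk}\circ\theta$ proven in the lemma, specialize it at $\bk=M\bn$, and obtain
\[
    \theta\circ\sigma^{M\bn} = \sigma^{M^{-1}M\bn}\circ\theta = \sigma^{\bn}\circ\theta.
\]
Pre-composing both sides by $\eta$ on the left and post-composing by $\eta$ on the right, and using $\eta\circ\theta=\mathrm{id}_{\Omega_\T}$ together with $\theta\circ\eta=\mathrm{id}_{\Omega_\Scal}$, yields the desired identity
\[
    \sigma^{M\bn}\circ\eta = \eta\circ\sigma^\bn.
\]
No tile-level verification is required since every computation of that kind was already carried out for $\theta$.

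Strictly speaking there is no hard part; the main point to watch is the indexing convention. Because $\theta$ takes a horizontal domino of tiles in $\T$ and compresses it to a single tile in $\Scal$, it produces a factor of $M^{-1}$ on the shift side; inverting this contraction naturally turns $M^{-1}$ into $M$, which is exactly why the statement of the corollary is the mirror image of the statement of the lemma. The conclusion that $\eta$ is a shear conjugacy (in the sense defined in the preliminaries) is then immediate, since $M$ is a shear matrix.
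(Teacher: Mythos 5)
Your proposal is correct and is essentially the argument the paper intends: the corollary is an immediate consequence of the preceding lemma, since that lemma already establishes that $\eta$ is the continuous inverse of $\theta$ and that $\theta\circ\sigma^{\bk}=\sigma^{M^{-1}\bk}\circ\theta$, from which conjugating by $\eta$ at $\bk=M\bn$ gives $\eta\circ\sigma^{\bn}=\sigma^{M\bn}\circ\eta$. Nothing further is needed.
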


Of course, as a consequence, the map $\eta:\Omega_\Scal\to\Omega_\T$ is also
recognizable and onto up to a shift.
The algorithm to compute the set of Wang tiles $\Scal$ and the shear conjugacy
$\eta:\Omega_\Scal\to\Omega_\T$ 
that shears tiling by the matrix
$\left(\begin{smallmatrix}
1 & 1 \\
0 & 1
\end{smallmatrix}\right)$
is in Algorithm~\ref{alg:shear}.

\begin{algorithm}
    \caption[Shear Wang tiling by some matrix.]{Shear Wang tilings by the matrix
$\left(\begin{smallmatrix}
1 & 1 \\
0 & 1
\end{smallmatrix}\right)$.}
    \label{alg:shear}
  \begin{algorithmic}[1]
    \Require $\T$ is a set of Wang tiles;
             $r\in\N$ is a surrounding radius.
      \Function{Shear}{$\T$, $r$}
        \State $D_1 \gets \left\{(u,v)\in\T^2\mid \text{ domino } u\odot^1v \text{
             admits a surrounding of radius $r$ with tiles in $\T$}\right\}$
        \State $\Scal \gets \varnothing$
        \ForAll{$(u,v) \in D_1$}
            \State $w\gets (\sctop(v),\sctop(v),\sctop(u),\sctop(u))$
            \State $\Scal \gets \Scal \cup \{u\boxbar w\}$
        \EndFor
        \State \Return $\Scal$, $\eta:\Omega_\Scal\to\Omega_\T:
        u\boxbar w
        \mapsto
        u \text{ for each }
        u\boxbar w \in \Scal$
      \EndFunction
      \Ensure $\Scal$ is a set of Wang tiles;
              $\eta:\Omega_\Scal\to\Omega_\T$ is a shear conjugacy.
  \end{algorithmic}
\end{algorithm}

Let $\T_6=\theta(\textsc{Dominoes}^i(\T_5))$ and
$\eta:\Omega_6\to\Omega_5$ be defined as in
Equation~\eqref{eq:eta-on-tilings}
where $\Omega_6=\Omega_{\T_6}$
and $\Omega_5=\Omega_{\T_5}$.
In SageMath, we compute
    {\footnotesize
\begin{verbatim}
sage: # the following takes 6s with dancing_links, 3min 12s with Glucose, 22s with Gurobi
sage: T6,eta = T5.shear(radius=2, solver="dancing_links") 
\end{verbatim}}
The map $\eta:\Omega_6\to\Omega_5$ 
is a bijection preserving the indices of tiles:
\[
    \eta:\Omega_6\to\Omega_5:\left\{\omegaV\right.
\]
and the resulting set of Wang tiles $\T_6$ is:
\[
    \T_6=\left\{\raisebox{-17mm}{\includegraphics{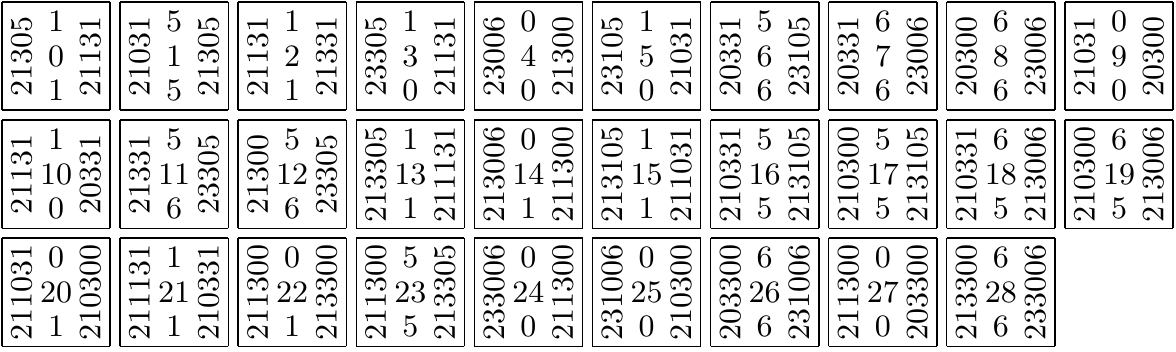}}\right\}
\]
which we can work with because it has markers.

\part{Desubstituting Wang tilings with markers again}\label{part:3}

\section{Desubstitution from $\Omega_6$ to $\Omega_{12}$:
$\Omega_6\xleftarrow{\omega_6}
\Omega_7\xleftarrow{\omega_7}
\Omega_8\xleftarrow{\omega_8}
\Omega_9\xleftarrow{\omega_9}
\Omega_{10}\xleftarrow{\omega_{10}}
\Omega_{11}\xleftarrow{\omega_{11}}
\Omega_{12}$}
\label{sec:desubstitution-6-to-12}

\begin{proposition}\label{prop:from-jeandel-rao-6-to-12}
    Let $\Omega_6$ be the Wang shift defined above from $\T_6$.
There exist sets of Wang tiles 
    $\T_7$, $\T_8$, $\T_9$, $\T_{10}$, $\T_{11}$ and $\T_{12}$
together with their associated Wang shifts 
    $\Omega_7$, $\Omega_8$, $\Omega_9$, $\Omega_{10}$, $\Omega_{11}$ and $\Omega_{12}$
and there exists a sequence of recognizable $2$-dimensional morphisms:
    \begin{equation*}
        \Omega_6 \xleftarrow{\omega_6}
        \Omega_7 \xleftarrow{\omega_7}
        \Omega_8 \xleftarrow{\omega_8}
        \Omega_9 \xleftarrow{\omega_9}
        \Omega_{10} \xleftarrow{\omega_{10}}
        \Omega_{11} \xleftarrow{\omega_{11}}
        \Omega_{12} 
    \end{equation*}
    that are onto up to a shift, i.e.,
    $\overline{\omega_i(\Omega_{i+1})}^\sigma=\Omega_{i}$
    for each $i\in\{6,7,8,9,10,11\}$.
\end{proposition}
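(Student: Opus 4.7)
The plan is to proceed exactly as in the proof of Proposition~\ref{prop:from-jeandel-rao-0-to-4}, by iterating $\Call{FindMarkers}{}$ and $\Call{FindSubstitution}{}$ six consecutive times starting from the tile set $\T_6$. At each step $i\in\{6,7,8,9,10,11\}$, I would first apply Algorithm~\ref{alg:find-markers} to $\T_i$ with an appropriate choice of direction $\be_{d_i}\in\{\be_1,\be_2\}$ and surrounding radius $r_i\in\N$, in order to extract a subset $M_i\subset\T_i$ of markers in the direction $\be_{d_i}$. Then Algorithm~\ref{alg:find-recognizable-sub-from-markers}, applied with the markers $M_i$, a chosen side $s_i\in\{\scleft,\scright\}$ and a radius at least as large as $r_i$, produces a new set of Wang tiles $\T_{i+1}$ of the cardinality indicated in the table of the introduction, together with a $2$-dimensional morphism $\omega_i:\Omega_{i+1}\to\Omega_i$. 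Theorem~\ref{thm:exist-homeo} then guarantees at once that $\omega_i$ is recognizable and satisfies $\overline{\omega_i(\Omega_{i+1})}^\sigma=\Omega_i$.

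The main technical obstacle is to verify that at every step $\T_i$ does admit markers, i.e., that Algorithm~\ref{alg:find-markers} returns a nonempty list for some choice of $d_i$ and $r_i$. In contrast with the desubstitution from $\Omega_0$ down to $\Omega_4$, where a surrounding radius of at most $3$ was sufficient, the larger and more structured tile sets encountered here may require both larger surrounding radii and a careful alternation between the directions $\be_1$ and $\be_2$ according to whether the current Wang shift exhibits its markers along rows or along columns. Since the shear conjugacy $\eta$ has already straightened the diagonal markers present in $\Omega_5$, one expects at least the first step $\T_6\to\T_7$ to work in the direction $\be_1$; the remaining steps may well require switching back to $\be_2$. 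The sides $s_i$ must moreover be chosen consistently so that the iterated image $(\omega_6\,\omega_7\,\omega_8\,\omega_9\,\omega_{10}\,\omega_{11})(\T_{12})$ produces dominoes that eventually combine into the patches of sizes $45$, $70$, $72$ and $112$ announced in Theorem~\ref{thm:main-result-in-simple-terms}.

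Concretely, the entire construction can be verified using SageMath~\cite{sagemathv8.9} together with the optional package \texttt{slabbe}~\cite{labbe_slabbe_0_6_2019}, exactly as in the proof of Proposition~\ref{prop:from-jeandel-rao-0-to-4}. For each $i\in\{6,7,8,9,10,11\}$ the computation returns an explicit subset of markers $M_i\subset\T_i$ and an explicit $2$-dimensional morphism $\omega_i$; the bookkeeping on the cardinalities $|\T_i|\in\{20,20,22,18,21,19\}$ should match the table given in the introduction, providing an independent consistency check. Since each $\omega_i$ arises from Algorithm~\ref{alg:find-recognizable-sub-from-markers} applied to a genuine subset of markers, Theorem~\ref{thm:exist-homeo} applies at every step and yields simultaneously recognizability and ontoness up to a shift for the whole sequence, which proves the proposition.
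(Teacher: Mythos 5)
Your proposal follows essentially the same route as the paper: iterate $\Call{FindMarkers}{}$ and $\Call{FindSubstitution}{}$ six times starting from $\T_6$, alternating between the directions $\be_1$ and $\be_2$ as needed, and invoke Theorem~\ref{thm:exist-homeo} at each step to get recognizability and ontoness up to a shift. The only small discrepancy is your worry about needing larger surrounding radii: the paper's computation succeeds with radius at most $2$ throughout this stage.
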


\begin{proof}
    The proof is done by executing the function
    $\Call{FindMarkers}$ 
    followed by 
    $\Call{FindSubstitution}$
    and repeating this process.
    Each time $\Call{FindMarkers}$ 
    finds at least one subset of markers using a surrounding radius 
    of size at most 2.
    Thus using Theorem~\ref{thm:exist-homeo},
    we may find a desubstitution of tilings.
    The proof is done in SageMath \cite{sagemathv8.9}
    using \texttt{slabbe} optional package
    \cite{labbe_slabbe_0_6_2019}.

First, we desubstitute $\T_6$:
    {\footnotesize
\begin{verbatim}
sage: T6.find_markers(i=1, radius=1, solver="dancing_links")
[[1, 6, 7, 8, 11, 12, 16, 17, 18, 19, 23, 26, 28],
 [0, 3, 4, 5, 13, 14, 15, 24, 25],
 [2, 9, 10, 20, 21, 22, 27]]
sage: M6 = [1, 6, 7, 8, 11, 12, 16, 17, 18, 19, 23, 26, 28]
sage: T7,omega6 = T6.find_substitution(M=M6, i=1, radius=1, 
....:                                  side="left", solver="dancing_links")
\end{verbatim}
}
and we obtain:
    \[
\omega_6:\Omega_7\to\Omega_6:\left\{ \omegaVI\right.
    \]
    \[
\T_7 =\left\{\raisebox{-10mm}{\includegraphics{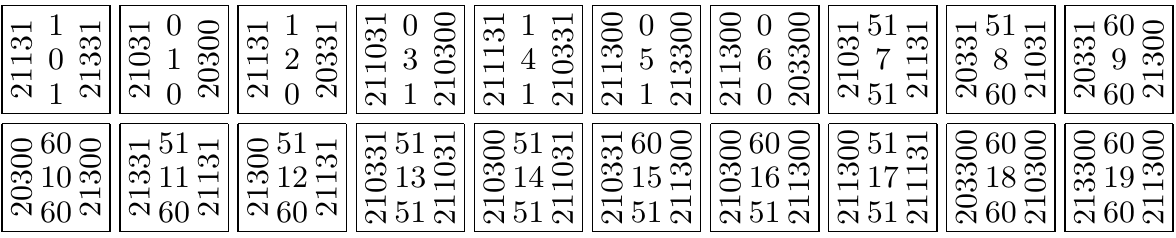}}\right\}
    \]

We desubstitute $\T_7$:
    {\footnotesize
\begin{verbatim}
sage: T7.find_markers(i=1, radius=1, solver="dancing_links")
[[0, 1, 2, 3, 4, 5, 6]]
sage: M7 = [0, 1, 2, 3, 4, 5, 6]
sage: T8,omega7 = T7.find_substitution(M=M7, i=1, radius=1, 
....:                                  side="right", solver="dancing_links")
\end{verbatim}
}
and we obtain:
    \[
\omega_7:\Omega_8\to\Omega_7:\left\{ \omegaVII\right.
    \]
    \[
\T_8 =\left\{\raisebox{-10mm}{\includegraphics{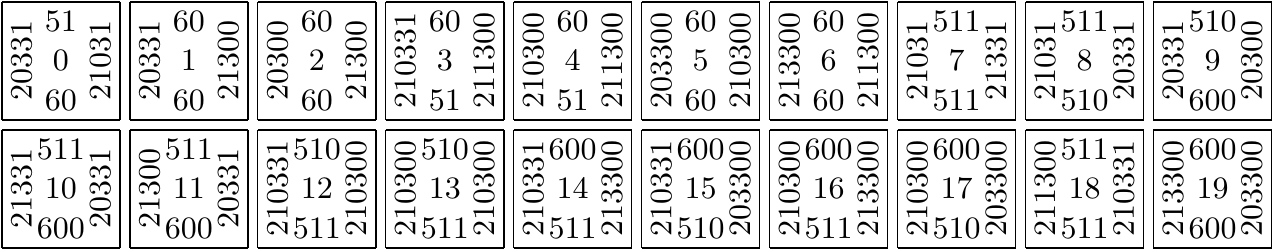}}\right\}
    \]

We desubstitute $\T_8$:
    {\footnotesize
\begin{verbatim}
sage: T8.find_markers(i=2, radius=2, solver="dancing_links")
[[0, 1, 2, 7, 8, 9, 10, 11]]
sage: M8 = [0, 1, 2, 7, 8, 9, 10, 11]
sage: T9,omega8 = T8.find_substitution(M=M8, i=2, radius=2, 
....:                                  side="right", solver="dancing_links")
\end{verbatim}
}
and we obtain:
    \[
\omega_8:\Omega_9\to\Omega_8:\left\{ \omegaVIII\right.
    \]
\[
    \T_9 =\left\{\footnotesize \TIXtable\right.
\]

We desubstitute $\T_9$:
    {\footnotesize
\begin{verbatim}
sage: T9.find_markers(i=1, radius=1, solver="dancing_links")
[[4, 6, 7, 15, 16, 18, 21],
 [0, 1, 2, 3, 9, 10, 11, 12, 13],
 [5, 8, 14, 17, 19, 20]]
sage: M9 = [0, 1, 2, 3, 9, 10, 11, 12, 13]
sage: T10,omega9 = T9.find_substitution(M=M9, i=1, radius=1, 
....:                                   side="right", solver="dancing_links")
\end{verbatim}
}
and we obtain:
    \[
\omega_9:\Omega_{10}\to\Omega_9:\left\{ \omegaIX\right.
    \]
\[
    \T_{10} =\left\{\footnotesize \TXtable\right.
\]

We desubstitute $\T_{10}$:
    {\footnotesize
\begin{verbatim}
sage: T10.find_markers(i=2, radius=2, solver="dancing_links")
[[0, 4, 5, 6, 7, 8]]
sage: M10 = [0, 4, 5, 6, 7, 8]
sage: T11,omega10 = T10.find_substitution(M=M10, i=2, radius=2, 
....:                                     side="right", solver="dancing_links")
\end{verbatim}
}
and we obtain:
    \[
\omega_{10}:\Omega_{11}\to\Omega_{10}:\left\{ \omegaX\right.
    \]
\[
    \T_{11} =\left\{\footnotesize \TXItable\right.
\]

We desubstitute $\T_{11}$:
    {\footnotesize
\begin{verbatim}
sage: T11.find_markers(i=1, radius=1, solver="dancing_links")
[[0, 1, 2, 9, 10, 11],
 [4, 6, 7, 12, 16, 17, 19],
 [3, 5, 8, 13, 14, 15, 18, 20]]
sage: M11 = [0, 1, 2, 9, 10, 11]
sage: T12,omega11 = T11.find_substitution(M=M11, i=1, radius=1, 
....:                                     side="right", solver="dancing_links")
\end{verbatim}
}
and we obtain:
    \[
\omega_{11}:\Omega_{12}\to\Omega_{11}:\left\{ \omegaXI\right.
    \]
\[
    \T_{12} =\left\{\footnotesize \TXIItablewithU\right.
\]
\end{proof}

\section{Proofs of main result}
\label{sec:proofs-main-results}

In \cite{MR3978536} the set of Wang tiles
\[
\U =\left\{\raisebox{-10mm}{\includegraphics{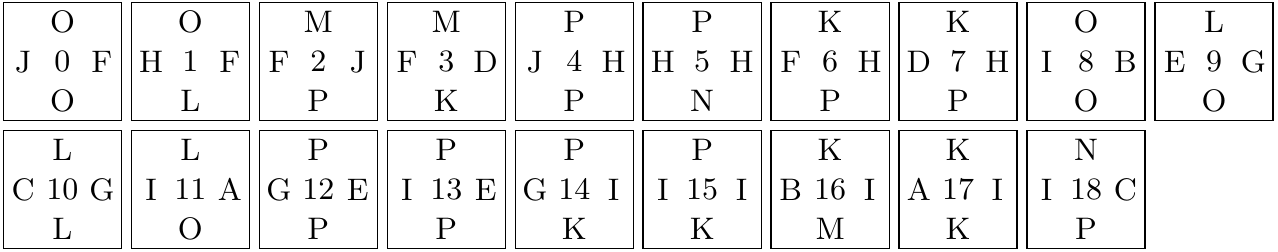}}\right\}
\]
was introduced. It was proved to be self-similar, aperiodic and minimal. In
this section, we prove that the desubstitution process we have done so far
starting from the Jeandel-Rao tiles $\T_0$ leads to the self-similar
set of tiles $\U$.

\begin{proposition}\label{prop:T12-and-U-equivalent}
    The set of Wang tiles $\T_{12}$ and $\U$ are equivalent.
\end{proposition}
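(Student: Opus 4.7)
The plan is to exhibit explicit bijections $i$ (on vertical-edge colors) and $j$ (on horizontal-edge colors) between the colors used in $\T_{12}$ and those used in $\U$, and then to verify tile-by-tile that the 19 tiles of $\T_{12}$ are carried onto the 19 tiles of $\U$. Since the definition of equivalence in the preliminaries requires only the existence of such two bijections, the verification is essentially a bookkeeping exercise once the correspondences are written down.

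First I would enumerate the distinct edge colors on each side. Inspecting the table of $\T_{12}$, the left/right colors are the seven strings $\{21030021300,\,21330020331,\,21030021031211300,\,21030021300211300,\,21030021300210300,\,21030020331210331,\,21033121331210331,\,21330020331210331\}$ (together with $21030020331$), giving the same cardinality as the number of vertical colors in $\U$; likewise, the top/bottom colors of $\T_{12}$ are the strings built over $\{51060,\,60060,\,51060511,\,51151511,\,51160511,\,60060511\}$ and match in number the horizontal colors of $\U$. For each color in $\T_{12}$ I would introduce a short letter alias (the excerpt already does this in the $\TXIItablewithU$ version, with letters $A,B,\dots,P$), and then define the bijections $i$ and $j$ by matching these aliases to the colors of $\U$ listed in the figure defining $\U$.

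Second, I would check that under $(i,j)$ each tile $(a,b,c,d)\in\T_{12}$ is sent to a tile of $\U$. The cleanest way to organize this is to use the permutation $\pi:\U\to\T_{12}$ provided by $\omegaUtoXII$ in the text: reading off $\pi(0)=0, \pi(1)=1, \pi(2)=9, \pi(3)=7, \ldots, \pi(18)=15$, one tile of $\U$ at index $k$ is compared with tile $\pi(k)$ of $\T_{12}$. Since $\pi$ is a bijection on 19 elements, if each of the 19 equalities of tuples (after applying $i,j$) holds, then $\T_{12}$ and $\U$ are equivalent. This step is routine but tedious and is verified mechanically in the SageMath notebook accompanying the contribution.

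Finally, I would record the resulting $\rho:\Omega_\U\to\Omega_{12}$ induced by the bijection of tiles, which is the map appearing in Figure~\ref{fig:subs-structure}. The main (indeed only) obstacle is the risk of a transcription error in matching colors; there is no conceptual difficulty because the equivalence is a purely combinatorial statement about two explicit finite sets of tiles. The fact that the color strings in $\T_{12}$ are long concatenations tracking the full history of fusions and shears (while those in $\U$ are short labels) is exactly what the bijections $i,j$ are there to reconcile.
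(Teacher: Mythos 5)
Your proposal is correct and follows essentially the same route as the paper: the paper's proof simply writes down the explicit color correspondences $A\mapsto 21030021300211300,\dots,P\mapsto 51060511$ (ten vertical-color letters $A$--$J$ and six horizontal-color letters $K$--$P$) and observes that this induces the tile bijection $\rho$. The only slip in your write-up is the count of vertical colors (there are ten, not seven/nine as in your enumeration), but this does not affect the argument.
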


\begin{proof}
    The correspondence between colors of $\T_{12}$ and colors of $\U$
    \begin{align*}
        & A\mapsto 21030021300211300 && K\mapsto 51160511\\
        & B\mapsto 21030021031211300 && L\mapsto 60060\\
        & C\mapsto 21330020331210331 && M\mapsto 51151511\\
        & D\mapsto 21033121331       && N\mapsto 60060511\\
        & E\mapsto 21030020331210331 && O\mapsto 51060\\
        & F\mapsto 21030021300       && P\mapsto 51060511\\
        & G\mapsto 21030021300210300 && \\
        & H\mapsto 21330020331       && \\
        & I\mapsto 21033121331210331 && \\
        & J\mapsto 21030020331       &&
    \end{align*}
	provides the bijection between $\T_{12}$ and $\U$
    which defines a bijection between $\Omega_\U$
    and $\Omega_{12}$:
\[
    \rho:\Omega_\U\to\Omega_{12}:\left\{\omegaUtoXII\right.
\]
\end{proof}

The self-similarity of $\Omega_\U$ is given by the following 2-dimensional
morphism:
\[
    \omega_{\U}:\Omega_\U\to\Omega_\U:\left\{\omegaUtoU\right.
\]

We may gather the proof of the main result and its corollaries.

\begin{proof}[Proof of Theorem~\ref{thm:introduction}]
    (i) Proved in Proposition~\ref{prop:from-jeandel-rao-0-to-4}.
    (ii) Proved in 
Proposition~\ref{prop:jmath-5-to-4-is-one-to-one-continuous}.
    (iii) Proved in 
Corollary~\ref{cor:eta-6-to-5-is-conjugacy}.
    (iv) Proved in Proposition~\ref{prop:from-jeandel-rao-6-to-12}.
    (v)
    From Proposition~\ref{prop:T12-and-U-equivalent},
    the set of Wang tiles $\T_{12}$ and $\U$ are equivalent.
    It was proved in \cite{MR3978536}
    that $\Omega_\U$ is self-similar, aperiodic and minimal.
\end{proof}

\begin{proof}[Proof of Corollary~\ref{cor:aperiodic-minimal-5-12}]
    From Theorem~\ref{thm:introduction},
    $\Omega_{12}$ is aperiodic and minimal.
    From Lemma~\ref{lem:minimal-implies-minimal}
    and Lemma~\ref{lem:aperiodic-implies-aperiodic},
    we conclude that
    $\Omega_{11}$,
    $\Omega_{10}$,
    $\Omega_{9}$,
    $\Omega_{8}$,
    $\Omega_{7}$ and
    $\Omega_{6}$ are aperiodic and minimal.

    From Corollary~\ref{cor:eta-6-to-5-is-conjugacy},
    the map $\eta:\Omega_6\to\Omega_5$ is a
    shear conjugacy. In particular, it is one-to-one, onto and
    recognizable.  
    From Lemma~\ref{lem:minimal-implies-minimal}
    and Lemma~\ref{lem:aperiodic-implies-aperiodic},
    we conclude that $\Omega_{5}$ is aperiodic and minimal.
\end{proof}

\begin{proof}[Proof of Corollary~\ref{cor:aperiodic-minimal-0-4}]
    From Proposition~\ref{prop:jmath-5-to-4-is-one-to-one-continuous},
    the map $\jmath:\T_5\to\T_4$ defines an embedding
    $\jmath:\Omega_5\to\Omega_4$.
    In particular, the map $\jmath$ is recognizable.
    From Lemma~\ref{lem:minimal-implies-minimal}
    and Lemma~\ref{lem:aperiodic-implies-aperiodic},
    we conclude that
    $X_4=\overline{\jmath(\Omega_{5})}^\sigma$ is an aperiodic and minimal
    subshift of $\Omega_4$.

    From Proposition~\ref{prop:from-jeandel-rao-0-to-4} and
    Corollary~\ref{cor:can-remove-2-tile}, we have
    that $\omega_i:\Omega_{i+1}\to\Omega_i$ is recognizable
    for every $i\in\{0,1,2,3\}$.
    From Lemma~\ref{lem:minimal-implies-minimal} and
    Lemma~\ref{lem:aperiodic-implies-aperiodic}, we conclude that
    $X_i=\overline{\omega_i(X_{i+1})}^\sigma$ is an aperiodic and minimal
    subshift of $\Omega_i$
    for every $i\in\{0,1,2,3\}$.

    The proof of nonemptyness of $\Omega_i\setminus X_i$ for every
    $i\in\{0,1,2,3\}$ is postponed to Proposition~\ref{prop:nonempty}.
\end{proof}

\begin{proof}[Proof of Theorem~\ref{thm:main-result-in-simple-terms}]
    It follows from Theorem~\ref{thm:introduction}.
\end{proof}

\begin{proposition}
    Each tile of the set of Wang tiles $\T_i$ appears with positive
    frequency in tilings in $X_i$
    for every $i\in\{0,1,\dots,12\}$.
\end{proposition}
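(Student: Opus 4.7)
The plan is to prove the proposition by descending induction on $i$, starting from the self-similar end of the diagram at $i=12$ and working back up to $i=0$. The base case is $i=12$: by Proposition~\ref{prop:T12-and-U-equivalent}, $\Omega_{12}$ is topologically conjugate to $\Omega_\U$, which is minimal and self-similar under the expansive primitive morphism $\omega_\U$. Since $\omega_\U$ is primitive, standard arguments for primitive $2$-dimensional substitutions (as used in \cite{MR3978536}, cf.\ also \cite[\S 5.2]{MR2590264}) show that $\Omega_\U$ is uniquely ergodic and every factor — in particular every individual tile of $\U$ — has positive frequency under the unique shift-invariant probability measure. Transporting this along the conjugacy $\rho:\Omega_\U\to\Omega_{12}$ yields the base case.

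For the inductive descent, I would treat three regimes. First, for each $i\in\{6,7,\ldots,11\}$, the recognizable morphism $\omega_i:\Omega_{i+1}\to\Omega_i$ of Proposition~\ref{prop:from-jeandel-rao-6-to-12} is onto up to a shift. Inspection of the explicit substitutions displayed in that proof shows that every tile of $\T_i$ occurs in $\omega_i(y)$ for at least one $y\in\T_{i+1}$ (either as a single-letter image or inside a domino image). Combined with the inductive hypothesis of positive frequency for every tile of $\T_{i+1}$, and the fact that recognizability preserves unique ergodicity and pushes the unique invariant measure along $\omega_i$, each tile of $\T_i$ acquires a positive frequency. Second, at the transitions $i=5$ and $i=4$ the relevant maps are bijective on tilings: the shear conjugacy $\eta:\Omega_6\to\Omega_5$ of Corollary~\ref{cor:eta-6-to-5-is-conjugacy} is the identity on tile indices and transports invariant measures together with tile frequencies; the embedding $\jmath:\Omega_5\to X_4$ of Proposition~\ref{prop:jmath-5-to-4-is-one-to-one-continuous} is surjective on tiles $\T_5\to\T_4$, so the frequency of $t\in\T_4$ in $X_4$ equals the sum of the positive frequencies of tiles in $\jmath^{-1}(t)\subset\T_5$. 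Third, for $i\in\{0,1,2,3\}$, I would repeat the argument from the first regime, now applied to $\omega_i:X_{i+1}\to X_i$ from Proposition~\ref{prop:from-jeandel-rao-0-to-4} and Corollary~\ref{cor:can-remove-2-tile}, again verifying by direct inspection that $\omega_i(\T_{i+1})$ covers $\T_i$.

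The main obstacle is the non-uniform character of the morphisms $\omega_i$: some letters go to single tiles while others go to $1\times 2$ dominoes, so the classical incidence-matrix computation of frequencies for uniform block substitutions does not apply verbatim. I would circumvent this by appealing to minimality of $X_i$ (resp.\ $\Omega_i$), established in Corollaries~\ref{cor:aperiodic-minimal-0-4} and~\ref{cor:aperiodic-minimal-5-12}: in a minimal subshift every pattern that appears somewhere is uniformly recurrent, so the only question is whether each tile of $\T_i$ appears at all in $X_i$. This reduces the whole proof to the combinatorial verification that at each step the image $\omega_i(\T_{i+1})$ hits every tile of $\T_i$, which is a finite check on the data already tabulated in the paper. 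For positivity of frequency itself, I would invoke unique ergodicity propagated recursively: $\Omega_\U$ is uniquely ergodic, the conjugacies $\rho$, $\eta$, $\jmath$ transport the unique invariant measure, and each recognizable morphism $\omega_i$ onto up to a shift sends the unique invariant measure on the domain to the unique invariant measure on the codomain, under which any tile occurring in the image has strictly positive mass.
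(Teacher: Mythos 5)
Your proof is correct, but it is packaged quite differently from the paper's. The paper's argument is a two-line linear-algebra computation: take the positive Perron right eigenvector $v$ of the incidence matrix of the primitive substitution $\omega_\U$ (this encodes the tile frequencies in $\Omega_\U$), apply the incidence matrix of the composed morphism $\bigl(\prod_{i=k}^{11}\omega_i\bigr)\rho$ for each $k$ (with $\jmath$ and $\eta$ playing the roles of $\omega_4$ and $\omega_5$), and check that the resulting vector has all entries positive; that vector is then asserted to give the relative frequencies in $X_k$. The combinatorial content is the same as yours — the matrix--vector product is positive exactly because every tile of $\T_i$ occurs in the image of some tile of $\U$ and the input vector is positive — but the paper does it in one global step from $\U$ down to $\T_i$, whereas you descend step by step. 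What your route buys is that it makes explicit the dynamical facts the paper leaves implicit: that frequencies are well defined (unique ergodicity of $\Omega_\U$ propagated through recognizable morphisms and conjugacies) and that in a minimal subshift positivity of frequency reduces to mere occurrence, which neatly sidesteps the non-uniformity of the $\omega_i$. What the paper's route buys is brevity and, as a by-product, the explicit frequency values recorded in Table~\ref{tab:frequency}. The only caveat on your side is that the transfer of unique ergodicity along a recognizable morphism that is onto up to a shift is a standard but nontrivial fact not proved in this paper; since your minimality argument already yields uniform positive lower density for every occurring tile, you could drop the unique-ergodicity machinery entirely and keep only the finite check that each $\omega_i(\T_{i+1})$ covers $\T_i$.
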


\begin{proof}
    Let $v$ be the positive right eigenvector of the incidence matrix of the
    primitive substitution $\omega_\U$. It can be checked that the incidence
    matrix of $\left(\prod_{i=k}^{11}\omega_i\right)\rho$ maps the vector $v$
    to a vector with positive entries for each $k\in\{0,1,\dots,11\}$ where we
    suppose $\omega_4=\jmath$ and $\omega_5=\eta$ to simplify the notation of
    the product. This vector gives the relative frequency of each tile from
    $\T_i$ in tilings in $X_i$.
\end{proof}

The frequencies of tiles in tilings of $X_0\subset\Omega_0$ is in
Table~\ref{tab:frequency}.

\begin{table}[h]
\[
\begin{array}{c|c}
    \text{Tiles} & \text{Frequency}\\
    \hline
\raisebox{-3mm}{\JeandelRaoVII}
 & 5/(12\varphi+14) \approx 0.1496\\
\raisebox{-3mm}{\JeandelRaoO},
\raisebox{-3mm}{\JeandelRaoI},
\raisebox{-3mm}{\JeandelRaoIII},
\raisebox{-3mm}{\JeandelRaoVI},
\raisebox{-3mm}{\JeandelRaoIX}
 & 1/(2\varphi+6)   \approx 0.1083\\
\raisebox{-3mm}{\JeandelRaoV}
 & 1/(5\varphi+4)   \approx 0.0827\\
\raisebox{-3mm}{\JeandelRaoIV},
\raisebox{-3mm}{\JeandelRaoVIII},
\raisebox{-3mm}{\JeandelRaoX}
 & 1/(8\varphi+2)   \approx 0.0669\\
\raisebox{-3mm}{\JeandelRaoII}
 & 1/(18\varphi+10) \approx 0.0256\\
\end{array}
\] 
\caption{Frequency of Jeandel-Rao tiles in tilings in the proper subshift
    $X_0\subset\Omega_0$ in terms of the golden mean
    $\varphi=\frac{1+\sqrt{5}}{2}$.}
    \label{tab:frequency}
\end{table}

We now obtain independently the link between Jeandel-Rao aperiodic tilings and
the Fibonacci word. This link was used by Jeandel and Rao to prove aperiodicity
of their set of Wang tiles \cite{jeandel_aperiodic_2015}.

\begin{proposition}\label{prop:Fibonacci}
    The language $\L\subset\{4,5\}^\Z$ of the sequences of horizontal strips of
    height 4 or 5 in Jeandel-Rao tilings $X_0\subset\Omega_0$ is the language
    of the Fibonacci word, the fixed point of $4\mapsto 5, 5\mapsto 54$.
\end{proposition}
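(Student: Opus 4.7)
The plan is to transport the row-height data along the chain $X_0 \leftarrow X_4 \leftarrow \Omega_5 \leftarrow \cdots \leftarrow \Omega_\U$ all the way to the self-similar Wang shift $\Omega_\U$, and then read off the Fibonacci structure from $\omega_\U$.

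First I would observe that in $\T_4$ the length of each vertical color records the corresponding row height. From the table in Section~\ref{sec:desubstitution-0-to-4}, tiles $0,\dots,12$ carry left and right colors of length $4$ while tiles $13,\dots,27$ carry left and right colors of length $5$, and these lengths equal the number of $\T_0$-tiles that the tile unfolds to vertically under $\omega_0\omega_1\omega_2\omega_3$. Because horizontal adjacency forces matching vertical colors and strings of different lengths cannot match, every horizontal row of a tiling in $\Omega_4$ consists of tiles of a single common length. Reading these lengths from bottom to top defines a continuous map $\pi \colon X_4 \to \{4,5\}^\Z$ satisfying $\pi \circ \sigma^{\be_2} = \sigma \circ \pi$ and $\pi \circ \sigma^{\be_1} = \pi$, and by construction $\pi(x)$ coincides with the sequence of horizontal-strip heights of $\omega_0\omega_1\omega_2\omega_3(x) \in X_0$, so $\L = \L_{\pi(X_4)}$.

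Next I would lift $\pi$ to $\Omega_\U$. The embedding $\jmath$ modifies only horizontal edges and therefore preserves the row structure. The shear conjugacy $\eta$ is associated to the matrix $\left(\begin{smallmatrix} 1 & 1 \\ 0 & 1 \end{smallmatrix}\right)$, which fixes horizontal lines setwise, so row-height sequences in $\Omega_5$ and $\Omega_6$ are in natural bijection. The recognizable morphisms $\omega_6,\dots,\omega_{11}$ and the bijection $\rho$ lift $\pi$ uniquely (recognizability resolving the fibers) to a continuous factor map $H \colon \Omega_\U \to \{4,5\}^\Z$ with $\L_{H(\Omega_\U)} = \L$, and by the minimality of $\Omega_\U$ from Theorem~\ref{thm:introduction} the image $H(\Omega_\U)$ is a minimal subshift.

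Finally I would identify $H(\Omega_\U)$. For each $t \in \U$, the self-similar image $\omega_\U(t)$ unfolds under the full composition to a rectangular patch of $X_0$ consisting of finitely many height-$4$ and height-$5$ strips stacked vertically; recording their heights bottom-to-top defines a one-dimensional substitution $\psi$ on $\{4,5\}$, and the self-similarity of $\omega_\U$ yields $H(\Omega_\U) = \X_\psi$. A direct computation from the 19 supertiles listed in Theorem~\ref{thm:main-result-in-simple-terms} shows that $\psi$ is primitive with incidence matrix of characteristic polynomial $x^2 - x - 1$; the only minimal primitive substitutive subshift on $\{4,5\}$ with this spectral data is the Fibonacci subshift, so $\L$ is the Fibonacci language. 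The main obstacle is showing that $\psi$ is well-defined, i.e., that the row-height sequence of a supertile depends only on the height class $4$ or $5$ rather than on the tile $t \in \U$ itself; this is ensured by the observation from the first paragraph that horizontal strips of $X_0$ are always fully contained inside a single horizontal layer of supertiles (a consequence of the color-length rigidity in $\T_4$ propagated through the recognizable chain), so $\psi$ factors through the height map and inherits primitivity from $\omega_\U$.
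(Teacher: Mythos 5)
Your first two steps are sound: the lengths of the vertical colors in $\T_4$ do record the strip heights (length $4$ for tiles $0$--$12$, length $5$ for tiles $13$--$27$), horizontal matching forces each row to be homogeneous, and the resulting row-height map transports cleanly through $\jmath$, $\eta$ and the recognizable chain up to $\Omega_\U$. The gap is in the final identification. The substitution $\psi$ on $\{4,5\}$ that you invoke is not induced by $\omega_\U$ in the way you describe: a tile of $\U$ does not unfold to a single strip of height $4$ or $5$ but to a supertile spanning \emph{two or three} strips (the supertiles have heights $9=5+4$ and $14=5+4+5$, i.e.\ they cover the blocks $54$ and $545$ read bottom to top). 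What $\omega_\U$ induces vertically is therefore a substitution on the two-letter alphabet of \emph{layer types}, $A\mapsto AB$, $B\mapsto A$ with $A$ standing for $545$ and $B$ for $54$ --- not a substitution $4\mapsto w_4$, $5\mapsto w_5$. Your attempted repair (``the row-height sequence of a supertile depends only on the height class $4$ or $5$'') does not parse, since no supertile has height class $4$ or $5$. The actual content of the proposition is precisely the recoding step you skip: one must show that the image of the Fibonacci language on $\{A,B\}$ under $A\mapsto 545$, $B\mapsto 54$ is again the Fibonacci language on $\{5,4\}$. This is what the paper's proof supplies (via the projection $\T_{12}\to\{a,b,c,d\}$ and the induced $2$-dimensional morphism $\tau$), and it holds because $545$ and $54$ are exactly the images of $5$ and $4$ under the \emph{square} of the Fibonacci morphism $5\mapsto 54$, $4\mapsto 5$, so the block coding reproduces the same language.

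A second, independent problem is the spectral shortcut. Even granting a primitive substitution on $\{4,5\}$ whose incidence matrix has characteristic polynomial $x^2-x-1$, this does not determine the language: the matrices $\left(\begin{smallmatrix}1&1\\1&0\end{smallmatrix}\right)$ and $\left(\begin{smallmatrix}0&1\\1&1\end{smallmatrix}\right)$ share that polynomial yet correspond to substitutions generating two distinct languages (one contains the factor $55$ but not $44$, the other the reverse), and the proposition asserts a specific one of the two, namely the fixed point of $4\mapsto 5$, $5\mapsto 54$ in which $5$ is the frequent letter. So even if your $\psi$ existed you would still have to compute it explicitly rather than only its spectrum. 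The repair is to carry out the layer-level computation (which of the $19$ supertiles sit in $9$-high versus $14$-high layers, and how $\omega_\U$ maps one layer type to the other) and then apply the $f^2$-recoding observation above.
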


\begin{proof}
Let $\psi:\Omega_{12}\to\{a,b,c,d\}^{\Z^2}$ be the $2$-dimensional morphism
defined as
\[
\begin{array}{rcl}
    \psi:\T_{12} & \to & \{a,b,c,d\}\\
t & \mapsto &
\begin{cases}
a  & \text{ if } t \in \{12,13,14,15,16,17,18\},\\
b  & \text{ if } t \in \{2,3,4,5,6,7\},\\
c  & \text{ if } t \in \{8,9,10,11\},\\
d  & \text{ if } t \in \{0,1\}.
\end{cases}
\end{array}
\]
There exists a unique $2$-dimensional morphism
$\tau:\psi(\Omega_{12})\to\psi(\Omega_{12})$ which commute with the projection
$\psi$ that is $\psi\circ\tau=\omega_{12}\circ\psi$.
The morphism $\tau$ is given by the rule
\[
a \mapsto \left(\begin{array}{cc}
b & d \\
a & c
\end{array}\right),\quad
    b \mapsto \left(\begin{array}{cc}
a & c
\end{array}\right),\quad
    c \mapsto \left(\begin{array}{c}
b \\
a 
\end{array}\right),\quad
    d \mapsto \left(\begin{array}{c}
a
\end{array}\right).
\]
Vertically, the Fibonacci morphism $a\mapsto ab,b\mapsto a$ gives the structure of the
language of horizontal strips.
Then from Figure~\ref{fig:omega-12-to-5},
we see that tiles of $\T_{12}$ with index from 0 to 7 map to horizontal
strips of height $5+4$ and that those with index from 8 to 18 map to
horizontal strips of height $5+4+5$.
The image of the language of the Fibonacci morphism $a\mapsto ab,b\mapsto a$
by the map $a\mapsto 545, b\mapsto 54$ is exactly the language of
the Fibonacci morphism $5\mapsto 54,4\mapsto 5$.
\end{proof}

The minimal subshift $X_0$ is not equal to $\Omega_0$. Still we
believe that $X_0$ is the only minimal subshift of $\Omega_0$ and that
$\Omega_0\setminus X_0$ is a null set for any probability shift-invariant
measure on $\Omega_0$. The goal of the next two sections is to cover these
questions.

\part{Non-minimality of Jeandel-Rao tilings}\label{part:4}

\section{Eight tilings in $\Omega_\U$ fixed by the square of $\omega_\U$}
\label{sec:fix-points-Omega_12}

To understand Jeandel-Rao tilings $\Omega_0$ we need to understand $\Omega_\U$.
It was proved in \cite{MR3978536}
that the Wang shift $\Omega_\U$ is minimal, aperiodic and self-similar
and that $\omega_\U:\Omega_\U\to\Omega_\U$ 
is an expansive recognizable morphism that is onto up
to a shift. The subshift $\Omega_\U$ is the hidden internal self-similar
structure of $\Omega_0$. In this section, we describe the fixed points of
$\omega_\U$ which are helpful to define the tilings of $\Omega_0$ that we
cannot describe by the image of morphisms.


The first two images of 
$\left(\begin{smallmatrix}
17 & 13 \\
6 & 5
\end{smallmatrix}\right)$
under the $2$-dimensional morphism $\omega_\U$ are:
\[
\left(\begin{array}{r|r}
17 & 13 \\
\hline
6 & 5
\end{array}\right)
\xrightarrow{\omega_\U}
\left(\begin{array}{rr|rr}
4 & 1 & 6 & 1 \\
13 & 9 & 14 & 11 \\
\hline
15 & 8 & 16 & 8
\end{array}\right)
\xrightarrow{\omega_\U}
\left(\begin{array}{rrr|rrr}
17 & 8 & 16 & 15 & 8 & 16 \\
6 & 1 & 3 & 7 & 1 & 2 \\
14 & 11 & 17 & 13 & 9 & 14 \\
\hline
6 & 1 & 6 & 5 & 1 & 6 \\
12 & 9 & 14 & 18 & 10 & 14
\end{array}\right)
\]
Therefore, the square of the morphism $\omega_\U$ is prolongable on
$\left(\begin{smallmatrix}
17 & 13 \\
6 & 5
\end{smallmatrix}\right)$ 
in all four quadrants
(since the original patch occurs in the center of the last one)
and there exist
a unique tiling
$z:\Z^2\to\T_\U$ 
with $z=\omega_\U^2(z)\in\Omega_\U$
such that at the origin we have:
\[
\left(\begin{array}{rr}
z_{-\be_1} & z_{\zero} \\
z_{-\be_1-\be_2} & z_{-\be_2}
\end{array}\right)
=
\left(\begin{array}{rr}
17 & 13 \\
6 & 5
\end{array}\right).
\]

\begin{lemma}\label{lem:the8fixedpoints}
    The square of the morphism $\omega_\U$ has eight distinct fixed points
    $z=\omega_\U(z)\in\Omega_\U$ that can be generated from  the following
    values of $z$ at the origin:
    \[
    \arraycolsep=1.4pt
\left(\begin{array}{rr}
    z_{(-1,0)} & z_{\zero} \\
    z_{-(1,1)} & z_{(0,-1)}
\end{array}\right)
\in
        \left\{
\left(\begin{array}{rr}
9 & 14 \\
8 & 16
\end{array}\right),
\left(\begin{array}{rr}
9 & 14 \\
1 & 6
\end{array}\right),
\left(\begin{array}{rr}
17 & 13 \\
16 & 15
\end{array}\right),
\left(\begin{array}{rr}
17 & 13 \\
6 & 5
\end{array}\right),
\left(\begin{array}{rr}
16 & 15 \\
3 & 7
\end{array}\right),
\left(\begin{array}{rr}
10 & 12 \\
9 & 14
\end{array}\right),
\left(\begin{array}{rr}
16 & 13 \\
2 & 4
\end{array}\right),
\left(\begin{array}{rr}
10 & 14 \\
11 & 17
\end{array}\right)
    \right\}.
    \]
\end{lemma}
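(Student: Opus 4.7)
The plan is to characterize fixed points $z$ of $\omega_\U^2$ by their central $2\times 2$ block
$P = \left(\begin{smallmatrix} z_{(-1,0)} & z_{(0,0)} \\ z_{(-1,-1)} & z_{(0,-1)}\end{smallmatrix}\right)$
and to show, by direct enumeration, that exactly eight such seeds produce a valid fixed point in $\Omega_\U$.

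First, I will unpack the fixed-point equation in the conventions fixed in the preliminaries. Since the origin of $\omega_\U^2(z)$ sits at position $\zero$ of the word $\omega_\U^2(z_\zero)$, the equation $z = \omega_\U^2(z)$ forces $z_\zero$ to appear at position $\zero$ (the lower-left corner) of its own image under $\omega_\U^2$, i.e.\ to be prolongable in the hyperoctant of sign $(+,+)$ in the sense of the definition recalled in Section~\ref{sec:preliminaries}. Analogous conditions force $z_{(-1,0)}$ to be prolongable in the hyperoctant $(-,+)$ (appearing at the lower-right corner of its image), $z_{(-1,-1)}$ in the hyperoctant $(-,-)$ (upper-right corner), and $z_{(0,-1)}$ in the hyperoctant $(+,-)$ (upper-left corner). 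Moreover, the four images $\omega_\U^2(b)$, $\omega_\U^2(a)$, $\omega_\U^2(c)$, $\omega_\U^2(d)$ must fit together coherently so that the four central positions of the resulting rectangular patch reproduce the seed~$P$.

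Next, I will carry out an exhaustive finite search. Using the explicit formula already given for $\omega_\U$, I will compute $\omega_\U^2(t)$ for each of the nineteen tiles $t\in\T_\U$, extract the letter in each of the four corners, and identify the set of tiles prolongable in each of the four hyperoctants. I will then enumerate the $2\times 2$ patches $P\in\L_{\Omega_\U}$ whose four entries are simultaneously prolongable in the appropriate hyperoctants, and verify the consistency condition that the central $2\times 2$ block of $\omega_\U^2(P)$ equals $P$ itself. This is a finite combinatorial check, certainly bounded by $19^4$ candidates and drastically shorter in practice because of the language constraint, and one expects that precisely the eight seeds listed in the statement satisfy all these conditions. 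The one worked example already displayed in the excerpt (starting from $\left(\begin{smallmatrix} 17 & 13 \\ 6 & 5\end{smallmatrix}\right)$) confirms that the procedure works and that at least one such seed exists.

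Finally, for each admissible seed $P$, the sequence $\bigl((\omega_\U^2)^n(P)\bigr)_{n\in\N}$ is a nested family of rectangular patches centered at the origin whose shapes grow without bound in each of the four cardinal directions, by expansiveness of $\omega_\U$ established in \cite{MR3978536}. By the fixed-point consistency of $P$ these patches agree on common supports, and their limit defines a unique $z\in\T_\U^{\Z^2}$ satisfying $z=\omega_\U^2(z)$. Every finite factor of $z$ occurs in some iterate $\omega_\U^{2n}(a)$ and hence in $\L_{\Omega_\U}$, so $z\in\Omega_\U$. The main obstacle is not conceptual but purely one of bookkeeping: verifying that the consistency check yields exactly eight admissible $2\times 2$ blocks, and no more.
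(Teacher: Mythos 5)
Your proposal is correct and is essentially the paper's argument: a finite enumeration over the $2\times 2$ factors of $\L_{\Omega_\U}$ (fifty of them, taken from \cite{MR3978536}), checking which seeds reproduce themselves at the center of their image and then growing the fixed point by iteration and expansiveness. The paper merely organizes the same computation slightly differently, building the functional graph of a single application of $\omega_\U$ on the fifty blocks and reading off the four $2$-cycles (eight vertices) as the periodic points of period dividing~$2$, rather than testing $\omega_\U^2$ directly.
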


\begin{figure}
    \includegraphics[height=.95\textheight]{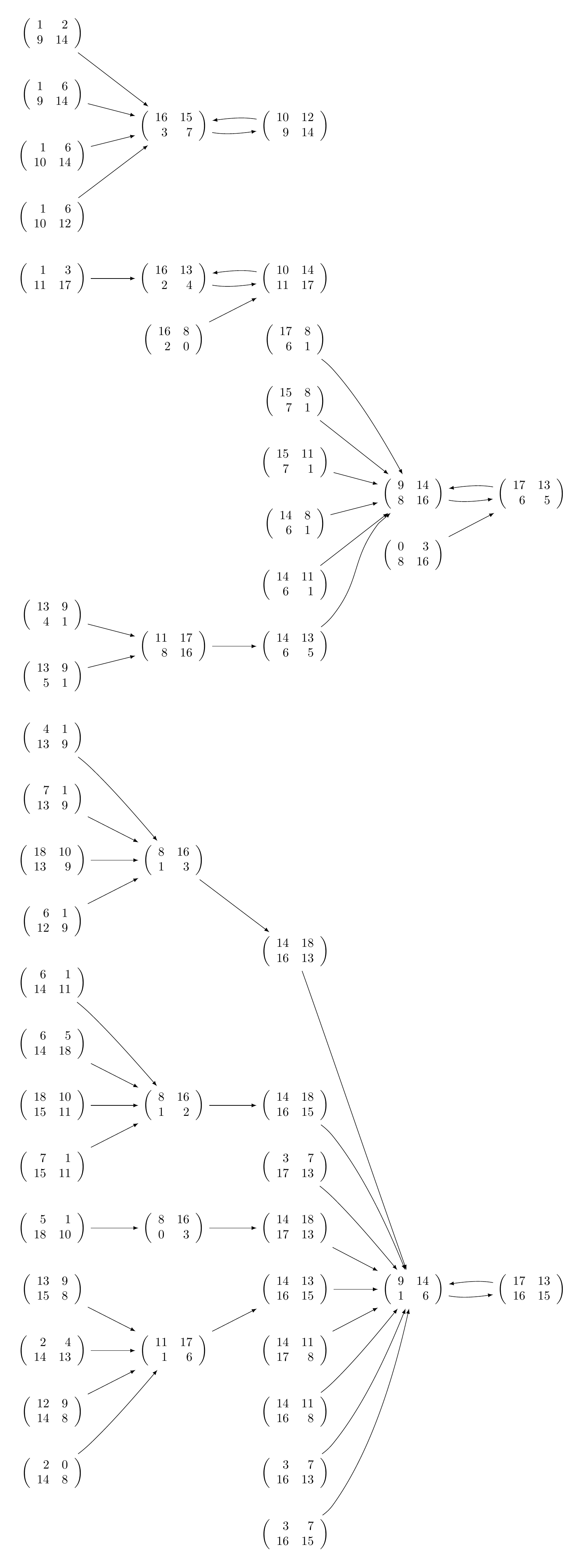}
    \caption{The eight fixed point of $\omega_\U$ are generated by the $2\times 2$ blocks
    that belong to a recurrent strongly connected component of this graph.}
    \label{fig:8fixedpoints}
\end{figure}

\begin{proof}
    The proof is done by applying the morphism $\omega_\U$ on each of the
    fifty $2\times 2$ factors of the language of $\Omega_\U$ already computed
    in \cite{MR3978536}. We get a graph shown
    in Figure~\ref{fig:8fixedpoints}. In the graph, there are 4 cycles of size
    2 and no other strongly connected components.
    Each of these 8 vertices correspond to a fixed point $z\in\Omega_\U$ of 
    the square of $\omega_\U$.
\end{proof}

\begin{lemma}\label{lem:rows-of-156-81516}
    Among the eight fixed points of $\omega_\U^2$ in $\Omega_\U$,
    only four contain a row made of tiles in $\{1,5,6\}$
    or a row made of tiles in $\{8,15,16\}$:
    the fixed points $z=(\omega_\U^2)^\infty(a)$
    generated by some
    \[
        a\in
    \arraycolsep=1.8pt
        \left\{
\left(\begin{array}{rr}
9 & 14 \\
8 & 16
\end{array}\right),
\left(\begin{array}{rr}
9 & 14 \\
1 & 6
\end{array}\right),
\left(\begin{array}{rr}
17 & 13 \\
16 & 15
\end{array}\right),
\left(\begin{array}{rr}
17 & 13 \\
6 & 5
\end{array}\right)
    \right\}.
    \]
Moreover, they contain exactly one of these row.
\end{lemma}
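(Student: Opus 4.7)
The plan exploits a structural property of $\omega_\U$: by inspection, every tile in $\{1, 5, 6\}$ has a one-row image lying entirely in $\{8, 15, 16\}$ (since $\omega_\U(1) = (16)$, $\omega_\U(5) = (16, 8)$, $\omega_\U(6) = (15, 8)$), and dually every tile in $\{8, 15, 16\}$ has a two-row image whose top row lies in $\{1, 5, 6\}$ (the top rows of $\omega_\U(8), \omega_\U(15), \omega_\U(16)$ being $(6), (6, 1), (5, 1)$ respectively). Consequently, a bi-infinite row of $\{1, 5, 6\}$ at some height of $z$ is sent by $\omega_\U$ to a bi-infinite row of $\{8, 15, 16\}$ at a corresponding height of $\omega_\U(z)$, and vice versa, so the property of containing such a row is stable under $\omega_\U^2$.

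First I would verify the four listed fixed points contain such a row. Each of $A_1, A_2, A_3, A_4$ has its bottom row, at $y = -1$, in the prescribed set: $(8, 16), (1, 6), (16, 15), (6, 5)$ respectively. Computing $\omega_\U^2(A_i)$ as an explicit patch shows that the row at $y = -1$ extends to a longer segment in the same set (for instance, $\omega_\U^2(A_1)$ at $y = -1$ is $(15, 8, 16, 8, 16)$). Because this row passes through the seed at positions $\{-1, 0\} \times \{-1\}$, tracking cumulative heights in the substitution shows it remains at $y = -1$ at every iterate of $\omega_\U^2$; iterating indefinitely yields a bi-infinite row at $y = -1$ entirely in the prescribed set.

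The main obstacle is the negative direction for $A_5, A_6, A_7, A_8$. The subtlety is that some patches $\omega_\U^2(A_i)$ still contain finite rows in one of the prescribed sets at non-central heights --- for instance row $y = 2$ of $\omega_\U^2(A_5)$ is $(16, 8, 16, 15, 8, 16) \subset \{8, 15, 16\}$ --- but such rows fail to extend. The reason is that these candidate rows are located at the boundary of the patch rather than passing through the seed itself, so at the next iterate they are shifted to a different $y$-coordinate (becoming the new boundary), while the old $y$-coordinate picks up tiles from the images of interior rows which do not preserve the prescribed form. Concretely, $\omega_\U^4(A_5)$ at $y = 2$ picks up tiles $14, 11, 17$ outside $\{8, 15, 16\}$. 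For each of $A_5, A_6, A_7, A_8$, I would compute $\omega_\U^2(A_i)$ and $\omega_\U^4(A_i)$ and verify that every row of $\omega_\U^2(A_i)$ entirely in a prescribed set fails to remain so in $\omega_\U^4(A_i)$ at the same $y$-coordinate.

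Uniqueness of the bi-infinite row in each of the first four fixed points follows from the same analysis: for each $A_i$ with $i \in \{1, 2, 3, 4\}$, computing $\omega_\U^2(A_i)$ shows that the row at $y = -1$ is the only row of the patch entirely in a prescribed set that passes through the seed itself, and any other candidate rows in the patch fail to extend in $\omega_\U^4(A_i)$ by the same mechanism as for $A_5, \ldots, A_8$, leaving exactly one bi-infinite row of the prescribed form in each of the four fixed points.
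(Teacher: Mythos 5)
Your structural observation---that tiles in $\{1,5,6\}$ have one-row images over $\{8,15,16\}$ while tiles in $\{8,15,16\}$ have two-row images whose top row is over $\{1,5,6\}$---is correct and is exactly the engine of the paper's argument, and your treatment of the four positive cases is sound: the bottom row of each such seed sits at the top of its own image in the lower half-plane, so the row at $y=-1$ stays over the prescribed set under every iterate. The gap is in the negative direction and in uniqueness. A fixed point has a row at \emph{every} height $y\in\Z$, so excluding a row over $\{1,5,6\}$ or $\{8,15,16\}$ is an infinite family of conditions; verifying that the candidate rows visible in $\omega_\U^2(A_i)$ die at the same height in $\omega_\U^4(A_i)$ disposes only of the finitely many heights those patches cover and says nothing about a hypothetical row at height $10^6$. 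Your ``boundary rows get shifted away and corrected'' mechanism is a plausible heuristic, but to turn it into a proof you would need an inductive invariant holding for all $n$ (for instance, that every row of $(\omega_\U^2)^n(A_i)$ lying over a prescribed set is a boundary row, and that this persists under one more application of $\omega_\U^2$), and you have not supplied that inductive step; nothing you compute prevents a fresh interior row over a prescribed set from appearing at some later iterate and at some new height.

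The missing idea is to run the substitution \emph{backwards} rather than forwards, which is what reduces the infinite check to the seed. Since $z=\omega_\U^2(z)$, a row of $z$ over $\{1,5,6\}$ at height $y$ lies inside the image strip of a single row of $\omega_\U(z)$, and every tile of that row must have the corresponding row of its image over $\{1,5,6\}$; the only such tiles are $\{8,13,15,16\}$, always with the top row, and the horizontal Rauzy graph (tile $13$ is always followed by $9$) eliminates $13$. One more pull-back, using that the only tiles whose images contain a row over $\{8,15,16\}$ are $\{1,5,6,10\}$ and that $10$ is always preceded by $18$ (and has an image of height $2$, incompatible in a common row with $1,5,6$), yields a row of $z$ over $\{1,5,6\}$ again, at a height that is forced to be the top of an $\omega_\U^2$-image strip. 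Because every such strip has height at least $2$, this pulled-back height either does not exist (immediate contradiction) or moves strictly monotonically towards $-1$; hence any hypothetical row pulls back in finitely many steps to the row at $y=-1$, which passes through the seed. Both the exclusion of the other four fixed points and the uniqueness of the row in the first four then reduce to inspecting the two rows of each $2\times 2$ seed, which is the finite check the paper performs.
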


\begin{proof}
    The 8 fixed points are given in Lemma~\ref{lem:the8fixedpoints}.
    Only 1, 5, 6 and 10 contain a row over $\{1, 5, 6\}^*$ in their image under
    $\omega_\U$. 
    Only $\left(\begin{smallmatrix}
        9 & 14 \\
        1 & 6
\end{smallmatrix}\right)$
and $\left(\begin{smallmatrix}
        17 & 13 \\
        6 & 5
\end{smallmatrix}\right)$ contain a row over $\{1, 5, 6,10\}^*$.
    Since the exactly one row (the top one) of the images of $1,5,6$ under
    $\omega_\U$ is over $\{1,5,6\}^*$, this property is preserved under the
    application of $\omega_\U$. Similarly, we show that only
    $\left(\begin{smallmatrix}
        9 & 14 \\
        8 & 16
\end{smallmatrix}\right)$
and $\left(\begin{smallmatrix}
        17 & 13 \\
        16 & 15
\end{smallmatrix}\right)$ generate fixed point with a row over $\{8,15,16\}^*$.
\end{proof}

Finally we illustrate in Figure~\ref{fig:rauzy_graph_U} the language of
horizontal dominoes in $\Omega_\U$ as a Rauzy graph.

\begin{figure}
    \includegraphics[width=.70\linewidth]{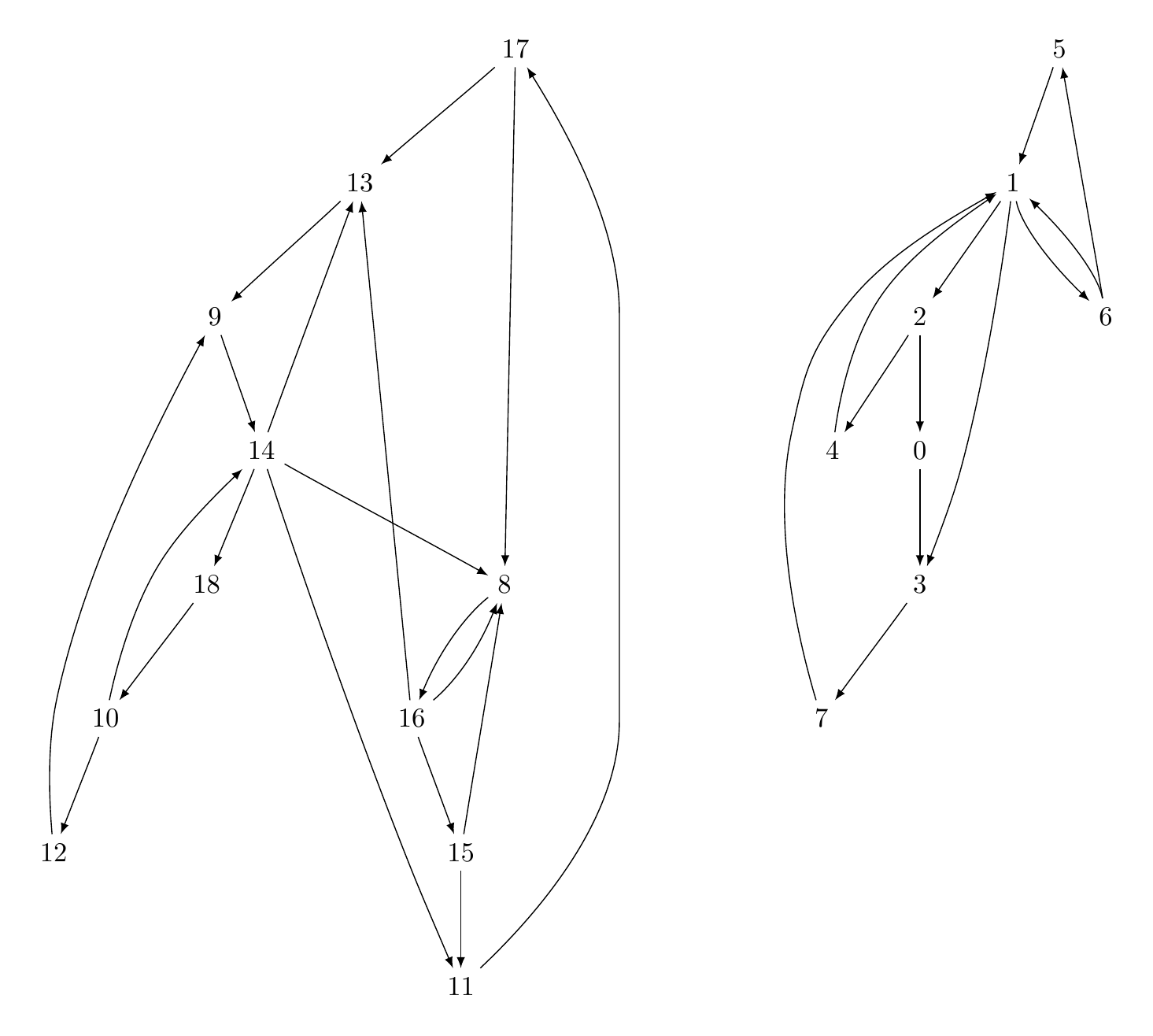}
\caption{The horizontal Rauzy graph for the tiles in $\Omega_\U$.
    There is an arrow $a\to b$ in the graph with $a,b\in\T_\U$ if and only if
    $a\odot^1 b$ is in the language of $\Omega_\U$.}
    \label{fig:rauzy_graph_U}
\end{figure}
    
\section{Understanding $\Omega_4\setminus X_4$}
\label{sec:fault lines}

Recall that $X_4=\overline{\jmath(\Omega_5)}^\sigma=\jmath(\Omega_5)$
as $\jmath(\Omega_5)$ is shift-invariant since $\jmath$ is a uniform morphism
mapping each letter on a letter.
The goal of this section is to show that
$\Omega_4\setminus X_4$
is nonempty and give a conjecture on its structure.
Informally, we believe that all tilings in $\Omega_4\setminus X_4$ are obtained
by sliding particular tilings in $X_4$ along a horizontal fault line of 0's or
1's.

The morphism $\eta\,
\omega_{6}\,
\omega_{7}\,
\omega_{8}\,
\omega_{9}\,
\omega_{10}\,
\omega_{11}\,\rho:\Omega_\U\to\Omega_5$
is illustrated in 
Figure~\ref{fig:omega-12-to-5}.
Recall that the application of the map $\jmath:\Omega_5\to\Omega_4$ replaces
horizontal colors 5 and 6 by 1 and 0 respectively.
Thus it can be seen on the figure that
    horizontal fault lines of 0's are present in the image of
    tiles numbered 1, 5 and 6 and horizontal fault lines of 1's are present in the images of
    tiles numbered 8, 15 and~16. In the next lemma, we formalize this observation.

Let $K\subseteq\Z$ be an interval of integers. A Wang tiling $t:\Z\times
K\to\T$ has a \emph{horizontal fault line} of color $a$ if there exists $k\in K$
such that $\scbottom(t(m,k))=a$ for all $m\in\Z$.

\begin{lemma}\label{lem:fault line-in-rows-iff}
    Let $u:\Z\to\T_\U$
    be a infinite horizontal row of some tiling in $\Omega_\U$.
\begin{enumerate}[\rm (i)]
\item $\jmath\,\eta\,
\omega_{6}\,
\omega_{7}\,
\omega_{8}\,
\omega_{9}\,
\omega_{10}\,
        \omega_{11}\,\rho(u)$
    contains a horizontal fault line of $0$'s
    if and only if $u\in\{1,5,6\}^\Z$.
\item $\jmath\,\eta\,
\omega_{6}\,
\omega_{7}\,
\omega_{8}\,
\omega_{9}\,
\omega_{10}\,
        \omega_{11}\,\rho(u)$
    contains a horizontal fault line of $1$'s
    if and only if $u\in\{8,15,16\}^\Z$.
\end{enumerate}
\end{lemma}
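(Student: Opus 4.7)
The plan is to verify both equivalences by direct inspection of the composite morphism $\Psi := \eta\,\omega_{6}\,\omega_{7}\,\omega_{8}\,\omega_{9}\,\omega_{10}\,\omega_{11}\,\rho \colon \Omega_\U \to \Omega_5$ tile-by-tile on $\U$, using Figure~\ref{fig:omega-12-to-5}. Since $\jmath$ acts on horizontal edges by sending $6 \mapsto 0$ and $5 \mapsto 1$, a horizontal fault line of $0$'s (resp.\ $1$'s) in $\jmath\Psi(u)$ corresponds to a horizontal row of bottom-edge colors lying in $\{0,6\}$ (resp.\ $\{1,5\}$) in $\Psi(u)$ itself. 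Because $\Psi$ is a $2$-dimensional morphism, horizontally adjacent tiles in a row $u \in \T_\U^\Z$ must have $\Psi$-images of the same height, so the strip $\Psi(u)$ is well-defined and a candidate fault line is determined by a single $y$-coordinate that must work simultaneously at every column.

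For the sufficiency direction of (i), I would verify from Figure~\ref{fig:omega-12-to-5} that each of $\Psi(1), \Psi(5), \Psi(6)$ contains a horizontal row whose bottom-edge colors all belong to $\{0,6\}$, and that these rows occur at the same relative height in the three patches. The fact that such rows $u \in \{1,5,6\}^\Z$ actually arise in $\Omega_\U$ is guaranteed by Lemma~\ref{lem:rows-of-156-81516}, so the images $\jmath\Psi(u(k))$ then glue along matching heights to produce a complete horizontal fault line of $0$'s in $\jmath\Psi(u)$. For the necessity direction, I would verify that for every $t \in \U \setminus \{1,5,6\}$ the patch $\Psi(t)$ contains no horizontal row of bottom-edge colors lying entirely in $\{0,6\}$; then if $u$ contains at least one such $t$, any candidate fault line of $0$'s is broken at the column corresponding to $t$. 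The proof of (ii) is strictly analogous, replacing $0$ by $1$, $6$ by $5$, and $\{1,5,6\}$ by $\{8,15,16\}$.

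The main bookkeeping obstacle is that the images $\Psi(t)$ are patches of size between $45$ and $112$ cells, so directly tabulating their bottom-edge rows for $19$ tiles is tedious. A cleaner approach is to propagate the property \emph{``contains a horizontal row whose bottom edges lie in $\{0,6\}$ (resp.\ $\{1,5\}$) at a given relative height''} inductively through the sequence $\rho, \omega_{11}, \omega_{10}, \ldots, \omega_{6}, \eta$: at each desubstitution stage, one identifies the subset of tiles of $\T_i$ whose current image has a fault-line row at a consistent height, and checks that this subset is exactly the preimage (under the morphism) of the corresponding subset at the next stage, terminating at $\{1,5,6\}$ and $\{8,15,16\}$ in $\U$. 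This inductive tracking can be carried out and cross-checked using the SageMath package \cite{labbe_slabbe_0_6_2019}.
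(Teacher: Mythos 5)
Your setup and the sufficiency direction are fine and essentially match the paper: you correctly reduce a fault line of $0$'s (resp.\ $1$'s) in $\jmath\Psi(u)$ to a row of bottom-edge colors in $\{0,6\}$ (resp.\ $\{1,5\}$) in $\Psi(u)$, and the images of $1,5,6$ (resp.\ $8,15,16$) do carry such a row at a common height.

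The necessity direction, however, has a genuine gap. Your plan rests on the claim that for every $t\in\U\setminus\{1,5,6\}$ the patch $\Psi(t)$ contains \emph{no} horizontal row of bottom-edge colors lying entirely in $\{0,6\}$. That verification fails: as one reads off Figure~\ref{fig:omega-12-to-5}, the patches of the tiles $0,4,9,10$ \emph{also} contain a complete row of bottom colors projecting to $0$ (and likewise the patch of tile $13$ contains a row of $1$'s for part (ii)). So the purely tile-local check only yields $u\in\{0,1,4,5,6,9,10\}^\Z$ in case (i) and $u\in\{8,13,15,16\}^\Z$ in case (ii), and your argument stops there. The missing ingredient is the horizontal adjacency structure of $\Omega_\U$ (the Rauzy graph of Figure~\ref{fig:rauzy_graph_U}): tile $10$ is always preceded by $18$, tile $9$ is always followed by $14$, tiles $0$ and $4$ are always preceded by $2$, and tile $13$ is always followed by $9$; since the patches of $18$, $14$, $2$ (resp.\ $9$) carry no fault-line row of the relevant color, any occurrence of $0,4,9,10$ (resp.\ $13$) in $u$ forces a neighbouring column that breaks the fault line. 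Only after this step can one conclude $u\in\{1,5,6\}^\Z$ (resp.\ $u\in\{8,15,16\}^\Z$). The same defect propagates to your ``cleaner'' inductive variant: tracking the set of tiles whose image carries a fault-line row through $\rho,\omega_{11},\dots,\eta$ terminates at $\{0,1,4,5,6,9,10\}$, not at $\{1,5,6\}$, so adjacency constraints must be injected somewhere in any case.
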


\begin{proof}
    (i) If $u\in\{1,5,6\}^\Z$, then $\jmath\,\eta\,
\omega_{6}\,
\omega_{7}\,
\omega_{8}\,
\omega_{9}\,
\omega_{10}\,
        \omega_{11}\,\rho(u)$
    contains a horizontal fault line of $0$'s.
    If $\jmath\,\eta\,
\omega_{6}\,
\omega_{7}\,
\omega_{8}\,
\omega_{9}\,
\omega_{10}\,
        \omega_{11}\,\rho(u)$
    contains a horizontal fault line of $0$'s, then
    we see from Figure~\ref{fig:omega-12-to-5} that
    $u\in\{0,1,4,5,6,9,10\}^\Z$.
    In the horizontal language of $\Omega_\U$ (see Figure~\ref{fig:rauzy_graph_U}),
    the tile 10 is preceded by 18 which has no fault line of 0's and
    the tile 9 is followed by 14 which has no fault line of 0's.
    Moreover, the tiles 0 and 4 are preceded by tile 2 which has no fault line of
    0's. We conclude that $u\in\{1,5,6\}^\Z$.

    (ii) If $u\in\{8,15,16\}^\Z$, then $\jmath\,\eta\,
\omega_{6}\,
\omega_{7}\,
\omega_{8}\,
\omega_{9}\,
\omega_{10}\,
        \omega_{11}\,\rho(u)$
    contains a horizontal fault line of $1$'s.
    If $\jmath\,\eta\,
\omega_{6}\,
\omega_{7}\,
\omega_{8}\,
\omega_{9}\,
\omega_{10}\,
        \omega_{11}\,\rho(u)$
    contains a horizontal fault line of $1$'s, then
    we see from Figure~\ref{fig:omega-12-to-5} that
    $u\in\{8,13,15,16\}^\Z$.
    In the language of $\Omega_\U$ (see Figure~\ref{fig:rauzy_graph_U}),
    the tile 13 is always followed by 9 which has no fault line of 1's.
    We conclude that $u\in\{8,15,16\}^\Z$.
\end{proof}

We can now use the result of the previous section on the fixed points of
$\omega_\U$ to construct tilings in $\Omega_4$ with horizontal fault lines.

\begin{lemma}\label{lem:has-horizontal-fault line}
Let $a\in
    \left\{
\left(\begin{smallmatrix}
9 & 14 \\
8 & 16
\end{smallmatrix}\right),
\left(\begin{smallmatrix}
9 & 14 \\
1 & 6
\end{smallmatrix}\right),
\left(\begin{smallmatrix}
17 & 13 \\
16 & 15
\end{smallmatrix}\right),
\left(\begin{smallmatrix}
17 & 13 \\
6 & 5
\end{smallmatrix}\right)
    \right\}$ and
$z=\lim_{n\to\infty}(\omega_\U)^{2n}(a)\in\Omega_\U$ be the fixed point of
    $\omega_\U$ with $a$ at origin.
Let
\[
y= \jmath\,\eta\,
\omega_{6}\,
\omega_{7}\,
\omega_{8}\,
\omega_{9}\,
\omega_{10}\,
    \omega_{11}\,\rho(z)\in\Omega_4.
\]
If $a\in
    \left\{
\left(\begin{smallmatrix}
9 & 14 \\
1 & 6
\end{smallmatrix}\right),
\left(\begin{smallmatrix}
17 & 13 \\
6 & 5
\end{smallmatrix}\right)
    \right\}$ then $y$ has a horizontal fault line of 0's below the row $k=-1$.
If $a\in
    \left\{
\left(\begin{smallmatrix}
9 & 14 \\
8 & 16
\end{smallmatrix}\right),
\left(\begin{smallmatrix}
17 & 13 \\
16 & 15
\end{smallmatrix}\right)
    \right\}$ then $y$ has a horizontal fault line of 1's below the row $k=-1$.
\end{lemma}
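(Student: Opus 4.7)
The plan is to identify, for each of the four seed blocks $a$, a single distinguished horizontal row inside the fixed point $z=\lim_{n\to\infty}(\omega_\U^2)^n(a)$, and then to transport that row through the morphism stack. The first step is combinatorial: one inspects the four seed blocks and notes that in each case the bottom row $(a_{-1,-1},a_{0,-1})$ already lies inside one of the two critical subsets of $\T_\U$. The blocks $\left(\begin{smallmatrix} 9 & 14 \\ 1 & 6 \end{smallmatrix}\right)$ and $\left(\begin{smallmatrix} 17 & 13 \\ 6 & 5 \end{smallmatrix}\right)$ have bottom row in $\{1,5,6\}$, while $\left(\begin{smallmatrix} 9 & 14 \\ 8 & 16 \end{smallmatrix}\right)$ and $\left(\begin{smallmatrix} 17 & 13 \\ 16 & 15 \end{smallmatrix}\right)$ have bottom row in $\{8,15,16\}$.

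Next, by Lemma~\ref{lem:rows-of-156-81516}, the fixed point $z$ contains \emph{exactly one} horizontal row whose tiles lie in $\{1,5,6\}$ (respectively $\{8,15,16\}$). Since the two consecutive tiles $z_{-1,-1}$ and $z_{0,-1}$ already belong to the relevant set, they must both sit on that unique distinguished row, which forces the entire row $m\mapsto z_{m,-1}$ to be contained in $\{1,5,6\}^\Z$ (respectively $\{8,15,16\}^\Z$). Lemma~\ref{lem:fault line-in-rows-iff}(i) (respectively (ii)) then guarantees that the image of that row under $\jmath\,\eta\,\omega_6\,\omega_7\,\omega_8\,\omega_9\,\omega_{10}\,\omega_{11}\,\rho$ is a horizontal strip of $y$ carrying a fault line of $0$'s (respectively $1$'s).

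It remains to locate this strip inside $y$. By the normalization convention that the origin of the image of $z$ lies inside the image of $z_\zero$, the image of any tile $z_{m,n}$ with $n<0$ occupies rows strictly below $0$ in $y$. Since $\eta$ is the upper-triangular shear with matrix $\left(\begin{smallmatrix}1 & 1 \\ 0 & 1\end{smallmatrix}\right)$, which preserves the horizontal-row foliation, and since each $\omega_i$ acts on a row by producing a strip of fixed height, the image of $z(\cdot,-1)$ sits at rows $k\le -1$ in $y$, and so does the fault line it carries.

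The main obstacle is this last bookkeeping step: one must track the vertical coordinate of the fault line through the shear $\eta$ and through the several $\omega_i$'s whose letter images are vertical dominoes, making sure that no positive vertical shift is introduced at the origin and that the only vertical extension caused by dominoes is downward from the rows already occupied by images of $z(\cdot,0)$. The qualitative statement "below the row $k=-1$" does not require pinpointing the exact row, only this containment in $k\le -1$.
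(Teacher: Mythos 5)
Your overall route is the same as the paper's: the statement is obtained by combining Lemma~\ref{lem:rows-of-156-81516} with Lemma~\ref{lem:fault line-in-rows-iff}. However, the step by which you locate the distinguished row at height $-1$ inside $z$ is logically broken. You argue: $z$ contains exactly one row entirely over $\{1,5,6\}$; the two tiles $z_{(-1,-1)},z_{(0,-1)}$ lie in $\{1,5,6\}$; hence row $-1$ is that unique row. This inference is invalid: a row can contain tiles of $\{1,5,6\}$, even consecutive ones, without being entirely over $\{1,5,6\}$, so nothing forces the two seed tiles to sit on the unique fully-$\{1,5,6\}$ row. Concretely, in the fixed point generated by $\left(\begin{smallmatrix}17 & 13\\ 6 & 5\end{smallmatrix}\right)$, the displayed second iterate shows that the row at height $+1$ contains the factor $6,1,3,7,1,2$; it has the consecutive pair $6,1\in\{1,5,6\}$ but also contains $3\notin\{1,5,6\}$. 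If your inference were valid it would equally ``prove'' that row $+1$ is the distinguished row, a contradiction. So uniqueness alone cannot pin down the location, and the localization ``below the row $k=-1$'' is exactly the content of the lemma, so this cannot be skipped.

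The correct way to locate the row is through the prolongation structure of $\omega_\U^2$, which is in effect what the proof of Lemma~\ref{lem:rows-of-156-81516} records: each of $1,5,6$ is sent by $\omega_\U$ to a one-row word over $\{8,15,16\}$, and each of $8,15,16$ is sent to a two-row block whose top row is over $\{1,5,6\}$; hence $\omega_\U^2$ sends a row over $\{1,5,6\}$ to a two-row block whose top row is again over $\{1,5,6\}$, and the seed's bottom row reappears inside that top row. Row $-1$ of $z$ is therefore the increasing union over $n$ of these top rows in $\omega_\U^{2n}(a)$, so it lies in $\{1,5,6\}^\Z$ (same argument with $\{8,15,16\}$ for the other two seeds). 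With that repaired, your transport of the row through $\jmath\,\eta\,\omega_6\cdots\omega_{11}\,\rho$ is acceptable for the weak claim being made: $\eta$ preserves the second coordinate, $\jmath$ is letter-to-letter, and each $\omega_i$ places the image of the row at height $-1$ at heights $\leq -1$, so the fault line produced by Lemma~\ref{lem:fault line-in-rows-iff} indeed sits below row $k=-1$ --- though note that you describe this bookkeeping more than you actually carry it out.
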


\begin{proof}
    The proof follows from 
    Lemma~\ref{lem:rows-of-156-81516} and
    Lemma~\ref{lem:fault line-in-rows-iff}.
\end{proof}

Consider now two well-chosen tilings in $\Omega_4$:
\begin{align*}
y_0&= \jmath\,\eta\,
\omega_{6}\,
\omega_{7}\,
\omega_{8}\,
\omega_{9}\,
\omega_{10}\,
    \omega_{11}\,\rho(z_0)\in\Omega_4
\quad
    \text{ with }
\quad
z_0=\lim_{n\to\infty}(\omega_\U)^{2n}
\left(\begin{smallmatrix}
9 & 14 \\
1 & 6
\end{smallmatrix}\right)\in\Omega_\U \text{ and }\\
y_1&= \jmath\,\eta\,
\omega_{6}\,
\omega_{7}\,
\omega_{8}\,
\omega_{9}\,
\omega_{10}\,
    \omega_{11}\,\rho(z_1)\in\Omega_4
\quad
    \text{ with }
\quad
z_1=\lim_{n\to\infty}(\omega_\U)^{2n}
\left(\begin{smallmatrix}
9 & 14 \\
8 & 16
\end{smallmatrix}\right)\in\Omega_\U.
\end{align*}
For any Wang tiling $y:\Z\to\T$,
we define $y^+$ to be the tiling $y$ restricted to the upper half-plane
$\Z\times(\N-1)$ and $y^-$ to be the tiling $y$ restricted to
the lower half-plane $\Z\times(-2-\N)$:
\[
y^+ = y|_{\Z\times(\N-1)}
\qquad
\text{ and }
\qquad
y^- = y|_{\Z\times(-2-\N)}.
\]
It follows from Lemma~\ref{lem:has-horizontal-fault line} that
the tiling $y_0^+$ (resp. $y_1^+$) is a tiling of the upper half-plane with
only color 0 (resp. 1) on the bottom. The tiling $y_0^-$ (resp. $y_1^-$) is a
tiling of the lower half-plane with only color 0 (resp. 1) on the top. 

\begin{remark}
    Obviously, the tilings $z_0$ and $z_1$ agree on the upper half-plane
    $\{(m,n)\in\Z^2\mid n \geq 0\}$ but surprisingly,
    they also agree on the lower half-plane
    $\{(m,n)\in\Z^2\mid n \leq -3\}$.
    We do not provide a proof of this here as it is not needed for the
    next results.
\end{remark}

\begin{figure}
\begin{center}
    \includegraphics[width=.80\linewidth]{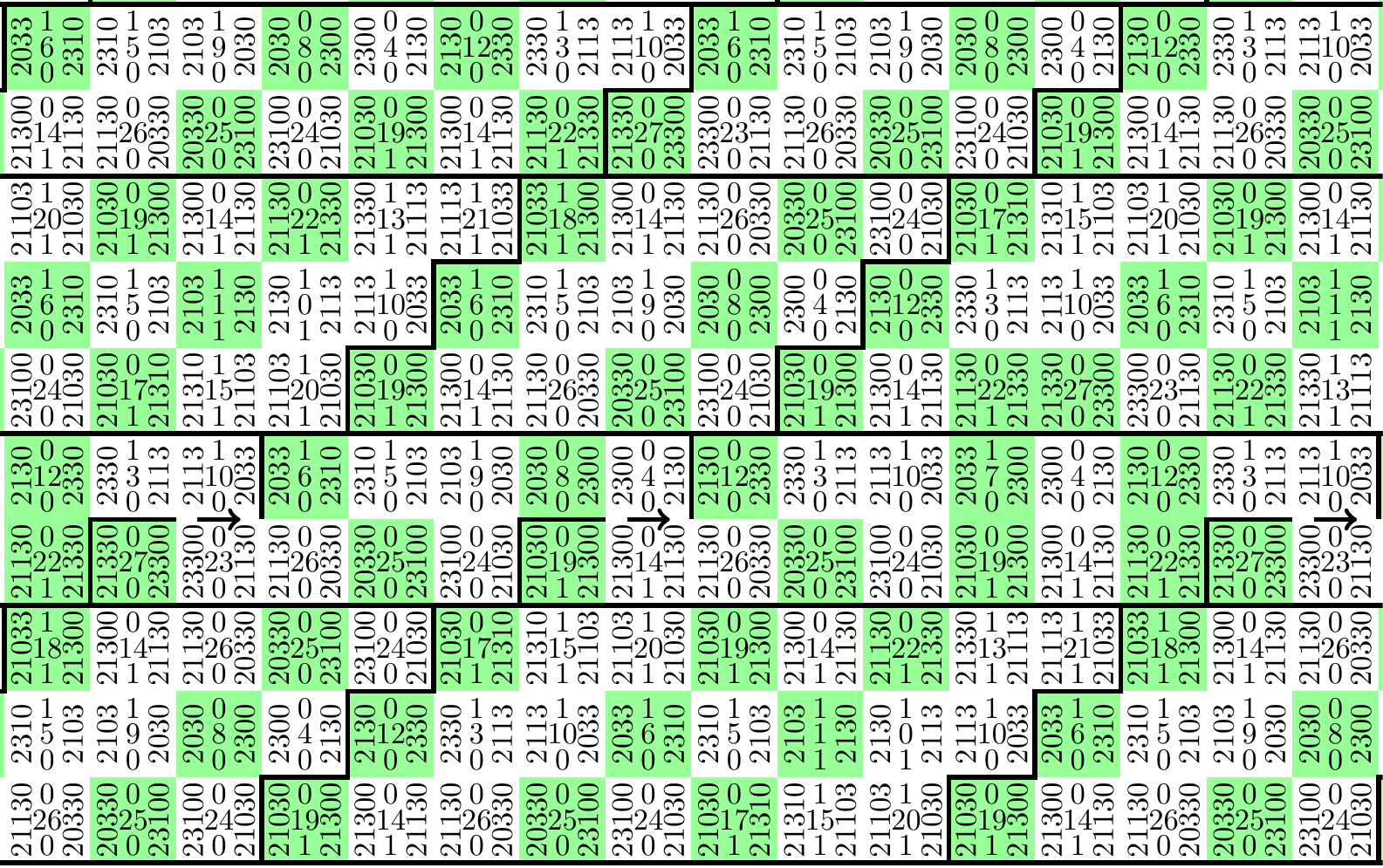}
\end{center}
    \caption{The tiling $x = y_0^-\odot^2
    \left(\sigma^{-\be_1}y_0^+\right)\in\Omega_4$ 
    illustrated in the window $[-8,7]\times[-5,4]$. 
    The image under the map $\jmath$ of the marker tiles 
    shown in green in Figure~\ref{fig:tiling_with_T5},
    i.e., the tiles of $\T_4$ with indices
        $\{0, 3, 4, 5, 13, 14, 15, 23, 24\}$,
    are shown with green background here.
    Those \emph{kind of diagonal markers} appear on lines of slope $+1$
    but those diagonal lines are broken by the shift of the upper half by one unit
    to the right. Hence $x\notin X_4$.}
    \label{fig:fault line}
\end{figure}

\begin{proposition}\label{prop:nonempty}
    The set $\Omega_4\setminus X_4$ is nonempty.
\end{proposition}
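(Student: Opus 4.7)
The strategy is to exhibit an explicit tiling of the plane by $\T_4$ that violates a structural constraint inherited by every tiling in $X_4$. Following the construction suggested by Figure~\ref{fig:fault line}, I set
\[
x := y_0^- \odot^2 (\sigma^{-\be_1} y_0^+).
\]
To check $x \in \Omega_4$, recall from Lemma~\ref{lem:has-horizontal-fault line} that $y_0$ has a horizontal fault line of $0$'s below row $-1$: every tile in row $-2$ has top color $0$ and every tile in row $-1$ has bottom color $0$. Vertical adjacencies within $y_0^-$ and within $\sigma^{-\be_1} y_0^+$ are inherited from $y_0 \in \Omega_4$, and the horizontal adjacencies at the seam between rows $-2$ and $-1$ match because both sides are uniformly $0$, independently of any horizontal shift of the upper half-plane.

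For the remaining inclusion $x \notin X_4$, I would use the three-coloring of tiles suggested by Figure~\ref{fig:tiling_with_T5}. The computation in the proof of Proposition~\ref{prop:from-jeandel-rao-6-to-12} produces three disjoint subsets $M_1, M_2, M_3 \subset \T_6$ of markers in direction $\be_1$ which together partition $\T_6$. Consequently every tiling in $\Omega_6$ admits a three-coloring of its columns in which each column is monochromatic. Under the shear conjugacy $\eta:\Omega_6 \to \Omega_5$ of Corollary~\ref{cor:eta-6-to-5-is-conjugacy}, which satisfies $\eta \circ \sigma^{\bn} = \sigma^{M\bn} \circ \eta$ with $M = \left(\begin{smallmatrix} 1 & 1 \\ 0 & 1 \end{smallmatrix}\right)$ and therefore sends $\be_2$-translation to $(\be_1 + \be_2)$-translation, this yields a three-coloring of $\T_5$ such that every slope-$+1$ diagonal in every tiling of $\Omega_5$ is monochromatic. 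Pulling back through the embedding $\jmath:\Omega_5 \to \Omega_4$ of Proposition~\ref{prop:jmath-5-to-4-is-one-to-one-continuous}, each tiling $w \in X_4$ acquires a well-defined three-coloring of positions (via its unique preimage under $\jmath$) whose slope-$+1$ diagonals are all monochromatic.

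Now I derive the contradiction. Suppose $x \in X_4$; every slope-$+1$ diagonal of $x$ is then monochromatic. For each $m \in \Z$, the slope-$+1$ diagonal of $x$ passing through $(m,-2)$ also passes through $(m+1,-1)$, and the tiles there are $x(m,-2) = y_0(m,-2)$ and $x(m+1,-1) = y_0(m+2,-1)$. In $y_0$ these two positions lie on the two distinct consecutive slope-$+1$ diagonals through $(m,-2)$ and $(m+1,-2)$ respectively. Monochromaticity in $x$ forces these two $y_0$-diagonals to carry the same color for every $m \in \Z$; by induction, every slope-$+1$ diagonal of $y_0$ has the same color. But $y_0 = \jmath(y_0^{(5)})$ with $y_0^{(5)} \in \Omega_5$, and $\Omega_5$ is minimal (Corollary~\ref{cor:aperiodic-minimal-5-12}), so every tile of $\T_5$ occurs in $y_0^{(5)}$ with positive density; in particular all three color classes occur, contradicting the single-color conclusion.

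The main obstacle is justifying the monochromaticity of slope-$+1$ diagonals in every tiling of $\Omega_5$ with the required level of rigor: this requires tracing the three subsets of markers of $\T_6$ through the shear conjugacy $\eta$ (a direct calculation using Equation~\eqref{eq:eta-on-tilings}), and then observing that although $\jmath$ identifies the two tiles $22, 23 \in \T_5$ that lie in different color classes, the color assigned to a position in any $x \in X_4$ is still unambiguous because $\jmath$ is one-to-one on tilings. The rest of the argument is purely combinatorial once this bookkeeping has been set up.
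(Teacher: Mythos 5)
Your proposal is correct and takes essentially the same route as the paper: the same tiling $x = y_0^- \odot^2 (\sigma^{-\be_1} y_0^+)$, the same fault-line argument for $x\in\Omega_4$, and the same obstruction for $x\notin X_4$, namely that the markers of $\T_6$ become slope-$+1$ diagonals under $\eta$ and the unit horizontal shift of the upper half-plane breaks this diagonal structure. Your version is in fact more explicit than the paper's (which only asserts the breakage); the one point left to tidy is the tile-$22$/$23$ ambiguity you already flag — the colour of a position of $x$ computed from $\jmath^{-1}(x)$ must be seen to agree with that computed from $\jmath^{-1}(y_0)$, which holds because the preimage of a tile is determined by it and the tile directly below, both lying in the same half-plane (and there is also a harmless sign slip in $x(m+1,-1)=y_0(m+2,-1)$, which should be $y_0(m,-1)$ with the paper's shift convention).
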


\begin{proof}
    Let
    $x = y_0^-\odot^2
    \left(\sigma^{-\be_1}y_0^+\right)$.
    Because of the horizontal fault line of 0's, adjacent edges in the tiling $x$
    are still valid. Thus we have $x\in\Omega_4$ is a valid Wang tiling.
    But $x\notin X_4$.
    Indeed, 
    any tiling in $ X_4$
    is the image under $\jmath\circ\eta$ of some tiling in $\Omega_6$.
    The image of the marker tiles in $M_6$
    must appear on diagonal lines of slope $1$ in the tiling $x$
    (recall Figure~\ref{fig:tiling_with_T5}).
    The fact that the upper half-tiling $y^+$ is shifted by one horizontal
    unit breaks the property of diagonal line of markers in $x$
    (see Figure~\ref{fig:fault line}).
    Therefore $x\notin X_4$.
\end{proof}

We can construct an uncountable number of tilings in
$\Omega_4\setminus X_4$.
For $i\in\{0,1\}$, let $Y_i^+$ (resp. $Y_i^-$) be the subshift generated by the
orbit of $y_i^+$ (resp.  $y_i^-$) under the action of the horizontal shift
$\sigma^{\be_1}$:
\begin{align*}
Y_i^+ &= \textsc{Closure}\left(\left\{\sigma^{k\be_1}y_i^+\mid k\in\Z\right\}\right),\\
Y_i^- &= \textsc{Closure}\left(\left\{\sigma^{k\be_1}y_i^-\mid k\in\Z\right\}\right).
\end{align*}
Any product $u^-\odot^2 u^+$ with $u^-\in Y_i^-$ and $u^+\in Y_i^+$ belongs to
$\Omega_4$ since they match along a horizontal fault line of $i$'s with
$i\in\{0,1\}$.

\begin{proposition}\label{prop:has-measure-0}
    For $i\in\{0,1\}$, 
    the subset 
    of tilings in $\Omega_4$ obtained by sliding half-plane along a horizontal fault line of $i$'s
    \[
        \overline{Y_i^-\odot^2 Y_i^+}^\sigma
        \subset\Omega_4
    \]
    has measure 0.
\end{proposition}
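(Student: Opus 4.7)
The plan is to argue by contradiction. Assume $\mu(W_i)>0$ where $W_i:=\overline{Y_i^-\odot^2 Y_i^+}^\sigma$, and normalize to obtain a $\Z^2$-shift-invariant probability measure $\mu':=\mu|_{W_i}/\mu(W_i)$ supported on the $\Z^2$-invariant set $W_i$. For $x\in\Omega_4$, set
\[
    E_i^{(n)}=\{x\in\Omega_4:\scbottom(x_{(m,n)})=i\text{ for all }m\in\Z\},
    \qquad
    N_x=\{n\in\Z:x\in E_i^{(n)}\},
\]
so that $N_x$ records the rows at which $x$ carries a horizontal fault line of colour $i$. The two key ingredients will be: (a) $|N_x|=1$ for every $x\in W_i$, and (b) a Fubini--Tonelli computation that makes (a) incompatible with the $\sigma^{\be_2}$-invariance of $\mu'$.

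For the combinatorial claim $|N_x|=1$, note that every $x\in W_i$ equals $\sigma^\bk y$ for some $\bk\in\Z^2$ and $y=u^-\odot^2 u^+$ with $u^\pm\in Y_i^\pm$, so $y$ carries at least one fault line at the interface row $-1$. To rule out additional fault lines I would first apply Lemma~\ref{lem:rows-of-156-81516} together with Lemma~\ref{lem:fault line-in-rows-iff}: the $\omega_\U^2$-fixed point $z_i$ generating $y_i^\pm$ contains exactly one row in $\{1,5,6\}^\Z$ (for $i=0$) or $\{8,15,16\}^\Z$ (for $i=1$), namely the interface row, so $y_i^\pm$ itself has no interior fault line. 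I would then promote this property to every $u^\pm\in Y_i^\pm$: using the primitivity of $\omega_\U$ and the recognizability of $\rho,\omega_{11},\ldots,\omega_6,\eta,\jmath$, each non-interface row of $y_i^\pm$ contains non-$A_i$ tiles (where $A_i:=\{t\in\T_4:\scbottom(t)=i\}$) at syndetic positions. Consequently, no horizontal-shift limit $u^\pm=\lim_n\sigma^{k_n\be_1}y_i^\pm$ can push all non-$A_i$ entries of a non-interface row out of every bounded window, so $u^\pm$ inherits the no-interior-fault-line property, $|N_y|=1$, and hence $|N_x|=|N_y|=1$.

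Having established $|N_x|=1$ for $\mu'$-almost every $x$, the contradiction is immediate. On one hand $\int|N_x|\,d\mu'=1$. On the other, $\sigma^{\be_2}$-invariance of $\mu'$ gives $\mu'(E_i^{(n)})=\mu'(E_i^{(0)})=:\alpha'$ for every $n\in\Z$, and Fubini--Tonelli yields
\[
    \int|N_x|\,d\mu'(x)
    =\int\sum_{n\in\Z}\mathbf{1}_{E_i^{(n)}}(x)\,d\mu'(x)
    =\sum_{n\in\Z}\mu'(E_i^{(n)})
    =\sum_{n\in\Z}\alpha'.
\]
This sum equals $0$ if $\alpha'=0$ and $+\infty$ if $\alpha'>0$; neither possibility can equal $1$. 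Therefore the initial assumption fails and $\mu(W_i)=0$.

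The main obstacle is the syndetic-recurrence step in the second paragraph: for each non-interface row $r$ of $y_i^\pm$, the non-$A_i$ tiles must appear with bounded gaps. This is a one-dimensional uniform-recurrence statement on rows, which is not a formal consequence of the $\Z^2$-minimality of $\Omega_\U$. A cleaner alternative I would try first is to prove directly that $Y_i^+$ is a minimal $\sigma^{\be_1}$-subshift (via Lemma~\ref{lem:minimal-implies-minimal} applied to the morphisms that produce $y_i^\pm$ from the primitive substitution $\omega_\U^2$), so that the closed $\sigma^{\be_1}$-invariant set $\{u\in Y_i^+:\text{row }r\text{ of }u\text{ is a fault line}\}$ must be either empty or all of $Y_i^+$; since $y_i^+$ lies in its complement for every $r\neq-1$, the set is empty, and the same argument for $Y_i^-$ completes the proof.
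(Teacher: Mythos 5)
Your proof is correct and is essentially the paper's argument: the paper writes $\overline{Y_i^-\odot^2 Y_i^+}^\sigma$ as the disjoint union $\bigsqcup_{k\in\Z}\sigma^{k\be_2}\left(Y_i^-\odot^2 Y_i^+\right)$ — disjoint precisely because each such tiling has at most one horizontal fault line of $i$'s — and then observes that shift-invariance forces all the pieces to have the same measure $\kappa$, so $\sum_{k\in\Z}\kappa\leq 1$ gives $\kappa=0$, which is exactly your $\int|N_x|\,d\mu'=\sum_{n\in\Z}\mu'(E_i^{(n)})$ computation in different clothing. The uniqueness-of-the-fault-line step that you labour over (and whose syndeticity sub-claim you flag as an obstacle) is dispatched in the paper by citing Lemma~\ref{lem:rows-of-156-81516} together with Lemma~\ref{lem:fault line-in-rows-iff}, so no ingredient beyond what you already have is required.
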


\begin{proof}
    Let $\mu$ be a shift invariant probability measure on $\Omega_4$, i.e.,
    $\mu(\sigma^\bk X)=\mu(X)$ for every measurable subset $X\subset\Omega_4$
    and every $\bk\in\Z^d$.
    Let $i\in\{0,1\}$.
    We have the following disjoint union
    \[
        \overline{Y_i^-\odot^2 Y_i^+}^\sigma
        =
        \bigsqcup_{k\in\Z}
        \sigma^{k\be_2} \left(Y_i^-\odot^2 Y_i^+\right).
    \]
    The union is disjoint since 
    it follows from Lemma~\ref{lem:rows-of-156-81516} that
    each of these tilings has at most one horizontal fault line
    of $i$'s.
    The set $Y_i^-\odot^2 Y_i^+$ is closed and thus measurable
    with $\mu$-measure $\kappa$. Thus
        $\mu\left(\overline{Y_i^-\odot^2 Y_i^+}^\sigma\right)
        =
        \sum_{k\in\Z}\kappa$
    which is bounded only if $\kappa=0$.
    We conclude that
        $\mu\left(\overline{Y_i^-\odot^2 Y_i^+}^\sigma\right)=0$.
\end{proof}

\begin{conjecture}\label{conj:fault line}
    A tiling in $\Omega_4$ has at most one horizontal fault line
    and if it has one then it is a fault line of 
    0's or 1's.
    In other words,
    \[
        \Omega_4\setminus X_4
        =
        \left(
        \overline{Y_0^-\odot^2 Y_0^+}^\sigma
        \cup
        \overline{Y_1^-\odot^2 Y_1^+}^\sigma
        \right)
        \setminus
        \overline{\{y_0,y_1,y_2,y_3\}}^\sigma.
    \]
where $y_0$ and $y_1$ are defined above and
\begin{align*}
y_2&= \jmath\,\eta\,
\omega_{6}\,
\omega_{7}\,
\omega_{8}\,
\omega_{9}\,
\omega_{10}\,
    \omega_{11}\,\rho(z_2)\in\Omega_4
\quad
    \text{ with }
\quad
z_2=\lim_{n\to\infty}(\omega_\U)^{2n}
\left(\begin{smallmatrix}
17 & 13 \\
16 & 15
\end{smallmatrix}\right)\in\Omega_\U \text{ and }\\
y_3&= \jmath\,\eta\,
\omega_{6}\,
\omega_{7}\,
\omega_{8}\,
\omega_{9}\,
\omega_{10}\,
    \omega_{11}\,\rho(z_3)\in\Omega_4
\quad
    \text{ with }
\quad
z_3=\lim_{n\to\infty}(\omega_\U)^{2n}
\left(\begin{smallmatrix}
17 & 13 \\
6 & 5
\end{smallmatrix}\right)\in\Omega_\U.
\end{align*}
\end{conjecture}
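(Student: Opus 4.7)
The plan is to prove both inclusions of the conjecture. For the inclusion $\supseteq$, a mild generalization of Proposition~\ref{prop:nonempty} suffices: a sliding construction $x = u^- \odot^2 \sigma^{k\be_1} u^+$ with $u^\pm \in Y_i^\pm$ is automatically in $\Omega_4$ thanks to the fault line of color $i$, and if $x$ is not a shift of $y_0,y_1,y_2,y_3$ then the horizontal translation $k$ breaks the slope-one marker alignment witnessed in Figure~\ref{fig:tiling_with_T5}, forcing $x \notin X_4$. Since $X_4$ is minimal by Corollary~\ref{cor:aperiodic-minimal-0-4}, the shift-closure $\overline{\{y_0,y_1,y_2,y_3\}}^\sigma$ is all of $X_4$, so the subtraction on the right-hand side simply removes the part already lying in $X_4$.

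For the reverse inclusion $\subseteq$, I would first prove the lifting criterion: $x \in \Omega_4$ lies in $X_4 = \jmath(\Omega_5)$ if and only if $x$ contains no horizontal fault line. The ``only if'' direction reduces to a finite verification that $\Omega_5$ has no horizontal fault line, using the fact that the decorations $5$ and $6$ were added to $\T_5$ precisely to prevent any complete row of constant horizontal color in $\Omega_5$. For the ``if'' direction, given $x$ without a fault line, I construct the lift $y \in \Omega_5$ tile by tile. By the proof of Proposition~\ref{prop:jmath-5-to-4-is-one-to-one-continuous} and Equation~\eqref{eq:pi_not_injective}, the only tile of $\T_4$ with multiple $\jmath$-preimages is the one with $(\scright,\sctop,\scleft,\scbottom)=(21330,0,21130,1)$, whose two lifts in $\T_5$ differ only in the bottom decoration ($1$ versus $5$); since this tile has $\sctop=0\neq 1=\scbottom$, it cannot stack on itself, so its lift is forced by the uniquely-lifting tile directly below.

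The technical heart of this direction is a finite consistency check: compute the set of vertical dominoes $u\odot^2 v$ appearing in $\Omega_4$ but not in $\jmath\bigl(\textsc{Dominoes}^2(\T_5)\bigr)$, and verify by case analysis that every such ``bad'' domino forces a complete row of constant horizontal color, i.e., a fault line. With this established, I would classify fault-line tilings: suppose $x \in \Omega_4$ has a horizontal fault line of color $i \in \{0,1\}$ at row $k$. By Lemma~\ref{lem:fault line-in-rows-iff}, the corresponding row of any preimage in $\Omega_\U$ via $\eta\,\omega_6\cdots\omega_{11}\,\rho$ must consist entirely of tiles in $\{1,5,6\}$ (for $i=0$) or $\{8,15,16\}$ (for $i=1$). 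Lemma~\ref{lem:rows-of-156-81516} then pins this row to exactly one of the eight fixed points of $\omega_\U^2$, corresponding to one of $y_0,y_1,y_2,y_3$, so the upper half $x^+$ is a horizontal shift of some $y_j^+$, giving $x^+ \in Y_i^+$; independently $x^- \in Y_i^-$, so $x \in \overline{Y_i^- \odot^2 Y_i^+}^\sigma$.

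The main obstacle is rigorously showing that the data of a half-plane tiling together with the constant fault-line row uniquely determines the substitutive structure via $\omega_6,\ldots,\omega_{11},\rho$. Standard recognizability arguments (Section~\ref{sec:recognizability}) give uniqueness of desubstitution only from two-sided information, whereas here one of the two sides of the fault line is missing. One must verify that the restricted tile sets forced on fault-adjacent rows, combined with the constraints of the half-plane, already determine each desubstitution step uniquely, so that the propagation upward lands in one of the four fixed-point configurations of $\omega_\U^2$ from Lemma~\ref{lem:rows-of-156-81516} rather than producing a new admissible half-plane structure. A secondary subtlety is that the finite consistency check in the second paragraph must carefully treat the ambiguous tile $22$, ensuring that long stretches of constant horizontal color that fall short of a complete fault line still admit a coherent choice of decoration.
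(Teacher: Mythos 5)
This statement is Conjecture~\ref{conj:fault line}: the paper does not prove it, explicitly labels it as open, and even identifies the missing ingredient, namely a synchronization property of the transducer associated to $\T_8$, for which it reports only ``partial results \dots insufficient to prove the conjecture.'' So there is no proof in the paper to compare yours against; the question is whether your sketch closes the gap, and it does not.

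There are two concrete problems. First, your proposed lifting criterion --- ``$x\in X_4=\jmath(\Omega_5)$ if and only if $x$ contains no horizontal fault line'' --- is false in the ``only if'' direction: Lemma~\ref{lem:has-horizontal-fault line} exhibits $y_0,\dots,y_3$, each of which is by construction in the image of $\jmath$ (hence in $X_4$) and each of which has a horizontal fault line. A fault line is a property of the undecorated colors of $\T_4$, and the decorations $5,6$ of $\T_5$ are erased by $\jmath$, so membership in $X_4$ cannot be detected by the mere presence or absence of a constant row; what distinguishes $y_j$ from its slid variants is whether the two half-planes admit a \emph{common, consistent} decoration, which is a relative alignment condition, not a local one. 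Relatedly, your claim that minimality forces $\overline{\{y_0,y_1,y_2,y_3\}}^\sigma=X_4$ misreads the paper's notation: $\overline{X}^\sigma=\bigcup_{\bk}\sigma^\bk X$ is the orbit (a countable set), not the topological orbit closure, and $X_4$ is an uncountable aperiodic minimal subshift; the subtraction on the right-hand side removes precisely the four orbits of sliding configurations that happen to lie in $X_4$, nothing more.

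Second, and more fundamentally, the classification step is circular as written. You argue that if $x\in\Omega_4$ has a fault line then ``the corresponding row of any preimage in $\Omega_\U$'' is constrained by Lemmas~\ref{lem:fault line-in-rows-iff} and~\ref{lem:rows-of-156-81516}; but a tiling in $\Omega_4\setminus X_4$ has no such preimage --- that is what being outside $X_4$ means --- and Lemma~\ref{lem:rows-of-156-81516} concerns only the eight fixed points of $\omega_\U^2$, not arbitrary elements of $\Omega_\U$ containing a restricted row. What must actually be shown is that every half-plane tiling of $\Z\times(\N-1)$ by $\T_4$ whose boundary row has constant bottom color $i$ is a horizontal shift of $y_j^+$ for the appropriate $j$, and symmetrically below. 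Recognizability gives unique desubstitution only from two-sided data, and with one side of the fault line removed this forcing is exactly the synchronization property the paper says is unresolved. You name this as ``the main obstacle'' but supply no argument for it, so the proposal leaves the conjecture exactly where the paper leaves it.
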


To prove that conjecture, which implies
Conjecture~\ref{conj:complement-measure-zero-Omega0},
we need to prove that the transducer associated to
the set of Wang tiles $\T_8$ (with each occurrence of 5 and 6 in horizontal colors
replaced by 1 and 0 respectively) is \emph{synchronized} for some
definition of synchronization. Informally, if a word $v=v_0v_1\dots
v_k=v'_0v'_1\dots v'_k$ is simultaneously the output of some computation 
\[
a_0\xrightarrow{u_0|v_0}
a_1\xrightarrow{u_1|v_1}
a_2
\dots
\xrightarrow{u_{k-1}|v_{k-1}}
a_k
\]
and the input of some computation
\[
b_0\xrightarrow{v'_0|w_0}
b_1\xrightarrow{v'_1|w_1}
b_2
\dots
\xrightarrow{v'_{k'-1}|w_{k'-1}}
b_{k'}
\]
then $k=k'$ and $v_i=v'_i$ for every $i$ with $1\leq i\leq k$,
then we say that the transducer is \emph{synchronized} on the word $v$.
We need to show that the only arbitrarily large words for which the transducer
$\T_8$ is not synchronized are those of the form $00000\dots$ or $111111\dots$.
Some partial results were found in that direction but they were insufficient to
prove the conjecture.

\part*{References and appendix}

\bibliographystyle{myalpha} 
\bibliography{biblio}

\newpage
\section*{Appendix}

\begin{figure}[h]
    \begin{center}
\includegraphics{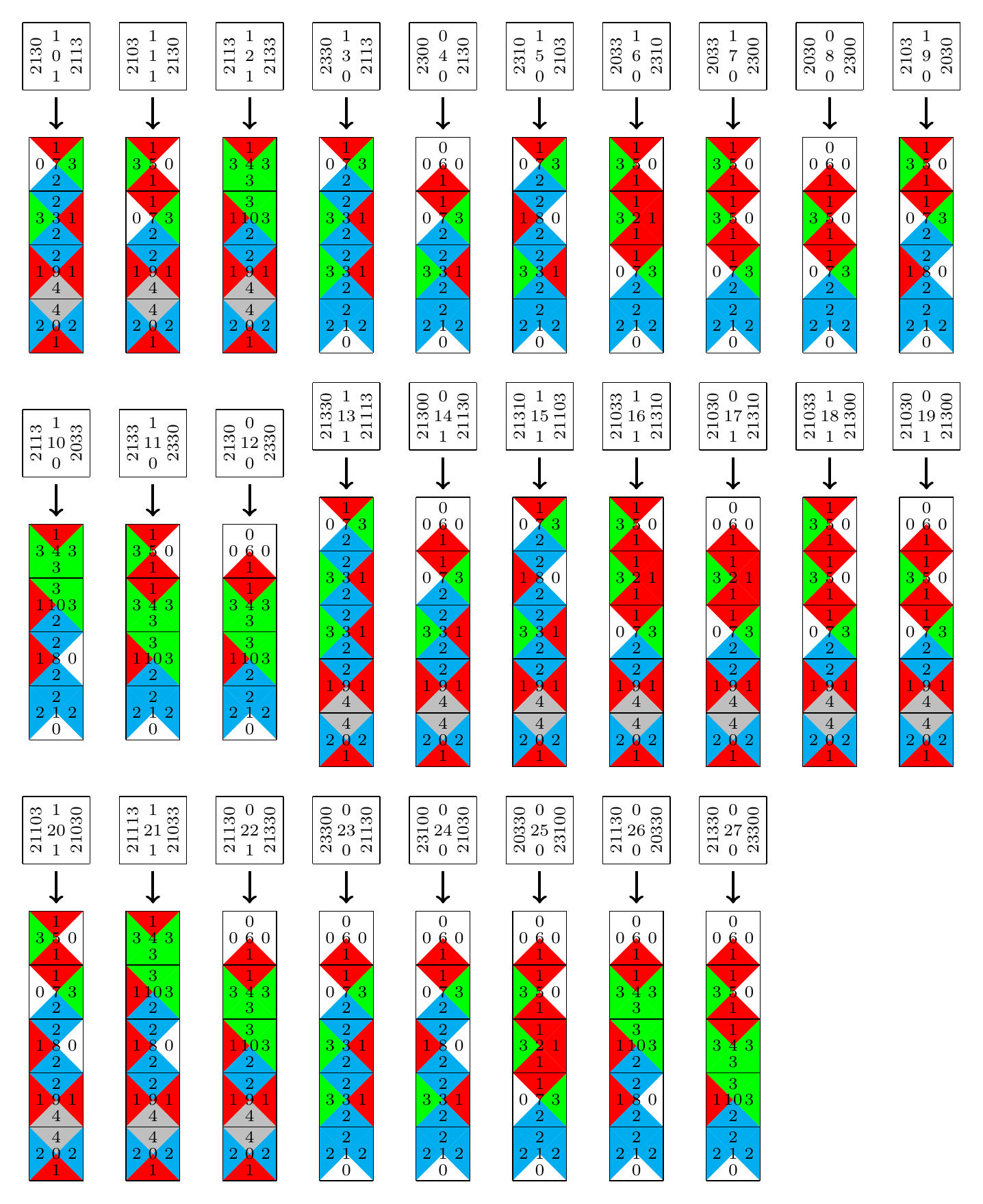}
    \end{center}
    \caption{The morphism 
    $\omega_0\,\omega_1\,\omega_2\,\omega_3:\Omega_4\to\Omega_0$
    is recognizable and onto up to a shift.}
    \label{fig:omega-4-to-0}
\end{figure}

\begin{figure}[p]
    \includegraphics[width=.95\linewidth]{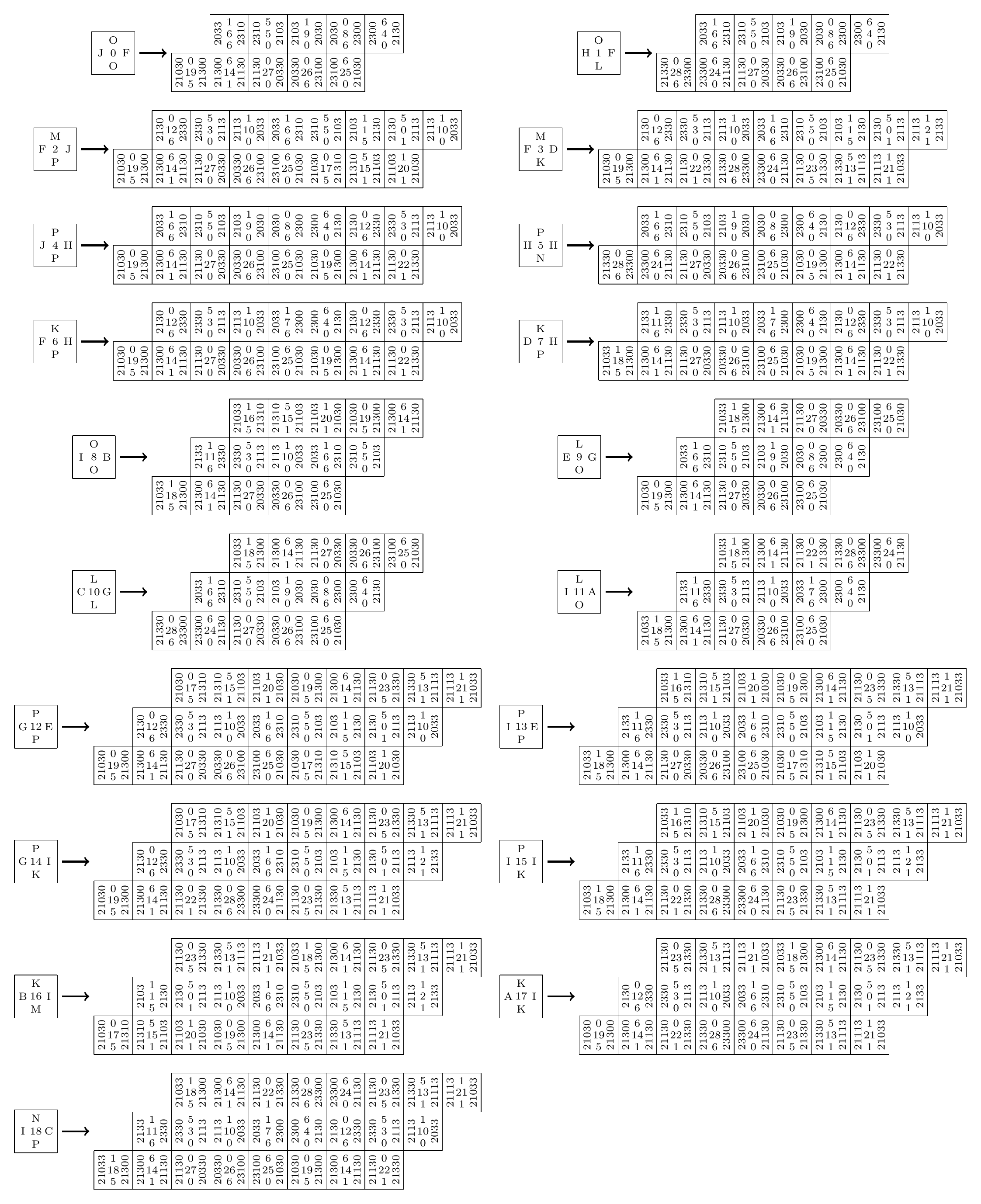}
    \caption{The map
    $\eta\,\omega_6\,\omega_7\,\omega_8
    \,\omega_9\,\omega_{10}\,\omega_{11}\,\rho:\Omega_\U\to\Omega_5$
    is recognizable and onto up to a shift. The map
    $\jmath:\Omega_5\to\Omega_4$ then replaces horizontal colors 5 and 6 by 1 and
    0, respectively, which creates horizontal fault lines of 0's in the image of
    tiles numbered 1, 5 and 6 and horizontal fault lines of 1's in the images of
    tiles numbered 8, 15 and~16.}
    \label{fig:omega-12-to-5}
\end{figure}

\begin{figure}
    \begin{center}
        \includegraphics[width=\linewidth]{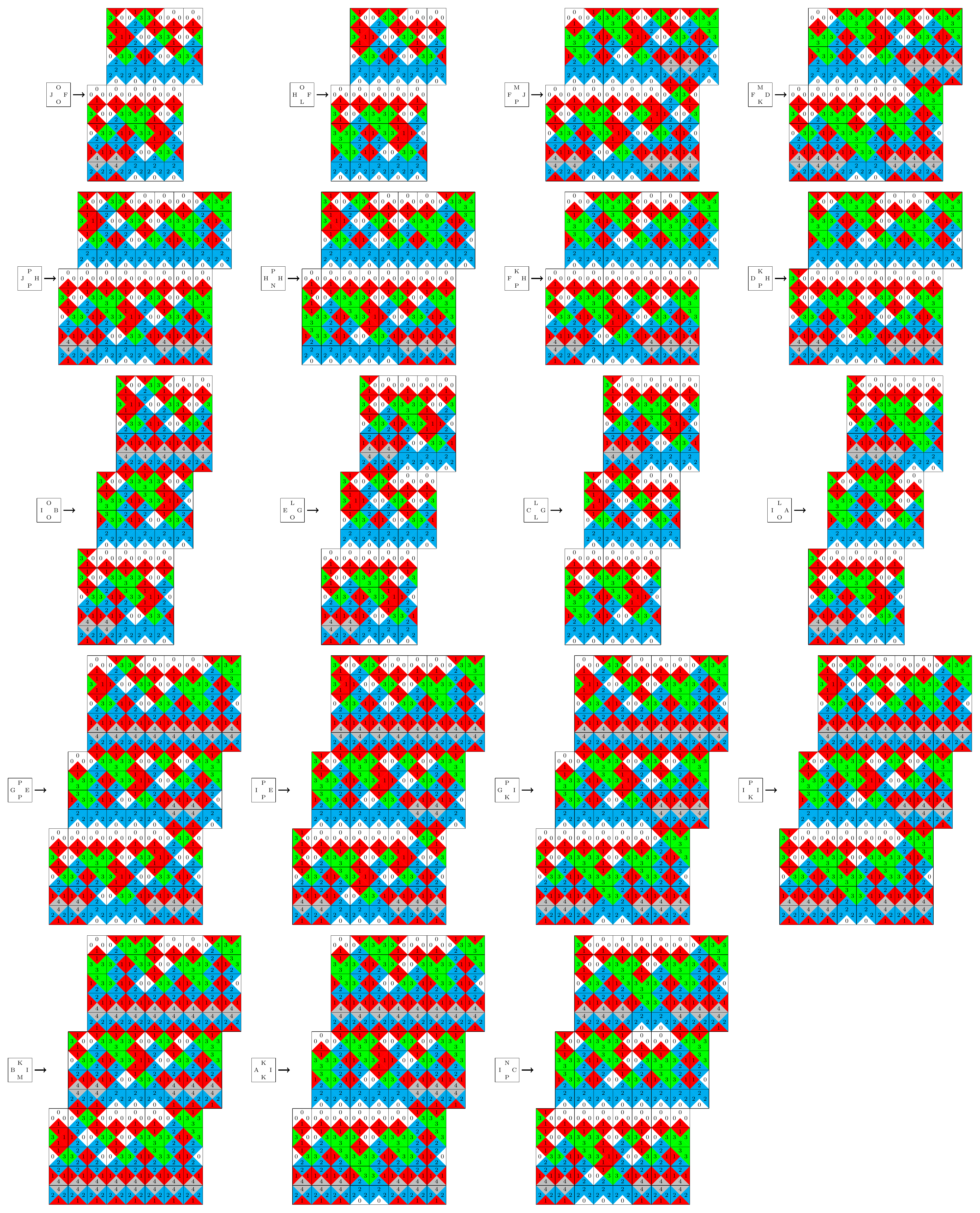}
    \end{center}
    \caption{The map
    $\omega_0\,\omega_1\,\omega_2\,\omega_3\,\jmath\,
    \eta\,\omega_6\,\omega_7\,\omega_8
    \,\omega_9\,\omega_{10}\,\omega_{11}\,\rho:\Omega_\U\to\Omega_0$.}
    \label{fig:omega-12-to-0}
\end{figure}

\end{document}